\documentclass{article}
\usepackage[utf8]{inputenc}
\usepackage{amsthm}
\usepackage{amsmath}
\usepackage{amsfonts}
\usepackage{latexsym}
\usepackage{geometry}
\usepackage{enumerate}
\usepackage{csquotes}
\usepackage{graphicx}
\usepackage{comment}
\usepackage{appendix}
\usepackage{hyperref}

\tolerance=1
\emergencystretch=\maxdimen
\hyphenpenalty=10000
\hbadness=10000

\def \W {\mathcal{W}}
\def \N {\mathcal{N}}
\def \PP {\mathcal{P}}
\def \XX {\mathcal{X}}
\def \bW {\overline{\mathcal{W}}}
\def \dist {{\rm dist}}
\DeclareMathOperator{\Rm}{Rm}
\DeclareMathOperator{\Ric}{Ric}
\DeclareMathOperator{\Vol}{Vol}
\DeclareMathOperator{\spt}{spt}
\makeatletter
\newcommand*{\rom}[1]{\rm {\expandafter\@slowromancap\romannumeral #1@}}
\makeatother

\title{Ancient Ricci flows with asymptotic solitons}
\author{Pak-Yeung Chan, Zilu Ma, and Yongjia Zhang}
\pagestyle{headings}
\numberwithin{equation}{section}
\newtheorem{Theorem}{Theorem}[section]
\newtheorem{Proposition}[Theorem]{Proposition}
\newtheorem{Lemma}[Theorem]{Lemma}
\newtheorem{Corollary}[Theorem]{Corollary}
\theoremstyle{definition}
\newtheorem{Definition}[Theorem]{Definition}

\begin{document}

\maketitle

\begin{abstract}
We study ancient Ricci flows which admit asymptotic solitons in the sense of Perelman \cite[Proposition 11.2]{Per02}. We prove that the asymptotic solitons must coincide with Bamler's tangent flows at infinity \cite{Bam20c}. Furthermore, we show that Perelman's $\nu$-functional is uniformly bounded on such ancient solutions; this fact leads to logarithmic Sobolev inequalities and Sobolev inequalities. Lastly, as an important tool for the proofs of the above results, we also show that, for a complete Ricci flow with bounded curvature, the bound of the Nash entropy depends only on the local geometry around an $H_n$-center of its base point.
\end{abstract}

\setcounter{tocdepth}{1}
\tableofcontents

\section{Introduction}

Through Hamilton's program \cite{Ham93b}, the study of ancient solutions emerged into the field. Perelman's seminal work \cite{Per02} manifests not only that the understanding of ancient solutions is crucial to the analysis of the singularity formation, but also that ancient solutions are interesting in their own right. The ancient solution can be approached from different directions: one is the classification (c.f. \cite{DHS12}, \cite{Bre20}, \cite{Li20}, etc.), which, in the higher-dimensional non-K\"ahler case, is impossible without any strong curvature assumption; another is the construction of examples (\cite{BKN12}, \cite{Lau13}, \cite{LW16}, etc.), whose immense quantity with their diverse behaviors discloses the difficulty of the whole field; a third approach is the least specific---to study the geometric and analytic properties of ancient solutions, among which Perelman's monotonicity formulas are very attracting ones. Works in this category include \cite{Y09}, \cite{N10}, \cite{CZ11}, \cite{Zhang21}, to list but a few. 

Bamler's recent ground-breaking works \cite{Bam20a}---\cite{Bam20c} have shed more light on the field of Ricci flow. The estimates therein within the smooth category can be regarded as some good improvements of Perelman's entropy techniques, whereas the weak version of the Ricci flow (called the \emph{metric flow}) provides an intriguing object for the future study. While the latter is fascinating and promising, it is yet beyond the scope of the present article, and we shall focus on the former, just as the last two authors have done in \cite{MZ21}.

As is well-known now, for ancient solutions under certain curvature conditions, Perelman's monotonicity formulas \cite{Per02} imply the existence of an asymptotic shrinker (c.f. \cite[Proposition 11.2]{Per02} and \cite{N10}). On the other hand, Bamler \cite{Bam20c} has proved that an ancient solution has a tangent flow at infinity, and that, if we further assume that the Nash entropy is uniformly bounded and that the underlying manifold is closed, then the tangent flow is a metric soliton with a singular set of co-dimension 4 (see \cite[Section 1.8]{Bam20c}; c.f. \cite[Theorem 7.8]{Bam20c}). A natural question arises that how Perelman's asymptotic shrinker and Bamler's tangent flow at infinity are related. Bamler himself remarks that, should the ancient solution be a $\kappa$-solution, they two would then coincide. But what about other cases? What is the weakest possible condition for this to be true? These are the questions that motivated the current study. In fact, one of our results is that the asymptotic shrinker and the tangent flow at infinity coincide if the former exists. We do not assume any other curvature condition for the ancient solution except for its time-wise boundedness.

Let $(M^n,g(t))_{t\in(-\infty,0]}$ be an ancient Ricci flow. Throughout this article, we shall make a technical assumption that $g(t)$ has bounded curvature within each compact time interval, namely,
\begin{eqnarray}\label{curvaturebound}
\sup_{M\times[t_1,t_2]}\big|{\Rm}_{g_t}\big|<\infty \quad \text{ for all }\quad -\infty<t_1\leq t_2\leq 0.
\end{eqnarray}
Note that the curvature bound may depend on the interval $[t_1,t_2]$, and is not uniform on $M\times(-\infty,0]$. This assumption is only for the validity of the classical formulas (especially the integration by parts at infinity) concerning conjugate heat kernels and entropies. In particular, we do not assume the ancient solution in question to be a $\kappa$-solution or to be Type I. 

Let $p_0\in M$ be a fixed point. Then, for a sequence $\{\tau_i\}_{i=1}^\infty$ with $\tau_i\nearrow\infty$, we may find $\{p_i\}_{i=1}^\infty$ such that $(p_i,-\tau_i)$ are $\ell$-centers of $(p_0,0)$, namely, $\ell_{p_0,0}(p_i,\tau_i)\leq\frac{n}{2}$; see section 2.3 for more details of the definitions. 
For most of the time in this article, we make the following assumption for the ancient solutions which we consider.
\\

\noindent\textbf{Assumption B:} For the fixed point $(p_0,0)$ and the sequences $\{\tau_i\}_{i=1}^\infty$ and $\{p_i\}_{i=1}^\infty$ as described above, there exists a smooth and complete Ricci flow $\big(M_\infty,g_\infty(t),p_\infty\big)_{t\in[-2,-1]}$, such that
\begin{eqnarray}\label{smoothconvergence}
\big(M,g_i(t),p_i\big)_{t\in[-2,-1]}\xrightarrow{\makebox[1cm]{}} \big(M_\infty,g_\infty(t),p_\infty\big)_{t\in[-2,-1]}
\end{eqnarray}
in the smooth Cheeger-Gromov-Hamilton sense, where the Ricci flow $g_i(t)$ is obtained by the Type I scaling
\begin{eqnarray}\label{TypeIscale}
g_i(t):=\tau_i^{-1}g(\tau_it).
\end{eqnarray}
\\

The statement of Assumption B is involved with a base point $(p_0,0)$, a sequence of positive scales $\{\tau_i\}_{i=1}^\infty$, and the choices of $\ell$-centers $(p_i,-\tau_i)$. Hence, if necessary, we shall refer to an ancient solution as ``satisfying Assumption B with respect to $\big(p_0,0,\{\tau_i\}_{i=1}^\infty,\{p_i\}_{i=1}^\infty\big)$''. For the limit Ricci flow $(M_\infty,g_\infty(t))$ in (\ref{smoothconvergence}), we do not assume any shrinker structure, neither do we make any geometric assumption except for its completeness. However, it will soon be clear that, because of \cite{CZ20}, this limit is Perelman's asymptotic shrinker; see the statement of Theorem \ref{Theorem_main}(1) below. 


We are now ready to state the first of our main theorems.

\begin{Theorem}\label{Theorem_main}
Under Assumption B, the following hold (see section 2 for all the definitions involved).
\begin{enumerate}[(1)]
\item $\big(M_\infty,g_\infty(t),p_\infty\big)_{t\in[-2,-1]}$ admits a shrinker structure, which makes it an asymptotic shrinker in the sense of Perelman (\cite[Proposition 11.2]{Per02}).
\item We have
\begin{eqnarray*}
\lim_{\tau\rightarrow\infty}\mathcal{N}_{p_0,0}(\tau)=\mu_\infty,
\end{eqnarray*}
where $\mathcal{N}_{p_0,0}$ is the Nash entropy based at $(p_0,0)$ and $\mu_\infty$ is the entropy of the asymptotic shrinker. In particular, $\mu_\infty$ is the infimum of $\mathcal{N}_{p_0,0}(\tau)$, $\tau>0$.
\item 
Any $\mathbb{F}$-limit of the sequence $\left\{\left((M,g_i(t))_{t\in[-2,-1]},(\nu_t^i)_{t\in [-2,-1]}\right)\right\}_{i=1}^\infty$, where $\nu^i_t:=\nu_{p_0,0\,|\,\tau_it}$, given by Bamler's compactness theorem \cite[Theorem 7.6]{Bam20b} is of the form 
\[
    \left(
    (M_\infty,g_\infty(t))_{t\in[-2,-1]},
    (\nu^\infty_t)_{t\in [-2,-1)}
    \right),
\]
up to isometry, where $(\nu_t^\infty)_{t\in [-2,-1)}$ is a conjugate heat flow made of a shrinker potential function.
\end{enumerate}
\end{Theorem}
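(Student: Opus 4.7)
I plan to establish the three parts in the order (1), (3), (2). Part (1) is essentially Perelman's asymptotic shrinker construction; part (3) is the main analytic step, identifying any abstract $\mathbb{F}$-limit with the smooth shrinker of (1) via smooth convergence of conjugate heat kernels; part (2) then follows from the $\mathcal{W}$-entropy convergence that (3) supplies, combined with a standard monotonicity argument.

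\textbf{Part (1).} Following Perelman \cite[Proposition 11.2]{Per02}, let $\ell_i$ denote the reduced distance $\ell_{p_0,0}$ viewed as a function on the Type-I-rescaled flow $(M, g_i(t))$; at a spacetime point $(x, t)$, this is $\ell_{p_0,0}(x, -\tau_i t)$. The $\ell$-center condition supplies $\ell_i(p_i, -1) \leq n/2$, and Perelman's gradient and upper Laplacian estimates for the reduced distance force $\ell_i$ to be locally uniformly bounded and Lipschitz on the rescaled flows. Extracting a locally Lipschitz limit $\ell_\infty$ on $M_\infty \times [-2, -1]$ using the CGH convergence from Assumption B, the monotonicity and boundedness of Perelman's reduced volume $\widetilde V_{p_0, 0}(\tau)$ (bounded above by $(4\pi)^{n/2}$) force equality in the underlying differential inequalities in the limit, yielding the shrinker equation
\[
\Ric_{g_\infty(t)} + \nabla^2 \ell_\infty(\cdot, t) = \tfrac{1}{2|t|}\, g_\infty(t)
\]
in a suitable weak sense. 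The regularity result of \cite{CZ20} upgrades $\ell_\infty$ to a smooth shrinker potential, so $(M_\infty, g_\infty, \ell_\infty(\cdot, -1))$ is an asymptotic shrinker in Perelman's sense.

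\textbf{Parts (3) and (2).} The central task is to upgrade the CGH convergence of the metrics to smooth local convergence of the conjugate heat kernels $\nu^i_t = \nu_{p_0, 0\,|\,\tau_i t}$. Writing $d\nu^i_t = (4\pi|t|)^{-n/2} e^{-f^i}\, dg_i(t)$ on the rescaled flow, we need locally uniform control of $f^i$. Since $p_i$ is an $\ell$-center (equivalently, an $H_n$-center up to controlled distance), Bamler's Gaussian concentration estimates localize $\nu^i_t$ near $p_i$ for $t \in [-2, -1]$ and prevent mass escape under the rescaling. Bamler's on- and off-diagonal heat kernel bounds, valid under the uniform Nash entropy bound (which itself follows from Perelman's monotonicity), together with standard parabolic regularity, deliver smooth local convergence $f^i \to f^\infty$. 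The limit $d\nu^\infty_t = (4\pi|t|)^{-n/2} e^{-f^\infty}\, dg_\infty(t)$ is then a smooth CHF on $(M_\infty, g_\infty)$, and uniqueness of the CHF combined with the shrinker structure from (1) forces $f^\infty$ to coincide with the shrinker potential. Smooth CHF convergence upgrades to $\mathbb{F}$-convergence, and uniqueness of $\mathbb{F}$-limits \cite{Bam20b} then yields (3). For (2), monotonicity of $\mathcal{N}_{p_0, 0}$ guarantees existence of the limit; Bamler's identity $\tau\, \mathcal{N}_{p_0, 0}(\tau) = \int_0^\tau \mathcal{W}_{p_0, 0}(s)\, ds$ reduces matters to showing $\mathcal{W}_{p_0, 0}(\tau_i) \to \mu_\infty$, which follows from scaling invariance $\mathcal{W}^{g}_{p_0, 0}(\tau_i) = \mathcal{W}^{g_i}_{p_0, 0}(1)$ combined with the smooth CHF convergence and the constancy of $\mathcal{W}$ on the shrinker Gaussian. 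The infimum assertion is immediate from monotonicity.

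\textbf{Main obstacle.} The principal difficulty is in upgrading CGH convergence of Ricci flows to smooth convergence of conjugate heat kernels, complicated by the fact that the original base point $p_0$ may lie arbitrarily far from the rescaled center $p_i$ in the metric $g_i(-1)$. Bamler's $H_n$-center concentration together with parabolic heat kernel estimates under bounded Nash entropy are the indispensable ingredients; once smooth CHF convergence is secured, the identification of the $\mathbb{F}$-limit and the entropy computation in (2) proceed in a relatively standard manner.
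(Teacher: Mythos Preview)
Your overall architecture matches the paper's: establish the shrinker structure via the locally uniformly Type~I machinery of \cite{CZ20}, then prove smooth convergence of the rescaled conjugate heat kernels, then read off the entropy limit. However, two steps in your outline are genuine gaps rather than routine details.

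\textbf{First, the Nash entropy lower bound.} You write that the uniform bound ``itself follows from Perelman's monotonicity.'' Monotonicity only tells you $\mathcal{N}_{p_0,0}(\tau)$ is nonincreasing; it gives no lower bound as $\tau\to\infty$. Nor does the existence of the asymptotic shrinker from part~(1) supply this directly: that construction goes through the \emph{reduced volume}, not the heat-kernel Nash entropy, and there is no a~priori comparison in this generality. The paper closes this gap with a new pseudolocality-type result (Theorem~\ref{Coro_nash_2}): the Nash entropy $\mathcal{N}_{p_0,0}(t-s)$ is bounded below purely in terms of the local geometry near an $\ell$-center $(z,s)$, with no dependence on the geometry near the base point. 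Applying this at scale $\tau_i$, where Assumption~B provides exactly such local bounds around $p_i$, yields $\mathcal{N}_{p_0,0}(\tau_i)\ge -Y$ uniformly, hence the bound for all $\tau$ by monotonicity. Without this ingredient you cannot invoke Bamler's Gaussian upper bound (Theorem~\ref{gaussianupperbound}), and the whole heat-kernel convergence argument stalls.

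\textbf{Second, the identification of $f^\infty$ as a shrinker potential.} You appeal to ``uniqueness of the CHF combined with the shrinker structure.'' Conjugate heat flows on a shrinker are far from unique---any probability measure propagates backward---so there is no abstract uniqueness forcing $f^\infty=\ell_\infty$. The paper instead shows directly that $f^\infty$ satisfies the shrinker equation: since the Nash entropy is now known to be bounded and $\mathcal{N}_{p_0,0}(\delta^{-1}\tau_i)-\mathcal{N}_{p_0,0}(\delta\tau_i)\to 0$, Bamler's almost-self-similarity estimate (Proposition~\ref{almostselfsimilar}) gives
\[
\int_{-2}^{-1}\int_M |t|\left|\Ric_{g_i}+\nabla^2 f_i-\tfrac{1}{2|t|}g_i\right|^2 d\nu^i_t\,dt\to 0,
\]
and Fatou's lemma applied to the locally smooth limit forces $\Ric_{g_\infty}+\nabla^2 f_\infty=\tfrac{1}{2|t|}g_\infty$. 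This is the mechanism, not uniqueness.
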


\bigskip

The proof of the above theorem consists of several elements. First of all, the smooth convergence (\ref{smoothconvergence}) in Assumption B implies locally uniform geometry bounds for $g_i(t)$ around points $p_i$, which, after scaling back, implies that $(M,g(t))$ is locally uniformly Type I (see Definition \ref{def}) along $\{(p_i,-\tau_i)\}_{i=1}^\infty$. Hence, by \cite{CZ20}, the limit $(M_\infty,g_\infty(t))$ admits a shrinker structure, and it is an asymptotic shrinker in the sense of Perelman. Then, applying Theorem \ref{Coro_nash_2} to the the $\ell$-centers $(p_i,-\tau_i)$, we obtain a uniform lower bound for the Nash entropy. Thirdly, the Nash entropy bound provides a Gaussian upper bound for the conjugate heat kernel \cite[Theorem 7.2]{Bam20a}, which is applied to show Theorem \ref{Theorem_main}(2). Finally, the Gaussian upper bound of the conjugate heat kernel also implies that an $H_n$-center is always not far from an $\ell$-center; this is the main idea in the proof of Theorem \ref{Theorem_main}(3).

The last two authors \cite{MZ21} have observed  that, on an ancient solution with bounded curvature within each compact time interval, the limit of the Nash entropy $\lim_{\tau\rightarrow\infty}\mathcal{N}_{x,t}(\tau)$ and the limit of Perelman's entropy $\lim_{\tau\rightarrow\infty}\mathcal{W}_{x,t}(\tau)$ are independent of the base point $(x,t)$. What is new in the corollary below is that the limit of the reduced volume $\lim_{\tau\rightarrow\infty}\mathcal{V}_{x,t}(\tau)$ is also independent of the base point $(x,t)$. This fact follows not from \cite{MZ21}, but from an observation that in the statement of Assumption B, the base point $(p_0,0)$ is not so important as the sequence of positive scales $\{\tau_i\}_{i=1}^\infty$.

\begin{Corollary}\label{entropynoloss}
Let $(M,g(t))_{t\in(-\infty,0]}$ be an ancient solution satisfying (\ref{curvaturebound}) and Assumption B with respect to $\big(p_0,0,\{\tau_i\}_{i=1}^\infty,\{p_i\}_{i=1}^\infty\big)$. Then the following holds.
\begin{eqnarray*}
\lim_{\tau\rightarrow\infty}\mathcal{N}_{p'_0,t'_0}(\tau)=\lim_{\tau\rightarrow\infty}\mathcal{W}_{p'_0,t'_0}(\tau)=\lim_{\tau\rightarrow\infty}\log\mathcal{V}_{p'_0,t'_0}(\tau)=\mu_\infty,
\end{eqnarray*}
where $\mathcal{W}$ and $\mathcal{V}$ stand for Perelman's entropy and reduced volume, respectively, $(p'_0,t'_0)$ is an arbitrary point in $M$, (in particular, $(p'_0,t'_0)$ is not necessarily $(p_0,0)$), and $\mu_\infty$ is the entropy of the asymptotic shrinker $(M_\infty,g_\infty(t))$ (see section 2.2 and section 2.3 for the definitions). 
\end{Corollary}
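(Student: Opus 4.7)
The Nash entropy and Perelman's entropy parts follow directly from Theorem \ref{Theorem_main}(2) together with the base-point independence of $\lim_{\tau\to\infty}\mathcal{N}_{x,t}(\tau)$ and $\lim_{\tau\to\infty}\mathcal{W}_{x,t}(\tau)$ proved in \cite{MZ21}; the only new content is the reduced volume assertion. The plan is to verify that Assumption B also holds with respect to the shifted base point $(p'_0,t'_0)$, using the same sequence of scales $\{\tau_i\}$ but an appropriate choice of $\ell$-centers of $(p'_0,t'_0)$, and then to identify the limit of the reduced volume along $\tau_i$ with the reduced volume on the asymptotic shrinker.

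Toward this end, $\mathcal{N}_{p'_0,t'_0}(\tau)\geq\mu_\infty$ for all $\tau>0$, by monotonicity of $\mathcal{N}$ in $\tau$ together with the base-point independence of its limit. This uniform lower bound, via \cite[Theorem 7.2]{Bam20a}, yields a Gaussian upper bound on the conjugate heat kernel based at $(p'_0,t'_0)$, uniform in $\tau$. Now pick $\ell$-centers $(p'_i,t'_0-\tau_i)$ of $(p'_0,t'_0)$; the standard closeness of $\ell$-centers to $H_n$-centers (up to an additive $C\sqrt\tau$), combined with the Gaussian upper bounds on the conjugate heat kernels from both $(p_0,0)$ and $(p'_0,t'_0)$, forces $d_{g(-\tau_i)}(p_i,p'_i)\leq C\sqrt{\tau_i}$ with $C$ independent of $i$. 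Under the Type I rescaling $\tilde g_i(t):=\tau_i^{-1}g(t'_0+\tau_i t)$, which differs from $g_i(t)$ only by the time shift $t'_0/\tau_i\to 0$, the distance $d_{\tilde g_i(-1)}(p_i,p'_i)$ therefore stays bounded uniformly in $i$, so the smooth Cheeger-Gromov-Hamilton convergence of Assumption B transfers to the new base points $p'_i$ with the same limit Ricci flow $g_\infty$ (at a possibly different base point $p'_\infty$). Hence Assumption B is verified for $(p'_0,t'_0,\{\tau_i\},\{p'_i\})$.

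For the reduced volume, the scale-invariance of the reduced distance gives
\[
\mathcal{V}_{p'_0,t'_0}(\tau_i-t'_0)=\mathcal{V}_{\tilde g_i,p'_0,0}(1).
\]
On the limit gradient shrinker the reduced volume based at the soliton origin is constantly equal to $e^{\mu_\infty}$, so it remains to pass the smooth Cheeger-Gromov-Hamilton convergence through the non-local integral defining $\mathcal{V}$. This is the main obstacle, and it is handled via uniform exponential decay of the integrand outside large rescaled balls around $p'_i$, which follows from Perelman's differential inequalities for $\ell$ combined with the uniform curvature control at the $\ell$-centers afforded by Assumption B. Once this passage to the limit is justified, $\lim_{i\to\infty}\log\mathcal{V}_{p'_0,t'_0}(\tau_i-t'_0)=\mu_\infty$, and the monotonicity of $\log\mathcal{V}$ in $\tau$ promotes this subsequential limit to the full limit $\lim_{\tau\to\infty}\log\mathcal{V}_{p'_0,t'_0}(\tau)=\mu_\infty$.
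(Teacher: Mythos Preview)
Your outline matches the paper's proof closely: the $\mathcal{N}$ and $\mathcal{W}$ parts indeed come from Theorem~\ref{Theorem_main}(2) plus \cite{MZ21}; the transfer of Assumption~B to the new base point is the content of the paper's Theorem~\ref{basepointindep}, and your sketch of that argument (closeness of $H_n$-centers via the $W_1$-monotonicity, then Proposition~\ref{H_n_l_n}) is exactly what the paper does.

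Two places where your description is looser than the paper's, and worth tightening. First, the tail estimate for the reduced volume integral: you say it ``follows from Perelman's differential inequalities for $\ell$ combined with the uniform curvature control at the $\ell$-centers,'' but local curvature control near the $\ell$-centers gives you \emph{upper} bounds on $\ell$ on compact sets, not the global lower bound needed for decay of $e^{-\ell}$ at infinity. The paper instead uses the subsolution comparison $(4\pi|t|)^{-n/2}e^{-\ell_i}\le K_i(p_0,0\,|\,\cdot,\cdot)$ (Proposition~\ref{basic_l}) together with the $H_n$-concentration bound (Proposition~\ref{measureaccumulationofHcenter}), which immediately gives $\int_{M\setminus B_{g_{i,t}}(z_i,\sqrt{AH_n|t|})}(4\pi|t|)^{-n/2}e^{-\ell_i}\,dg_{i,t}\le 1/A$ uniformly in $i$; this is cleaner and does not require propagating any pointwise $\ell$-estimate globally. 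Second, the phrase ``on the limit gradient shrinker the reduced volume based at the soliton origin is constantly equal to $e^{\mu_\infty}$'' is imprecise: $\ell_\infty$ is the $C^{0,\alpha}$-limit of the $\ell_i$, a shrinker potential satisfying Perelman's normalization $\Delta\ell_\infty-|\nabla\ell_\infty|^2+R_\infty+\tfrac{\ell_\infty-n}{\tau}=0$, not a reduced distance intrinsically defined on the shrinker. The paper's Lemma~\ref{somethingmaybeuseful1} uses exactly this normalization (citing \cite{Zhang20}) to conclude $\log V_\infty=\mu_\infty$.
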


As a complement of Theorem \ref{Theorem_main}, we also consider its reciprocal problem: is a tangent flow of an ancient solution also Perelman's asymptotic shrinker? This, of course, is not true if the tangent flow is not smooth. However, if the tangent flow is smooth, then this problem can be easily answered with Bamler's results in \cite{Bam20c}, which are currently only proved to be true for a sequence of Ricci flows on closed manifolds and with bounded entropies. Hereby we would like to clarify that the following theorem is true \textbf{only if} the results in \cite{Bam20c} (especially Theorem 1.6 therein) are valid for noncompact Ricci flows with bounded geometry within each compact time interval (as it is conventional, by \textbf{bounded geometry} we always mean that the sectional curvature is bounded from above and from below, and that the volumes of unit balls are uniformly bounded away from $0$). This generalization is not among the goals of the present article. However, its validity, with relatively strong assurance, is not only our speculation, but expectation.

\begin{Theorem}[The reciprocal of Theorem \ref{Theorem_main}]\label{Thm_main_reciprocal}
Let $(M^n,g(t))_{t\in(-\infty,0]}$ be an ancient solution with bounded geometry within each compact time interval and with bounded Nash entropy, that is, there is a point $p_0\in M$ such that $\mathcal{N}_{p_0,0}(\tau)\geq -Y$ for all $\tau>0$, where $Y$ is a constant. If a tangent flow of $(M,g(t))_{t\in(-\infty,0]}$ at infinity (c.f. Definition \ref{tangentflowatinfinity}) is smooth, then it is also an asymptotic shrinker in the sense of Perelman.
\end{Theorem}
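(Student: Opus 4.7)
The plan is to reduce Theorem \ref{Thm_main_reciprocal} to Theorem \ref{Theorem_main}, by realizing the given smooth tangent flow as the Cheeger--Gromov--Hamilton limit appearing in Assumption B. Set $g_i(t) := \tau_i^{-1} g(\tau_i t)$ for a sequence $\tau_i \nearrow \infty$ realizing the tangent flow, and let $\nu^i_t := \nu_{p_0,0\,|\,\tau_i t}$. First I would invoke Bamler's compactness \cite[Theorem 7.6]{Bam20b} (in its assumed noncompact generalization) to extract an $\mathbb{F}$-subconvergent subsequence of $(M, g_i(t), \nu^i_t)_{t \in [-2,-1]}$; by hypothesis, one such $\mathbb{F}$-limit is represented by a smooth Ricci flow $(M_\infty, g_\infty(t))_{t \in [-2,-1]}$. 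By Bamler's smoothness-promotion theorems, when we point the sequence at $H_n$-centers $q_i$ of $(p_0,0)$ at time $-\tau_i$, this $\mathbb{F}$-convergence upgrades to smooth Cheeger--Gromov--Hamilton convergence $(M, g_i(t), q_i) \to (M_\infty, g_\infty(t), q_\infty)$.

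Next I would replace the $H_n$-centers $q_i$ by $\ell$-centers $p_i$ of $(p_0,0)$ at time $-\tau_i$. The bounded Nash entropy assumption and \cite[Theorem 7.2]{Bam20a} yield a Gaussian upper bound on the conjugate heat kernel $K(p_0,0\,|\,\cdot,t)$. Combining this bound with the standard concentration estimates for the conjugate heat flow, both $H_n$-centers and $\ell$-centers lie in a bounded ball (in $g_i(-1)$) around any reasonable ``center of mass'' of $\nu^i_{-1}$, whence $d_{g_i(-1)}(p_i, q_i) \leq C(n,Y)$. Passing to a subsequence, $p_i \to p_\infty \in M_\infty$, and the smooth CGH convergence persists with the new base points: $(M, g_i(t), p_i)_{t \in [-2,-1]} \to (M_\infty, g_\infty(t), p_\infty)_{t \in [-2,-1]}$.

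This is exactly Assumption B for $(M, g(t))$ with respect to $(p_0, 0, \{\tau_i\}, \{p_i\})$, with the (repointed) smooth tangent flow serving as the Assumption B limit. Theorem \ref{Theorem_main}(1) then immediately supplies a shrinker structure on $(M_\infty, g_\infty(t))$ making it an asymptotic shrinker in Perelman's sense, which is the desired conclusion. The main obstacle, already flagged in the excerpt, is the extension of the relevant pieces of \cite{Bam20c}---in particular the smooth-convergence/metric-soliton dictionary---to noncompact ancient flows with bounded geometry on compact time intervals; once that is granted, the remaining ingredients (Gaussian kernel bound, closeness of the two kinds of centers, and repointing of a smooth CGH limit) are routine within the machinery already developed in this paper.
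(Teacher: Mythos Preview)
Your proposal is correct and follows essentially the same route as the paper's proof in Section~11: upgrade the $\mathbb{F}$-convergence to smooth convergence via the assumed noncompact extension of \cite[Theorem 1.6]{Bam20c}, locate the $H_n$-centers within the region of smooth convergence, use the bounded Nash entropy and Proposition~\ref{H_n_l_n} to pass to $\ell$-centers, and thereby verify Assumption~B (equivalently, the locally uniformly Type~I condition), so that Theorem~\ref{Theorem_main} applies. The only cosmetic difference is that the paper first fixes a point $x_\infty\in M_\infty$ where $\nu^\infty_{-1}$ has mass at least $\tfrac12$ and uses Proposition~\ref{measureaccumulationofHcenter} to bound $\dist_{g_{i,-1}}(z_i,\psi_{i,-1}(x_\infty))$, whereas you point directly at the $H_n$-centers; and the paper closes by re-deriving that $\nu^\infty_t$ is made of a shrinker potential via Proposition~\ref{almostselfsimilar}, which in your scheme comes for free from Theorem~\ref{Theorem_main}(3).
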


\textbf{Remark:} Of course, if we assume the underlying manifold $M^n$ (not necessarily the underlying manifold of the tangent flow) to be closed, then the proof of the above theorem can easily go through, since the techniques of \cite{Bam20c} do apply in this case.
\\

Under Assumption B, we also consider Perelman's $\nu$-functional (see section 2.2 for the definition) on the ancient solution. We prove that on such an ancient solution, the $\nu$-functional is uniformly bounded everywhere, and a lower bound (indeed, the infimum) is the entropy of the asymptotic shrinker.

\begin{Theorem}\label{nu-functional}
Let $(M,g(t))_{t\in(-\infty,0]}$ be an ancient Ricci flow satisfying (\ref{curvaturebound}) and Assumption B. Then we have
\begin{eqnarray}\label{nu_nonsense_00}
\inf_{t\leq 0}\nu(g(t))=\mu_\infty>-\infty,
\end{eqnarray}
where $\mu_\infty$ is the entropy of the asymptotic shrinker in Theorem \ref{Theorem_main}(1). In particular, this result is true for all noncollapsed Type I ancient solutions and all $\kappa$-solutions.
\end{Theorem}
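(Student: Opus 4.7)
The plan is to combine Perelman's monotonicity of the $\mu$-functional with the scaling invariance $\mu(\lambda g,\lambda\tau)=\mu(g,\tau)$ and the asymptotic shrinker structure provided by Theorem~\ref{Theorem_main}(1). Let $f_\infty$ denote the shrinker potential of $(M_\infty,g_\infty(-1))$, so that $\mathcal{W}(g_\infty(-1),f_\infty,1)=\mu(g_\infty(-1),1)=\mu_\infty$.

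For the inequality $\inf_{t\le 0}\nu(g(t))\le\mu_\infty$, I would exploit the rescaled sequence directly: by scaling invariance,
\[
\mu(g(-\tau_i),\tau_i)=\mu(g_i(-1),1).
\]
Using the smooth Cheeger--Gromov--Hamilton convergence from Assumption B, I would transport cutoffs of $f_\infty$ through the convergence diffeomorphisms to produce admissible test functions $f_i$ on $(M,g_i(-1))$. Because $e^{-f_\infty}$ has Gaussian decay (from the classical quadratic growth of the shrinker potential), dominated convergence yields $\mathcal{W}(g_i(-1),f_i,1)\to\mathcal{W}(g_\infty(-1),f_\infty,1)=\mu_\infty$, so that $\nu(g(-\tau_i))\le\mu(g(-\tau_i),\tau_i)\le\mu_\infty+o(1)$.

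For the reverse inequality $\nu(g(t_0))\ge\mu_\infty$ at an arbitrary $t_0\le 0$, fix any $\tau>0$; it suffices to show $\mu(g(t_0),\tau)\ge\mu_\infty$. Set $T=t_0+\tau$ and apply Perelman's monotonicity that $t\mapsto\mu(g(t),T-t)$ is nondecreasing (which follows from $\mathcal{W}$-monotonicity along a conjugate heat flow starting from a minimizer at time $t_0$ and evolved backward). For $j$ large enough that $-\tau_j\le t_0$, this together with scaling invariance gives
\[
\mu\bigl(g_j(-1),\,1+(\tau+t_0)/\tau_j\bigr)=\mu(g(-\tau_j),\tau+t_0+\tau_j)\le\mu(g(t_0),\tau).
\]
Setting $s_j:=1+(\tau+t_0)/\tau_j\to 1$, it remains to establish the lower semi-continuity $\liminf_{j\to\infty}\mu(g_j(-1),s_j)\ge\mu(g_\infty(-1),1)=\mu_\infty$.

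The main obstacle is precisely this lower semi-continuity of $\mu$ under smooth Cheeger--Gromov--Hamilton convergence, which on a non-compact manifold is delicate because the mass of approximate minimizers of $\mathcal{W}(g_j(-1),\cdot,s_j)$ can a priori escape to spatial infinity as $j\to\infty$. To rule this out I would combine the uniform Nash-entropy lower bound from Theorem~\ref{Theorem_main}(2) with Bamler's Gaussian upper bound on the conjugate heat kernel, and invoke elliptic regularity for the Euler--Lagrange equation of $1/j$-minimizers $\tilde f_j$ of $\mu(g_j(-1),s_j)$ to extract uniform pointwise and integral control. Passing to a subsequential limit then produces an admissible $\tilde f_\infty$ on $M_\infty$ with $\mathcal{W}(g_\infty(-1),\tilde f_\infty,1)\ge\mu_\infty$, and smooth convergence together with the Gaussian tails transfers the inequality back to the $g_j$-side. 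With this concentration-compactness step in hand, everything else reduces to scaling invariance and Perelman's monotonicity.
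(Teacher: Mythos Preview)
Your upper-bound argument (transporting the shrinker potential as a test function) is correct and matches the paper: this is the easy upper-semicontinuity of $\mu$ under Cheeger--Gromov convergence. The lower bound, however, has a genuine gap. You reduce to the lower-semicontinuity $\liminf_j\mu(g_j(-1),s_j)\ge\mu_\infty$ and propose concentration-compactness for $1/j$-near-minimizers $\tilde f_j$, but near-minimizers satisfy no Euler--Lagrange equation, so elliptic regularity is not available; and Bamler's Gaussian upper bound controls the conjugate heat kernel $K(p_0,0\,|\,\cdot,\cdot)$, not arbitrary near-minimizers of $\mathcal W$. There is no a priori mechanism preventing the mass of such $\tilde f_j$ from drifting away from the base points $p_i$, since the locally uniformly Type I condition only controls the geometry near $p_i$, not globally on $(M,g_i(-1))$.

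The paper avoids lower-semicontinuity of $\mu$ entirely. It tracks instead $\overline{\mathcal W}(g(t),u(\cdot,t),T-t)$ for the conjugate-heat evolution $u$ of the \emph{specific} test function $u_0$ chosen at time $t_0$. The key observation is the maximum-principle comparison $u\le C_0\,K(p_0,0\,|\,\cdot,\cdot)$ (since $u_0$ is bounded and compactly supported), which together with the Nash-entropy bound and Theorem~\ref{gaussianupperbound} forces the rescaled $u_i$ to have Gaussian decay centred at the $\ell$-centers $p_i$---precisely the concentration you cannot obtain for arbitrary near-minimizers. A matching local lower bound (via the $\ell$-distance and \eqref{subsolution}) makes $u_i\to u_\infty>0$ of unit mass on the shrinker, and then Li--Wang (Proposition~\ref{shrinkernufunctional}) gives $\overline{\mathcal W}(g_\infty(t),u_\infty,|t|)\ge\mu(g_\infty(t),|t|)=\mu_\infty$, hence $\liminf_i\overline{\mathcal W}_i\ge\mu_\infty$. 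A separate forward-limit argument $\lim_{t\to t_0^-}\overline{\mathcal W}(g(t),u,T-t)=\overline{\mathcal W}(g(t_0),u_0,\tau_0)$ (nontrivial since $u_0$ vanishes on an open set; you also gloss over this when invoking monotonicity of $\mu$) then closes the proof via $\mathcal W$-monotonicity.
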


Once the $\nu$-functional is known to be bounded, the following logarithmic Sobolev inequalities and Sobolev inequalities are simply consequences of straightforward computations (c.f. \cite{Zhq07, LW20}).

\begin{Corollary}[The logarithmic Sobolev and Sobolev inequalities]
Let $(M,g(t))_{t\in(-\infty,0]}$ be an ancient Ricci flow satisfying (\ref{curvaturebound}) and Assumption B. Then, for any $t\in(-\infty,0]$, the following are true.
\begin{enumerate}[(1)]
    \item Logarithmic Sobolev inequality: for any compactly supported locally Lipschitz function $u$ and positive scale $\tau>0$, we have
    \begin{align*}
    \int_M u^2\log u^2dg_t-\left(\int_Mu^2dg_t\right)\int_M u^2dg_t+\left(\mu_\infty+n+\frac{n}{2}\log(4\pi\tau)\right)\int_Mu^2dg_t&
    \\
    \leq \tau\int_M(4|\nabla u|^2+Ru)dg_t&.
    \end{align*}
    \item Sobolev inequality: for any compactly supported locally Lipschitz function $u$, we have
    \begin{eqnarray*}
    \left(\int_M |u|^{\frac{2n}{n-2}}dg_t\right)\leq C(n)e^{-\frac{2\mu_\infty}{n}}\int_M(4|\nabla u|^2+Ru)dg_t.
    \end{eqnarray*}
\end{enumerate}
Here $\mu_\infty$ is the entropy of the asymptotic shrinker in Theorem \ref{Theorem_main}(1). In particular, this result is true for all noncollapsed Type I ancient solutions and all $\kappa$-solutions.
\end{Corollary}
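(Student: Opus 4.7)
The plan is to deduce both inequalities as direct consequences of Theorem \ref{nu-functional}, which asserts $\nu(g(t))\geq\mu_\infty$ for every $t\leq 0$; by the very definition of $\nu$, this means $\mu(g(t),\tau)\geq\mu_\infty$ for every $\tau>0$. First I would unpack $\mu$ via the standard substitution $u^2=(4\pi\tau)^{-n/2}e^{-f}$. Under this change of variable, $|\nabla f|^2 e^{-f}$ becomes $4|\nabla u|^2$ (using $\nabla f = -2\nabla u/u$), and $(f-n)(4\pi\tau)^{-n/2}e^{-f}$ becomes $-u^2\log u^2-\tfrac{n}{2}u^2\log(4\pi\tau)-nu^2$. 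Feeding any compactly supported locally Lipschitz $u$ with $\int_M u^2\,dg_t=1$ into $\mu(g(t),\tau)\geq\mu_\infty$ then produces the normalized form of (1), namely
\begin{equation*}
\int_M u^2\log u^2\,dg_t+\mu_\infty+n+\tfrac{n}{2}\log(4\pi\tau)\leq\tau\int_M\bigl(4|\nabla u|^2+Ru^2\bigr)\,dg_t.
\end{equation*}
The unnormalized version stated in (1) follows by applying this to $u/\|u\|_{L^2(g_t)}$ and clearing the denominator.

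For the Sobolev inequality (2) I would optimize the log-Sobolev inequality in $\tau$. With $u$ normalized and $A:=\int_M(4|\nabla u|^2+Ru^2)\,dg_t$, the right-hand side above is minimized at $\tau^{\ast}=n/(2A)$, yielding the sharp form
\begin{equation*}
\int_M u^2\log u^2\,dg_t\leq\tfrac{n}{2}\log\!\left(\frac{A}{2\pi n}\right)-\mu_\infty-\tfrac{n}{2}.
\end{equation*}
From a log-Sobolev inequality of this precise shape, the $L^{2n/(n-2)}$-Sobolev inequality with constant $C(n)e^{-2\mu_\infty/n}$ is obtained by the Beckner--Rothaus/Gross iteration. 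In the Ricci flow context this derivation has been carried out explicitly in \cite{Zhq07} and \cite{LW20}, to which one can defer for the bookkeeping of the dimensional constant $C(n)$.

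No substantive obstacle is anticipated, in agreement with the authors' own remark that the corollary follows by straightforward computation. The only minor care needed is a standard density argument to justify using compactly supported locally Lipschitz $u$ (rather than smooth minimizers $f$) as test functions in $\mu(g(t),\tau)$; this is routine and is already implicit in the cited references, so the core of the proof really is the translation of $\nu\geq\mu_\infty$ into the language of $(u,\tau)$, followed by the classical optimization in $\tau$.
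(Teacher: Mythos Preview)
Your proposal is correct and follows exactly the route the paper indicates: the authors do not give a detailed proof but state that the corollary follows from Theorem~\ref{nu-functional} by ``straightforward computations (c.f.\ \cite{Zhq07, LW20}),'' which is precisely the substitution $u^2=(4\pi\tau)^{-n/2}e^{-f}$ into $\mu(g(t),\tau)\geq\mu_\infty$ followed by the classical optimization-in-$\tau$ passage to the Sobolev inequality that you outline. Your observation that a density argument handles locally Lipschitz test functions, and your implicit correction of the typos $Ru\to Ru^2$ and $(\int u^2)\int u^2\to(\int u^2)\log\int u^2$ in the stated inequality, are both apt.
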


Obviously, we may also use Theorem \ref{nu-functional} to obtain a finer version of the main results in \cite{Zhang20} and \cite{MZ21}. We include the following corollary, and the details of the proof are merely combinations of Theorem \ref{nu-functional} with \cite{Zhang20} and \cite{MZ21}.

\begin{Corollary}
Let $(M,g(t))_{t\in(-\infty,0]}$ be an ancient solution satisfying (\ref{curvaturebound}). Furthermore, assume \emph{either one} of the following conditions is true.
\begin{enumerate}[(1)]
    \item $g(t)$ satisfies a Type I curvature bound, that is, there is a constant $C$ such that $$|\Rm_{g_t}|\leq\frac{C}{|t|}\quad\text{ for all }\quad t\in(-\infty,0).$$
    \item $g(t)$ satisfies Hamilton's trace Harnack $$\frac{\partial R}{\partial t}+2\langle X,\nabla R\rangle+2\Ric(X,X)\geq 0\quad\text{ for all vector field } X,$$
    and there is a constant $C$ such that
   $$|{\Rm}|\leq CR\quad\text{ everywhere on }\quad M\times(-\infty,0].$$
\end{enumerate}
Then, $(M,g(t))_{t\in(-\infty,0]}$ is strongly $\kappa$-noncollapsed on all scales if and only if $$\inf_{t\leq 0}\nu(g(t))\geq -\beta,$$ where $\kappa$ and $\beta$ are mutually dependent on.
\end{Corollary}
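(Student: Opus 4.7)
The plan is to prove the two directions separately, using Theorem \ref{nu-functional} as the main engine for one direction and a standard Perelman-type argument for the other. The ``if'' direction---a uniform lower bound $\inf_{t\leq 0}\nu(g(t))\geq -\beta$ implies strong $\kappa$-noncollapsing with $\kappa=\kappa(n,\beta)$ on all scales---is the classical consequence of the $\W$-functional monotonicity: if a ball $B_{g_{t_0}}(x_0,r)$ satisfies $|\Rm|\leq r^{-2}$ with volume ratio $\epsilon$, testing $\W$ against a standard cutoff concentrated on this ball gives an upper bound of the form $\log \epsilon + C(n)$, which contradicts the $\nu$-lower bound once $\epsilon$ is small. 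This is independent of the two curvature hypotheses (1) and (2).

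The substantive direction is ``only if''. The strategy is to verify Assumption B from $\kappa$-noncollapsing plus either (1) or (2), then invoke Theorem \ref{nu-functional}. Fix $p_0\in M$, an arbitrary sequence $\tau_i\nearrow\infty$, and $\ell$-centers $(p_i,-\tau_i)$ of $(p_0,0)$; form the Type I rescalings $g_i(t)=\tau_i^{-1}g(\tau_i t)$. In case (1), the Type I bound $|\Rm_{g_t}|\leq C/|t|$ transfers under rescaling to a uniform $|\Rm_{g_i(t)}|\leq C/|t|$ on $[-2,-1]$, which together with strong $\kappa$-noncollapsing and Hamilton's compactness theorem produces the smooth Cheeger--Gromov--Hamilton limit required by Assumption B; this is essentially the setup of \cite{MZ21}. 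In case (2), the $\ell$-center inequality $\ell_{p_0,0}(p_i,\tau_i)\leq n/2$ combined with Hamilton's trace Harnack and standard $\ell$-function comparisons gives $R(p_i,-\tau_i)\lesssim \tau_i^{-1}$; the Harnack then propagates a uniform scalar curvature bound to a fixed-size parabolic neighborhood around $(p_i,-\tau_i)$ after rescaling, and $|\Rm|\leq CR$ upgrades this to a full curvature bound. This is precisely the extraction step carried out in \cite{Zhang20}, so Assumption B is again verified.

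With Assumption B established, Theorem \ref{nu-functional} yields $\inf_{t\leq 0}\nu(g(t)) = \mu_\infty > -\infty$. Setting $\beta := -\mu_\infty$ (which depends only on the geometry of the asymptotic shrinker, hence ultimately on $n$, $\kappa$, and the constant $C$ appearing in (1) or (2)) closes the loop and expresses the claimed mutual dependence of $\kappa$ and $\beta$.

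The main technical obstacle is the curvature propagation in case (2): showing that scalar curvature control at the $\ell$-center upgrades, via the trace Harnack and $|\Rm|\leq CR$, to uniform $C^0$ curvature bounds on a parabolic neighborhood of fixed size after the Type I rescaling. This is exactly the ingredient supplied by \cite{Zhang20}, so the quotation is legitimate and no new estimate is required here; the corollary really is a clean concatenation of Theorem \ref{nu-functional} with the asymptotic-shrinker-extraction results of \cite{Zhang20} and \cite{MZ21}.
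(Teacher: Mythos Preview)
Your proposal is correct and matches the paper's approach. The paper gives no detailed proof of this corollary, stating only that ``the details of the proof are merely combinations of Theorem \ref{nu-functional} with \cite{Zhang20} and \cite{MZ21}''; your argument is exactly that combination, and Section 10 of the paper (which verifies Assumption B under each of (1) and (2) via the inequality $|\nabla\ell|^2+R\le C\ell/|t|$ from \cite{Per02} and \cite{N10}) confirms the extraction step you outline. One small caution: your final sentence asserts that $\beta=-\mu_\infty$ depends only on $n$, $\kappa$, and $C$, but the paper does not claim a universal function $\beta(\kappa,n,C)$---the phrase ``mutually dependent on'' should be read as the existence of one constant given the other for the fixed ancient solution in question, so you may wish to soften that dependence claim.
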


The next two results imply that the Nash entropy depends only on the local geometry around the $\ell$-centers or the $H_n$-centers, and does not depend on the geometry near the base point. These theorems are particularly useful in the long-time analysis of the Nash entropy. 

\begin{Theorem}\label{Coro_nash_2}
Let $(M^n,g(t))_{t\in I}$ be a complete Ricci flow with bounded curvature within each time interval compact in $I$. Let $s$, $t\in I$, $s<t$, and assume that $s-(t-s)\in I$. Let $x\in M$ and let $(z,s)$ be an $\ell$-center of $(x,t)$. Furthermore, assume that
\begin{gather*}
|{\Ric}|\leq\frac{C_0}{t-s}\quad \text{ on }\quad B_s(z,\sqrt{t-s})\times[s-(t-s),s],
\\
\operatorname{Vol}_{g_s}\big(B_s(z,\sqrt{t-s})\big)\geq\alpha (t-s)^{\frac{n}{2}}.
\end{gather*}
Then we have
\begin{eqnarray*}
\mathcal{N}_{x,t}(t-s)\geq-\beta,
\end{eqnarray*}
where $\beta$ is a positive constant depending only on $n$, $\alpha$, and $C_0$. Consequently, under the same assumption of the theorem, we have
\begin{eqnarray*}
K(x,t\,|\,y,s)\leq\frac{C}{(t-s)^{\frac{n}{2}}}\exp\left(-\frac{\dist_s^2(z,y)}{C(t-s)}\right)\quad\text{ for all }\quad y\in M,
\end{eqnarray*}
where $C$ depends only on $n$, $\alpha$, and $C_0$, and $K$ is the conjugate heat kernel (see section 2.1 for the definition.)
\end{Theorem}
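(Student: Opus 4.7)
By parabolic rescaling we may assume $t=0$ and $s=-1$, so $\tau := t-s = 1$; the hypotheses become $|\Ric|\leq C_0$ on $B_{-1}(z,1)\times[-2,-1]$, $\Vol_{g_{-1}}(B_{-1}(z,1))\geq\alpha$, and $\ell_{x,0}(z,1)\leq n/2$. The target is $\mathcal{N}_{x,0}(1)\geq-\beta(n,\alpha,C_0)$; once this is established, the stated Gaussian upper bound on $K$ follows from Bamler's Theorem 7.2 of \cite{Bam20a}, after recentering the Gaussian exponent at $z$ using the $O(\sqrt{t-s})$ closeness between $\ell$-centers and $H_n$-centers.

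The first step is a Perelman-style localization of the reduced distance. Starting from $\ell_{x,0}(z,1)\leq n/2$, I would extend a minimizing $\mathcal{L}$-geodesic from $(x,0)$ to $(z,-1)$ by a short spatial path from $z$ to a point $y\in B_{-1}(z,c_1)$, estimating the additional $\mathcal{L}$-length through the bound on $R$ and the controlled evolution of the spatial distance function on $[-2,-1]$. This yields $\ell_{x,0}(y,1)\leq C_1(n,C_0)$ on $B_{-1}(z,c_1(n,C_0))$. Bishop--Gromov under $\Ric\geq-C_0$ propagates the volume non-collapsing to the smaller ball, and one obtains
\[
\mathcal{V}_{x,0}(1)\geq\int_{B_{-1}(z,c_1)}(4\pi)^{-n/2}e^{-\ell_{x,0}(\cdot,1)}\,dg_{-1}\geq e^{-\beta_0}
\]
for some $\beta_0=\beta_0(n,\alpha,C_0)$.

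The main step is to convert this reduced volume bound into a Nash entropy bound. This is delicate because $\log\mathcal{V}$ and $\mathcal{N}$ are not comparable in a useful direction: Jensen applied to $\mathcal{V}=\int e^{f-\ell}K\,dg$ only gives $\mathcal{N}\leq\log\mathcal{V}-n/2+\int\ell K$, which is an upper bound on $\mathcal{N}$. My plan is instead to argue directly with the conjugate heat kernel $K(x,0|\cdot,\cdot)$. The local parabolic mean value inequality applied on a parabolic subregion of $B_{-1}(z,1)\times[-2,-1]$---available from the local Sobolev inequality produced by $|\Ric|\leq C_0$ and the volume non-collapsing, together with the identity $\int_M K\,dg_{s'}=1$---yields a pointwise bound $K(x,0|\cdot,s')\leq C(n,\alpha,C_0)$ at interior times $s'\in(-2,-1)$ on a smaller ball around $z$, which can then be propagated to $s'=-1$ via a short-time heat kernel estimate together with the semigroup identity for $K$. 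Combined with the $\ell$-center/$H_n$-center closeness, which produces the variance bound $\int_M\dist_{-1}^2(\cdot,z)K\,dg_{-1}\leq C(n)$ localizing the mass of $K(x,0|\cdot,-1)$ near $z$, this allows one to estimate the entropy $-\int_MK\log K\,dg_{-1}$ from below by splitting into the contribution on a small ball around $z$ (controlled via the pointwise bound) and the tail (controlled via the concentration), producing $\mathcal{N}_{x,0}(1)\geq-\beta$.

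The main obstacle is the tail estimate in this last step: since $\beta$ must depend only on $(n,\alpha,C_0)$ and not on any global curvature input, one cannot bound $K$ globally from outside data, and the entropy contribution from the region outside the ball of local control must be handled using only the concentration from the $\ell$-center structure together with the local parabolic regularity. The interplay between these two ingredients---the global $\ell$-center geometry that forces $K$ to localize near $z$, and the local parabolic smoothing that forces $K$ to be regular where we can see it---is where the proof's technical content lies.
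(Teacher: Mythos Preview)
Your local ingredients are correct and match the paper: the mean value inequality on the controlled parabolic region gives the upper bound $K(x,t\,|\,y,s')\leq C(t-s')^{-n/2}$ at a slightly earlier time $s'<s$, and concatenating a minimal $\mathcal{L}$-geodesic to $(z,s)$ with a short spatial geodesic gives $\ell\leq C$ on a small ball around $z$, hence a matching lower bound $K\geq c(t-s')^{-n/2}$ there via the subsolution inequality. So on $B_s(z,r/3)$ you know $f\in[-C,C]$ and $\nu_{x,t\,|\,s'}(B_s(z,r/3))\geq c$.

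The gap is in the step you yourself flag as the obstacle. Your plan is to bound $\mathcal{N}=\int f\,d\nu-n/2$ by splitting the integral into near and far parts, using concentration of $\nu$ near $z$ for the tail. There are two problems. First, you invoke ``$\ell$-center/$H_n$-center closeness'' to get the variance bound around $z$, but that closeness is proved in the paper (Proposition~\ref{H_n_l_n}) \emph{using} the Gaussian upper bound on $K$, which in turn requires the Nash entropy bound you are trying to establish; so as stated this is circular. Second, even granting concentration, the far contribution $\int_{M\setminus B}f\,d\nu$ is not controlled from below: outside the region of local regularity you have no upper bound on $K$, hence no lower bound on $f$, and a small $\nu$-mass does not prevent $\int_{\text{far}}f\,d\nu$ from being very negative.

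The paper sidesteps the tail entirely. The key tool you are missing is Proposition~\ref{Nashintegral} (Bamler's Proposition~5.13), namely
\[
\int_M\Big(f-\mathcal{N}_{x,t}(t-s')-\tfrac{n}{2}\Big)^2\,d\nu_{x,t\,|\,s'}\ \leq\ n-2R_{\min}(t-s'),
\]
a consequence of Hein--Naber's Poincar\'e inequality. Since the integrand is nonnegative, you may restrict the integral to $B_s(z,r/3)$ where $|f|\leq C$ and $\nu\geq c$; expanding the square yields a quadratic inequality in $\mathcal{N}$ with coefficients depending only on $(n,\alpha,C_0)$, and this bounds $\mathcal{N}_{x,t}(t-s')$. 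Monotonicity then gives $\mathcal{N}_{x,t}(t-s)\geq\mathcal{N}_{x,t}(t-s')$, so there is no need to ``propagate back to $s$'' via semigroup identities; one simply works at $s'=s-(t-s)/9$ throughout.
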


\begin{Theorem}\label{Thm_nash}
Let $(M^n,g(t))_{t\in I}$ be a complete Ricci flow with bounded curvature within each time interval compact in $I$. Let $s$, $t\in I$, $s<t$, and assume that $s-(t-s)\in I$. Let $x\in M$ and let $(z,s)$ be an $H_n$-center of $(x,t)$. Furthermore, assume that
\begin{gather*}
\operatorname{Vol}_{g_s}\big(B_s(z,\sqrt{t-s})\big)\geq\alpha (t-s)^{\frac{n}{2}}.
\end{gather*}
Then all the conclusions in Theorem \ref{Coro_nash_2} are true, with constants $\beta$ and $C$ depending on $n$ and $\alpha$.
\end{Theorem}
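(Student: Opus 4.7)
My plan is to follow the architecture of the proof of Theorem~\ref{Coro_nash_2}, but to use the built-in concentration inequality at an $H_n$-center in place of the $\ell$-function control that the Ricci bound supplied in that setting. Write $\tau := t-s$ and $r:=\sqrt{\tau}$. By the defining property of an $H_n$-center,
\[
\int_M \dist_{g_s}^2(z,y)\,d\nu_{x,t;s}(y)\leq H_n\tau,
\]
so Chebyshev yields
\[
\nu_{x,t;s}\bigl(B_s(z,Lr)\bigr)\geq 1/2,\qquad L:=\sqrt{2H_n}.
\]
Together with the hypothesized $\operatorname{Vol}_{g_s}(B_s(z,r))\geq\alpha r^n$, this is the only input I would work with.

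The main step is to promote this data to a pointwise lower bound
\[
K(x,t\,|\,y_*,s)\geq c(n,\alpha)\,r^{-n}
\]
for some $y_*$ within distance $O(r)$ of $z$. Once this is in hand, the Nash entropy bound $\mathcal{N}_{x,t}(\tau)\geq -\beta(n,\alpha)$ follows from the same chain of inequalities used to prove Theorem~\ref{Coro_nash_2}: a pointwise lower bound on $K$ at a single space-time point passes, through the identity $\mathcal{N}_{x,t}(\tau) = -\int_M K\log K\,dg_s - \frac{n}{2}\log(4\pi\tau) - \frac{n}{2}$ and a Harnack-type comparison between $K(x,t\,|\,y_*,s)$ and $K(x,t\,|\,\cdot,s)$, to a lower bound on $\mathcal{N}_{x,t}(\tau)$. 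The Gaussian upper bound on $K$ claimed in the conclusion is then just \cite[Theorem~7.2]{Bam20a} applied with this Nash-entropy lower bound; its base point can be taken to be the $H_n$-center $z$ by the triangle inequality and the mass concentration already established.

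The hard part is precisely the extraction of that pointwise lower bound, for without any Ricci hypothesis we have no a priori upper bound on $\operatorname{Vol}_{g_s}(B_s(z,Lr))$, so the naive averaging argument does not by itself furnish the scale $r^{-n}$. One way I would handle this is by a bootstrap: exploit the $L^\infty$-smoothing of the conjugate heat semigroup at a slightly earlier time $s'=s-\epsilon\tau$ together with the semigroup identity to transfer the mass concentration from time $s$ to time $s'$; this yields a crude Gaussian-type pointwise upper bound on $K(x,t\,|\,\cdot,s')$ depending only on $n$; pigeonholing this against the ball $B_s(z,r)$---whose volume is bounded below by $\alpha r^n$, by hypothesis---then pins down a suitable $y_*$ at time $s$ at the correct scale. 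Once the book-keeping is complete, the resulting constants $\beta$ and $C$ depend only on $n$ and $\alpha$, as required.
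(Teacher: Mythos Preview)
Your approach has a genuine gap, and the paper takes a completely different route. The bootstrap you describe does not work as stated: the pointwise bound $K(x,t\,|\,y,s')\le C(n)r^{-n}$ you hope to extract from ``$L^\infty$-smoothing of the conjugate heat semigroup'' cannot hold with constants depending only on $n$---Bamler's $L^\infty$ bound \cite[Theorem~7.1]{Bam20a} carries an inescapable factor $\exp(-\mathcal N_{x,t}(t-s))$, and this is precisely the quantity you are trying to control. Even granting such an upper bound, pigeonholing it against the volume \emph{lower} bound $\operatorname{Vol}_{g_s}(B_s(z,r))\ge\alpha r^n$ does not produce a pointwise \emph{lower} bound on $K$: the inequalities run the wrong direction. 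The architecture of Theorem~\ref{Coro_nash_2} genuinely depends on the local Ricci bound, both to run the mean-value inequality (Claim~2) and to control the $\mathcal L$-length in the concatenation argument (Claim~3), and neither ingredient survives here.

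The paper's proof abandons pointwise heat-kernel estimates entirely and works directly at the level of the Nash entropy. The key move is to shift base points: one first bounds $\mathcal N_{z,s}(r^2)$, the Nash entropy based at the $H_n$-center $(z,s)$ itself. Bamler's noncollapsing theorem \cite[Theorem~8.1]{Bam20a} (recorded here as Theorem~\ref{localnoncollapsing}) gives a volume \emph{upper} bound
\[
\operatorname{Vol}_{g_s}\bigl(B_s(z,r)\bigr)\le C\exp\bigl(\mathcal N_{z,s}(r^2)\bigr)r^n,
\]
so the hypothesis $\operatorname{Vol}_{g_s}(B_s(z,r))\ge\alpha r^n$ immediately forces $\mathcal N_{z,s}(r^2)\ge -C_1(n,\alpha)$. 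One then transfers this bound to $\mathcal N_{x,t}(2r^2)$ via the Nash-entropy Harnack inequality (Proposition~\ref{harnackofnashentropy}, i.e.\ \cite[Corollary~5.11]{Bam20a}), which compares Nash entropies at different base points with a penalty proportional to $r^{-1}\dist_{W_1}^{g_s}(\nu_{x,t\,|\,s},\delta_z)$. The $H_n$-center property gives $\dist_{W_1}^{g_s}(\nu_{x,t\,|\,s},\delta_z)\le\sqrt{H_n}\,r$, so the penalty is a dimensional constant and the proof is complete in two lines.
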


\textbf{Remarks:} \begin{enumerate}
    \item In Theorem \ref{Coro_nash_2}, the assumptions can be replaced by\begin{gather*}
|{\Ric}|\leq\frac{C_0}{\varepsilon^2(t-s)}\quad \text{ on }\quad B_s(z,\varepsilon\sqrt{t-s})\times[s-\varepsilon^2(t-s),s],
\\
\operatorname{Vol}_{g_s}\big(B_s(z,\varepsilon\sqrt{t-s})\big)\geq\alpha \varepsilon^n(t-s)^{\frac{n}{2}},
\\
s-\varepsilon^2(t-s)\in I;
\end{gather*}
in Theorem \ref{Thm_nash}, the assumptions can be replaced by\begin{gather*}
\operatorname{Vol}_{g_s}\big(B_s(z,\varepsilon\sqrt{t-s})\big)\geq\alpha \varepsilon^n(t-s)^{\frac{n}{2}},
\\
s-\varepsilon^2(t-s)\in I,
\end{gather*}
where $\varepsilon$ is any positive constant. If so, then the constants in the conclusions also depend on $\varepsilon$.
\item Theorem \ref{Coro_nash_2} and Theorem \ref{Thm_nash} do not imply each other. The reason is because unless the Nash entropy is known to be bounded, one does not know whether an $\ell$-center is close to an $H_n$-center. In fact, the statement of Theorem \ref{Thm_nash} is much stronger than Theorem \ref{Coro_nash_2}. This is because of Bamler's good gradient estimates for heat kernels.
\item Comparing Theorem \ref{Thm_nash} with \cite[Theorem 6.2]{Bam20a}, we have that, if $(z,t-r^2)$ is an $H_n$-center of $(x,t)$, then $r^{-n}\Vol_{g_{t-r^2}}\left(B_{t-r^2}(z,\sqrt{2H_n}r)\right)$ and $\N_{x,t}(r^2)$ mutually bound each other.
\end{enumerate}

Theorem \ref{Coro_nash_2} and Theorem \ref{Thm_nash} show an interesting behavior of the Nash entropy: it is the geometry around the $H_n$-centers or the $\ell$-centers that determines (a lower bound of) the Nash entropy. This is somewhat in contrast to our first impression, since the Nash entropy is defined as a global integral. But the $H_n$-concentration property provides a likely explanation: as we know, the $H_n$-centers are points around which the conjugate heat flow (regarded as a probability measure) accumulates its measure (\cite[Proposition 3.13]{Bam20a}), and hence the region far away from an $H_n$-center should make little contribution to the Nash entropy. Furthermore, this ``locality'' behavior is also in time. In other words, $\mathcal{N}_{x,t}(t-s)$ is bounded so long as the geometry near $(z,s)$, an $H_n$-center of $(x,t)$, is bounded, regardless of what is happening to the Ricci flow on $M\times[s,t]$. We hope these results to be useful in future studies.

This paper is organized as follows. In section 2 we review some basic definitions and results. In section 3 we prove Bamler's conjugate heat kernel estimates for noncompact Ricci flows. In section 4 we prove Theorem \ref{Coro_nash_2} and Theorem \ref{Thm_nash}. In section 5 we prove some basic properties for ancient solutions satisfying Assumption B, including the boundedness of the Nash entropy; Theorem \ref{Theorem_main}(1) is proved in this section. In section 6 we explore the relation between smooth convergence and $\mathbb{F}$-convergence. In section 7 we prove Theorem \ref{Theorem_main}(2)(3). In section 8 we prove Corollary \ref{entropynoloss}. In section 9 we prove Theorem \ref{nu-functional}. In section 10 we consider two classical cases in which the asymptotic shrinker is known to exist, and show that they are special cases of Theorem \ref{Theorem_main}. In section 11 we prove Theorem \ref{Thm_main_reciprocal}. 
\\

\emph{Acknowledgement:} The authors are much indebted to Professor Bennett Chow for many insightful discussions. The first author would also like to thank Professor Richard Bamler for many good suggestions.

\section{Preliminaries}

\subsection{Conjugate heat kernel}
Let $(M^n,g(t))_{t\in I}$ be a complete Ricci flow.
We denote by 
    $K(x,t\,|\, y,s)$
the unique minimal fundamental solution to the heat equation coupled with  $g(t)$, that is, 
\begin{align*}
\Box_{x,t} K(x,t\,|\,y,s)=0,
& \quad \lim_{t\rightarrow s+} K(x,t\,|\,y,s)
= \delta_y(x),\\
\Box^*_{y,s} K(x,t\,|\,y,s)=0,
& \quad \lim_{s\rightarrow t-} K(x,t\,|\,y,s)
= \delta_x(y),
\end{align*}
where 
\[
\Box_{x,t}:= \partial_t - \Delta_{g_t,x},
\quad
\Box^*_{y,s} :=-\partial_s - \Delta_{g_s,y} + R(y,s).
\]
Throughout this paper, the laplacians and the covariant derivatives are all time-dependent, computed using the evolving metric of the Ricci flow. We shall suppress the subindices in the notations such as $\Delta_{g_t,x}$ when the metric and the variables are understood. It is well known that, whenever the integration by parts at infinity is valid, the conjugate heat equation preserves the integral. Hence, at least for Ricci flows with bounded curvature within each compact time interval, the measure $\nu_{x,t\,|\,s}$ defined as
\begin{eqnarray}\label{CHK_measure}
\nu_{x,t\,|\,s}(A):=\int_AK(x,t\,|\, \cdot, s) dg_s,\quad A\subset M
\end{eqnarray}
is always a probability measure, where $x\in M$, $s,t\in I$, and $s\le t.$ In this article, both the fundamental solution $K(x,t\,|\,\cdot,\cdot)$ and the evolving probability measure $(\nu_{x,t\,|\,s})_{s\in I\cap(-\infty,t]}$ will be referred to as the \emph{conjugate heat kernel}, whenever there is no ambiguity. One can also use other positive solutions to the conjugate heat equation instead of a fundamental solution to construct time-dependent probability measures as (\ref{CHK_measure}), and such evolving probability measures are usually referred to as \emph{conjugate heat flows}. For the conjugate heat kernel, there are the following logarithmic Sobolev and Poincar\'e inequalities proved in \cite{HN14}.

\begin{Proposition}[Hein-Naber's logrithmic Sobolev and Poincar\'e inequalities \cite{HN14}]\label{Hein-Naber-log-Sobolev}
Suppose that $(M^n,g(t))_{t\in [-T,0]}$ is a complete Ricci flow with bounded curvature. Let $x_0\in M$ be a fixed point and $\nu_s:=\nu_{x_0,0\,|\,s}$ for $s\in [-T,0).$ Then, for any function $u\ge 0, \sqrt{u}\in C_0^{0,1}(M)$, we have
\begin{equation}
   \label{ineq: log-Sobolev}
    \int_M u\log u \, d\nu_{s}
    -\left( \int_M u\, d\nu_s\right)
    \log \left( \int_M u\, d\nu_s\right)
    \le |s| \int_M \frac{|\nabla u|^2}{u} d\nu_s.
\end{equation}
For any $u\in C_0^{0,1}(M),$ we have
\begin{equation}
    \int_M u^2d\nu_s-\left(\int_M ud\nu_s\right)^2\leq 2|s|\int_M |\nabla u|^2d\nu_s.\label{ineq: poincare} 
\end{equation}
\end{Proposition}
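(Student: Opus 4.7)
The plan is to run the Hein--Naber coupling between the backward conjugate heat flow $\nu_t$ and a \emph{forward} heat flow, exploiting the monotonicity of a suitable Fisher-type quantity. By standard density and regularization, assume $u$ is smooth, strictly positive and compactly supported, and let $v(\cdot,t)$ on $[s,0)$ solve $\Box v = v_t - \Delta v = 0$ with initial data $v(\cdot,s) = u$. The bounded-curvature hypothesis supplies the parabolic gradient estimates needed to justify the integrations by parts at infinity that appear below.

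The engine is the identity $\frac{d}{dt}\int f\,d\nu_t = \int \Box f\,d\nu_t$, valid for reasonable $f$, which follows from $\Box^{*} K = 0$ together with integration by parts. For the log-Sobolev inequality set
\[
\Phi(t) := \int v \log v \, d\nu_t, \qquad I(t) := \int \frac{|\nabla v|^2}{v}\, d\nu_t.
\]
A short calculation using $\Box v = 0$ yields $\Box(v\log v) = -|\nabla v|^2/v$, hence $\Phi'(t) = -I(t)$. Next, writing $w := \log v$ and using Bochner's formula combined with the Ricci flow evolution $\partial_t g^{ij} = 2R^{ij}$, the curvature terms cancel, and an elementary rearrangement exploiting $v\nabla w = \nabla v$ produces
\[
\Box\bigl(v\,|\nabla \log v|^2\bigr) = -2v\,|\nabla^2 \log v|^2 \leq 0,
\]
so $I'(t) \leq 0$ and $I$ is non-increasing in $t$. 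Since $\int v\,d\nu_t$ is also conserved (take $f = v$), the weak convergence $\nu_t \to \delta_{x_0}$ together with continuity of $v$ forces $\Phi(t) \to \bigl(\int u\,d\nu_s\bigr)\log\bigl(\int u\,d\nu_s\bigr)$ as $t\nearrow 0$. Integrating $\Phi' = -I$ from $s$ to $0$ and using $I(t) \leq I(s)$ produces
\[
\int u\log u\,d\nu_s - \Bigl(\int u\,d\nu_s\Bigr)\log\Bigl(\int u\,d\nu_s\Bigr) = \int_s^0 I(t)\,dt \leq |s| \int \frac{|\nabla u|^2}{u}\, d\nu_s,
\]
which is inequality (\ref{ineq: log-Sobolev}). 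The Poincaré inequality (\ref{ineq: poincare}) is obtained identically with $\Phi(t) := \int v^2\,d\nu_t$ and $J(t) := \int |\nabla v|^2\,d\nu_t$: now $\Phi' = -2J$, the Bochner identity reads $\Box|\nabla v|^2 = -2|\nabla^2 v|^2 \leq 0$ so $J$ is non-increasing, and the same endpoint analysis finishes the argument.

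The main obstacle is technical rather than conceptual: one must verify that the integrations by parts at infinity underlying $\frac{d}{dt}\int f\,d\nu_t = \int \Box f\,d\nu_t$ and the Bochner step are legitimate for the typically non-compactly-supported functions $v\log v$ and $v|\nabla \log v|^2$ on a complete Ricci flow. This is handled using the Gaussian upper bound on $K$ together with standard gradient and Hessian estimates on $v$ over the compact interval $[s,0]$, all available under the bounded-curvature hypothesis. A secondary issue is the potential singularity of $|\nabla v|^2/v$ near $\{v=0\}$; it is removed by first running the argument for $u+\varepsilon$ and letting $\varepsilon\to 0$, the assumption $\sqrt{u}\in C_0^{0,1}$ ensuring the limiting inequality is meaningful since $|\nabla u|^2/u = 4|\nabla\sqrt{u}|^2 \in L^1(d\nu_s)$.
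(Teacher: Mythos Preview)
Your proposal is correct and follows essentially the same strategy as the paper's proof in Appendix~A: run the forward heat flow $v$ against the backward kernel measure $\nu_t$, use $\Box(v\log v)=-|\nabla v|^2/v$ to get $\Phi'=-I$, and use the subsolution property $\Box(|\nabla v|^2/v)\le 0$ to get $I$ nonincreasing, then read off the inequality from the endpoints. The paper recovers $I(t)\le I(s)$ by the equivalent integral formulation (sub-mean-value for $|\nabla v|^2/v$ against $\nu_{x,t\,|\,s}$ followed by the reproduction formula) rather than by differentiating $I$, but this is the same mechanism.

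The only substantive point is that the paper's entire purpose in Appendix~A is to \emph{carry out} the technical justification you defer in your last paragraph: the novelty of the proposition is precisely the relaxation from Hein--Naber's bounded-geometry hypothesis to bounded curvature alone, and the paper does this via explicit cutoff functions $\phi^A$, the heat-kernel Gaussian bounds of \cite{RFV3}, and the Zhang-type gradient estimate~(\ref{QSZhangtypeestimateforHE}). Your sketch correctly identifies these as the needed ingredients, but a full proof under the stated hypothesis requires writing them out; in particular one needs the decay of $u$ away from $\operatorname{spt} v$ (your paper's Lemma~\ref{lem: decay on u} analogue) and care with the limit $t\to s^+$ of $\int |\nabla v|^2/v\,d\nu_t$ (handled in the paper via Proposition~\ref{grad u l2}), since the $u+\varepsilon$ trick alone does not obviously control these integrals on a merely bounded-curvature background.
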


In the original statement of the above results, Hein-Naber \cite{HN14} assumed bounded geometry for the Ricci flow. However, given all the estimates for the conjugate heat kernel (c.f. \cite[Corollary 26.26, Theorem 26.31]{RFV3}, \cite[Theorem 10]{EKNT08}, and \cite{Zhq06,BCP10}), it is not difficult to weaken the assumption to bounded curvature alone. We shall include the proof in Appendix A.

Hein-Naber also proved the following Gaussian concentration theorem using Davies' technique and their logarithmic Sobolev inequalities.

\begin{Proposition}[Hein-Naber's Gaussian concentration \cite{HN14}]\label{prop: gaussian concentration}
Under the same assumption as the above proposition, we have
\begin{equation}
\label{ineq: gaussian concentration}
   \nu_s(A)\nu_s(B)
    \le \exp\left\{
        - \frac{\dist^2_s(A,B)}{8|s|}
    \right\}, 
\end{equation}
for any measurable subsets $A$, $B\subset M$. Here $\dist_s$ means the classical distance between two sets computed using the metric $g(s)$, not the Hausdorff distance.
\end{Proposition}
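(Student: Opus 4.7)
My plan is to follow the classical Herbst-Davies argument, using the logarithmic Sobolev inequality \eqref{ineq: log-Sobolev} from Proposition \ref{Hein-Naber-log-Sobolev} as the sole analytic input, and then to feed a distance function into the resulting exponential moment bound.

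\smallskip

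The first step is to derive an exponential moment (Laplace transform) bound. Given a bounded $1$-Lipschitz function $f$ on $M$, I would set $u:=e^{\lambda f}$ in \eqref{ineq: log-Sobolev} for $\lambda\in\mathbb{R}$ and let $F(\lambda):=\int_M e^{\lambda f}\,d\nu_s$. A direct computation gives $\int u\log u\,d\nu_s=\lambda F'(\lambda)$, $\int u\,d\nu_s=F(\lambda)$, and $\int \tfrac{|\nabla u|^2}{u}\,d\nu_s\leq \lambda^2 F(\lambda)$ since $|\nabla f|\leq 1$. Thus \eqref{ineq: log-Sobolev} becomes the Herbst differential inequality
\begin{equation*}
\lambda F'(\lambda)-F(\lambda)\log F(\lambda)\leq |s|\,\lambda^2 F(\lambda),
\end{equation*}
which, after dividing by $\lambda^2 F(\lambda)>0$, reads $\tfrac{d}{d\lambda}\!\left(\tfrac{\log F(\lambda)}{\lambda}\right)\leq |s|$. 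Integrating from $0$ to $\lambda>0$, with the boundary value $\lim_{\lambda\to 0^+}\tfrac{\log F(\lambda)}{\lambda}=\int_M f\,d\nu_s=:\bar f$ coming from $F(0)=1$ and l'H\^opital, I obtain
\begin{equation*}
\int_M e^{\lambda f}\,d\nu_s\leq \exp\!\Big(\lambda\bar f+|s|\,\lambda^2\Big),\qquad \lambda\in\mathbb{R},
\end{equation*}
where the case $\lambda<0$ follows by replacing $f$ by $-f$.

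\smallskip

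The second step is to apply the above to the $g(s)$-distance function $f(\cdot):=\dist_s(\cdot,A)$, which is $1$-Lipschitz. On $A$ we have $f\equiv 0$, and on $B$ we have $f\geq d:=\dist_s(A,B)$. Therefore, for any $\lambda>0$,
\begin{equation*}
\nu_s(A)=\int_A e^{-\lambda f}\,d\nu_s\leq \int_M e^{-\lambda f}\,d\nu_s\leq e^{-\lambda\bar f+|s|\lambda^2},
\end{equation*}
\begin{equation*}
\nu_s(B)\,e^{\lambda d}\leq \int_B e^{\lambda f}\,d\nu_s\leq \int_M e^{\lambda f}\,d\nu_s\leq e^{\lambda\bar f+|s|\lambda^2}.
\end{equation*}
Multiplying these two inequalities, the $\bar f$ terms cancel, leaving $\nu_s(A)\nu_s(B)\leq \exp(2|s|\lambda^2-\lambda d)$. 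Minimizing the right-hand side in $\lambda$ with $\lambda=d/(4|s|)$ yields exactly $\exp\{-d^2/(8|s|)\}$, which is \eqref{ineq: gaussian concentration}.

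\smallskip

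The main technical obstacle is that Proposition \ref{Hein-Naber-log-Sobolev} requires $\sqrt{u}\in C_0^{0,1}(M)$, whereas $e^{\lambda f/2}$ is neither compactly supported nor bounded when $f=\dist_s(\cdot,A)$ is used on a noncompact $M$. I would circumvent this by a standard truncation and approximation: replace $f$ by $f_R:=\min\{f,R\}\cdot\chi_R$, where $\chi_R$ is a smooth cutoff equal to $1$ on $B_s(x_0,R)$ and supported in $B_s(x_0,2R)$, so $\sqrt{e^{\lambda f_R}}\in C_0^{0,1}(M)$. The Herbst ODE argument applies to $f_R$ verbatim, giving $\int e^{\lambda f_R}d\nu_s\leq \exp(\lambda\bar f_R+|s|\lambda^2)$. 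To pass to the limit $R\to\infty$, I would invoke the Gaussian decay of $K(x_0,0\,|\,\cdot,s)$ from the upper bounds cited in the text (e.g.\ \cite[Theorem 26.31]{RFV3}), which guarantees that $\nu_s$ has finite exponential moments of every order and that $\bar f_R\to\bar f$, $\int e^{\lambda f_R}d\nu_s\to \int e^{\lambda f}d\nu_s$ by monotone/dominated convergence. Everything else is purely algebraic, so this truncation issue is really the only place where the noncompactness of $M$ enters.
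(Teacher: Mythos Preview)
Your approach is exactly what the paper indicates (it does not give a proof but simply says the result follows from ``Davies' technique and their logarithmic Sobolev inequalities''), and your Herbst argument is the standard way to implement this; the algebra in Steps~1 and~2 is correct and yields the sharp constant $1/(8|s|)$.

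There is, however, a small slip in your truncation: with $f_R:=\min\{f,R\}\cdot\chi_R$ you have $f_R\equiv 0$ outside $\operatorname{supp}\chi_R$, hence $e^{\lambda f_R}\equiv 1$ there, and $\sqrt{e^{\lambda f_R}}$ is \emph{not} in $C_0^{0,1}(M)$. The clean fix is to first extend \eqref{ineq: log-Sobolev} to all bounded $u\ge 0$ with $\sqrt{u}$ globally Lipschitz, by applying it to $u_R:=u\,\chi_R^2$ (so $\sqrt{u_R}=\sqrt{u}\,\chi_R\in C_0^{0,1}$), using $|\nabla\sqrt{u_R}|^2\le (1+\varepsilon)\chi_R^2|\nabla\sqrt{u}|^2+(1+\varepsilon^{-1})u|\nabla\chi_R|^2$, and letting $R\to\infty$ then $\varepsilon\to 0$; the Gaussian heat-kernel bound you already invoke makes all limits trivial. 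After that, the Herbst ODE applies directly to $u=e^{\lambda\min\{f,R\}}$ (bounded, $\sqrt{u}$ Lipschitz), and one final $R\to\infty$ via monotone convergence finishes the job.
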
 

\subsection{Perelman's entropy functionals}

Let us recall Perelman's definition of the $\mathcal{W}$-functional: 
\begin{eqnarray}\label{Perelmansentropy}
\W(g,f,\tau):=\int_M\Big(\tau\big(|\nabla f|^2+R\big)+f-n\Big)(4\pi\tau)^{-\frac{n}{2}}e^{-f}dg,
\end{eqnarray}
where $(M^n,g)$ is a Riemannian manifold, $f$ is a smooth function, and $\tau>0$ is a positive scale. If we let $u:=(4\pi\tau)^{-\frac{n}{2}}e^{-f}$, then we may rewrite
\begin{eqnarray}\label{anotherPerelmansentropy}
\bW(g,u,\tau)&:=&\int_M\left(\tau\left(\frac{|\nabla u|^2}{u^2}+R\right)-\log u-\frac{n}{2}\log(4\pi\tau)-n\right)u\,dg.
\end{eqnarray}
In fact, this form of the $\mathcal{W}$ functional is what we will apply in the discussion of section 9. Note that for any $c>0$, we have
\begin{eqnarray*}
\W(cg,f,c\tau)=\W(g,f,\tau),\quad \bW(cg,c^{-\frac{n}{2}}u,c\tau)=\bW(g,u,\tau).
\end{eqnarray*}

Perelman's $\mu$-functional and $\nu$-functional are defined as follows.
\begin{eqnarray*}
\mu(g,\tau)&:=&\inf\left\{\bW(g,u,\tau)\ \bigg|\ \sqrt{u}\in C^\infty_0(M),\ u\geq 0,\ \text{ and } \int_Mudg=1\right\},
\\
\nu(g)&:=&\inf_{\tau>0}\mu(g,\tau).
\end{eqnarray*}
Here for the $\mu$ functional we are adopting the definition in \cite{RFV1}; see the formula above Lemma 6.28 therein. It is well understood that $\mu(g,\tau)$ is the logarithmic Sobolev constant at scale $\tau$, and that $\nu$ is the Sobolev constant.

Let us fix a point $(p_0,t_0)$ in the space-time of a Ricci flow $(M^n,g(t))_{t\in I}$, and denote the conjugate heat kernel based at $(p_0,t_0)$ as
\begin{eqnarray*}
K(p_0,t_0\,|\,x,t):=(4\pi(t_0-t))^{-\frac{n}{2}}e^{-f(x,t)}, \quad t\in I\cap(-\infty,t_0).
\end{eqnarray*}
Then, \emph{Perelman's entropy} and the \emph{Nash entropy} are respectively defined as
\begin{eqnarray}
\mathcal{W}_{p_0,t_0}(\tau)&:=& \mathcal{W}\big(g(t_0-\tau),f(\cdot,t_0-\tau),\tau\big)
\\\nonumber
&=&\int_M\Big(\tau\left(|\nabla f|^2+R\right)+f-n\Big)(\cdot,t_0-\tau)\,d\nu_{p_0,t_0\,|\, t_0-\tau},
\\
\mathcal{N}_{p_0,t_0}(\tau)&:=&\int_M f(\cdot,t_0-\tau)\,d\nu_{p_0,t_0\,|\, t_0-\tau}-\frac{n}{2},
\end{eqnarray}
for all $\tau>0$ and $t_0-\tau\in I$. The point $(p_0,t_0)$ is called the \emph{base point}. It is well known from Perelman \cite{Per02} that both Perelman's entropy and the Nash entropy are increasing in time (and hence decreasing in $\tau$). Although the monotonocity of $\mathcal{W}_{p_0,t_0}$ or $\mathcal{N}_{p_0,t_0}$ requires the integration by parts at infinity, yet this is valid at least for Ricci flows with bounded curvature.

\subsection{Perelman's reduced distance}
We briefly review Perelman's reduced distance and reduced volume. Let $(M,g(t))_{t\in[-T,0]}$ be a Ricci flow with bounded curvature. Let $(p_0,t_0)\in M\times(-T,0]$ be a fixed point in space-time. Then, Perelman's \emph{reduced distance} is defined as
\begin{eqnarray}\label{definitionofl}
\ell_{p_0,t_0}(x,\tau):=\frac{1}{2\sqrt{\tau}}\inf_{\gamma}\int_0^\tau\sqrt{s}\left(|\dot{\gamma}(s)|_{g(t_0-s)}^2+R(\gamma(s),t_0-s)\right)ds,
\end{eqnarray}
where $x\in M$, $\tau\in(0,T-|t_0|]$, and the infimum is taken over all piecewise smooth curves $\gamma:[0,\tau]\rightarrow M$ satisfying $\gamma(0)=p_0$ and $\gamma(\tau)=x$. The minimizer of (\ref{definitionofl}) is usually called a minimal $\mathcal{L}$-geodesic from $(p_0,t_0)$ to $(x,t_0-\tau)$. $(p_0,t_0)$ is called the base point of $\ell$, and whenever the base point is understood, we shall suppress the subindex in the notaion $\ell_{p_0,t_0}(\cdot,\cdot)$. The \emph{reduced volume based at $(p_0,t_0)$} is defined as
\begin{eqnarray}
\mathcal{V}_{p_0,t_0}(\tau):=\int_M(4\pi\tau)^{-\frac{n}{2}}e^{-\ell_{p_0,t_0}(\cdot,\tau)}dg_{t_0-\tau}.
\end{eqnarray}

Perelman's reduced distance and reduced volume satisfy many nice equations and inequalities. Among them the most important one is the monotonicity of the reduced volume.

\begin{Proposition}
Perelman's reduced volume $\mathcal{V}(\tau)$ is an increasing function in time (and hence a decreasing function in $\tau$).
\end{Proposition}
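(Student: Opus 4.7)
The plan is to prove the equivalent statement that $\mathcal{V}_{p_0,t_0}(\tau)$ is non-increasing in $\tau$, which is Perelman's classical monotonicity formula. Set $u(x,\tau):=(4\pi\tau)^{-n/2}e^{-\ell_{p_0,t_0}(x,\tau)}$, so that $\mathcal{V}_{p_0,t_0}(\tau)=\int_M u\,dg_{t_0-\tau}$. Since $\partial_\tau dg_{t_0-\tau}=R\,dg_{t_0-\tau}$ and the $\Delta u$-term integrates to zero, a formal calculation gives
\[
\frac{d}{d\tau}\mathcal{V}_{p_0,t_0}(\tau)
=\int_M(\partial_\tau u+Ru)\,dg_{t_0-\tau}
=\int_M(\partial_\tau u-\Delta u+Ru)\,dg_{t_0-\tau}
=\int_M \Box^*u\,dg_{t_0-\tau},
\]
so the proof reduces to establishing $\Box^*u\leq 0$ (in a suitable weak sense) together with the justification of the vanishing of $\int_M \Delta u\,dg_{t_0-\tau}$ and of the commutation of $d/d\tau$ with the integral.

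For the pointwise inequality, on the smooth locus of $\ell$ a direct computation yields
\[
\Box^*u = u\Big(\!-\partial_\tau\ell+\Delta\ell-|\nabla\ell|^2+R-\tfrac{n}{2\tau}\Big),
\]
so what needs to be proved is Perelman's celebrated differential inequality
\[
\partial_\tau\ell-\Delta\ell+|\nabla\ell|^2-R+\tfrac{n}{2\tau}\geq 0.
\]
The heart of the argument is the verification of this inequality, which I would carry out along the standard route: compute the first variation of the $\mathcal{L}$-length along a minimizing $\mathcal{L}$-geodesic from $(p_0,t_0)$ to $(x,t_0-\tau)$ to extract closed-form expressions for $\partial_\tau\ell$ and $|\nabla\ell|^2$ in terms of the velocity and the scalar curvature along the geodesic, then combine the trace-Jacobi version of the second variation with the contracted second Bianchi identity to bound $\Delta\ell$ from above. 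At the $\mathcal{L}$-cut locus $\ell$ is merely locally Lipschitz, but the inequality remains valid in the barrier sense: near a cut point one constructs a smooth upper barrier $\widetilde\ell\geq\ell$ by running $\mathcal{L}$-length along a fixed minimizer followed by smooth perturbations, applies the inequality to $\widetilde\ell$, and passes to the limit. This weak inequality is enough for the monotonicity of $\int_M \Box^*u\,dg$.

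The integration by parts at infinity and the differentiation under the integral sign are justified by the Gaussian-type lower bound $\ell_{p_0,t_0}(x,\tau)\geq c\,\dist_{g_{t_0-\tau}}^2(p_0,x)/\tau-C$ that holds under the bounded-curvature hypothesis, which forces $u$ to decay like a Gaussian and all relevant derivatives to be integrable. An alternative approach, which sidesteps the cut-locus issue entirely, is to parametrize $M$ via the $\mathcal{L}$-exponential map based at $(p_0,t_0)$ and work in $T_{p_0}M$, showing pointwise in $V$ that $\tau\mapsto(4\pi\tau)^{-n/2}e^{-\ell(\mathcal{L}\exp_\tau V,\tau)}J(\tau,V)$ is non-increasing, with the $\mathcal{L}$-Jacobian $J$ extended by zero past the first conjugate time; integrating this pointwise inequality over $T_{p_0}M$ yields the same conclusion. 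Either route ultimately rests on the same first- and second-variation calculus for the $\mathcal{L}$-functional, and the principal technical obstacle is precisely Perelman's pointwise differential inequality for $\ell$—once it is in hand, the remaining steps are essentially bookkeeping.
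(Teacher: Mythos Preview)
The paper does not actually prove this proposition: it is stated in the preliminaries as a known result of Perelman, with the only commentary being that ``the underlying reason for the monotonicity of the reduced volume is the fact that its integrand is a `sub'-conjugate heat kernel'' (i.e., Proposition~\ref{basic_l}). Your outline is correct and is exactly the mechanism the paper alludes to---showing $\Box^* u\le 0$ for $u=(4\pi\tau)^{-n/2}e^{-\ell}$ via Perelman's differential inequality for $\ell$, with the barrier/cut-locus and $\mathcal{L}$-exponential alternatives both being standard routes to the same conclusion.
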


The underlying reason for the monotonicity of the reduced volume is the fact that its integrand is a ``sub''-conjugate heat kernel.

\begin{Proposition}\label{basic_l}
Let $\ell_{p,0}(x,\tau)$ be the reduced distance based at $(p,0)$. Then, $\displaystyle u(x,t):=(4\pi|t|)^{-\frac{n}{2}}e^{-\ell_{p,0}(x,|t|)}$ is a subsolution to the conjugate heat equation $-\partial_t u-\Delta u+Ru=0$ which also converges to the Dirac delta measure based at $p$ as $\tau\rightarrow 0+$. Precisely, this means 
\begin{gather*}
\frac{\partial\ell}{\partial\tau}-\Delta_{g_{-\tau}}\ell+\left|\,\nabla_{g_{-\tau}}\ell\,\right|_{g_{-\tau}}^2-R_{g_{-\tau}}+\frac{n}{2\tau}\geq 0,\\
\lim_{\tau\rightarrow 0+}(4\pi\tau)^{-\frac{n}{2}}e^{-\ell_{p,0}(\cdot,\tau)}=\delta_{p}.
\end{gather*}
Both of the formulas above are understood in the sense of distribution. In consequence, we have
\begin{eqnarray}\label{subsolution}
(4\pi|t|)^{-\frac{n}{2}}e^{-\ell_{p,0}(x,|t|)}\leq K(p,0\,|\,x,t)\quad \text{ for all } (x,t)\in M\times [-T,0),
\end{eqnarray}
where $K(p,0\,|\,\cdot,\cdot)$ is the conjugate heat kernel based at $(p,0)$.
\end{Proposition}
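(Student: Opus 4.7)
The statement is Perelman's classical observation, and I would follow his original line of argument in three main steps: establish the differential inequality for $\ell$ on its smooth locus via the $\mathcal{L}$-geometry computations, extend it across the $\mathcal{L}$-cut locus distributionally, verify the delta-function initial condition, and deduce the pointwise bound (\ref{subsolution}) by a maximum principle for the conjugate heat equation.

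For the differential inequality, fix a point in the smooth locus of $\ell(\cdot,\tau)$ and let $\gamma$ be the unique minimizing $\mathcal{L}$-geodesic from $(p,0)$ to it. Perelman's first and second variation formulas for the $\mathcal{L}$-length along $\gamma$ produce identities/inequalities relating $\partial_\tau\ell$, $|\nabla\ell|^2$, $\Delta\ell$, and his remainder integral along $\gamma$; eliminating the remainder yields exactly
\[
    \partial_\tau \ell - \Delta \ell + |\nabla\ell|^2 - R + \frac{n}{2\tau} \geq 0,
\]
which, in terms of $u=(4\pi\tau)^{-n/2}e^{-\ell}$, is equivalent to $\Box^{\ast} u \leq 0$. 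On the $\mathcal{L}$-cut locus (a closed set of measure zero) $\ell$ is only locally Lipschitz, so the inequality has to be interpreted distributionally. Following Perelman's standard barrier argument, near a cut point one constructs a smooth local upper support function $\tilde{\ell}\geq \ell$, parameterized by an $\mathcal{L}$-geodesic from a slightly perturbed initial point, which satisfies the same pointwise inequality; a density/limit procedure then promotes the inequality to the distributional sense on the whole parabolic domain.

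For the initial condition, a direct Jacobi-field/energy estimate using the boundedness of curvature on any compact time interval shows that $\ell(x,\tau)=\tfrac{1}{4\tau}\,d_{g(0)}^{2}(p,x)+O(1)$ uniformly on compact subsets of $M$ as $\tau\to 0+$, and the corresponding Euclidean asymptotic for the total mass forces $u(\cdot,-\tau)\to \delta_{p}$ in the weak sense. For (\ref{subsolution}), put $v:=K(p,0\,|\,\cdot,\cdot)-u$, so that $\Box^{\ast} v\geq 0$ distributionally on $M\times[-T,0)$ with $v(\cdot,-\tau)\rightharpoonup 0$ as $\tau\to 0+$. The maximum principle for the conjugate heat equation is then carried out in the standard way: pair $v$ with an arbitrary nonnegative solution $\phi$ of the forward heat equation and use integration by parts to obtain $\frac{d}{dt}\int_{M}\phi\,v\,dg_{t}\leq 0$; combined with $\int_{M}\phi\,v\,dg_{0}=0$ in the limit, this yields $\int_{M}\phi\,v\,dg_{t}\geq 0$ at all earlier times, and taking $\phi$ to approach a delta at an arbitrary prescribed point gives $v\geq 0$ pointwise.

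The principal technical obstacle is the distributional step across the $\mathcal{L}$-cut locus, which is handled by the now-standard Perelman barrier argument. A secondary point, specific to the noncompact setting here, is the justification of all integration by parts used in the maximum-principle comparison and in extracting the initial delta function; both are controlled by the hypothesis that $g(t)$ has bounded curvature on each compact time interval, together with the Gaussian-type decay of the forward heat kernel of the type already cited in this section.
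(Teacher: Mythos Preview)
Your outline is correct and follows Perelman's original argument. Note, however, that the paper does not supply its own proof of this proposition: it sits in the preliminaries (Section~2.3) and is stated as a known background fact from \cite{Per02}, together with the other basic $\ell$-identities. So there is no ``paper's proof'' to compare against beyond the implicit reference to Perelman; your sketch is exactly the standard route one would cite here.
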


By an elementary application of the maximum principle, Perelman \cite{Per02} proved that $\ell(\cdot,\tau)$ always attains its minimum. This minimum point should be viewed as the ``center'' of the reduced distance.

\begin{Proposition}
Let $\ell_{p,0}$ be the reduced distance based at $(p,0)$. Then we have
\begin{eqnarray}\label{lcenter}
\min_{M}\ell(\cdot,\tau)\leq\frac{n}{2} \quad \text{ for all } \quad \tau\in(0,T].
\end{eqnarray}
\end{Proposition}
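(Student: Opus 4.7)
This is Perelman's classical Lemma 7.6 in \cite{Per02}, and the paragraph preceding the statement already hints at the strategy: an elementary application of the maximum principle to $\ell(\cdot,\tau)$, based on the differential inequality in Proposition \ref{basic_l}. My plan would execute this as follows. First, I would verify coercivity of $\ell(\cdot,\tau)$ on $M$, so that the spatial infimum is attained at some $y_\tau \in M$. Under the standing bounded-curvature hypothesis, the Gaussian-type lower bound
$$\ell(y,\tau) \geq c\,\dist_{g(-\tau)}^2(y,p)/\tau - C$$
for $\dist(y,p)$ large follows by estimating the $\mathcal{L}$-length of any smooth curve from $p$ to $y$ in time $\tau$, and forces $\ell(\cdot,\tau)\to\infty$ at spatial infinity.

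Next, at a minimum point $(y_\tau,\tau_0)$, I would apply the maximum principle in the barrier sense (to cope with possible non-smoothness at cut-locus points of the $\mathcal{L}$-exponential map): $\nabla\ell = 0$ and $\Delta\ell \geq 0$ there. Combining Proposition \ref{basic_l} with the two companion Perelman inequalities along minimizing $\mathcal{L}$-geodesics,
$$2\Delta\ell - |\nabla\ell|^2 + R + \frac{\ell - n}{\tau} \leq 0, \qquad 2\ell_\tau + |\nabla\ell|^2 - R - \frac{\ell}{\tau} \leq 0,$$
both derived from the first- and second-variation formulas for the $\mathcal{L}$-length, gives at $(y_\tau,\tau_0)$ the chain of inequalities
$$2\tau_0\,\ell_\tau(y_\tau,\tau_0) \leq R(y_\tau,-\tau_0)\,\tau_0 + \ell(y_\tau,\tau_0) \leq n,$$
after using $\nabla\ell = 0$ and $\Delta\ell \geq 0$ to suppress the gradient and Laplacian terms. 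In particular, $\tau_0\,\ell_\tau(y_\tau,\tau_0) \leq n/2$ in the barrier sense.

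The last step is to integrate this pointwise inequality for $m(\tau) := \min_M \ell(\cdot,\tau)$ together with the initial behavior $m(\tau)\to 0$ as $\tau\to 0^+$, which is verified by testing against the constant curve $\gamma\equiv p$, along which $\ell(p,\tau) = \frac{1}{2\sqrt{\tau}}\int_0^\tau \sqrt{s}\,R(p,-s)\,ds \to 0$. The main technical obstacle is the rigorous application of the barrier-sense maximum principle at cut-locus points of the $\mathcal{L}$-exponential map; this is resolved by smooth upper barrier functions that touch $\ell$ from above at $y_\tau$. A secondary difficulty is the careful bookkeeping needed to extract the sharp constant $n/2$ from the three differential inequalities, since the naive integration of $\tau m' \leq n/2$ alone is too weak; one must additionally exploit the pointwise constraint $R\tau + \ell(y_\tau,\tau) \leq n$ at the minimum to close the argument.
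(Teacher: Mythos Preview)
Your overall strategy is the right one and matches what the paper (following Perelman) has in mind, but there is a concrete sign error that derails the execution. The second ``inequality'' you wrote,
\[
2\ell_\tau + |\nabla\ell|^2 - R - \frac{\ell}{\tau} \le 0,
\]
is not one of Perelman's formulas. What holds along a minimal $\mathcal{L}$-geodesic (at smooth points) is the \emph{identity}
\[
2\ell_\tau + |\nabla\ell|^2 - R + \frac{\ell}{\tau} = 0
\]
(plus sign on $\ell/\tau$; this is exactly equation~(\ref{nonsense7_1_1}) in the paper, after converting $\partial_t=-\partial_\tau$). Your version would require $\ell\ge 0$, which is not assumed here. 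This sign slip is precisely why you end up with only $\tau m'\le n/2$, which you correctly flag as too weak and then leave unresolved.

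With the correct sign the argument closes immediately. At a spatial minimum $y_\tau$ one has $|\nabla\ell|=0$ and $\Delta\ell\ge 0$ in the barrier sense, so the identity gives $2\tau\ell_\tau = R\tau - \ell$, while the Laplacian inequality $2\Delta\ell - |\nabla\ell|^2 + R + \tfrac{\ell-n}{\tau}\le 0$ gives $R\tau + \ell \le n$. Adding $2\ell$ to the first equation and combining yields
\[
2\tau\ell_\tau + 2\ell \;=\; R\tau + \ell \;\le\; n,
\]
i.e.\ $\frac{d}{d\tau}(\tau m)\le \tfrac{n}{2}$ in the barrier sense. Since $\tau m(\tau)\to 0$ as $\tau\to 0^+$ (your verification of this limit is fine), integration gives $\tau m(\tau)\le \tfrac{n}{2}\tau$, hence $m(\tau)\le \tfrac{n}{2}$. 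Equivalently --- and this is how Perelman phrases it --- set $\bar L := 4\tau\ell$; then the two formulas combine to the single parabolic inequality $\partial_\tau\bar L + \Delta\bar L \le 2n$, and the minimum principle applied to $\min_M\bar L - 2n\tau$ gives the result directly. No additional ``bookkeeping to extract the sharp constant'' is needed once the sign is right.
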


The point(s) where the minimum in formula (\ref{lcenter}) is attained plays an important role in our arguments. In most of the cases, it turns out that such a minimum point is not far from Bamler's $H_n$-centers. Hence, we would like to assign a special term to these points.

\begin{Definition}
Let $(M^n,g(t))_{t\in I}$ be a Ricci flow, and let $(x,t)\in M\times I$ be a point in space time. Let $s\in I\cap(-\infty,t)$. Then, $(z,s)$ is called an \emph{$\ell$-center} of $(x,t)$ if
\begin{eqnarray*}
\ell_{x,t}(z,t-s)\leq\frac{n}{2}.
\end{eqnarray*}
\end{Definition}

\textbf{Remark:} Similar to the case of the $H_n$-center, the $\ell$-center is not necessarily unique at a fixed time $s$ for a fixed base point $(x,t)$. Furthermore, in practice (especially when considering the base points for the blow-down sequence from which we obtain an asymptotic shrinker), a sequence of space-time points along which $\ell$ is uniformly bounded serves equally well as a sequence of $\ell$-centers; see Definition \ref{def}.

\subsection{Shrinking gradient Ricci soliton and its entropy} 

A shrinking gradient Ricci soliton (or Ricci shrinker for short) is a tuple $(M^n,g,f)$, where $(M^n,g)$ is a smooth Riemannian manifold, and $f$ is a smooth function on $M$ called a \emph{shrinker potential}, satisfying
\[
    \Ric + \nabla^2 f = \tfrac{1}{2}g.
\]
Apparently, the shrinker potential is not unique, for one may always add a constant to it. If we normalize $f$ such that
\[
	(4\pi)^{-n/2}\int_M e^{-f}dg = 1,
\]
then
\[
	 \mu:= f-(|\nabla f|^2 + R)
\]
is a constant called the \emph{shrinker entropy}. 

Let $\Phi_t$ be the 1-parameter group of self-diffeomorphisms generated by $\nabla f$ with $\Phi_0:M\rightarrow M$ being the identity map. 
Define $\phi_t:= \Phi_{-\log|t|}$,
$g(t) := |t| \phi^*_t g$, and $f_t=f\circ\phi_t$. Then, $(M,g(t))_{t\in(-\infty,0)}$ is an ancient solution to the Ricci flow, and $(4\pi|t|)^{-\frac{n}{2}}e^{-f_t}$ is a solution to the conjugate heat equation. The evolving tuple $(M,g(t),f_t)_{t\in(-\infty,0)}$ is called the \emph{canonical form} of the Ricci shrinker.  The canonical form satisfies
\begin{align}\label{canonicalformnormalization}
\Ric_{g_t}+\nabla^2 f_t=\frac{1}{2|t|}g(t),&
\\\nonumber
-|t|\big(|\nabla f_t|^2_{g_t}+R_{g_t}\big)+f_t\equiv \mu,&
\end{align}
where $\mu$ is the shrinker entropy. It turns out that the entropy of a fixed shrinker is unique (even if the normalized potential is not necessarily unique). In fact, Li-Wang \cite{LW20} proved the following strong statement.

\begin{Proposition}[\cite{LW20}]\label{shrinkernufunctional}
Let $(M^n,g,f)$ be a Ricci shrinker, where $f$ is normalized so that $\int_M (4\pi)^{-\frac{n}{2}}dg=1$. Let $\mu$ be the shrinker entropy. Then we have
\begin{eqnarray*}
\mu=\mathcal{W}(g,f,1)=\mu(g,1)=\nu(g).
\end{eqnarray*}
Furthermore, if we let $(M,g(t),f_t)_{t\in(-\infty,0)}$ be the canonical form, then we also have
\begin{eqnarray*}
\mu=\mathcal{W}(g(t),f_t,|t|)=\mu(g(t),|t|)=\nu(g(t))\quad\text{ for all }\quad t\in(-\infty,0).
\end{eqnarray*}
\end{Proposition}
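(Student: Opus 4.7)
The plan is to establish the three equalities in sequence. For $\mathcal{W}(g, f, 1) = \mu$, I would compute directly. The normalization $f - (|\nabla f|^2 + R) = \mu$ recasts the integrand as $|\nabla f|^2 + R + f - n = 2f - n - \mu$, so
\[
    \mathcal{W}(g, f, 1) = 2\int_M f\,(4\pi)^{-n/2}e^{-f}\,dg - n - \mu.
\]
The trace of the soliton equation yields $R + \Delta f = n/2$, and integration by parts (valid at infinity by Cao--Zhou's quadratic growth bound for $f$) gives $\int(R + |\nabla f|^2)(4\pi)^{-n/2}e^{-f}dg = n/2$, whence $\int f\,(4\pi)^{-n/2}e^{-f}dg = \mu + n/2$ by the normalization. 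Substitution yields $\mathcal{W}(g, f, 1) = \mu$, and therefore $\mu(g, 1) \leq \mu$. The same computation with the canonical-form identities (\ref{canonicalformnormalization}) yields $\mathcal{W}(g(t), f_t, |t|) = \mu$ for every $t < 0$.

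For the equality $\mu(g, 1) = \mu$, it remains to prove $\bar{\mathcal{W}}(g, u_0, 1) \geq \mu$ for every admissible probability density $u_0$. My plan is a variational argument in the spirit of Carrillo--Ni (for compact shrinkers) and Li--Wang (for noncompact shrinkers): extract a minimizing sequence $u_i = (4\pi)^{-n/2}e^{-f_i}$, use the quadratic growth of $f$, B.-L.\ Chen's nonnegativity $R \geq 0$, and the $\kappa$-noncollapsing of shrinkers to confine the sequence, and pass to a subsequential weak limit $\tilde u = (4\pi)^{-n/2}e^{-\tilde f}$ that realizes the minimum. The Euler--Lagrange equation $2\Delta \tilde f - |\nabla \tilde f|^2 + R + \tilde f - n = \mu(g, 1)$, combined with Perelman's Bochner identity applied to a short-time Ricci flow emanating from $g$ (whose entropy derivative is $2\int |\Ric + \nabla^2 \tilde f - \tfrac12 g|^2 \tilde u\,dg$), forces $\Ric + \nabla^2 \tilde f = \tfrac12 g$. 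By the rigidity of the shrinker potential on $(M, g)$, $\tilde u$ coincides with the shrinker density up to an ambient isometry, so $\bar{\mathcal{W}}(g, \tilde u, 1) = \mu$ and $\mu(g, 1) = \mu$.

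For $\nu(g) = \mu(g, 1)$, I would invoke Perelman's monotonicity along the canonical form. Given any admissible $u_0$ at $t = -1$ and any target scale $\tau > 0$, evolve $u_0$ backward in $t$ as a conjugate heat flow $u(\cdot, t)$ on $(M, g(t))_{t \leq -1}$. Monotonicity gives $\bar{\mathcal{W}}(g, u_0, \tau) \geq \bar{\mathcal{W}}(g(t), u(\cdot, t), \tau - 1 - t) \geq \mu(g(t), \tau - 1 - t)$ for every $t \leq -1$. Using the self-similarity $g(t) \cong |t| g$ and scale invariance $\mu(cg, c\sigma) = \mu(g, \sigma)$,
\[
    \mu(g(t), \tau - 1 - t) = \mu\bigl(g,\, (\tau - 1 - t)/|t|\bigr) \;\longrightarrow\; \mu(g, 1) \;=\; \mu
\]
as $t \to -\infty$, using the preceding step and the continuity of $\sigma \mapsto \mu(g, \sigma)$. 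Hence $\mu(g, \tau) \geq \mu$ for every $\tau > 0$, so $\nu(g) = \mu(g, 1) = \mu$. The canonical-form conclusions follow by scale invariance of $\mathcal{W}$, $\mu$, and $\nu$.

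The principal obstacle is the second step, where controlling the minimizing sequence at spatial infinity is genuinely nontrivial in the noncompact setting. This is the heart of Li--Wang's contribution \cite{LW20}: Cao--Zhou's quadratic bound $\tfrac14(r(x) - C)_+^2 \leq f(x) \leq \tfrac14(r(x) + C)^2$, the polynomial volume growth of shrinkers, $\kappa$-noncollapsing, and Hein--Naber-type logarithmic Sobolev inequalities (Proposition \ref{Hein-Naber-log-Sobolev}) provide enough confinement to extract a nontrivial weak limit and justify the integration by parts that drives the rigidity argument.
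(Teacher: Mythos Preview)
The paper does not give its own proof of this proposition; it is quoted from \cite{LW20} without argument. Your outline is essentially a sketch of the Li--Wang approach, and you have correctly identified that the delicate part is the second step---existence and rigidity of a minimizer for $\mu(g,1)$ on a noncompact shrinker---which is precisely the content of \cite{LW20}. One small remark on step~3: you invoke both Perelman's monotonicity for a conjugate heat flow on the noncompact canonical form and the continuity of $\sigma\mapsto\mu(g,\sigma)$; since shrinkers need not have bounded curvature, neither is automatic, and Li--Wang justify them via their tailored cut-off function (their Lemma~3), which is the same device the present paper invokes elsewhere when integrating by parts on the asymptotic shrinker.
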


\subsection{Locally uniformly Type I ancient solution}

Cheng-Zhang \cite{CZ20} studied a geometric condition for ancient Ricci flows, called the locally uniformly Type I condition. Precisely, it is defined as follows. 

\begin{Definition}[Locally uniformly Type I ancient solutions]\label{def}
Let $(M,g(t))_{t\in(-\infty,0]}$ be an ancient Ricci flow. Fix $p_0\in M$ and let $\{(p_i,-\tau_i)\}_{i=1}^\infty\subset M\times(-\infty,0)$ be a sequence of space-time points with $\tau_i\nearrow\infty$. Then, $(M,g(t))_{t\in(-\infty,0]}$ is called \emph{locally uniformly Type I} along the sequence $\{(p_i,-\tau_i)\}_{i=1}^\infty$, if the following hold.
\begin{enumerate}[(1)]
 \item $\ell_{p_0,0}$ is bounded along $\{(p_i,-\tau_i)\}_{i=1}^\infty$, that is, \begin{eqnarray*}\label{def1}
 \limsup_{i\rightarrow\infty}\ell(p_i,\tau_i)<\infty.
 \end{eqnarray*}

\item Around the space-time points $(p_i,-\tau_i)$, the curvature has a locally uniformly Type I bound. More precisely, there exists a positive function $C:(0,\infty)\rightarrow(0,\infty)$ with the following property: for all $A>0$, there exists $i_0\in\mathbb{N}$, depending on $A$, such that
    \begin{eqnarray*}\label{def2}
    \sup_{B_{-\tau_i}(p_i,r\sqrt{\tau_i})\times[-2\tau_i,-\tau_i]}|{\Rm}|\leq \frac{C(r)}{\tau_i} \quad\text{for all}\quad i\geq i_0\quad \text{and for all}\quad r\leq A.
    \end{eqnarray*}

\item There is a time-wise Ricci curvature lower bound for $g(t)$. In other words, there exists a continuous positive function $K:(-\infty,0]\rightarrow(0,\infty)$, such that
    \begin{eqnarray*}\label{def3}
    \Ric_{g_t}\geq-K(t)g(t)\quad\text{ for all }\quad t\in(-\infty,0].
    \end{eqnarray*}

\item $g(t)$ is noncollapsed along $(p_i,-\tau_i)$. In other words,
    \begin{eqnarray*}\label{def4}
    \liminf_{i\rightarrow\infty} \Big((\tau_i)^{-\frac{1}{2}}\text{inj}\big(g(-\tau_i),p_i\big)\Big)>0,
    \end{eqnarray*}
    where $\text{inj}(g, x)$ stands for the injectivity radius of the Riemannian metric $g$ at $x$.
\end{enumerate}
\end{Definition}

With the locally uniformly Type I condition, Cheng-Zhang \cite{CZ20} obtained locally uniformly $C^0$ and $C^1$ estimates for the reduced distance $\ell$, and consequently proved the existence of an asymptotic shrinker. We shall include these results below. 

\begin{Proposition}[Proposition 5.1 in \cite{CZ20}]\label{C0-l-estimate-LUTypeI}
Let $(M,g(t))_{t\in(-\infty,0]}$ be a locally uniformly Type I ancient solution as described in Definition \ref{def}. Let $\ell_i(\cdot,\tau):=\ell_{p_0,0}(\cdot,\tau_i\tau)$ and $g_i(t):=\tau_i^{-1}g(\tau_it)$. Then, for any $\varepsilon\in(0,\frac{1}{4})$, there is a positive function $C(\cdot,\varepsilon):(0,\infty)\rightarrow(0,\infty)$ with the following property: for any $r>0$, it holds that $0\leq\ell_i(x,|t|)\leq C(r,\varepsilon)$ for all $(x,t)\in B_{g_{i,-1}}(p_i,r)\times[-2,-1-\varepsilon]$ whenever $i$ is large enough.
\end{Proposition}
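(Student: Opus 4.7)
\emph{Proof proposal.} The lower bound $\ell_i \geq 0$ is immediate from the definition of the reduced distance once we know that $R \geq 0$ on $(M, g(t))_{t \in (-\infty, 0]}$; and since $g(t)$ has bounded curvature within each compact time interval, this nonnegativity of the scalar curvature follows from the standard fact (Chen) that complete ancient Ricci flows with bounded curvature on compact time intervals have $R \geq 0$. The reduced distance is invariant under the Type I scaling $g_i(t) = \tau_i^{-1} g(\tau_i t)$, so we will work entirely in the rescaled flow $(M,g_i(t))$, in which the hypotheses (1)--(4) of Definition \ref{def} read: $\ell_{i}(p_i, 1) \leq C_0$ uniformly in $i$, $|\Rm_{g_i}| \leq C(r)$ on $B_{g_i(-1)}(p_i, r) \times [-2,-1]$ for $i$ large, and the Ricci lower bound $\Ric_{g_i(t)} \geq -K_i(t) g_i(t)$ with $K_i(t) = \tau_i K(\tau_i t)$, which in particular is bounded on $[-2,-1]$ for $i$ large (since the original $K$ is continuous).

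To get the upper bound on $\ell_i(x,|t|)$ for $(x,t)\in B_{g_i(-1)}(p_i,r)\times [-2,-1-\varepsilon]$, I would construct a test curve $\gamma$ in the scaled flow from $(p_0,0)$ to $(x,t)$ by concatenating two pieces:
\begin{enumerate}[(a)]
    \item a near-minimizing $\mathcal{L}$-geodesic $\gamma_1$ from $(p_0,0)$ to $(p_i,-1)$ whose $\mathcal{L}$-length is at most $2\sqrt{1}\,\ell_{i}(p_i,1) + \delta \leq 2C_0+\delta$;
    \item a smooth curve $\gamma_2:[1,|t|]\to M$, with $\gamma_2(1)=p_i$ and $\gamma_2(|t|)=x$, of (essentially) constant speed in the fixed metric $g_i(-1)$, chosen along a $g_i(-1)$-minimizing geodesic from $p_i$ to $x$.
\end{enumerate}
Then $\ell_i(x,|t|)\leq \frac{1}{2\sqrt{|t|}}\bigl(\mathcal{L}(\gamma_1)+\mathcal{L}(\gamma_2)\bigr)$, so it suffices to bound $\mathcal{L}(\gamma_2)=\int_1^{|t|}\sqrt{s}\bigl(|\dot\gamma_2|^2_{g_i(-s)}+R_{g_i}(\gamma_2(s),-s)\bigr)ds$ by a constant depending only on $r$ and $\varepsilon$.

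For the curve $\gamma_2$, the $g_i(-1)$-speed is bounded by $r/(|t|-1)\leq r/\varepsilon$. The Ricci upper bound from condition (2) (in the rescaled ball) and the Ricci lower bound from condition (3) together yield a uniform bilipschitz comparison between $g_i(-1)$ and $g_i(-s)$ on the relevant ball for $s\in[1,2]$, so $|\dot\gamma_2|^2_{g_i(-s)}\leq C(r)/\varepsilon^2$; these same bounds (together with the Type I bound on curvature) ensure that the image of $\gamma_2$ stays inside $B_{g_i(-s)}(p_i,r')$ for some $r'=r'(r)$ where the curvature hypothesis still applies, so that $R_{g_i}\leq C(r)$ along $\gamma_2$. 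Since the interval $[1,|t|]$ has length at most $1$ and $\sqrt{s}\leq\sqrt{2}$, we conclude $\mathcal{L}(\gamma_2)\leq C(r,\varepsilon)$. Combined with $\mathcal{L}(\gamma_1)\leq 2C_0+\delta$ and $|t|\geq 1+\varepsilon$, sending $\delta\to 0$ gives the desired bound $\ell_i(x,|t|)\leq C(r,\varepsilon)$.

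The main obstacle I anticipate is the verification that the connecting curve $\gamma_2$ actually remains inside the region where the locally uniform Type I bound is available; this has to be done carefully using the Ricci upper and lower bounds to control how the ball $B_{g_i(-1)}(p_i,r)$ deforms as $s$ ranges in $[1,|t|]$, and is precisely where one needs $i$ to be large enough so that the Type I bound in condition (2) applies on a slightly enlarged radius $r'>r$. A secondary minor point is the $\varepsilon$ margin: the $1/\varepsilon^2$ blow-up in the speed is the reason one cannot take $t$ all the way to $-1$, and is what forces the excluded interval $[-1-\varepsilon,-1]$ in the statement.
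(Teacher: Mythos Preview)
Your proposal is correct and follows the standard concatenation argument that the paper itself uses elsewhere (see Claim~3 in the proof of Theorem~\ref{Coro_nash_2} and the argument leading to formula~(\ref{nonsense8004001})); the paper does not give its own proof here but simply cites \cite[Proposition~5.1]{CZ20}, whose proof is exactly along your lines. One small simplification: your ``main obstacle'' is not actually an obstacle, because condition~(2) in Definition~\ref{def} is stated for the ball $B_{g_i(-1)}(p_i,r)$ at the \emph{fixed} time $-1$, and your $g_i(-1)$-geodesic $\gamma_2$ automatically stays in that ball; hence you never need to track how the ball deforms in $s$, and condition~(3) is in fact not needed for this proposition (the two-sided curvature bound from~(2) already controls the metric distortion along $\gamma_2$).
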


\begin{Proposition}[Asymptotic Shrinker \cite{CZ20}]\label{TIshrinker}
Let $g_i(t)$ and $\ell_i$ be as described in the above proposition, then, after possibly passing to a subsequence, the sequence of tuples $\displaystyle\Big\{\big(M,g_i(t),\ell_i(\cdot,|t|)\big)_{t\in[-2,-1]}\Big\}_{i=1}^\infty$ converges to (the canonical form of) a shrinking gradient Ricci soliton $\big(M_\infty,g_\infty(t),\ell_\infty(\cdot,|t|)\big)_{t\in(-2,-1)}$, satisfying
\begin{eqnarray*}
\Ric_{g_\infty}(t)+\nabla^2\ell_\infty(\cdot,|t|)=\frac{1}{2|t|}g_\infty(t).
\end{eqnarray*}
The convergence of the Ricci flows is in the pointed smooth Cheeger-Gromov-Hamilton sense, and $\ell_i\rightarrow\ell_\infty$ in the $C^{0,\alpha}_{\operatorname{loc}}$ sense or in the weak $*W^{1,2}_{\operatorname{loc}}$ sense on $M_\infty\times(1,2)$, where $\ell_i$ should be understood to be pulled back by the defining diffeomorphisms of the Cheeger-Gromov-Hamilton convergence, and $\alpha$ is any number in $(0,1)$.
\end{Proposition}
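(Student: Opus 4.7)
The plan is to combine three ingredients: Hamilton's compactness theorem for the rescaled metrics, Perelman's a priori $C^0$ and gradient estimates for the rescaled reduced distance $\ell_i$, and the rigidity in Perelman's reduced volume monotonicity to force the limit to be a shrinker with $\ell_\infty$ serving as a potential.

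For the metric convergence, condition (2) of Definition \ref{def} supplies uniform local curvature bounds on balls of any fixed radius centered at $p_i$ throughout $[-2,-1]$ for $g_i(t)$, and condition (4) supplies a uniform positive lower bound for $\operatorname{inj}(g_i(-1),p_i)$. Hamilton's compactness theorem then yields a subsequential pointed smooth limit $(M_\infty,g_\infty(t),p_\infty)_{t\in[-2,-1]}$. For the convergence of the reduced distances, Proposition \ref{C0-l-estimate-LUTypeI} supplies locally uniform $C^0$ bounds on slightly smaller intervals $[-2,-1-\varepsilon]$. Coupled with Perelman's pointwise inequality $|\nabla\ell|^2+R\leq C\ell/\tau$, which applies thanks to the local curvature bounds, one obtains locally uniform $W^{1,2}$ bounds. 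A diagonal subsequence argument, compatible with the smooth convergence of the metrics, then produces $\ell_\infty\in C^{0,\alpha}_{\mathrm{loc}}\cap W^{1,2}_{\mathrm{loc}}$ as the subsequential limit on $M_\infty\times(1,2)$, with the $\ell_i$ pulled back by the Cheeger-Gromov-Hamilton diffeomorphisms.

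To identify the limit as a shrinker, I would exploit Perelman's reduced volume monotonicity. The function $\tau\mapsto\mathcal{V}_{p_0,0}(\tau)$ is non-increasing and strictly positive (the subsolution inequality in Proposition \ref{basic_l} together with the noncollapsing at $p_i$ and the $\ell$-bound along $(p_i,-\tau_i)$ gives a uniform positive lower bound), so $V_\infty:=\lim_{\tau\to\infty}\mathcal{V}_{p_0,0}(\tau)$ exists and is positive. The scaling invariance $\mathcal{V}^{g_i}_{p_0,0}(|t|)=\mathcal{V}^{g}_{p_0,0}(\tau_i|t|)$ then yields $\mathcal{V}^{g_i}_{p_0,0}(a)-\mathcal{V}^{g_i}_{p_0,0}(b)\to 0$ for any $1<a<b<2$. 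By Perelman's pointwise monotonicity formula, this difference controls an integral of the nonnegative defect $\bigl|\Ric+\nabla^2\ell_i-\tfrac{g_i}{2\tau}\bigr|^2$ weighted by the reduced volume density, so the integrated defect tends to zero. Passing to the limit forces the shrinker equation $\Ric_{g_\infty(t)}+\nabla^2\ell_\infty(\cdot,|t|)=\tfrac{1}{2|t|}g_\infty(t)$ in the weak sense, and elliptic regularity then promotes $\ell_\infty$ to a smooth shrinker potential.

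The main obstacle lies in this last passage to the limit. One must control the tail of the integrand at spatial infinity, which is handled by Perelman's quadratic lower bound $\ell(x,\tau)\gtrsim \dist_{g_{-\tau}}^2(x,p_0)/\tau-C$ giving Gaussian decay of $e^{-\ell_i}$ and ruling out mass loss, and one must make sense of the Hessian of $\ell_\infty$ in the limit, which is handled by first reading the soliton equation in the weak sense through the vanishing of the weighted defect integral and then invoking elliptic regularity for the resulting equation on the smooth limit $(M_\infty,g_\infty(t))$.
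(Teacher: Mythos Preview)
The paper does not prove this proposition; it is quoted from \cite{CZ20} as a preliminary result. Your outline is exactly the classical strategy (Perelman \cite{Per02}, Naber \cite{N10}, and \cite{CZ20} in this generality): Hamilton compactness for the metrics from conditions (2) and (4) of Definition \ref{def}, the $C^0$ bound of Proposition \ref{C0-l-estimate-LUTypeI} combined with the gradient inequality $|\nabla\ell|^2+R\le C\ell/\tau$ to obtain locally uniform $W^{1,2}$ bounds on $\ell_i$, and reduced-volume rigidity to force the shrinker equation in the limit. This is correct and matches what \cite{CZ20} does.

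One caveat: the ``quadratic lower bound $\ell(x,\tau)\gtrsim \dist_{g_{-\tau}}^2(x,p_0)/\tau-C$'' you invoke for tail control is not available under the locally uniformly Type~I hypothesis alone, since condition (2) of Definition \ref{def} only bounds curvature on balls around the $\ell$-centers $p_i$, not globally. Fortunately you do not actually need it here: the full defect integral is controlled by the reduced-volume difference $\mathcal{V}(\tau_i a)-\mathcal{V}(\tau_i b)\to 0$, and since the integrand is nonnegative it also tends to zero on every compact set; the shrinker equation on $(M_\infty,g_\infty)$ then follows in the weak sense from local smooth convergence of the metrics and $C^{0,\alpha}$/weak-$W^{1,2}$ convergence of $\ell_i$, and elliptic regularity promotes $\ell_\infty$ to smoothness. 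When tail control of the reduced volume itself is genuinely needed later (Lemma \ref{somethingmaybeuseful}), the paper obtains it via the comparison $(4\pi\tau)^{-n/2}e^{-\ell}\le K$ of Proposition \ref{basic_l} together with $H_n$-concentration, not via a global quadratic lower bound on $\ell$.
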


\begin{Corollary}[Noncollapsing \cite{CZ20}]\label{noncollapsing}
A locally uniformly Type I ancient solution is (weakly) $\kappa$-noncollapsed on all scales, where $\kappa>0$ depends only on the entropy of the asymptotic shrinker. 
\end{Corollary}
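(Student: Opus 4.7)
The plan is to obtain a uniform positive lower bound on Perelman's reduced volume $\mathcal{V}_{p_0,0}(\tau)$ via its monotonicity and a limit identification with the asymptotic shrinker entropy $\mu_\infty$, and then to invoke Perelman's classical reduced-volume-to-noncollapsing theorem.

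First I would exploit the scale invariance of the reduced volume. Writing $g_i(t)=\tau_i^{-1}g(\tau_i t)$ and $\ell_i(\cdot,|t|)=\ell_{p_0,0}(\cdot,\tau_i|t|)$, one has
\[
\mathcal{V}^g_{p_0,0}(\tau_i\tau)\;=\;\mathcal{V}^{g_i}_{p_0,0}(\tau)\;=\;\int_M (4\pi\tau)^{-n/2}e^{-\ell_i(\cdot,\tau)}\,dg_i(-\tau).
\]
Propositions \ref{C0-l-estimate-LUTypeI} and \ref{TIshrinker} supply smooth Cheeger-Gromov-Hamilton convergence $(M,g_i(t),p_i)\to(M_\infty,g_\infty(t),p_\infty)$ together with $C^{0,\alpha}_{\mathrm{loc}}$ convergence $\ell_i\to\ell_\infty$, where $\ell_\infty$ is the shrinker potential of the asymptotic shrinker. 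Combining these with Perelman's quadratic lower bound $\ell\geq c\cdot\dist^2-C$ (which holds uniformly in $i$ thanks to the locally uniform Type I hypotheses in Definition \ref{def}(2)(3) applied to $\mathcal{L}$-geodesics), I would justify uniform integrability of $e^{-\ell_i}$ outside compact sets and pass to the limit:
\[
\lim_{i\to\infty}\mathcal{V}^{g_i}_{p_0,0}(\tau)\;=\;\int_{M_\infty}(4\pi\tau)^{-n/2}e^{-\ell_\infty(\cdot,\tau)}\,dg_\infty(-\tau)\;=\;e^{\mu_\infty},
\]
where the last identity is the standard evaluation of the reduced volume of a shrinker in its canonical form, combined with Proposition \ref{shrinkernufunctional}.

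By the monotonicity of the reduced volume, $\mathcal{V}_{p_0,0}(\tau)$ is non-increasing in $\tau$, and the computation above then forces
\[
\mathcal{V}_{p_0,0}(\tau)\;\geq\;e^{\mu_\infty}\;>\;0\qquad\text{for all }\tau>0.
\]
A uniform positive lower bound of the reduced volume at $(p_0,0)$ converts, via Perelman's classical $\kappa$-noncollapsing theorem, into weak $\kappa$-noncollapsing at $(p_0,0)$ on all scales with $\kappa=\kappa(n,\mu_\infty)$. To promote the noncollapsing to an arbitrary base point $(x_0,t_0)\in M\times(-\infty,0]$, I would repeat the argument with $(x_0,t_0)$ in place of $(p_0,0)$: $\ell$-centers of $(x_0,t_0)$ always exist by Proposition \ref{lcenter}, and the intrinsic geometric conditions (2)-(4) of Definition \ref{def} along such centers can be verified from the noncollapsing just obtained at $(p_0,0)$ together with standard parabolic distance distortion estimates; the asymptotic shrinker so produced still has entropy $\mu_\infty$, giving the same $\kappa$.

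The main obstacle is the interchange of limit and integral in the first step. It requires a uniform-in-$i$ quadratic lower bound of $\ell_i$ on the rescaled flows to kill the tails, and this is precisely where the locally uniform Type I bounds in Definition \ref{def}(2)(3) are indispensable: they feed into the standard integral estimates along $\mathcal{L}$-geodesics to yield Perelman's $\ell\geq c\cdot\dist^2-C$ bound uniformly along the sequence. Once this technical point is handled, everything else is a routine invocation of monotonicity and Perelman's classical machinery.
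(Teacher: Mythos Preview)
The paper does not give a proof here; it cites the result directly from \cite{CZ20}. Your outline follows the expected reduced-volume route and is correct in its broad strokes, but two points deserve comment.

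First, you misidentify the main obstacle. For noncollapsing you only need the inequality $\mathcal{V}_{p_0,0}(\tau)\geq e^{\mu_\infty}$, and by monotonicity this reduces to $\liminf_i\mathcal{V}_{p_0,0}(\tau_i)\geq e^{\mu_\infty}$, which is immediate from Fatou's lemma and the local convergence in Proposition~\ref{TIshrinker}; no tail estimate, and hence no full interchange of limit and integral, is needed for this direction. In fact the global quadratic lower bound $\ell_i\geq c\,\dist^2-C$ you invoke is \emph{not} available from Definition~\ref{def}(2)(3): condition (2) only bounds curvature on balls around $p_i$, and condition (3) gives only a one-sided Ricci bound, neither of which suffices for Perelman's $\ell$--distance comparison outside those balls. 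When the paper later needs the full limit identification (Lemma~\ref{somethingmaybeuseful}), it handles the tail by the comparison $(4\pi\tau)^{-n/2}e^{-\ell}\leq K$ of (\ref{subsolution}) combined with Proposition~\ref{measureaccumulationofHcenter}, not by a quadratic bound on $\ell$.

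Second, the base-point extension is a genuine gap. Perelman's reduced-volume argument yields noncollapsing only at the base point of $\ell$, so you must repeat it at each $(x_0,t_0)$. This requires conditions (2) and (4) of Definition~\ref{def} at the $\ell$-centers of $(x_0,t_0)$, which in turn requires those new centers to lie within bounded rescaled distance of the original $p_i$. That closeness does not follow from ``the noncollapsing just obtained at $(p_0,0)$'' (which concerns volumes of balls at $p_0$, not curvature or injectivity radius near the new centers), nor from distance-distortion estimates alone. The paper establishes it later, in Theorem~\ref{basepointindep}, via the Nash-entropy bound of Theorem~\ref{entropybound}, $H_n$-centers, and the $W_1$-monotonicity (\ref{monotoneofdW1}); this is nontrivial and your sketch does not supply it.
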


\subsection{Metric flow and
$H_n$-center}

In this subsection we briefly review the notion of the metric flow introduced by Bamler \cite{Bam20b}. Since here (and in the next subsection) we are not attempting to be comprehensive about all the nuances, the reader should always resort to \cite{Bam20b} for more details. We shall use the notation $\PP(X)$ to represent the space of probability measures on a metric space $X.$  The metric flow is introduced as a natural generalization of the Ricci flow space-time. A metric flow over $I\subset \mathbb{R}$ is a tuple 
\[\big(\XX,\mathfrak{t}, (\dist_t)_{t\in I}, (\nu_{x\,|\,s})_{x\in \XX, s\in I, s\le \mathfrak{t}(x)}\big),\]
where $\mathfrak{t}:\XX\to I$ is the time function and $\XX_t:= \mathfrak{t}^{-1}(t)$ is called the time slice at $t$, $\dist_t$ is a metric on $\XX_t$, $\nu_{x\,|\,s}\in \PP(\XX_s)$ is a family of probability measures called the \emph{conjugate heat kernel} based at $x\in\XX$, and it satisfies the usual reproduction formula: for any $t_1\le t_2\le t_3$ in $I$ and for any $x\in \XX_{t_3}$, we have
\[
    \nu_{x\,|\,t_1}
    = \int_{\XX_{t_2}} \nu_{\cdot\,|\,t_1}\, d\nu_{x\,|\,t_2}.
\]
The sharp gradient estimate of Bamler \cite[Theorem 4.1]{Bam20b} is also axiomized into the definition of the metric flow. More details could be found in \cite[Definition 3.2]{Bam20b}. The last two authors generalized \cite[Theorem 4.1]{Bam20b} to noncompact Ricci flows with bounded curvature within each compact time interval (c.f. \cite{MZ21}). Hence, the following observation is straightforward.

\begin{Theorem}[Bamler]
\label{thm: smooth flows are metric flows}
Let $(M^n,g(t))_{t\in I}$ be a complete Ricci flow with bounded curvature within each time interval compact in $I$.
Then it induces a canonical metric flow in the sense of \cite[Definition 3.2]{Bam20b}.
\end{Theorem}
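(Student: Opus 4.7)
The plan is to build the metric flow from the natural space-time $\XX := M \times I$, with time function $\mathfrak{t}(x,t) := t$ so that the time slice $\XX_t = M\times\{t\}$ is endowed with the Riemannian distance $\dist_t$ of $g(t)$. For a point $x = (x_0,t_0)\in\XX$ and $s\in I \cap (-\infty, t_0]$, the conjugate heat kernel measure $\nu_{x\,|\,s}\in\PP(\XX_s)$ is defined exactly as in (\ref{CHK_measure}) via the minimal fundamental solution $K(x_0,t_0\,|\,\cdot,s)$, with the convention $\nu_{x\,|\,t_0}:=\delta_x$. Under the hypothesis of bounded curvature on each compact subinterval, $K$ exists, is unique, and is strictly positive by the standard results cited just after Proposition \ref{Hein-Naber-log-Sobolev}; moreover, integration by parts at infinity is justified, so each $\nu_{x\,|\,s}$ is a bona fide probability measure on $\XX_s$.

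Next I would check the reproduction formula $\nu_{x\,|\,t_1} = \int_{\XX_{t_2}} \nu_{\cdot\,|\,t_1}\,d\nu_{x\,|\,t_2}$ for $t_1\le t_2\le \mathfrak{t}(x)$. Unwinding definitions, this is precisely the semigroup identity
\[
    K(x,\mathfrak{t}(x)\,|\,y,t_1)
    = \int_M K(z,t_2\,|\,y,t_1)\,K(x,\mathfrak{t}(x)\,|\,z,t_2)\,dg_{t_2}(z),
\]
which follows from the uniqueness of the minimal fundamental solution together with Fubini, the bounded-curvature Gaussian estimates used to interchange the order of integration, and the conservation of mass for the conjugate heat equation. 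The remaining elementary axioms of \cite[Definition 3.2]{Bam20b} (measurability in the base point, convergence $\nu_{x\,|\,s}\to\delta_x$ in $\PP$ as $s\nearrow\mathfrak{t}(x)$, and the trivial compatibility with $\dist_t$) are consequences of standard parabolic regularity and short-time asymptotics of the heat kernel on complete manifolds with bounded curvature.

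The one substantial axiom is Bamler's sharp gradient/Kuwada-type estimate on the contraction of Wasserstein distances of conjugate heat kernels (originally \cite[Theorem 4.1]{Bam20b}), which enforces that $(\XX_t,\dist_t)$ and the family $(\nu_{x\,|\,s})$ interact as they would for a Ricci flow. For compact $M$ this is proved in \cite{Bam20b}, but it genuinely uses compactness in the original argument, so this is the main obstacle. The remedy is already in hand: the last two authors in \cite{MZ21} extended the sharp gradient estimate to all complete Ricci flows with bounded curvature within each compact time interval. Inserting their estimate verifies the final axiom, and collecting everything yields the canonical metric flow structure on $\XX$. I would also remark that, because the construction depends only on $(M,g(t))$ and the intrinsic conjugate heat kernel, the resulting metric flow is canonical up to the obvious tautological identification.
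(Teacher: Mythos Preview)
Your proposal is correct and follows essentially the same approach as the paper: the paper's proof is even terser, simply citing \cite[Lemma 26.16]{RFV3} for the reproduction formula (Definition 3.2(7)) and invoking \cite{MZ21} for the sharp gradient estimate needed for Definition 3.2(6), with the remaining axioms declared routine. Your write-up supplies more detail but identifies the identical two nontrivial ingredients.
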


\begin{proof}
By \cite[Lemma 26.16]{RFV3}, the reproduction formula holds for the minimal heat kernel $K(x,t\,|\,y,s)$ coupled with the flow $(M,g(t))$. 
So \cite[Definition 3.2 (7)]{Bam20b} is satisfied if we choose $d\nu_{x,t\,|\,s} := K(x,t\,|\,\cdot,s)\, dg_s.$
As mentioned in the above paragraph, \cite[Definition 3.2 (1)-(6)]{Bam20b} are satisfied. 
\end{proof}

We shall then introduce some definitions and results for the metric flow. In particular, they can be applied to smooth Ricci flows satisfying the condition of the above theorem. Before we proceed to define the notion of $H$-concentration, let us recall the \emph{variance} of two probability measures. Let $\mu,\nu\in \PP(X)$, where $X$ is a metric space, then their variance is defined as
\[
    {\rm Var}(\mu,\nu)
    := \int_{X\times X} \dist^2(y_1,y_2)
    \, d\mu(y_1)d\nu(y_2),
\]
where $\dist$ is the metric on $X$. If $\mu$ is the same as $\nu$, then we also denote ${\rm Var}(\mu):={\rm Var}(\mu,\mu)$. $H$-concentration is defined as follows.

\begin{Definition}[Definition 3.30 in \cite{Bam20b}] A metric flow $\mathcal{X}$ over $I\subset\mathbb{R}$ is said to be $H$-concentrated, where $H$ is a positive number, if for any  $s, t\in I$, $s\leq t$, and $x_1, x_2$ $\in \mathcal{X}_t$, we have,
\begin{equation}\label{thedefinitionofHconcentrarion}
    \text{Var}(\nu_{x_1\,|\,
    s}, \nu_{x_2\,|\, s})\leq \text{dist}_{t}^2(x_1, x_2)+H(t-s).
\end{equation}
\end{Definition}
Combining (\ref{thedefinitionofHconcentrarion}) with the reproduction formula, we have that, on an $H$-concentrated metric flow $\XX$, for any $x_1,x_2\in \XX_t,$
\[
    {\rm Var}(\nu_{x_1\,|\,s},\nu_{x_2\,|\,s}) + H s
\]
is non-decreasing in $s\in I\cap(-\infty, t]$, and is bounded from above by $\dist_t^2(x_1,x_2)+Ht$ (c.f. \cite[Proposition 3.34]{Bam20b}). This fact guarantees the existence of $H$-centers (c.f. \cite[Proposition 3.36]{Bam20b}), which are defined as follows.

\begin{Definition}[Definition 3.35 in \cite{Bam20b}]\label{def_H_n_center}
Let $s,t\in I$ and $s\leq t$. A point $z$ $\in \mathcal{X}_s$ is called an $H$-center of $x$ $\in \mathcal{X}_t$ if 
\begin{equation*}
    \text{Var}(\delta_z, \nu_{x\,|\,s})\leq H(t-s).
\end{equation*}
\end{Definition}

The $H$-center adopted its name partially because the conjugate heat kernel accumulates its measure around it. Precisely, we have (\cite[Proposition 3.13]{Bam20a} and \cite[Lemma 3.37]{Bam20b}):

\begin{Proposition}[Bamler]\label{measureaccumulationofHcenter}
Let $\mathcal{X}$ be an $H$-concentrated metric flow over $I\subset\mathbb{R}$. Let $x\in \mathcal{X}_t$ be a fixed point, and let $z\in\mathcal{X}_s$ be an $H$-center of $x$, where $s<t$ and $s,t\in I$. Then, for any $A>1$, we have
\begin{eqnarray*}
\nu_{x\,|\, s}\big(B_s(z,\sqrt{AH(t-s)})\big)\geq 1-\frac{1}{A}.
\end{eqnarray*}
\end{Proposition}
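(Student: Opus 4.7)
The plan is to deduce this directly from the definition of $H$-center via a Chebyshev-type inequality. The definition states that $\operatorname{Var}(\delta_z, \nu_{x\,|\,s}) \leq H(t-s)$, which by unpacking the definition of variance means
\[
    \int_{\XX_s} \dist_s^2(z,y) \, d\nu_{x\,|\,s}(y) \leq H(t-s).
\]
This is exactly the kind of second-moment bound that Chebyshev's inequality is designed to exploit.

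First I would set $r := \sqrt{AH(t-s)}$ and consider the complement of the ball, $E := \XX_s \setminus B_s(z,r)$. On this set, $\dist_s^2(z,y) \geq r^2 = AH(t-s)$ pointwise, so
\[
    AH(t-s) \cdot \nu_{x\,|\,s}(E)
    \leq \int_E \dist_s^2(z,y)\, d\nu_{x\,|\,s}(y)
    \leq \int_{\XX_s} \dist_s^2(z,y)\, d\nu_{x\,|\,s}(y)
    \leq H(t-s).
\]
Dividing by $AH(t-s)$ (which is positive since $s<t$ and $A>1$) gives $\nu_{x\,|\,s}(E) \leq 1/A$, and hence $\nu_{x\,|\,s}(B_s(z,r)) \geq 1 - 1/A$, which is exactly the asserted inequality.

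There is essentially no obstacle here: the statement is a one-line consequence of Chebyshev's inequality applied to the defining variance bound of an $H$-center. The only minor point to verify is that the integration is meaningful, i.e.\ that $\dist_s(z,\cdot)$ is $\nu_{x\,|\,s}$-measurable and that $\nu_{x\,|\,s}$ is a genuine probability measure on $\XX_s$; both are built into the definition of a metric flow. Note also that the argument does not use the reproduction formula or the $H$-concentration inequality itself in its full generality -- it only uses the variance bound that defines an $H$-center -- so the proof is identical in the smooth Ricci flow setting and in Bamler's abstract metric flow setting.
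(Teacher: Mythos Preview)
Your proof is correct and is exactly the standard Chebyshev argument. The paper itself does not give a proof of this proposition but merely cites it from Bamler's work, where essentially the same one-line variance/Chebyshev computation appears.
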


\textbf{Remark:} Bamler \cite[Proposition 3.2]{Bam20a} proved that if $\mathcal{X}=M^n\times I$ is a Ricci flow space-time on a closed manifold $M^n$ over an interval $I$, then $\mathcal{X}$ must be $H_n$-concentrated, where $$H_n:=\frac{(n-1)\pi^2}{2}+4.$$ The same argument also works when the Ricci flow is complete and has bounded curvature within compact time intervals. \emph{This is a fact which shall be used throughout this article.} In general, given $x$ $\in \mathcal{X}_t$, and $s\leq t$, $H$-centers of $x$ may not be unique in $\XX_s$.

\subsection{Metric flow pair and $\mathbb{F}$-convergence}

Suppose $\XX$ is a metric flow over $I\subset\mathbb{R}$.
A family of probability measures $\mu_s\in \PP(\XX_s)$, where $s\in I'\subset I$, is called a \emph{conjugate heat flow} if it satisfies the reproduction formula: for any $s,t\in I',s\le t,$ we have
\[
    \mu_s
    = \int_{\XX_t} \nu_{x\,|\,s}\, d\mu_t(x).
\]
A metric flow pair is then defined to be a metric flow coupled with a conjugate heat flow.

\begin{Definition}[Definition 5.1 in \cite{Bam20b}] A pair $\left(\mathcal{X}, (\mu_t)_{t\in I'}\right)$ is called a \emph{metric flow pair} over $I\subset \mathbb{R}$ if the following conditions are satisfied:
\begin{itemize}
    \item $I'\subset I$, and $|I\setminus I'|=0$, where $|\,\cdot\,|$ is the Lebesgue measure;
    \item $\mathcal{X}$ is a metric flow over $I'$;
    \item $(\mu_t)_{t\in I'}$ is a conjugate heat flow on $\mathcal{X}$ with $\spt\mu_t=\mathcal{X}_t$ for all $t\in I'$.
\end{itemize}

\end{Definition}

The definition of $\mathbb{F}$-convergence requires the notions of coupling and $1$-Wasserstein distance between probability measures. Let $X, Y$ be metric spaces.
For any $\mu\in \PP(X)$ and $\nu\in \PP(Y),$ we denote by $\Pi(\mu,\nu)$ the space of \emph{couplings} between $\mu$ and $\nu$, namely, the set of all the probability measures $q\in \PP(X\times Y)$ satisfying
\[
    q(A\times Y)=\mu(A),\quad
    q(X\times B)=\nu(B),
\]
for any measurable subsets $A\subset X$ and $ B\subset Y$. The $1$-Wasserstein distance between $\mu,\nu\in \PP(X)$ is defined to be
\[
    \dist_{W_1}(\mu,\nu)
    := \inf_{q\in \Pi(\mu,\nu)} \int_{X\times X} \dist(x,y) \, dq(x,y).
\]
By the Kantorovich-Rubinstein Theorem, this definition is equivalent to
\[
    \dist_{W_1}(\mu,\nu)
    = \sup_{f}\left( \int f\, d\mu 
    - \int f\, d\nu\right),
\]
where the supremum is taken over all bounded $1$-Lipschitz functions $f$ on $X.$

It is to be noted that for any metric flow $\XX$, the $1$-Wassernstein distance between two conjugate heat flows satisfies a monotonicity property (\cite[Proposition 3.24(2)]{Bam20b}), namely, for any conjugate heat flows $(\mu^1_s)_{s\in I'}$ and $(\mu^2_s)_{s\in I''}$, we have
\begin{eqnarray}\label{monotoneofdW1}
    \dist_{W_1}^{\XX_s}
    (\mu^1_2,\mu^2_s)\quad\text{ is non-decreasing in }\quad s\in I'\cap I''.
\end{eqnarray}
Consequently, for any $x_1$ and $x_2\in\mathcal{X}_t$, we have
\begin{eqnarray}
\dist_{W_1}^{\XX_s}(\nu_{x_1\,|\,s},\nu_{x_2\,|\,s})\leq \dist_t(x_1,x_2)\quad\text{ for all }\quad s< t.
\end{eqnarray}
In fact, this monotonicity property is 
a consequence of the prescribed gradient estimate in the definition of the metric flow (\cite[Definition 3.2(6)]{Bam20b}).

We now introduce the definition of  $\mathbb{F}$-convergence within a correspondence, because this is the only version we shall use, and because it is essentially equivalent to the definition of $\mathbb{F}$-convergence itself. See more details in \cite[Section 6]{Bam20b}. Let $\displaystyle\big\{(\XX^i,(\mu_t^i)_{t\in I'^{, i}})\big\}_{i\in \mathbb{N}\cup\{\infty\}}$ be a sequence of metric flow pairs over a finite interval $I\subset\mathbb{R}$. A correspondence $\mathfrak{C}$ is a collection of complete and separable metric spaces $(Z_t,\dist^{Z_t})_{t\in I}$ together with isometric embeddings $\phi_t^i:(\XX_t^i,\dist_t^i)\to (Z_t,\dist^{Z_t})$ for $t\in I'^{,i}.$
Then, $(\XX^i,(\mu_t^i)_{t\in I'^{, i}})$ $\mathbb{F}$-converges to $(\XX^\infty,(\mu_t^\infty)_{t\in I'^{, \infty}})$ within the correspondence $\mathfrak{C}$ uniformly on $J\subset I$, denoted as
\[
    \left(\XX^i,(\mu_t^i)_{t\in I'^{, i}}\right)
    \xrightarrow{\makebox[1.5cm]{$\mathbb{F},\mathfrak{C},J$}}
    \left(\XX^\infty,(\mu_t^\infty)_{t\in I'^{,\infty}}\right),
\]
if for any $\varepsilon>0,$ there is an $\bar i\in \mathbb{N},$ such that if $i\ge \bar i,$
there is a measurable subset $E_i\subset I$ with $$J\subset I\setminus E_i\subset I'^{,i}\cap I'^{,\infty},$$  and there are couplings $q_t^i\in \Pi(\mu_t^i,\mu_t^\infty)$ for $t\in I\setminus E_i$ with the following properties:
\begin{itemize}
    \item $|E_i| \le \varepsilon^2$.
    \item For any $s,t\in I\setminus E_i,s\le t,$ it holds that
\[
\int_{\XX^i_t\times \XX^\infty_t}
\dist^{Z_s}_{W_1}
\left(
\phi^i_{s*}\nu^i_{x_1\,|\,s},
\phi^\infty_{s*}\nu^\infty_{x_2\,|\,s}
\right)\, dq_t^i(x_1,x_2)
\le \varepsilon.
\]
\end{itemize}
If $J$ above can be taken as any compact sub-interval of $I$, we say that the convergence is uniform over any compact sub-intervals.

The tangent flow at infinity of an ancient solution is then defined to be the $\mathbb{F}$-limit of a sequence of Ricci flows obtained by a Type I scaling process for the ancient solution.

\begin{Definition}[Tangent flow at infinity \cite{Bam20b}]\label{tangentflowatinfinity}
Let $(M,g(t))_{t\in(-\infty,0]}$ be an ancient solution. A metric flow pair $\big(\mathcal{X},(\mu_t)_{t\in (-\infty,0)}\big)$, where $\XX$ is a metric flow over $(-\infty,0]$, is called a \emph{tangent flow at infinity} of $(M,g(t))_{t\in(-\infty,0]}$, if there exist a fixed point $(p_0,t_0)\in M\times(-\infty,0]$ and a sequence $\tau_i\nearrow\infty$, such that
\begin{eqnarray*}
\left((M,g_i(t))_{t\in(-\infty,0]},(\nu^i_t)_{t\in(-\infty,0]}\right)\xrightarrow{\makebox[1cm]{$\mathbb{F}$}}\left(\mathcal{X},(\mu_t)_{t\in (-\infty,0)}\right),
\end{eqnarray*}
where 
\begin{gather*}
    g_i(t):=\tau_i^{-1}g(t_0+\tau_it),
    \\
    \nu^i_t:=\nu_{p_0,t_0\,|\,t_0+\tau_it},
\end{gather*}
for all $t\in(-\infty,0]$. Since the interval $(-\infty,0]$ is infinite, the $\mathbb{F}$-convergence here should be understood to be the $\mathbb{F}$-convergence on each finite subinterval of $(-\infty,0]$.
\end{Definition}

If we assume that the ancient solution in question has bounded curvature within each compact time intervals, then it must be $H_n$-concentrated. By \cite[Theorem 7.6]{Bam20b}, a tangent flow always exists for any fixed base point $(p_0,t_0)$ and for any sequence of scaling factors $\{\tau_i\}_{i=1}^\infty$. However, without a uniform lower bound on the Nash entropy, the tangent flow could be collapsed.

\section{Conjugate heat kernel estimates on noncompact manifolds}

In this section, we shall verify that \cite[Theorem 7.2]{Bam20a} and \cite[Proposition 5.13]{Bam20a} are still valid for Ricci flows with bounded curvature within each compact time interval. Bamler's original proof was for Ricci flows on closed manifolds. Given the assumptions which we make, these generalizations are but straightforward arguments, yet they are very important to the proofs of our main theorems.

\subsection{The Gaussian upper bound}

\begin{Theorem}[Theorem 7.1 in \cite{Bam20a}]\label{aCHKcoarseupperboundofbamler}
Let $(M^n,g(t))_{t\in I}$, where $I$ is a compact interval, be a complete Ricci flow with bounded curvature. Suppose $[s,t]\subset I$ and $R\geq R_{\operatorname{min}}$ on $M\times[s,t]$. Then for any $x,y\in M$, we have
\begin{eqnarray*}
K(x,t\,|\,y,s)\leq\frac{C}{(t-s)^{\frac{n}{2}}}\exp(-\mathcal{N}_{x,t}(t-s)),
\end{eqnarray*}
where $C:=C(R_{\operatorname{min}}(t-s))$ is a constant depending on $R_{\operatorname{min}}(t-s)$.
\end{Theorem}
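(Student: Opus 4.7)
The plan is to follow Bamler's proof of \cite[Theorem 7.1]{Bam20a} on closed manifolds essentially verbatim, replacing each implicit use of compactness by suitable decay and integrability estimates for the conjugate heat kernel, all of which are available under the standing assumption of bounded curvature on compact time subintervals. The classical Gaussian upper bounds for the minimal fundamental solution and its gradient (see \cite[Theorem 26.31]{RFV3}, \cite[Theorem 10]{EKNT08}, \cite{Zhq06, BCP10}) are what make the whole noncompact version work, since they guarantee that $K$, $K\log K$, $|\nabla K|^2/K$, and $RK$ all lie in $L^1(M,dg_s)$ and that every integration by parts at infinity is legitimate.

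With integrability in place, the structure of the proof is as follows. First, writing $K(x,t\,|\,\cdot,s)=(4\pi\tau)^{-n/2}e^{-f}$ with $\tau=t-s$, one records the basic identity
\[
-\int_M K\log K\,dg_s \;=\; \mathcal{N}_{x,t}(\tau) + \tfrac{n}{2}\log(4\pi\tau) + \tfrac{n}{2},
\]
which shows that $\mathcal{N}_{x,t}(\tau)$ is precisely the quantity controlling the relative entropy of the probability density $K(x,t\,|\,\cdot,s)$ with respect to $dg_s$. Second, one evolves the $L^p$-norms of $K$ in $s$ along the conjugate heat equation and combines this with Perelman's differential Harnack for $K$ and the lower bound $R\ge R_{\operatorname{min}}$; in spirit this is a Nash-type or Davies ultracontractivity argument, in which the log-Sobolev inequality supplied by $\mathcal{N}_{x,t}(\tau)$ together with the curvature lower bound plays the decisive role. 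Integrating the resulting differential inequality in $p$ (or equivalently running the standard Moser iteration) yields the desired pointwise bound
\[
K(x,t\,|\,y,s) \le \frac{C}{\tau^{n/2}}\exp\bigl(-\mathcal{N}_{x,t}(\tau)\bigr),
\]
with $C=C(R_{\operatorname{min}}\tau)$; the dependence on $R_{\operatorname{min}}\tau$ is exactly what appears when one tracks the $RK$ contributions through the $L^p$ evolution.

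The main obstacle is purely technical: certifying at each step that the noncompactness of $M$ does not obstruct the computation. This reduces to showing that the relevant integrands lie in $L^1(M,dg_s)$ and that boundary terms in every integration by parts vanish, both of which follow once one has pointwise Gaussian upper bounds for $K$ and $|\nabla K|$. These bounds are in turn a consequence of the bounded-curvature assumption via the heat-kernel estimates cited above, so the content of this section is essentially a careful verification that Bamler's closed-case analysis survives once the appropriate decay of $K$ and its derivatives at spatial infinity is in hand, rather than any new geometric insight beyond what is already in \cite{Bam20a}.
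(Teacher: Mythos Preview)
Your overall philosophy—follow Bamler verbatim and justify each step in the noncompact setting—is exactly what the paper does, but your description of Bamler's proof and of the actual obstacle does not match the paper's account. You sketch the argument as a Nash/Davies ultracontractivity scheme in which one evolves $L^p$-norms of $K$ and feeds in a log-Sobolev inequality, and you locate the noncompact difficulty in $L^1$-integrability and integration by parts at infinity. That is not how \cite[Theorem~7.1]{Bam20a} is proved, and it is not where the paper finds the noncompact issue.

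The paper identifies a single concrete step that requires justification: Bamler's inequality \cite[(7.11)]{Bam20a}. After rescaling to $s=0$, $t=1$, one fixes $y$, writes $u(\cdot,\cdot)=K(\cdot,\cdot\,|\,y,0)$, and considers the subsolution $v:=(t-\tfrac12)|\nabla u|^2+u^2$ of the heat equation on $M\times[\tfrac12,1]$. The needed inequality is
\[
(t-\tfrac12)|\nabla u|^2(x,t)\le v(x,t)\le \int_M K(x,t\,|\,\cdot,\tfrac12)\,u^2(\cdot,\tfrac12)\,dg_{1/2},
\]
which on a closed manifold is immediate from the maximum principle. In the noncompact case the point is not Gaussian decay or integration by parts, but simply that the maximum principle for the heat subsolution $v$ requires $v$ to be \emph{bounded}. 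The paper establishes this by first using \cite[Lemma~26.17]{RFV3} to get $0<u\le J$ on $M\times[\tfrac14,1]$, then applying the Zhang--type gradient estimate \cite[Theorem~3.2]{Zhq06} (valid in the noncompact bounded-curvature setting) to obtain
\[
\frac{|\nabla u|^2}{u^2}\le \frac{1}{t-\tfrac14}\log\frac{J}{u},
\]
from which $|\nabla u|^2\le C(J)$ on $M\times[\tfrac12,1]$. With $v$ bounded, the standard noncompact maximum principle (e.g.\ \cite[Theorem~12.10]{RFV2}) gives \cite[(7.11)]{Bam20a}. The remaining ingredients of Bamler's proof are \cite[Theorem~5.9, Corollary~5.11, Theorem~6.2]{Bam20a}, which are already available in the noncompact bounded-curvature setting (the first two via \cite{MZ21}, the third via Proposition~\ref{Nashintegral}); none of these is an $L^p$ iteration. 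So your proposal, as written, misdiagnoses both the structure of the proof and the genuine technical point.
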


\begin{Theorem}[Theorem 7.2 in \cite{Bam20a}]\label{gaussianupperbound}
Let $(M^n,g(t))_{t\in I}$, where $I$ is a compact interval, be a complete Ricci flow with bounded curvature. Suppose $[s,t]\subset I$ and $R\geq R_{\operatorname{min}}$ on $M\times[s,t]$. Let $(z,s)\in M\times I$ be an $H_n$-center of a point $(x,t)\in M\times I$. Then for any $\varepsilon>0$ and $y\in M$, we have
\begin{eqnarray*}
K(x,t\,|\,y,s)\leq\frac{C\exp(-\mathcal{N}_{x,t}(t-s))}{(t-s)^{\frac{n}{2}}}\exp\left(-\frac{\dist_s^2(z,y)}{(8+\varepsilon)(t-s)}\right),
\end{eqnarray*}
where $C:=C(R_{\operatorname{min}}(t-s),\varepsilon)$ is a constant depending on $R_{\operatorname{min}}(t-s)$ and $\varepsilon$.
\end{Theorem}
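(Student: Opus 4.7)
The plan is to adapt Bamler's proof of \cite[Theorem 7.2]{Bam20a} to the noncompact bounded-curvature setting. Three inputs are needed, all already in place: the coarse upper bound of Theorem \ref{aCHKcoarseupperboundofbamler}, Hein-Naber's Gaussian concentration (Proposition \ref{prop: gaussian concentration}), and the measure concentration around the $H_n$-center (Proposition \ref{measureaccumulationofHcenter}). All three hold for complete Ricci flows with bounded curvature on compact time subintervals, so noncompactness of $M$ introduces no genuine obstruction.

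Let $d := \dist_s(z,y)$. If $d \le A\sqrt{t-s}$ with $A = A(n,\varepsilon)$ chosen so that $\exp(-A^2/(8+\varepsilon))$ is bounded below by a universal constant, the conclusion reduces immediately to Theorem \ref{aCHKcoarseupperboundofbamler}. So assume $d \gg \sqrt{t-s}$. Set $r_1 := \sqrt{2H_n(t-s)}$, so that Proposition \ref{measureaccumulationofHcenter} gives $\nu_{x,t\,|\,s}(B_s(z,r_1)) \ge 1/2$. For any $r_2 > 0$ with $r_1 + r_2 < d$, the sets $B_s(z,r_1)$ and $B_s(y,r_2)$ are separated by at least $d - r_1 - r_2$, and Proposition \ref{prop: gaussian concentration} therefore yields the tail estimate
$$\nu_{x,t\,|\,s}(B_s(y,r_2)) \;\le\; 2\exp\!\left(-\frac{(d - r_1 - r_2)^2}{8(t-s)}\right).$$

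To convert this integrated bound into a pointwise bound on $K(x,t\,|\,y,s)$, one uses parabolic regularity of the conjugate heat kernel, for instance Bamler's sharp gradient estimate \cite[Theorem 4.1]{Bam20b} (extended to the noncompact bounded-curvature case in \cite{MZ21}) or an equivalent parabolic mean-value inequality. This ensures that $K(x,t\,|\,\cdot,s)$ is comparable, up to a multiplicative constant depending only on $n$, to its average over $B_s(y,r_2)$ for $r_2 \sim \sqrt{t-s}$. A local volume lower bound on $B_s(y,r_2)$ — available from the Ricci bound implicit in the bounded-curvature hypothesis combined with standard comparison — then transforms the tail estimate into the pointwise bound claimed. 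Choosing $r_1, r_2 = O(\sqrt{t-s})$, the exponent satisfies $(d - r_1 - r_2)^2/(8(t-s)) \ge d^2/((8+\varepsilon)(t-s))$ precisely when $d \gg \sqrt{t-s}$ (the regime under consideration), and the resulting $\varepsilon$-dependent cross-terms are absorbed into the constant $C = C(R_{\min}(t-s),\varepsilon)$.

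The main obstacle will be the conversion step from the measure tail bound to the pointwise bound on $K$, requiring both parabolic regularity of $K(x,t\,|\,\cdot,s)$ as a function of its second argument and a local volume lower bound on balls at scale $\sqrt{t-s}$; these are standard under bounded curvature but must be carefully assembled. The subsequent book-keeping of constants to realize the enlargement $8 \to 8+\varepsilon$ of the Gaussian denominator, via inequalities of the form $(1 - O(\sqrt{t-s}/d))^{-2} \le 1 + O(\varepsilon)$, is routine. Noncompactness plays no further role beyond what has been addressed in the previous subsection, which is precisely why the authors describe the present generalization as a straightforward argument.
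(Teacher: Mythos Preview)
Your outline has a genuine gap at the conversion step from the measure tail bound to the pointwise bound on $K(x,t\,|\,y,s)$. You propose to use parabolic regularity together with a local volume lower bound on $B_s(y,r_2)$, but neither ingredient is available with the constant dependence the theorem requires. First, Bamler's gradient estimate \cite[Theorem 4.1]{Bam20b} controls solutions of the \emph{forward} heat equation; it does not give an oscillation bound on $K(x,t\,|\,\cdot,s)$ in the second variable, which solves the conjugate heat equation. Second, and more seriously, a Ricci lower bound alone never yields a volume lower bound at an arbitrary point $y$ --- that needs noncollapsing or an injectivity radius bound --- and even if you invoked the full curvature bound together with some auxiliary noncollapsing, the resulting constant would then depend on that bound rather than only on $R_{\min}(t-s)$ and $\varepsilon$. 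The statement of the theorem is precise about this: $C = C(R_{\min}(t-s),\varepsilon)$.

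Bamler's actual argument, which the paper is asking you to transplant, sidesteps both issues. One inserts an intermediate time $s' \in (s,t)$ close to $s$, uses the reproduction formula $K(x,t\,|\,y,s) = \int_M K(x,t\,|\,w,s') K(w,s'\,|\,y,s)\,dg_{s'}(w)$, applies the coarse pointwise bound of Theorem \ref{aCHKcoarseupperboundofbamler} to the factor $K(w,s'\,|\,y,s)$, and then controls the remaining integral $\int e^{-\mathcal{N}^*_s(w,s')}\,d\nu_{x,t\,|\,s'}(w)$ using the Nash entropy Harnack inequality (\cite[Corollary 5.11]{Bam20a}, i.e.\ Proposition \ref{harnackofnashentropy} here) together with the variance/$H_n$-center bound. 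An iteration in $s'$ drives the Gaussian constant down to $8+\varepsilon$. No volume lower bound at $y$ ever enters, which is exactly why the constant has the claimed dependence.
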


\begin{proof}[Proof of Theorem \ref{aCHKcoarseupperboundofbamler}]
The proof is not essentially different from \cite[Theorem 7.1]{Bam20a}, since the latter does not rely heavily on the fact that the background manifold is closed. The only potential issue is the application of the maximum principle. Hence, we shall verify \cite[(7.11)]{Bam20a} below. By parabolic rescaling, we may assume that $s=0$ and $t=1$. It suffices to show
\begin{equation} \label{ineq: v is subsolution}
    v(x,t)
    \le \int_{M} K(x,t\,|\,y,\tfrac{1}{2})
    v(y,\tfrac{1}{2})\, dg_{\frac{1}{2}}(y),
\end{equation}
for all $(x,t)\in M\times [\frac{1}{2},1]$, where
\begin{equation} \label{ultranonsense}
   v := (t-\tfrac{1}{2})|\nabla u|^2 + u^2,\quad
    u(x,t):= K(x,t\,|\,y,0), 
\end{equation}
and $y\in M$ is a fixed point.


In fact, by elementary computations, we have that $v$ is a subsolution to the heat equation, that is, $$\Box v=|\nabla u|^2-2(t-\tfrac{1}{2})|\nabla^2u|^2-2|\nabla u|^2\leq 0.$$Hence, to prove (\ref{ineq: v is subsolution}), we need only to verify that the maximum principle can be applied to $v$ on $M\times[\frac{1}{2},1]$.

By \cite[Lemma 26.17]{RFV3}, we can find a positive constant $J>0$, depending on the curvature bound on $M\times[0,1]$ and $\Vol_{g_0}\big(B_{g_0}(y,1)\big)>0$, such that $0<u(x,t)\leq J$ for all $(x,t)\in M\times[\frac{1}{4},1]$. We may then apply \cite[Theorem 3.2]{Zhq06} (for the proof in our noncompact setting, see \cite[Lemma 2.4]{Zhang21}) to obtain
\begin{eqnarray*}
\frac{|\nabla u|^2}{u^2}\leq\frac{1}{t-\tfrac{1}{4}}\log\frac{J}{u}\quad\text{ on }\quad M\times(\tfrac{1}{4},1].
\end{eqnarray*}
Hence, we have the following estimate on $M\times[\frac{1}{2},1]$.
\begin{eqnarray*}
|\nabla u|^2\leq 4\Big(u^2\log J-u^2\log u\Big)\leq C(J),
\end{eqnarray*}
where $C(J)$ is a constant depending only on $J$. Here we have applied the fact that $0<u\leq J$ on $M\times[\frac{1}{2},1]$. It then follows that the function $v$ as defined in (\ref{ultranonsense}) is bounded on $M\times[\frac{1}{2},1]$, and the standard maximum principle (c.f. \cite[Theorem 12.10]{RFV2}) implies that
\begin{equation*}
(t-\tfrac{1}{2})|\nabla u|^2(x,t)\leq v(x,t)
\leq \int_{M}K(x,t\,|\, y, \tfrac{1}{2})u^2(y,\tfrac{1}{2})d g_{\tfrac{1}{2}}.
\end{equation*}
This is exactly \cite[(7.11)]{Bam20a}.

For the rest of the proof, the reader may follow \cite{Bam20a} line by line, and we shall not include all the details therein. Besides the argument above, Bamler's original proof also applied \cite[Theorem 5.9]{Bam20a}, \cite[Corollary 5.11]{Bam20a}, and \cite[Theorem 6.2]{Bam20a}; the first two of them are already proved in \cite[Theorem 4.4, Corollary 4.5]{MZ21} for our case, and the third result can be easily generalized to our case using (\ref{Nashintegral_2}) below.
\end{proof}

The proof of Theorem \ref{gaussianupperbound} also follows closely after Bamler's argument, for it does not depend on anything holding exclusively on closed manifolds. In the course of the proof, all the auxiliary results have already been established in our case, as described in the last paragraph above. We shall not exhibit all these details here, since this could be a distraction from our main purpose, and since this type of generalizations are usually understood to be straightforward.

\subsection{An integral inequality involved in the Nash entropy}

Next, we prove \cite[Proposition 5.13]{Bam20a} on a complete Ricci flow with bounded curvature on compact time intervals. The reader will see later that formula (\ref{Nashintegral_2}) is particularly useful for the local analysis of the Nash entropy, that is, the estimation of the Nash entropy using local geometry (see section 4 below). Interestingly, we shall apply this formula in a somewhat reverse way as Bamler did in the proof of \cite[Theorem 6.2]{Bam20a}.

\begin{Proposition}[Proposition 5.13 in \cite{Bam20a}]\label{Nashintegral}
Let $(M^n,g(t))_{t\in I}$, where $I$ is a compact interval, be a Ricci flow with bounded curvature. Let $t_0-\tau$, $t_0\in I$, where $\tau>0$, and let $d\nu:=(4\pi\tau)^{-\frac{n}{2}}e^{-f}dg$ be the conjugate heat kernel based at $(x_0,t_0)$. Furthermore, assume $R(\cdot,t_0-\tau)\geq R_{\operatorname{min}}$, then we have
\begin{eqnarray}
\int_M\tau(|\nabla f|^2+R)d\nu_{t_0-\tau}&\leq&\frac{n}{2},\label{Nashintegral_1}
\\
\int_M\left(f-\mathcal{N}_{x_0,t_0}(\tau)-\frac{n}{2}\right)^2d\nu_{t_0-\tau}&\leq& n-2R_{\operatorname{min}}\tau.\label{Nashintegral_2}
\end{eqnarray}
\end{Proposition}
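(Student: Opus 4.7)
The plan is to prove (\ref{Nashintegral_1}) from the identity that realizes $\mathcal{N}_{x_0,t_0}(\tau)$ as the running $\tau$-average of $\mathcal{W}_{x_0,t_0}(\tau)$, and then to derive (\ref{Nashintegral_2}) by applying the Hein--Naber Poincar\'e inequality (Proposition \ref{Hein-Naber-log-Sobolev}) to $f$ and feeding in (\ref{Nashintegral_1}).

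\textbf{Step 1: proof of (\ref{Nashintegral_1}).} Straight from the definitions in Section 2.2,
$$
\mathcal{W}_{x_0,t_0}(\tau)-\mathcal{N}_{x_0,t_0}(\tau)=\int_M \tau\big(|\nabla f|^2+R\big)\,d\nu_{t_0-\tau}-\tfrac{n}{2}.
$$
Under the bounded-curvature-on-compact-time-intervals hypothesis, the integration by parts at infinity needed for Perelman's identity $\frac{d}{d\tau}\big(\tau\,\mathcal{N}_{x_0,t_0}(\tau)\big)=\mathcal{W}_{x_0,t_0}(\tau)$ is permitted, so together with $\tau\,\mathcal{N}_{x_0,t_0}(\tau)\to 0$ as $\tau\to 0^+$ one has $\tau\,\mathcal{N}_{x_0,t_0}(\tau)=\int_0^\tau \mathcal{W}_{x_0,t_0}(s)\,ds$. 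Since $\mathcal{W}_{x_0,t_0}$ is nonincreasing in $\tau$ (noted in Section 2.2), the average dominates the endpoint value, giving $\mathcal{N}_{x_0,t_0}(\tau)\ge \mathcal{W}_{x_0,t_0}(\tau)$. Substituting into the displayed identity yields (\ref{Nashintegral_1}).

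\textbf{Step 2: proof of (\ref{Nashintegral_2}).} I would apply the Poincar\'e inequality (\ref{ineq: poincare}) to $u=f$. Since $\int_M f\,d\nu_{t_0-\tau}=\mathcal{N}_{x_0,t_0}(\tau)+\tfrac{n}{2}$, this yields
$$
\int_M \Big(f-\mathcal{N}_{x_0,t_0}(\tau)-\tfrac{n}{2}\Big)^2 d\nu_{t_0-\tau}\ \le\ 2\tau\int_M |\nabla f|^2\,d\nu_{t_0-\tau}.
$$
Then write $|\nabla f|^2=\big(|\nabla f|^2+R\big)-R$, use that $d\nu_{t_0-\tau}$ is a probability measure together with $R\ge R_{\operatorname{min}}$ on $M\times\{t_0-\tau\}$, and invoke (\ref{Nashintegral_1}) to bound the right-hand side by $n-2R_{\operatorname{min}}\tau$. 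This is exactly (\ref{Nashintegral_2}).

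\textbf{Main obstacle.} The serious technical issue (technical rather than conceptual) is that Proposition \ref{Hein-Naber-log-Sobolev} is stated for $u\in C_0^{0,1}(M)$, whereas $f=-\log K-\tfrac{n}{2}\log(4\pi\tau)$ is not compactly supported. I would overcome this by a standard cutoff approximation, replacing $f$ by $\eta_k f$ for a sequence of Lipschitz cutoffs $\eta_k$ exhausting $M$ with $|\nabla\eta_k|\to 0$, and then letting $k\to\infty$. The passage to the limit is justified by the polynomial-in-distance lower bound for $f$ and the Gaussian upper bound for $K(x_0,t_0\,|\,\cdot,t_0-\tau)$ available on Ricci flows with bounded curvature on compact time intervals (\cite[Corollary 26.26]{RFV3}, \cite{Zhq06,BCP10,EKNT08}), which imply $\int_M f^2\,d\nu_{t_0-\tau}+\int_M|\nabla f|^2\,d\nu_{t_0-\tau}<\infty$ and drive the truncation error to zero. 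These same estimates underwrite the integration by parts used in Step 1 (and are exactly what is invoked in the Appendix A proof of Proposition \ref{Hein-Naber-log-Sobolev} itself), so essentially no new analytic input is needed beyond what the paper already develops.
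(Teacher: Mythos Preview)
Your proposal is correct and follows essentially the same approach as the paper: both obtain (\ref{Nashintegral_1}) from $\mathcal{N}_{x_0,t_0}(\tau)\ge \mathcal{W}_{x_0,t_0}(\tau)$, and both derive (\ref{Nashintegral_2}) by applying the Hein--Naber Poincar\'e inequality to (a cutoff of) $f$ and then invoking (\ref{Nashintegral_1}) together with $R\ge R_{\operatorname{min}}$. The paper's cutoff argument applies $\phi^A$ to the centered function $f-\mathcal{N}_{x_0,t_0}(\tau)-\tfrac{n}{2}$ rather than to $f$ itself, and it uses both the Gaussian upper \emph{and} lower bounds \cite[Theorems 26.25, 26.31]{RFV3} to get the two-sided quadratic estimate $C^{-1}\dist^2-C\le f\le C\dist^2+C$ guaranteeing the finiteness of $\int f^2\,d\nu$; but these are cosmetic differences, and your ``Main obstacle'' paragraph identifies and resolves exactly the same issue.
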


\begin{proof}
(\ref{Nashintegral_1}) follows from the fact that the Nash entropy is always no smaller than Perelman's entropy, which is always true when the Ricci flow has bounded curvature. (\ref{Nashintegral_2}) follows from (\ref{Nashintegral_1}) together with Hein-Naber's Poincar\'e inequality (\ref{ineq: poincare}). To apply the Poincar\'e inequality to the function $f-\mathcal{N}_{x_0,t_0}(\tau)-\frac{n}{2}$, let us first of all recall the coarse heat kernel Gaussian upper and lower bounds (c.f. Theorem 26.25 and Theorem 26.31 in \cite{RFV3}), that is,
\begin{eqnarray*}
C^{-1}\dist^2_{t_0-\tau}(x,x_0)-C\leq f(x,t_0-\tau)\leq C\dist^2_{t_0-\tau}(x_0,x)+C \quad \text{ for all }\quad x\in M,
\end{eqnarray*}
where $C$ is a constant depending only on the curvature bounds on $M\times[t_0-\tau,t_0]$, the value of $\tau$, and $\Vol_{g_{t_0}}\big(B_{g_{t_0}}(x_0,1)\big)$. It then follows from some straightforward computations that
\begin{align}\label{H-N-bound}
&\big|\,\mathcal{N}_{x_0,t_0}(\tau)\,\big|=\left|\,\int_Mfd\nu_{t_0-\tau}-\frac{n}{2}\,\right|<\infty,
\\\nonumber
& \int_M\left(f-\mathcal{N}_{x_0,t_0}(\tau)-\frac{n}{2}\right)^2d\nu_{t_0-\tau}<\infty.
\end{align}
Let us then fix an arbitrary number $A>0$, and let $\phi^A:M\rightarrow[0,1]$ be a smooth cut-off function such that $\phi^A\equiv 1$ on $B_{t_0-\tau}(x_0,A)$, $\phi^A\equiv 0$ on $M\setminus B_{t_0-\tau}(x_0,2A)$, and $|\nabla\phi^A|_{g_{t_0-\tau}}\leq 2A^{-1}$ everywhere on $M$. We may then apply the PoincaR\'e inequality (\ref{ineq: poincare}) to $u=\phi^A\left(f-\mathcal{N}_{x_0,t_0}(\tau)-\frac{n}{2}\right)$ and obtain
\begin{align}\label{H-N-f}
&\int_M\left(\phi^A\left(f-\mathcal{N}_{x_0,t_0}(\tau)-\frac{n}{2}\right)\right)^2d\nu_{t_0-\tau}-\left(\int_M\phi^A\left(f-\mathcal{N}_{x_0,t_0}(\tau)-\frac{n}{2}\right)d\nu_{t_0-\tau}\right)^2
\\\nonumber
&\leq2\tau\int_M\left|\nabla \left(\phi^A\left(f-\mathcal{N}_{x_0,t_0}(\tau)-\frac{n}{2}\right)\right)\right|^2d\nu_{t_0-\tau}.
\end{align}
Taking $A\rightarrow\infty$ and using (\ref{H-N-bound}), we have the following computations for all terms in formula (\ref{H-N-f}).
\begin{align*}
    &\int_M\left(\phi^A\left(f-\mathcal{N}_{x_0,t_0}(\tau)-\frac{n}{2}\right)\right)^2d\nu_{t_0-\tau}\rightarrow \int_M\left(f-\mathcal{N}_{x_0,t_0}(\tau)-\frac{n}{2}\right)^2d\nu_{t_0-\tau},
    \\
    &\int_M\phi^A\left(f-\mathcal{N}_{x_0,t_0}(\tau)-\frac{n}{2}\right)d\nu_{t_0-\tau}\rightarrow \int_M\left(f-\mathcal{N}_{x_0,t_0}(\tau)-\frac{n}{2}\right)d\nu_{t_0-\tau}=0,
    \\
    &\int_M\left|\nabla \left(\phi^A\left(f-\mathcal{N}_{x_0,t_0}(\tau)-\frac{n}{2}\right)\right)\right|^2d\nu_{t_0-\tau}=\int_M|\nabla\phi^A|^2\left(f-\mathcal{N}_{x_0,t_0}(\tau)-\frac{n}{2}\right)^2d\nu_{t_0-\tau}
    \\
    &\quad +\int_M(\phi^A)^2|\nabla f|^2d\nu_{t_0-\tau}+2\int_M\langle\nabla\phi^A,\nabla f\rangle\phi^A\left(f-\mathcal{N}_{x_0,t_0}(\tau)-\frac{n}{2}\right)d\nu_{t_0-\tau},
\end{align*}
and for the last three terms, we have
\begin{align*}
    &\int_M(\phi^A)^2|\nabla f|^2d\nu_{t_0-\tau}\rightarrow \int_M|\nabla f|^2d\nu_{t_0-\tau},
    \\
    &\int_M|\nabla\phi^A|^2\left(f-\mathcal{N}_{x_0,t_0}(\tau)-\frac{n}{2}\right)^2d\nu_{t_0-\tau}\leq\frac{4}{A^2}\int_M\left(f-\mathcal{N}_{x_0,t_0}(\tau)-\frac{n}{2}\right)^2d\nu_{t_0-\tau}\rightarrow 0
    \\
    &\left|\int_M\langle\nabla\phi^A,\nabla f\rangle\phi^A\left(f-\mathcal{N}_{x_0,t_0}(\tau)-\frac{n}{2}\right)d\nu_{t_0-\tau}\right|
    \\
    &\quad\quad \leq\frac{2}{A}\left(\int_M|\nabla f|^2d\nu_{t_0-\tau}\right)^{\frac{1}{2}}\left(\int_M\left(f-\mathcal{N}_{x_0,t_0}(\tau)-\frac{n}{2}\right)^2d\nu_{t_0-\tau}\right)^{\frac{1}{2}}\rightarrow 0.
\end{align*}  
Hence, (\ref{Nashintegral_2}) follows from (\ref{H-N-f}).
 \end{proof}

\section{A pseudolocality for the Nash entropy} 

In this section we prove Theorem  \ref{Coro_nash_2} and Theorem \ref{Thm_nash}. These results shall be important tools in the proof of Theorem \ref{Theorem_main}. We emphasize again that though the statements of Theorem \ref{Coro_nash_2} and Theorem \ref{Thm_nash} are closely related,  yet without a Nash entropy bound, there is no Gaussian upper bound for the conjugate heat kernel, and it is not clear whether the $H_n$-centers are close to the $\ell$-centers (compare with Proposition \ref{H_n_l_n}). Therefore, Theorem \ref{Coro_nash_2} and Theorem \ref{Thm_nash} do not imply each other. 

\begin{proof}[Proof of Theorem \ref{Coro_nash_2}] In the proof, we shall use the lower case letter $c$ to represent a positive estimate constant which is intuitively small, and the capital letter $C$ to represent a positive estimate constant which is intuitively large. These constants may vary from line to line, we nevertheless use the same letters in order not to cause notational complexity. Throughout the proof, we define 
\begin{eqnarray*}
r:=\sqrt{t-s},\quad s':=s-\tfrac{r^2}{9}.
\end{eqnarray*}

First of all, we state a consequence of the Bishop-Gromov comparison theorem, which will be used in the estimation of the local upper and lower bounds for the conjugate heat kernel.
\\

\noindent\textbf{Claim 1.} There are positive constants $c$ and $C$, depending only on $\alpha$ and $C_0$, such that the following hold for all $\displaystyle y\in B_s(z,\tfrac{r}{3})$.
\begin{gather}\label{claim1_1}
|{\Ric}|\leq C_0r^{-2} \quad \text{ on } \quad B_s(y,\tfrac{r}{3})\times [s',s],
\\
 \Vol_{g_{s}}\left(B_s(y,\tfrac{r}{3})\right)\geq c\left(\tfrac{r}{3}\right)^n. \label{claim1_2}
\end{gather}

\begin{proof}[Proof of Claim 1]
(\ref{claim1_1}) is almost a restatement of an assumption of the theorem. We will prove (\ref{claim1_2}) by the Bishop-Gromov comparison theorem. Fixing any point $y\in B_s(z,\frac{r}{3})$, since $\Ric\geq -C_0r^{-2}$ on $B_s(z,r)$, we may argue as follows. Note that all the volumes are computed using $g(s)$.
\begin{eqnarray*}
\frac{\Vol\big(B_s(y,\tfrac{r}{3})\big)}{(\tfrac{r}{3})^n}&\geq& c\frac{\Vol\big(B_s(y,\tfrac{2r}{3} )\big)}{(\tfrac{2r}{3})^n}\geq c\frac{\Vol\big(B_s(z,\tfrac{r}{3} )\big)}{(\tfrac{2r}{3} )^n}
\\
&=&2^{-n}c\frac{\Vol\big(B_s(z,\tfrac{r}{3} )\big)}{(\tfrac{r}{3} )^n}\geq c\frac{\Vol\big(B_s(z,r)\big)}{ r^n}\geq c,
\end{eqnarray*}
where the constant $c$ depends only on $\alpha$ and $C_0$. 
\end{proof}

Next, we shall obtain some estimates for the conjugate heat kernel $K(x,t\,|\,\cdot,\cdot)$. The local upper bound follows straightforwardly from the standard mean value inequality, and the local lower bound is a consequence of (\ref{subsolution}).
\\

\noindent\textbf{Claim 2.} There is a constant $C$, depending only on $\alpha$ and $C_0$, such that
\begin{eqnarray}
K\left(x,t\,\left|\,y,s'\right.\right)\leq Cr^{-n}\leq\frac{C}{(t-s')^{\tfrac{n}{2}}}\quad \text{ for all }\quad  y\in B_s(z,\tfrac{r}{3}).
\end{eqnarray}
\begin{proof}[Proof of Claim 2]
Let us fix an arbitrary $y\in B_s(z,\frac{r}{3})$. Since $K(x,t\,|\,\cdot,\cdot)$ is a conjugate heat kernel, we have
\begin{eqnarray*}
\int_{s-\frac{1}{9}r^2}^s\int_{B_s(y,\frac{r}{3})}K(x,t\,|\,\cdot,\eta)dg_\eta d\eta\leq \int_{s-\frac{1}{9}r^2}^s\int_{M}K(x,t\,|\,\cdot,\eta)dg_\eta d\eta=\int_{s-\frac{1}{9}r^2}^s1 d\eta=\frac{1}{9}r^2.
\end{eqnarray*}

On the other hand, the bounds provided by (\ref{claim1_1}) and (\ref{claim1_2}) are sufficient for the implementation of the standard mean value inequality (c.f. \cite[Theorem 25.2]{RFV3}) on the parabolic disk $B_s(y,\frac{r}{3})\times[s-\frac{1}{9}r^2,s]$. We then obtain that
\begin{eqnarray*}
K\left(x,t\,\left|\,y,s'\right. \right)&\leq&\frac{C}{\frac{1}{9}r^2\Vol_{g_{s}}\big(B_s(y,\frac{r}{3})\big)}\int_{s-\frac{1}{9}r^2}^s\int_{B_s(y,\frac{r}{3})}K(x,t|\cdot,\eta)dg_\eta d\eta
\\
&\leq&\frac{C}{\frac{1}{9}r^2\cdot c(\frac{r}{3})^n}\cdot\tfrac{1}{9}r^2\leq C r^{-n}.
\end{eqnarray*}
\end{proof}

\noindent\textbf{Claim 3:} There is a constant $c$, depending only on $\alpha$ and $C_0$, such that 
\begin{eqnarray*}
K(x,t\,|\,y,s')\geq cr^{-n}\geq\frac{c}{(t-s')^{\frac{n}{2}}}\quad\text{ for all }\quad y\in B_s(z,\tfrac{r}{3}).
\end{eqnarray*}

\begin{proof}[Proof of Claim 3]
Let us fix an arbitrary $y\in B_s(z,\frac{1}{3}r)$, and let $\gamma_2:[0,\frac{1}{9}r^2]\rightarrow B_s(z,\frac{1}{3}r)$ be the minimal constant-speed $g(s)$-geodesic, such that $\gamma_2(0)=z$ and $\gamma_2(\frac{1}{9}r^2)=y$. Then, obviously we have
\begin{eqnarray*}
|\gamma_2'(\tau)|^2_{g_{s}}\leq\left(\frac{r/3}{r^2/9}\right)^2= 9r^{-2}\quad \text{ for all }\quad \tau\in[0,\tfrac{1}{9}r^2].
\end{eqnarray*}
Therefore, by the Ricci curvature bound in (\ref{claim1_1}), we have
\begin{eqnarray}\label{maximusnonsense001}
|\gamma'_2(\tau)|^2_{g_{s-\tau}}\leq|\gamma'_2(\tau)|^2_{g_{s}}\exp\left(\int_0^{\tfrac{1}{9}r^2}\left(\sup_{B_s\left(z,\tfrac{1}{3}r\right)\times[s',s]}|\Ric|\right)d\eta\right)\leq Cr^{-2},
\end{eqnarray}
for all $\tau\in[0,\tfrac{1}{9}r^2]$. Let $\gamma_1:[0,r^2]\rightarrow M$ be a minimal $\mathcal{L}$-geodesic from $(x,t)$ to $(z,s)$. Then, we may use
\begin{eqnarray*}
\gamma(\tau):=\left\{\begin{array}{lcr}
     \gamma_1(\tau) &\text{if} &\tau\in[0,r^2], \\
     \gamma_2(\tau-r^2)&\text{if}&\tau\in[r^2,\tfrac{10}{9}r^2]. 
\end{array}\right.
\end{eqnarray*}
as a test curve in (\ref{definitionofl}) and estimate
\begin{eqnarray*}
\ell_{x,t}(y',t-s')&=&\ell_{x,t}\left(y',\tfrac{10}{9}r^2\right)
\\
&\leq&\frac{1}{2\sqrt{\tfrac{10}{9}r^2}}\bigg\{\int_0^{r^2}\sqrt{\tau}\left(|\gamma_1'(\tau)|^2_{g_{t-\tau}}+R_{g_{t-\tau}}(\gamma_1(\tau))\right)d\tau
\\
&&\quad\quad\quad +\int_{0}^{\tfrac{1}{9}r^2}\sqrt{\tau+r^2}\left(|\gamma_2'(\tau)|^2_{g_{s-\tau}}+R_{g_{s-\tau}}(\gamma_2(\tau))\right)d\tau\bigg\}
\\
&\leq&\frac{3}{2\sqrt{10}r}\left(2\sqrt{r^2}\ell_{x,t}(z,r^2)+\int_0^{\frac{1}{9}r^2}\sqrt{\tau+r^2}Cr^{-2}d\tau\right)
\\
&\leq&\frac{3}{2\sqrt{10}r}\left(nr+Cr\right)
\\
&\leq& C.
\end{eqnarray*}
Here we have used the facts that $\ell_{x,t}(z,t-s)\leq\frac{n}{2}$ and that $\displaystyle\sup_{\tau\in[0,\frac{1}{9}r^2]}\left|R_{g_{s-\tau}}(\gamma_2(\tau))\right|\leq C_0r^{-2}$, where the latter is because $\gamma_2\subset B_s(z,\frac{r}{3})$. Claim 3 then follows from (\ref{subsolution}).
\end{proof}

With all the preparations above, we are ready to estimate the Nash entropy. By (\ref{claim1_1}), we have
\begin{eqnarray*}
\Vol_{g_{s'}}\big(B_s(z,\tfrac{r}{3})\big)&\geq&\Vol_{g_{s}}\big(B_s(z,\tfrac{r}{3})\big)\exp\left(-\int_0^{\tfrac{1}{9}r^2}\left(\sup_{B_s(z,\tfrac{r}{3})\times[s',s]}|R|\right)d\eta\right)
\\\nonumber
&\geq&cr^n\geq c(t-s')^{\tfrac{n}{2}}.
\end{eqnarray*}
Combining this with Claim 3, we have
\begin{eqnarray}\label{P1}
\nu_{x,t\,|\,s'}\big(B_s(z,\tfrac{r}{3})\big)=\int_{B_s(z,\tfrac{r}{3})}K(x,t\,|\,\cdot,s')dg_{s'}\geq c.
\end{eqnarray}
Since $[s-r^2,s]\subset I$, we have, by the maximum principle, 
\begin{eqnarray}\label{P2}
R(\cdot,s')\geq-\frac{n}{2(s'-(s-r^2))}\geq-Cr^{-2}\geq-\frac{C}{t-s'}.
\end{eqnarray}
Let us define function $f$ by
\begin{eqnarray*}
K(x,t\,|\,\cdot,s'):=\frac{1}{(4\pi(t-s'))^{\frac{n}{2}}}e^{-f(\cdot,s')}.
\end{eqnarray*}
Then, Claim 2 and Claim 3 imply
\begin{eqnarray}\label{P3}
-C\leq f(\cdot,s')\leq C\quad\text{ on }\quad B_s(z,\tfrac{r}{3}).
\end{eqnarray}

With the estimates (\ref{P1})---(\ref{P3}), we may compute by using (\ref{Nashintegral_2})
\begin{align*}
    & \quad\quad n+2(t-s')\cdot\frac{C}{t-s'}
    \\
    &\geq\int_M\left(f(\cdot,s')-\mathcal{N}_{x,t}(t-s')-\tfrac{n}{2}\right)^2d\nu_{x,t\,|\,s'}
    \\\nonumber
    &\geq\int_{B_s(z,\tfrac{r}{3})}\left(f(\cdot,s')-\mathcal{N}_{x,t}(t-s')-\tfrac{n}{2}\right)^2d\nu_{x,t\,|\,s'}
    \\\nonumber
    &=\int_{B_s(z,\tfrac{r}{3})}\bigg(f^2(\cdot,s')-2f\left(\mathcal{N}_{x,t}(t-s')+\tfrac{n}{2}\right)+\left(\mathcal{N}_{x,t}(t-s')+\tfrac{n}{2}\right)^2\bigg)d\nu_{x,t\,|\,s'}
    \\\nonumber
    &\geq\int_{B_s(z,\tfrac{r}{3})}\left(-f^2(\cdot,s')+\tfrac{1}{2}\left(\mathcal{N}_{x,t}(t-s')+\tfrac{n}{2}\right)^2\right)d\nu_{x,t\,|\,s'}
    \\\nonumber
    &\geq\tfrac{1}{2}\left(\mathcal{N}_{x,t}(t-s')+\tfrac{n}{2}\right)^2\nu_{x,t\,|\,s'}\big(B_s(z,\tfrac{r}{3})\big)-\nu_{x,t\,|\,s'}\big(B_s(z,\tfrac{r}{3})\big)\sup_{B_s(z,\tfrac{r}{3})}f^2
    \\\nonumber
    &\geq c\left(\mathcal{N}_{x,t}(t-s')+\tfrac{n}{2}\right)^2-C.
\end{align*}
It then follows that
\begin{eqnarray*}
\mathcal{N}_{x,t}(t-s)\geq\mathcal{N}_{x,t}(t-s')\geq -\beta.
\end{eqnarray*}
This finishes the proof of the theorem.
\end{proof}

Before we proceed to prove Theorem \ref{Thm_nash}, a few preparatory results are needed. These results are all proved in \cite{Bam20a} for Ricci flows on closed manifolds, but they can be easily generalized to our case without much effort. 

\begin{Proposition}[Corollary 5.11 in \cite{Bam20a}; c.f. Corollary 4.5 in \cite{MZ21}]\label{harnackofnashentropy}
Let $(M^n,g(t))_{t\in I}$ be a complete Ricci flow with bounded curvature within each time interval compact in $I$. Assume $s$, $t^*$, $t_1$, and $t_2\in I$ with $s<t^*\leq t_1, t_2$. Moreover, assume $R(\cdot,t^*)\geq R_{\operatorname{min}}$. Then, for any $x_1$ and $x_2\in M$, we have
\begin{eqnarray}
\N^*_s(x_1,t_1)-\N_s^*(x_2,t_2)\leq\left(\frac{n}{2(t^*-s)}-R_{\operatorname{min}}\right)^{\frac{1}{2}}\dist_{W_1}^{g_{t^*}}(\nu_{x_1,t_1\,|\,t^*},\nu_{x_2,t_2\,|\,t^*})+\frac{n}{2}\log\left(\frac{t_2-s}{t^*-s}\right),
\end{eqnarray}
where we have defined $\N_s^*(x,t):=\N_{x,t}(t-s)$.
\end{Proposition}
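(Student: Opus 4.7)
The plan is to reproduce Bamler's proof of Corollary 5.11 in \cite{Bam20a}, now available in our noncompact setting because every auxiliary ingredient has been established with bounded curvature on compact time intervals replacing the original closed-manifold assumption. In particular, the sharp gradient estimate for conjugate heat kernels was already generalized in \cite{MZ21}, and the Hein--Naber logarithmic Sobolev and Poincar\'e inequalities (Proposition \ref{Hein-Naber-log-Sobolev}) and the coarse/sharp Gaussian estimates for the kernel $K(x,t\,|\,\cdot,s)$ have also been generalized in the present paper. The strategy is to split the estimate into a purely spatial and a purely temporal piece, and then combine them through a coupling.

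For the spatial piece, I would establish that on the time slice $M_{t^*}$ the function $x\mapsto \mathcal{N}^*_s(x,t^*)$ is Lipschitz with constant $\big(\frac{n}{2(t^*-s)}-R_{\min}\big)^{1/2}$. Writing $\mathcal{N}^*_s(x,t^*)=-\int K(x,t^*\,|\,\cdot,s)\log K(x,t^*\,|\,\cdot,s)\,dg_s -\frac{n}{2}\log(4\pi(t^*-s))-\frac{n}{2}$, taking a spatial variation and applying Bamler's sharp gradient estimate for the conjugate heat kernel reduces the Lipschitz bound to controlling an integral of $|\nabla_y\log K|^2\,d\nu_{x,t^*\,|\,s}$; this integral is precisely $\int|\nabla f|^2\,d\nu$, which by Hein--Naber's Poincar\'e inequality (and the argument underlying Proposition \ref{Nashintegral}) is dominated by $\frac{n}{2(t^*-s)}-R_{\min}$.

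For the temporal piece, I would use the monotonicity of the Nash entropy along the conjugate heat flow (Perelman's monotonicity, valid for Ricci flows with bounded curvature on compact time intervals). The reproduction formula together with Jensen's inequality allows one to pass from the base time $t_i$ down to the common time $t^*$, and a direct accounting of the normalization $(4\pi\tau)^{-n/2}$ in the definition of the Nash entropy produces exactly the additive correction $\frac{n}{2}\log\frac{t_2-s}{t^*-s}$. Combining the spatial estimate with the temporal reduction along an optimal coupling $q\in\Pi(\nu_{x_1,t_1\,|\,t^*},\nu_{x_2,t_2\,|\,t^*})$ and invoking the Kantorovich--Rubinstein duality for $\dist_{W_1}^{g_{t^*}}$ yields the stated inequality.

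The main obstacle is verifying the validity of the integration by parts at infinity in two places: in the differential identity underlying Bamler's spatial gradient estimate, and in the monotonicity of the Nash entropy along the conjugate heat flow. Both rely on sufficiently fast decay of the conjugate heat kernel. For this I would invoke the Gaussian upper bound established as Theorem \ref{gaussianupperbound} of the present paper, combined with the coarse Gaussian lower bound from \cite[Theorem 26.31]{RFV3} already cited in the proof of Proposition \ref{Nashintegral}. Once these integrations by parts and the justification of passing limits are in place, the remainder of Bamler's argument transfers verbatim to the noncompact setting with bounded curvature on compact time intervals.
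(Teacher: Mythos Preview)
The paper does not supply its own proof of this proposition; it simply records the result as already established, citing \cite[Corollary 5.11]{Bam20a} for the closed case and \cite[Corollary 4.5]{MZ21} for the noncompact bounded-curvature setting. Your outline faithfully reconstructs the argument of those references---the Lipschitz bound on $x\mapsto \mathcal N^*_s(x,t^*)$ is exactly \cite[Theorem 5.9]{Bam20a} (generalized as \cite[Theorem 4.4]{MZ21} and invoked several times elsewhere in the present paper), and the temporal reduction via the reproduction formula plus Kantorovich--Rubinstein is the content of the corollary itself---so your approach is essentially that of the cited sources.
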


\begin{Theorem}[Theorem 8.1 in \cite{Bam20a}]\label{localnoncollapsing}
Let $(M^n,g(t))_{t\in I}$ be a complete Ricci flow with bounded curvature within each time interval compact in $I$. Assume $[t-r^2,t]\subset I$ and $R\geq R_{\operatorname{min}}$ on $M\times[t-r^2,t]$. Then for all $A\in[1,\infty)$, we have
$$\Vol_{g_t}\left(B_{g_t}(x,Ar)\right)\leq C(R_{\operatorname{min}}r^2)\exp(\N_{x,t}(r^2))\exp(C_0A^2)r^n,$$ where $C_0$ is a constant depending only on the dimension. 
\end{Theorem}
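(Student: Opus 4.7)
The plan is to prove the volume estimate by integrating a pointwise lower bound for a forward heat kernel against the conservation-of-mass identity. Set $s := t-r^{2}$ and let $(z,s)$ be an $H_{n}$-center of $(x,t)$. Because $(M,g(t))$ has bounded curvature on $[s,t]$, the forward heat equation on $M$ is stochastically complete, so $u(y,t):=K(y,t\,|\,z,s)$, viewed as a function of $y$ at time $t$, satisfies $\int_{M} u(y,t)\,dg_{t}(y)=1$. Thus it suffices to establish a pointwise lower bound
\[
K(y,t\,|\,z,s)\ \ge\ \frac{c}{r^{n}}\exp\!\bigl(\mathcal{N}_{x,t}(r^{2})\bigr)\exp(-C_{0}A^{2}),\qquad y\in B_{g_{t}}(x,Ar),
\]
where $c$ depends only on $n$ and $R_{\min}r^{2}$ and $C_{0}$ depends only on $n$; integrating over $B_{g_{t}}(x,Ar)$ and rearranging then gives the stated volume inequality.

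To produce this lower bound I would proceed in three steps. First, I apply Proposition \ref{harnackofnashentropy} in both directions with $s^{*}=s$, $t_{1}=t_{2}=t$, and the two orderings of $(x,y)$. Since the $1$-Wasserstein distance is monotone backward in time along the conjugate heat flow (inequality (\ref{monotoneofdW1})), $d_{W_{1}}^{g_{s}}(\nu_{x,t\,|\,s},\nu_{y,t\,|\,s})\le d_{t}(x,y)\le Ar$, and hence
\[
\bigl|\mathcal{N}_{y,t}(r^{2})-\mathcal{N}_{x,t}(r^{2})\bigr|\ \le\ C(R_{\min}r^{2})\,A.
\]
Second, combining this Wasserstein bound with the $H_{n}$-concentration of both conjugate heat kernels (Proposition \ref{measureaccumulationofHcenter}) shows that any $H_{n}$-center $z_{y}$ of $(y,t)$ at time $s$ satisfies $d_{s}(z,z_{y})\le C_{n}(A+1)r$. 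Third, I invoke the Gaussian lower bound for the conjugate heat kernel based at $(y,t)$---the reverse-direction analog of Theorem \ref{gaussianupperbound}---evaluated at the point $z$ with $d_{s}(z,z_{y})\le C_{n}Ar$, which gives
\[
K(y,t\,|\,z,s)\ \ge\ \frac{c}{r^{n}}\exp\!\bigl(\mathcal{N}_{y,t}(r^{2})\bigr)\exp(-C_{n}A^{2}).
\]
Combining with the Nash-entropy comparison from Step~1 closes the pointwise lower bound on $u$.

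I expect the main obstacle to be the Gaussian lower bound for the conjugate heat kernel in the noncompact setting, since this is the reverse-direction counterpart of Theorem \ref{gaussianupperbound}. In \cite{Bam20a} it is derived from a mixture of Harnack estimates and the variance inequality (\ref{Nashintegral_2}); adapting it here should follow the same pattern as the noncompact proof of the upper bound presented in Section 3, i.e., by verifying that the relevant maximum principle and integration-by-parts manipulations remain valid under bounded curvature on compact sub-intervals, via the estimates of \cite{RFV3} and \cite{Zhq06} already invoked there. A conceptually simpler alternative is Perelman's subsolution inequality from Proposition \ref{basic_l}, giving $K(y,t\,|\,z,s)\ge (4\pi r^{2})^{-n/2}e^{-\ell_{y,t}(z,r^{2})}$; one then bounds $\ell_{y,t}(z,r^{2})$ from above by a test $\mathcal{L}$-curve that first traverses $B_{g_{t}}(x,Ar)$ in short time (contributing $\lesssim A^{2}$ to the action) and then concatenates with a minimizing $\mathcal{L}$-geodesic from $(x,t)$ to an $\ell$-center at time $s$, whose action is at most $\tfrac{n}{2}$. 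The subtle point in either route is the same: extracting the multiplicative factor $\exp(\mathcal{N}_{x,t}(r^{2}))$ requires coupling the Nash entropy to the kernel at the concentration point, which is exactly what the Harnack plus variance package accomplishes.
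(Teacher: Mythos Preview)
Your approach is correct and is precisely Bamler's argument; the paper's ``proof'' is a meta-proof that lists the ingredients of \cite[Theorem 8.1]{Bam20a} and of the Gaussian lower bound \cite[Theorem 7.5]{Bam20a} (your Step~3) and checks that each holds in the noncompact bounded-curvature setting via \cite{MZ21} and Section~3, so you have reconstructed exactly what the paper is citing.

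One small correction: the claim $\int_M K(y,t\,|\,z,s)\,dg_t(y)=1$ is not quite right. Stochastic completeness gives mass conservation for the heat kernel against a \emph{fixed} volume form, but under Ricci flow $\partial_t dg_t=-R\,dg_t$, so for $u(\cdot,t)=K(\cdot,t\,|\,z,s)$ one has $\tfrac{d}{dt}\int_M u\,dg_t=-\int_M Ru\,dg_t\le -R_{\min}\int_M u\,dg_t$, hence only $\int_M u(\cdot,t)\,dg_t\le e^{-R_{\min}r^2}$. This is harmless, as the extra factor is absorbed into $C(R_{\min}r^2)$, but the appeal to stochastic completeness should be replaced by this computation.
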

\begin{proof}
In the original proof of \cite[Theorem 8.1]{Bam20a}, the results applied are \cite[Theorem 5.9]{Bam20a} and \cite[Theorem 7.5]{Bam20a}, and the former is already established in \cite[Theorem 4.4]{MZ21} under the assumption of bounded curvature within each compact time interval. For \cite[Theorem 7.5]{Bam20a}, the results applied in the original proof are \cite[Theorem 5.9]{Bam20a} (c.f. \cite[Theorem 4.4]{MZ21}), \cite[Corollary 5.11]{Bam20a} (c.f. \cite[Corollary 4.5]{MZ21}), the Gaussian concentration theorem (c.f. Proposition \ref{prop: gaussian concentration} above), \cite[Theorem 7.1]{Bam20a} (c.f. Theorem \ref{aCHKcoarseupperboundofbamler} above), and Proposition 4.2 (c.f. \cite[Proposition 3.4]{MZ21}). In conclusion, the proof in our case follows step by step after \cite[Theorem 8.1]{Bam20a}; all these results hold under our current assumption. Since the proofs of both \cite[Theorem 7.5]{Bam20a} and \cite[Theorem 8.1]{Bam20a} are short, we shall leave the verification to the readers.
\end{proof}

\begin{proof}[Proof of Theorem \ref{Thm_nash}]
Under the assumption of the theorem, we denote
$$r:=\sqrt{t-s}.$$
Since $s-r^2\in I$, by the maximum principle, we have $R(\cdot,s)\geq -\frac{n}{2r^2}$. Then, applying Theorem \ref{localnoncollapsing} to the point $(z,s)$ and $A=1$, we have
$$\alpha r^n\leq \operatorname{Vol}_{g_s}\big(B_s(z,r)\big)\leq C(-\tfrac{n}{2})\exp(\N_{z,s}(r^2))\exp(C_0)r^n,$$
that is,
\begin{eqnarray}\label{thecomparisonofthenashentropy}
\N^*_{s-r^2}(z,s)=\N_{z,s}(r^2)\geq -C_1,
\end{eqnarray}
where $C_1$ depends only on $\alpha$ and $n$.

Next, applying Proposition \ref{harnackofnashentropy} with $(z,s)\to (x_1,t_1)$, $(x,t)\to (x_2,t_2)$, $ s\to t^*$, $s-r^2\to s$, $-\frac{n}{2r^2}\to R_{\operatorname{min}}$, we have
\begin{eqnarray*}
\N_{s-r^2}^*(z,s)-\N_{s-r^2}^*(x,t)&\leq& \left(\frac{n}{r^2}\right)^{\frac{1}{2}}\dist_{W_1}^{g_s}(\nu_{x,t\,|\, s},\delta_z)+\frac{n}{2}\log\left(\frac{t-s+r^2}{s-s+r^2}\right)
\\
&\leq& \left(\frac{n}{r^2}\right)^{\frac{1}{2}}\sqrt{H_n(t-s)}+\frac{n}{2}\log\frac{2r^2}{r^2}
\\
&\leq& C(n),
\end{eqnarray*}
where we have applied the definition of the $H_n$-center and the fact $\dist_{W_1}^{g_s}(\nu_{x,t\,|\, s},\,\delta_z)\leq \sqrt{\operatorname{Var}(\nu_{x,t\,|\, s},\,\delta_z)}\leq\sqrt{H_n(t-s)}$ (c.f. \cite[Lemma 3.2]{Bam20a}). In combination with (\ref{thecomparisonofthenashentropy}), we have
$$\N_{x,t}(2r^2)=\N_{s-r^2}^*(x,t)\geq \N_{s-r^2}^*(z,s)-C(n)\geq -C_1-C(n).$$
This finishes the proof of the theorem.
\end{proof}

\section{Properties of ancient solutions satisfying Assumption B}

In this section, we prove several properties for an ancient solution satisfying (\ref{curvaturebound}) and Assumption B. We first of all verify that such an ancient solution must be locally uniformly Type I, which implies a shrinker structure on the blow-down limit in (\ref{smoothconvergence}). This fact together with Theorem \ref{Coro_nash_2} implies that the ancient solution has bounded Nash entropy. Finally, by using Theorem 3.1, we show that an $H_n$-center is always not far from an $\ell$-center.

Throughout this section, we let $(M,g(t))_{t\in(-\infty,0]}$ be an ancient solution satisfying (\ref{curvaturebound}) and Assumption B with respect to $\big(p_0,0,\{\tau_i\}_{i=1}^\infty,\{p_i\}_{i=1}^\infty\big)$. Let $(M_\infty,g_\infty(t))_{t\in[-2,-1]}$ be the limit Ricci flow in (\ref{smoothconvergence}). From the smooth convergence (\ref{smoothconvergence}), we may easily check that $(M,g(t))_{t\in(-\infty,0]}$ satisfies the locally uniformly Type I condition.

\begin{Proposition}\label{LUTypeI}
$(M,g(t))_{t\in(-\infty,0]}$ is locally uniformly Type I along the sequence $\{(p_i,-\tau_i)\}_{i=1}^\infty$.
\end{Proposition}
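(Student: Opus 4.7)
The plan is simply to verify, one by one, the four conditions in Definition \ref{def}; no genuinely difficult point arises because Assumption B has been designed precisely to make the Cheng-Zhang framework available. Conditions (1) and (3) are immediate. Condition (1) is just the assumption that the points $(p_i,-\tau_i)$ are $\ell$-centers of $(p_0,0)$, so $\ell_{p_0,0}(p_i,\tau_i)\le n/2$ for every $i$. Condition (3) follows from the curvature hypothesis (\ref{curvaturebound}): the function $t\mapsto \sup_M|\Ric_{g_t}|$ is locally bounded on $(-\infty,0]$ and therefore admits a continuous positive majorant $K:(-\infty,0]\to(0,\infty)$ with $\Ric_{g_t}\ge -K(t)g(t)$ everywhere.

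For condition (2), I will directly translate the smooth convergence (\ref{smoothconvergence}). Fix $r>0$. Smooth Cheeger-Gromov-Hamilton convergence provides open sets $U_i\subset M_\infty$ exhausting $M_\infty$, together with diffeomorphisms $\Phi_i:U_i\to \Phi_i(U_i)\subset M$ satisfying $\Phi_i(p_\infty)=p_i$ and $\Phi_i^*g_i(t)\to g_\infty(t)$ smoothly on every compact subset of $M_\infty\times[-2,-1]$. Once $i$ is large, $U_i$ contains the closed ball $\bar B_{g_\infty(-1)}(p_\infty,r+1)$, so the balls $B_{g_i(-1)}(p_i,r)$ lie inside $\Phi_i(U_i)$ and the smooth convergence of $\Phi_i^*g_i$ to $g_\infty$ on a compact neighborhood of this closed ball yields a uniform bound
\[
\sup_{B_{g_i(-1)}(p_i,r)\times[-2,-1]}|\Rm_{g_i(t)}|\le C(r).
\]
Undoing the parabolic rescaling $g_i(t)=\tau_i^{-1}g(\tau_it)$ converts this into $|\Rm_{g_t}|\le C(r)/\tau_i$ on $B_{-\tau_i}(p_i,r\sqrt{\tau_i})\times[-2\tau_i,-\tau_i]$ for all large $i$, which is exactly condition (2).

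Finally, condition (4) follows from the same convergence together with the well-known continuity of the injectivity radius under smooth Cheeger-Gromov-Hamilton convergence at a base point. Since $(M_\infty,g_\infty(-1))$ is a smooth complete Riemannian manifold, $\operatorname{inj}\bigl(g_\infty(-1),p_\infty\bigr)>0$, and the smooth convergence of $\Phi_i^*g_i(-1)$ to $g_\infty(-1)$ on a fixed neighborhood of $p_\infty$ gives $\operatorname{inj}(g_i(-1),p_i)\to \operatorname{inj}(g_\infty(-1),p_\infty)$. Rescaling back by $\tau_i^{1/2}$ produces
\[
\liminf_{i\to\infty}\tau_i^{-1/2}\operatorname{inj}\bigl(g(-\tau_i),p_i\bigr)>0,
\]
which is (4). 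I do not anticipate any real obstacle in this proof: the entire proposition amounts to unwinding Assumption B against Definition \ref{def}, and the only mildly non-tautological point is the last continuity property of the injectivity radius, which is a standard ingredient in the Cheeger-Gromov-Hamilton compactness theory.
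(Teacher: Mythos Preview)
Your proof is correct and follows essentially the same approach as the paper: both arguments verify the four conditions of Definition \ref{def} directly, extracting conditions (2) and (4) from the smooth Cheeger--Gromov--Hamilton convergence (via local curvature bounds and continuity of the injectivity radius at the base point, then rescaling), and reading off conditions (1) and (3) from the $\ell$-center assumption and the curvature bound (\ref{curvaturebound}), respectively.
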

\begin{proof}
By the following Cheeger-Gromov-Hamilton convergence in the statement of Assumption B
\begin{eqnarray*}
\big(M,g_i(t),p_i\big)_{t\in[-2,-1]}\xrightarrow{\makebox[1cm]{}} \big(M_\infty,g_\infty(t),p_\infty\big)_{t\in[-2,-1]},
\end{eqnarray*}
where $g_i(t)=\tau_i^{-1}g(\tau_it)$, we may find $\varepsilon_i\searrow 0$, open sets $U_i\subset M_\infty$, and diffeomorphisms 
\begin{eqnarray}\label{CGH-COMVERGENCE-11111}
\Psi_i:M_\infty\supset U_i\rightarrow V_i\subset M,
\end{eqnarray}
such that the following hold
\begin{gather}
     U_i\supset B_{g_{\infty, -1}}(p_\infty, \varepsilon_i^{-1}) \quad \text{ and }\quad V_i=\Psi_i(U_i)\supset B_{g_{i, -1}}(p_i, \varepsilon_i^{-1}),
    \\
    \Psi_i(p_\infty)=p_i,\label{base_B}
    \\
    \big\|\Psi_i^*g_i-g_\infty\big\|_{C^{[\varepsilon_i^{-1}]}(U_i\times[-2,-1])}\leq \varepsilon_i.\label{norm}
\end{gather}
As a consequence of (\ref{base_B}) and (\ref{norm}), we can find a positive function $C:(0,\infty)\rightarrow(0,\infty)$, such that
\begin{gather}\label{supranonsense008}
    \lim_{i\rightarrow\infty}\operatorname{inj}_{g_{i, -1}}(p_i)=\operatorname{inj}_{g_{\infty, -1}}(p_\infty)>0,\\
    \lim_{i\rightarrow\infty} \sup_{B_{g_{i, -1}}(p_i,r)\times[-2,-1]}|\Rm_{g_i}|=\sup_{B_{g_{\infty, -1}}(p_\infty,r)\times[-2,-1]}|\Rm_{g_{\infty, -1}}|\leq C(r)<\infty,\label{supranonsense009}
\end{gather}
where the latter holds for all $r>0$. Scaling (\ref{supranonsense008}) and (\ref{supranonsense009}) with factor $\tau_i$, we have verified Definition \ref{def}(2)(4) for $(M,g(t))$. Furthermore, by (\ref{curvaturebound}), we have a time-wise Ricci curvature lower bound for $g(t)$. Lastly, since $(p_i,-\tau_i)$'s are $\ell$-centers of $(p_0,0)$, we also have $\ell_{p_0,0}(p_i,\tau_i)\leq\frac{n}{2}$ for all $i$. Therefore, $(M,g(t))$ is locally uniformly Type I along the sequence $\{(p_i,-\tau_i)\}_{i=1}^\infty$.
\end{proof}

The following proposition and corollary are but restatements of Proposition \ref{TIshrinker} and Corollary \ref{noncollapsing}, respectively. Theorem \ref{Theorem_main}(1) follows from Proposition \ref{shrinkerstructure}.

\begin{Proposition}\label{shrinkerstructure}
The limit $(M_\infty,g_\infty(t))_{t\in[-2,-1]}$ has a shrinker structure. More precisely, $\ell_i\rightarrow\ell_\infty$ in the $C^\alpha_{\operatorname{loc}}$ sense or in the weak $*W^{1,2}_{\operatorname{loc}}$ sense on $M_\infty\times(1,2)$, where $\ell_i(\cdot,\tau)=\ell(\cdot,\tau_i\tau):M\rightarrow\mathbb{R}$ for $\tau\in[1,2]$,  $\ell$ is the reduced distance based at $(p_0,0)$ with respect to the Ricci flow $(M,g(t))$, and $\ell_\infty$ is a smooth shrinker potential satisfying
\begin{eqnarray*}
\Ric_{g_\infty}(-\tau)+\nabla^2\ell_\infty(\cdot,\tau)=\frac{1}{2\tau}g_\infty(-\tau)\quad\text{ for all }\quad \tau\in(1,2).
\end{eqnarray*}
\end{Proposition}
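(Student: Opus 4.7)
The plan is to combine Proposition \ref{LUTypeI} with Cheng-Zhang's asymptotic shrinker theorem (Proposition \ref{TIshrinker}). The key observation is that the smooth Cheeger-Gromov-Hamilton limit of a fixed sequence of pointed Ricci flows is unique up to isometry, so the limit produced by Cheng-Zhang's machinery can be identified with the limit $(M_\infty, g_\infty(t), p_\infty)$ already provided by Assumption B. This makes the desired proposition essentially a specialization of \cite{CZ20} to our setting.

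First, I would invoke Proposition \ref{LUTypeI} to conclude that $(M, g(t))_{t\in(-\infty,0]}$ is locally uniformly Type I along $\{(p_i, -\tau_i)\}_{i=1}^\infty$. This activates the estimates of \cite{CZ20}: by Proposition \ref{C0-l-estimate-LUTypeI}, for any $\varepsilon \in (0, 1/4)$ and any $r>0$, the pulled-back functions $\Psi_i^* \ell_i(\cdot, |t|)$ are uniformly bounded on $B_{g_{\infty, -1}}(p_\infty, r) \times [-2, -1-\varepsilon]$, where $\Psi_i : U_i \subset M_\infty \to V_i \subset M$ are the diffeomorphisms realizing the Cheeger-Gromov-Hamilton convergence in Assumption B. The standard gradient estimates for the reduced distance (also established in \cite{CZ20}) further yield locally uniform $W^{1,2}$ bounds for the pullbacks.

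By Arzel\`a-Ascoli together with weak compactness in $W^{1,2}$, a subsequence of $\Psi_i^* \ell_i$ then converges in the $C^{0,\alpha}_{\operatorname{loc}}$ sense and weakly$^*$ in $W^{1,2}_{\operatorname{loc}}$ to a function $\ell_\infty$ on $M_\infty \times (1,2)$. Proposition \ref{TIshrinker} guarantees that $\ell_\infty$ is smooth and satisfies the shrinker equation
\[
\Ric_{g_\infty}(-\tau) + \nabla^2 \ell_\infty(\cdot, \tau) = \frac{1}{2\tau}\, g_\infty(-\tau)
\]
on $M_\infty \times (1, 2)$, which is precisely the assertion of the proposition.

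The main point I see as requiring care, rather than as a genuine obstacle, is the identification of the two limits: Proposition \ref{TIshrinker} as stated in \cite{CZ20} extracts its own limiting pointed flow from the locally uniformly Type I sequence, while Assumption B postulates a limit $(M_\infty, g_\infty(t), p_\infty)$ a priori. Since both limits are smooth Cheeger-Gromov-Hamilton limits of the same sequence $(M, g_i(t), p_i)$ under Type I scaling, and such limits are unique up to isometry fixing the base point, one can simply transport the soliton potential from Cheng-Zhang's abstract limit to the specific $M_\infty$ appearing in Assumption B. Once this identification is recorded, the shrinker structure is inherited verbatim from \cite{CZ20}.
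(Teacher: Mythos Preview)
Your proposal is correct and follows the same approach as the paper: the paper itself records that Proposition~\ref{shrinkerstructure} is simply a restatement of Proposition~\ref{TIshrinker} once Proposition~\ref{LUTypeI} has been established. Your additional remark about identifying the Cheng--Zhang limit with the limit from Assumption~B via uniqueness of Cheeger--Gromov--Hamilton limits is the only detail the paper leaves implicit, and you handle it correctly.
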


\begin{Corollary}\label{LUTypeInoncollapsed}
$(M,g(t))_{t\in(-\infty,0)}$ is weakly $\kappa$-noncollapsed on all scales. As a consequence, $(M,g(t))_{t\in(-\infty,0]}$ has bounded geometry in each compact time-interval.
\end{Corollary}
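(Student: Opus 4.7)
The proof will be essentially a combination of the two results just proved in this section, together with the earlier Corollary \ref{noncollapsing} imported from \cite{CZ20}. The plan is to first extract $\kappa$-noncollapsing and then bootstrap it (together with (\ref{curvaturebound})) to bounded geometry on compact time intervals.

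For the first conclusion, I would invoke Proposition \ref{LUTypeI} to say that $(M,g(t))$ is locally uniformly Type I along $\{(p_i,-\tau_i)\}_{i=1}^\infty$. Then Corollary \ref{noncollapsing} applies directly and yields that $(M,g(t))_{t\in(-\infty,0]}$ is weakly $\kappa$-noncollapsed on all scales, where $\kappa$ depends only on the entropy $\mu_\infty$ of the asymptotic shrinker furnished by Proposition \ref{shrinkerstructure}. This step is immediate and requires no additional calculation.

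For the ``bounded geometry'' consequence, I would argue as follows. Fix a compact time interval $[t_1,t_2]\subset(-\infty,0]$. By assumption (\ref{curvaturebound}), the sectional curvature satisfies $|{\Rm}_{g_t}|\leq K$ on $M\times[t_1-1,t_2]$ for some constant $K=K(t_1,t_2)$, which provides both the upper and lower sectional curvature bounds required in the definition of bounded geometry. For the uniform volume lower bound on unit balls, choose a fixed radius $r_0=\min\{1,K^{-1/2},\sqrt{t_1-(t_1-1)}\}$ so that for every $(x,t)\in M\times[t_1,t_2]$ the parabolic neighborhood $B_t(x,r_0)\times[t-r_0^2,t]$ is contained in $M\times[t_1-1,t_2]$ and has $|{\Rm}|\leq r_0^{-2}$ on it. The weak $\kappa$-noncollapsing property then gives $\operatorname{Vol}_{g_t}(B_t(x,r_0))\geq\kappa r_0^n$. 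Because $r_0$ is a fixed universal constant on $[t_1,t_2]$ and the Ricci curvature is uniformly bounded below on this interval, the Bishop--Gromov comparison theorem upgrades this to a uniform positive lower bound on $\operatorname{Vol}_{g_t}(B_t(x,1))$, completing the verification of bounded geometry.

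The main (and only) conceptual point is to check that the locally uniformly Type I hypothesis of Corollary \ref{noncollapsing} is indeed in force, which was already done in Proposition \ref{LUTypeI}; everything else is routine bookkeeping of constants. There is no serious obstacle here, and the corollary should be obtained in just a few lines.
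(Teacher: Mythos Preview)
Your proposal is correct and follows exactly the approach the paper intends: the paper simply states that Proposition \ref{shrinkerstructure} and Corollary \ref{LUTypeInoncollapsed} are restatements of Proposition \ref{TIshrinker} and Corollary \ref{noncollapsing}, respectively, so your invocation of Proposition \ref{LUTypeI} followed by Corollary \ref{noncollapsing} is precisely what is meant. Your explicit derivation of bounded geometry from $\kappa$-noncollapsing plus (\ref{curvaturebound}) via Bishop--Gromov is routine and more detailed than the paper bothers to write, but entirely in line with the intended argument.
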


\textbf{Remark:} From this point on, any ancient solution satisfying (\ref{curvaturebound}) and Assumption B is understood to have bounded geometry within each compact time interval.

\begin{Theorem}\label{entropybound}
The Nash entropy of $(M,g(t))_{t\in(-\infty, 0]}$ based at $(p_0,0)$ is bounded independent of time, that is,
\begin{eqnarray*}
\mathcal{N}_{p_0,0}(\tau)\geq -Y\quad\text{ for all }\quad\tau>0,
\end{eqnarray*}
where $Y\in(0,\infty)$ depends on the local geometry bounds around the $\ell$-centers $(p_i,-\tau_i)$.
\end{Theorem}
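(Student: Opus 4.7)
The plan is to invoke Theorem \ref{Coro_nash_2} with base point $(x,t)=(p_0,0)$ and $\ell$-center $(z,s)=(p_i,-\tau_i)$ for each $i$, extracting the required local curvature bound and volume lower bound from the smooth Cheeger--Gromov--Hamilton convergence in Assumption B. The monotonicity of $\mathcal{N}_{p_0,0}(\tau)$ in $\tau$ will then propagate the uniform bound at the discrete scales $\tau_i$ to all $\tau>0$.

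First I would unpack the convergence (\ref{smoothconvergence}). Exactly as in the proof of Proposition \ref{LUTypeI}, for every $r>0$ there is a constant $C(r)<\infty$ and a constant $v(r)>0$ such that, for all sufficiently large $i$,
\[
\sup_{B_{g_{i,-1}}(p_i,r)\times[-2,-1]}|{\Ric}_{g_i}|\le C(r),\qquad
\Vol_{g_{i,-1}}\!\big(B_{g_{i,-1}}(p_i,r)\big)\ge v(r);
\]
indeed the right-hand sides can be chosen close to the corresponding quantities for $g_\infty$ at $p_\infty$. Taking $r=1$ and undoing the Type I scaling $g_i(t)=\tau_i^{-1}g(\tau_i t)$ gives
\[
\sup_{B_{-\tau_i}(p_i,\sqrt{\tau_i})\times[-2\tau_i,-\tau_i]}|{\Ric}_{g_t}|\le \frac{C_0}{\tau_i},\qquad
\Vol_{g_{-\tau_i}}\!\big(B_{-\tau_i}(p_i,\sqrt{\tau_i})\big)\ge \alpha\,\tau_i^{n/2},
\]
with $C_0,\alpha$ independent of $i$. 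These are exactly the hypotheses of Theorem \ref{Coro_nash_2} with $t=0$, $s=-\tau_i$, and $t-s=\tau_i$ (note that $s-(t-s)=-2\tau_i\in(-\infty,0]$, and that by definition $(p_i,-\tau_i)$ is an $\ell$-center of $(p_0,0)$).

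Applying Theorem \ref{Coro_nash_2} therefore yields a constant $\beta=\beta(n,\alpha,C_0)$, independent of $i$, such that
\[
\mathcal{N}_{p_0,0}(\tau_i)\ge -\beta\qquad\text{for every sufficiently large } i.
\]
Finally, since $\tau_i\nearrow\infty$ and $\mathcal{N}_{p_0,0}(\tau)$ is monotonically non-increasing in $\tau$ (recall that the Nash entropy is increasing in time; this monotonicity is valid here thanks to (\ref{curvaturebound})), for any fixed $\tau>0$ we may pick $i$ large enough that $\tau_i\ge\tau$ and conclude
\[
\mathcal{N}_{p_0,0}(\tau)\ge \mathcal{N}_{p_0,0}(\tau_i)\ge -\beta.
\]
Setting $Y:=\beta$ proves the theorem.

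The step that requires the most care is not any single estimate but the bookkeeping of dependencies: one must verify that the local geometry bounds extracted from (\ref{smoothconvergence}) are uniform in $i$, which is what allows $\beta$ in Theorem \ref{Coro_nash_2} to be chosen independent of $i$ and thus yields a $\tau$-independent lower bound $Y$. Everything else is a direct appeal to the pseudolocality result of Section 4 together with the monotonicity of $\mathcal{N}_{p_0,0}$.
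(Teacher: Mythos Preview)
Your proposal is correct and follows essentially the same approach as the paper: extract uniform local geometry bounds around $(p_i,-\tau_i)$ from the smooth convergence (\ref{smoothconvergence}), apply Theorem \ref{Coro_nash_2} to obtain $\mathcal{N}_{p_0,0}(\tau_i)\ge -\beta$ with $\beta$ independent of $i$, and then invoke monotonicity of the Nash entropy to extend the bound to all $\tau>0$. The only cosmetic difference is that the paper applies Theorem \ref{Coro_nash_2} to the rescaled flow $g_i(t)$ at scale $t-s=1$ and then uses the scaling invariance $\mathcal{N}^i_{p_0,0}(1)=\mathcal{N}_{p_0,0}(\tau_i)$, whereas you undo the scaling first and apply the theorem directly to the original flow at scale $t-s=\tau_i$; this is the same argument.
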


\begin{proof}
By the assumption (\ref{curvaturebound}), we have that each $(M,g_i(t))_{t\in[-2,0]}$ is a complete Ricci flow with bounded curvature, where $g_i(t):=\tau_i^{-1}g(\tau_it)$. Since $(p_i,-1)$ is an $\ell$-center of $(p_0,0)$ with respect to the Ricci flow $(M,g_i(t))$, and since (\ref{supranonsense008}) and (\ref{supranonsense009}) imply that
\begin{gather*}
\big|\Rm_{g_i}\big|\leq C\quad\text{ on }\quad B_{g_{i,-1}}(p_i,1)\times[-2,-1],
\\
\Vol_{g_{i,-1}}(p_i,1)\geq c,
\end{gather*}
where $c$ and $C$ are positive constants independent of $i$, we have, by Theorem \ref{Coro_nash_2},
\begin{eqnarray*}
\mathcal{N}^i_{p_0,0}(1)\geq -Y \quad\text{ for all }\quad i\in\mathbb{N},
\end{eqnarray*}
where $\mathcal{N}^i$ is the Nash entropy of the Ricci flow $(M,g_i(t))$ and $Y$ depends only on $c$ and $C$. Therefore, by the scaling property of the Nash entropy, we have $$\mathcal{N}_{p_0,0}(\tau_i)\geq -Y\quad\text{ for all }\quad i,$$and the theorem follows from the monotonicity of $\mathcal{N}$.
\end{proof}

Next, we show that the $H_n$-centers and the $\ell$-centers are not far from each other, and consequently the base points in (\ref{smoothconvergence}) can be replaced by  $H_n$-centers. First of all, the following result is merely a consequence of Theorem \ref{gaussianupperbound} and Theorem \ref{entropybound}.

\begin{Proposition}\label{gauss0}
There exists a positive number $C$ depending only on the number $Y$ in the conclusion of Theorem \ref{entropybound}, such that
\begin{eqnarray}\label{gauss00}
K(p_0,0\,|\,x,t)\leq\frac{C}{|t|^{\frac{n}{2}}}\exp\left(-\frac{\dist_{t}^2(x,z)}{C|t|}\right)\quad\text{ for all }\quad x\in M\quad\text{ and }\quad t\in (-\infty,0),
\end{eqnarray}
where $(z,t)$ is an $H_n$-center of $(p_0,0)$.
\end{Proposition}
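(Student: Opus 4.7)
The plan is to deduce the statement directly from the Gaussian upper bound of Theorem \ref{gaussianupperbound} together with the Nash entropy lower bound of Theorem \ref{entropybound}, after checking that the constant $C(R_{\min}(t-s),\varepsilon)$ appearing in Theorem \ref{gaussianupperbound} can be taken uniformly in $t$.

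First I would fix an arbitrary $t<0$ and apply Theorem \ref{gaussianupperbound} on the compact time interval $[t,0]$, with the ``later'' point taken to be $(p_0,0)$, the ``earlier'' point taken to be a generic $x\in M$, and $(z,t)$ an $H_n$-center of $(p_0,0)$. By Corollary \ref{LUTypeInoncollapsed}, the ancient flow has bounded curvature on $[t,0]$, so the hypotheses of Theorem \ref{gaussianupperbound} are met; the conclusion reads
\begin{equation*}
K(p_0,0\,|\,x,t)\le \frac{C_1\exp\bigl(-\mathcal{N}_{p_0,0}(|t|)\bigr)}{|t|^{n/2}}\exp\left(-\frac{\dist_t^2(z,x)}{(8+\varepsilon)|t|}\right),
\end{equation*}
where $C_1=C_1\bigl(R_{\min}\cdot|t|,\,\varepsilon\bigr)$ and $R_{\min}$ is a scalar curvature lower bound on $M\times[t,0]$.

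The key observation needed to make $C_1$ independent of $t$ is that for an ancient Ricci flow with bounded curvature on each compact time interval, the maximum principle applied to the evolution inequality $\partial_t R\ge \Delta R+\tfrac{2}{n}R^2$ yields $R\ge 0$ everywhere on $M\times(-\infty,0]$ (this is the standard Chen-type argument, valid in our noncompact setting thanks to the validity of the maximum principle under bounded curvature). Consequently we may take $R_{\min}=0$, so that $R_{\min}\cdot|t|=0$ and $C_1$ depends only on $n$ and the fixed choice of $\varepsilon$ (say $\varepsilon=1$).

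Finally I would invoke Theorem \ref{entropybound}, which gives $\mathcal{N}_{p_0,0}(|t|)\ge-Y$ uniformly in $t$, hence $\exp(-\mathcal{N}_{p_0,0}(|t|))\le e^Y$. Substituting this into the displayed inequality and setting $C:=\max\bigl\{C_1 e^Y,\,8+\varepsilon\bigr\}$ yields the desired bound (\ref{gauss00}). The proof is therefore essentially a two-line combination of the two cited theorems; the only non-bookkeeping step, and the one most likely to require a brief remark in the write-up, is the uniform non-negativity $R\ge 0$ that decouples the constant from the length of the time interval.
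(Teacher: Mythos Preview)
Your proposal is correct and matches the paper's approach: the paper simply states that the proposition is ``merely a consequence of Theorem \ref{gaussianupperbound} and Theorem \ref{entropybound}'' without further detail. Your explicit justification that $R\ge 0$ on an ancient flow (so that $R_{\min}\cdot|t|=0$ and the constant $C_1$ is uniform in $t$) is a useful clarification that the paper leaves implicit; this is indeed the standard Chen argument \cite{CBl09} already cited elsewhere in the paper.
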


Let us find another sequence of points $\{z_i\}_{i=0}^\infty\subset M$, such that each $(z_i,-\tau_i)$ is an $H_n$-centers of $(p_0,0)$. For $H_n$-concentrated Ricci flows (this category obviously contains all Ricci flows with bounded curvature 
in compact time intervals; see the remark below Proposition \ref{measureaccumulationofHcenter}), 
one may always find an $H_n$-center at each time for any base point. We then have the following estimate.

\begin{Proposition} \label{H_n_l_n}
There is a constant $C$ depending on the number Y in the conclusion of Theorem \ref{entropybound}, such that the following holds.
\begin{eqnarray}\label{boundeddist}
\dist_{-\tau_i}(p_i,z_i)\leq C\sqrt{\tau_i} \quad\text{ for all }\quad i.
\end{eqnarray}
\end{Proposition}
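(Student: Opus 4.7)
The plan is to directly combine the lower bound on the conjugate heat kernel at the $\ell$-center coming from Proposition \ref{basic_l} with the Gaussian upper bound at the $H_n$-center coming from Proposition \ref{gauss0}. Since both bounds evaluate $K(p_0,0\,|\,p_i,-\tau_i)$, pinching them will force $p_i$ to be geometrically close to $z_i$ on the natural scale $\sqrt{\tau_i}$.

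More concretely, first I would apply Proposition \ref{basic_l} with base point $(p_0,0)$ and the fact that $(p_i,-\tau_i)$ is an $\ell$-center, i.e.\ $\ell_{p_0,0}(p_i,\tau_i)\le n/2$, to obtain the pointwise lower bound
\[
(4\pi\tau_i)^{-n/2}e^{-n/2}\;\le\;(4\pi\tau_i)^{-n/2}e^{-\ell_{p_0,0}(p_i,\tau_i)}\;\le\;K(p_0,0\,|\,p_i,-\tau_i).
\]
Second, I would apply Proposition \ref{gauss0} at the point $(p_i,-\tau_i)$ with $H_n$-center $(z_i,-\tau_i)$ to obtain the upper bound
\[
K(p_0,0\,|\,p_i,-\tau_i)\;\le\;\frac{C}{\tau_i^{n/2}}\exp\!\left(-\frac{\dist_{-\tau_i}^2(p_i,z_i)}{C\tau_i}\right),
\]
where $C$ depends only on the constant $Y$ from Theorem \ref{entropybound}, and hence only on the local geometry around the $\ell$-centers.

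Combining the two inequalities, the $\tau_i^{-n/2}$ factors cancel and the dimensional constants consolidate into a single constant, yielding
\[
\exp\!\left(-\frac{\dist_{-\tau_i}^2(p_i,z_i)}{C\tau_i}\right)\;\ge\;c>0,
\]
with $c$ depending only on $n$ and $Y$. Taking logarithms of both sides and rearranging gives $\dist_{-\tau_i}^2(p_i,z_i)\le C'\tau_i$, which is the required bound (\ref{boundeddist}) after taking square roots.

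I do not anticipate any serious obstacle here: the core of the argument is purely algebraic, and the only non-trivial input is Proposition \ref{gauss0}, whose validity in our noncompact setting has already been secured via the Nash entropy lower bound of Theorem \ref{entropybound} together with the generalization of \cite[Theorem 7.2]{Bam20a} proved in Section 3. The sole subtlety worth flagging is that the Gaussian upper bound is stated with an $H_n$-center as its center of concentration, so the bound must be applied with this center being the chosen point $z_i$, not the $\ell$-center $p_i$; this is precisely what produces the distance term $\dist_{-\tau_i}(p_i,z_i)$ that we wish to estimate.
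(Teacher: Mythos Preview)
Your proposal is correct and follows essentially the same approach as the paper: both combine the lower bound $K(p_0,0\,|\,p_i,-\tau_i)\ge(4\pi\tau_i)^{-n/2}e^{-n/2}$ from Proposition~\ref{basic_l} and the $\ell$-center condition with the Gaussian upper bound of Proposition~\ref{gauss0}, then cancel the $\tau_i^{-n/2}$ factors and take logarithms to obtain $\dist_{-\tau_i}^2(p_i,z_i)\le C\tau_i$.
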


\textbf{Remark:} Indeed, the same argument of the proof below shows that, for any $t<0$, we have $$\dist_t(p,z)\leq C\sqrt{|t|},$$where $(p,t)$ and $(z,t)$ are an $\ell$-center and an $H_n$-center of $(p_0,0)$, respectively.

\begin{proof}
Combining (\ref{subsolution}) and (\ref{gauss00}), we have
\begin{align*}
   &\frac{C}{\tau_i^{\frac{n}{2}}}\exp\left(-\frac{\dist_{-\tau_i}^2(p_i,z_i)}{C\tau_i}\right) \geq  K(p_0,0\,|\,p_i,-\tau_i)\geq\frac{1}{(4\pi\tau_i)^{\frac{n}{2}}}e^{-\ell_{p_0,0}(p_i,\tau_i)}\geq \frac{1}{(4\pi\tau_i)^{\frac{n}{2}}}e^{-\frac{n}{2}},
\end{align*}
where we have used the fact that $\ell_{p_0,0}(p_i,\tau_i)\leq\frac{n}{2}$. It then follows that
\begin{eqnarray*}
\dist_{-\tau_i}^2(p_i,z_i)\leq C\tau_i\quad\text{ for all }\quad i.
\end{eqnarray*}
\end{proof}

\begin{Corollary}\label{change-l-center-to-Hn-center}
In the convergence in (\ref{smoothconvergence}), possibly after passing to a subsequence, one may replace $p_i$ by $z_i$. If so, then the limit Ricci flow is still the same, whose base point is correspondingly replace by some $z_\infty$ satisfying $\dist_{g_{\infty, -1}}(p_\infty,z_\infty)\leq C$, where $C$ is the constant in (\ref{boundeddist}).
\end{Corollary}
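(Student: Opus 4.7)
The plan is to show that the sequence $\{z_i\}$ behaves, up to the diffeomorphisms of Cheeger-Gromov-Hamilton convergence, like a bounded sequence in $M_\infty$, so that after subsequencing it identifies a limiting base point $z_\infty$ near $p_\infty$.

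First I would translate Proposition \ref{H_n_l_n} to the scale of $g_i$. Since $g_i(-1)=\tau_i^{-1}g(-\tau_i)$, distances scale by $\tau_i^{-1/2}$, and hence
\[
\dist_{g_{i,-1}}(p_i,z_i)=\tau_i^{-1/2}\dist_{-\tau_i}(p_i,z_i)\le C
\]
with $C$ the constant from (\ref{boundeddist}). Next I would feed $z_i$ through the diffeomorphisms $\Psi_i:U_i\to V_i$ of (\ref{CGH-COMVERGENCE-11111}): for $i$ large enough, $V_i$ contains $B_{g_{i,-1}}(p_i,C+1)$, so $z_i\in V_i$ and $w_i:=\Psi_i^{-1}(z_i)\in U_i$ is defined and, by (\ref{norm}), satisfies $\dist_{g_{\infty,-1}}(p_\infty,w_i)\le C+o(1)$.

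Because $(M_\infty,g_\infty(-1))$ is complete and $\{w_i\}$ lies in a compact ball around $p_\infty$, I would pass to a subsequence to obtain $w_i\to z_\infty$ in $M_\infty$ with $\dist_{g_{\infty,-1}}(p_\infty,z_\infty)\le C$. Then I would verify the pointed CGH convergence $(M,g_i(t),z_i)_{t\in[-2,-1]}\to(M_\infty,g_\infty(t),z_\infty)_{t\in[-2,-1]}$ by simply re-centering the old convergence: any ball $B_{g_{\infty,-1}}(z_\infty,R)$ is contained in $B_{g_{\infty,-1}}(p_\infty,R+C+1)$ for all large $i$, which already lies in $U_i$, and the same diffeomorphisms $\Psi_i$ (restricted and, if desired, composed with an auxiliary diffeomorphism moving $\Psi_i(z_\infty)$ exactly to $z_i$ via the short geodesic between them, which is small in norm) furnish the required exhaustion with $C^{[\varepsilon_i^{-1}]}$-closeness of pulled-back metrics inherited from (\ref{norm}).

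The only mildly technical point is that the definition of pointed CGH convergence requires the diffeomorphisms to send the base point to the base point exactly. The fix is standard: since $\dist_{g_{i,-1}}(\Psi_i(z_\infty),z_i)\to 0$, one precomposes $\Psi_i$ with a smooth diffeomorphism of $M_\infty$ supported in a small neighborhood of $z_\infty$ that moves $z_\infty$ to $\Psi_i^{-1}(z_i)$ and is $C^k$-close to the identity for every fixed $k$; this perturbation does not destroy (\ref{norm}). This yields the desired CGH convergence with base points $z_i$ and $z_\infty$, and the limit Ricci flow $(M_\infty,g_\infty(t))_{t\in[-2,-1]}$ is unchanged.
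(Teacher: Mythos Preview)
Your proof is correct and follows the same approach as the paper: rescale Proposition \ref{H_n_l_n} to obtain $\dist_{g_{i,-1}}(p_i,z_i)\le C$, then invoke the definition of Cheeger--Gromov--Hamilton convergence. The paper's proof is a two-line version of exactly this argument, while you have spelled out the details (pulling back via $\Psi_i$, extracting a subsequential limit $z_\infty$, and handling the base-point issue) that the paper leaves implicit.
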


\begin{proof}
After the parabolic scaling $g_i(t)=\tau_i^{-1}g(\tau_it)$, (\ref{boundeddist}) becomes
\begin{eqnarray*}
\dist_{g_{i,-1}}(p_i,z_i)\leq C\quad\text{ for all }\quad i.
\end{eqnarray*}
The corollary then follows from the definition of Cheeger-Gromov-Hamilton convergence.
\end{proof}

\section{Smooth convergence and $\mathbb{F}$-convergence}

For the purpose of proving Theorem \ref{Theorem_main}(3), we shall prove a more general statement in this section, namely, that from a sequence of complete smooth Ricci flows, if, using a sequence of $H_n$-centers as base points, one obtains a smooth Cheeger-Gromov-Hamilton limit, then, this smooth limit is the same as the $\mathbb{F}$-limit.

Let $\{(M_i,g_i(t))_{t\in(-T_i,0]}\}_{i=1}^\infty$ be a sequence of complete Ricci flows,  each one with bounded curvature within each compact time interval, where $\infty\geq T_i>c>0$ for some constant $c$. For each $i$, let $p_i\in M_i$ be a fixed point and $(\nu^i_{t})_{t\in(-T_i,0]}$ the conjugate heat kernel on $(M_i,g_i)$ based at $(p_i,0)$. According to \cite[Theorem 7.4, Corollary 7.5]{Bam20b}, 
$\displaystyle\left\{\big(M_i,g_i(t))_{t\in(-T_i,0]},(\nu^i_t)_{t\in(-T_i,0]}\big)\right\}_{i=1}^\infty$ has an $\mathbb{F}$-convergent  subsequence. 

\begin{Theorem}\label{convergence}
Assume that there exist a
compact
interval $I=[a,b]\subset (-T_\infty,0)$, where $T_\infty:=\limsup_{i\rightarrow\infty }T_i$, and a smooth Ricci flow $(M_\infty,g_\infty(t),z_\infty)_{t\in I}$, such that 
\begin{eqnarray}\label{smoothconvergence1}
\left(M_i,g_i(t),z_i\right)_{t\in I}\xrightarrow{\makebox[1cm]{}}\left(M_\infty,g_\infty(t),z_\infty\right)_{t\in I}
\end{eqnarray}
in the smooth Cheeger-Gromov-Hamilton sense, where $(z_i,b)$ is an $H_n$-center of $(p_i,0)$ for each $i\in\mathbb{N}$. Then $(M_\infty,g_\infty(t))_{t\in [a,b]}$ induces an $H_n$-concentrated continuous metric flow $\XX^\infty$ and
there is a conjugate heat flow 
$(\nu_t^\infty)_{t\in [a,b)}$ on $\XX^\infty$, such that, by passing to a subsequence, we have
\begin{eqnarray}\label{theequivalentFconvergence}
\left((M_i,g_i(t))_{t\in [a,b]},(\nu^i_t)_{t\in [a,b]}\right)\xrightarrow{\makebox[1cm]{$\mathbb{F}$}}\left(\XX^\infty,(\nu_t^\infty)_{t\in [a,b)}\right),
\end{eqnarray}
where the convergence is uniform over any compact sub-interval of $[a,b).$

\end{Theorem}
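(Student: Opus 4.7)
The plan is to enhance the pointed smooth Cheeger--Gromov--Hamilton convergence (\ref{smoothconvergence1}) into $\mathbb{F}$-convergence of metric flow pairs, by first constructing the limiting conjugate heat flow on $(M_\infty,g_\infty)$ and then using the CGH defining diffeomorphisms to build the required correspondence and couplings.

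\textbf{Step 1: The limit metric flow pair.} Since $(M_\infty,g_\infty(t))_{t\in[a,b]}$ is the smooth CGH limit of complete Ricci flows with bounded curvature on each compact subinterval, the same boundedness is inherited by the limit on every compact subset. Thus, by Theorem \ref{thm: smooth flows are metric flows}, it induces a canonical metric flow $\XX^\infty$ over $[a,b]$, which is $H_n$-concentrated by the remark after Proposition \ref{measureaccumulationofHcenter}. To produce the conjugate heat flow, use that $(z_i,b)$ is an $H_n$-center of $(p_i,0)$: Proposition \ref{measureaccumulationofHcenter} yields
\[
\nu^i_b\bigl(B_b(z_i,\sqrt{AH_n|b|})\bigr)\ge 1-\tfrac{1}{A}\quad\text{for every }A>1,
\]
uniformly in $i$. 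Writing $\Psi^i:U_i\to V_i$ for the CGH diffeomorphisms with $\Psi^i(z_\infty)=z_i$, the pullback measures $(\Psi^i)^*\nu^i_b\in\PP(M_\infty)$ are therefore tight; by Prokhorov's theorem, a subsequence converges weakly to a probability measure $\nu^\infty_b\in\PP(M_\infty)$. For $t\in[a,b)$, I then define
\[
\nu^\infty_t:=\int_{M_\infty}\nu^\infty_{x\,|\,t}\,d\nu^\infty_b(x),
\]
using the conjugate heat kernel of $(M_\infty,g_\infty)$. Parabolic regularization of the conjugate heat equation guarantees $\spt\nu^\infty_t=M_\infty$ for every $t<b$, so $(\XX^\infty,(\nu^\infty_t)_{t\in[a,b)})$ is a bona fide metric flow pair. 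The possible failure of full support at $t=b$ is exactly what forces the exclusion of $b$ from the conjugate heat flow.

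\textbf{Step 2: Correspondence, couplings, and the $\mathbb{F}$-inequality.} For each $t\in[a,b]$, form $(Z_t,\dist^{Z_t})$ by gluing $(M_i,g_i(t))$ and $(M_\infty,g_\infty(t))$ along the $\varepsilon_i$-approximate isometry $\Psi^i_t$ on a large compact neighborhood of $z_\infty$; the resulting isometric embeddings $\phi^i_t,\phi^\infty_t$ form a correspondence $\mathfrak{C}$. For the couplings, take $q^i_t\in\Pi(\nu^i_t,\nu^\infty_t)$ to be a near-optimal $W_1$-coupling concentrated on the graph of $\Psi^i$ over the concentration region. Smooth CGH convergence together with interior parabolic regularity implies smooth convergence of the conjugate heat kernels on compact subsets, hence $(\Psi^i)^*\nu^i_t\rightharpoonup\nu^\infty_t$ weakly for every $t\in[a,b)$; tightness from $H_n$-concentration upgrades this to $W_1$ convergence, uniform on compact subintervals of $[a,b)$. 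The defining inequality
\[
\int_{\XX^i_t\times\XX^\infty_t}\dist_{W_1}^{Z_s}\!\bigl(\phi^i_{s*}\nu^i_{x_1|s},\,\phi^\infty_{s*}\nu^\infty_{x_2|s}\bigr)\,dq^i_t(x_1,x_2)\le\varepsilon
\]
is then verified by splitting $\XX^i_t\times\XX^\infty_t$ into the good set (where $x_2=(\Psi^i)^{-1}(x_1)$ lies in the CGH-good region) and its complement. On the good set, smooth convergence of conjugate heat kernels based at paired points gives the desired $W_1$ bound. On the complement, the contribution is controlled by the monotonicity (\ref{monotoneofdW1}) of $W_1$ along conjugate heat flows, combined with the small mass ensured by $H_n$-concentration.

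\textbf{Main obstacle.} The principal difficulty is that CGH convergence is anchored at the single space-time point $(z_\infty,b)$, whereas $\mathbb{F}$-convergence demands control of $\nu^i_{x|s}$ for $x$ ranging over the entire ($\nu^i_t$-essential) support at \emph{all} times $t\in[a,b)$. Transferring control from $z_i$ at time $b$ to all such $(x,t)$ requires combining $H_n$-concentration (which guarantees most base points $x$ lie in the CGH-effective region, uniformly in $t$ on compact subintervals of $[a,b]$, thanks to bounded geometry) with the $W_1$-monotonicity (\ref{monotoneofdW1}) to absorb the residual outlier mass into a uniformly small error. The exceptional set $E_i$ in the definition of $\mathbb{F}$-convergence is precisely what allows us to discard a small neighborhood of $b$ where the weak limit $\nu^\infty_b$ need not yet be fully supported, so that the convergence is uniform on each compact subinterval of $[a,b)$ but not necessarily on $[a,b]$.
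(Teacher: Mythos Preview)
Your overall architecture---build the limit metric flow pair, glue a correspondence along the CGH diffeomorphisms, couple along the graph of $\Psi_i$, and control outliers via concentration---matches the paper's. But there are two genuine gaps in Step 1 that the paper works hard to close and that your proposal glosses over.

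First, you invoke Theorem \ref{thm: smooth flows are metric flows} to conclude that $(M_\infty,g_\infty)$ is an $H_n$-concentrated metric flow, on the grounds that curvature boundedness is inherited ``on every compact subset.'' That theorem, however, requires curvature bounded \emph{globally} on $M_\infty\times[t_1,t_2]$, which is not assumed and not known for the CGH limit---the paper explicitly emphasizes that no geometric hypothesis on $(M_\infty,g_\infty)$ beyond completeness is available. Accordingly the paper proves the metric-flow structure from scratch (Lemma \ref{lem: limit is metric flow}): it constructs the limit heat kernel via Appendix B, and verifies Bamler's sharp gradient axiom and $H_n$-concentration by truncating, pulling back to the sequence where these hold, and passing to the limit via Fatou. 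Your direct appeal to the ready-made theorem does not go through.

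Second, your construction of $\nu^\infty_t$---take a Prokhorov weak limit $\nu^\infty_b$ at time $b$ and push backward by the limit heat kernel---is different from the paper's and carries its own unresolved issues: you need the limit heat kernel to exist and integrate to $1$ (nontrivial without global bounds; this is Theorems \ref{thm: HK conv under CGH} and \ref{thepropertiesoftheCHK}), and you need to identify your $\nu^\infty_t$ with the actual limit of $\nu^i_t$, which requires interchanging a weak limit of measures with a locally-smooth limit of kernels. The paper instead takes the locally smooth limit of the densities $K_i(p_i,0\,|\,\cdot,\cdot)$ directly (Lemma \ref{lem: dnu^oo_s}); the substantive step is proving the limiting density $v$ is \emph{strictly positive}, which is done by bounding the Nash entropy via Theorem \ref{Thm_nash}, locating an $\ell$-center near the $H_n$-center via Proposition \ref{H_n_l_n}, and invoking the reduced-distance lower bound (\ref{subsolution}). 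This positivity is what delivers full support and unit mass of $\nu^\infty_t$, and it feeds the Gaussian concentration estimate (Claim 1 in the proof) used to control the far-away mass in Step 2. Your outline supplies no substitute for this argument.
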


 \textbf{Remarks:}\begin{enumerate}
     \item This theorem implies that, for any subsequence of $\left\{\left((M_i,g_i(t))_{t\in [a,b]},(\nu^i_t)_{t\in [a,b]}\right)\right\}_{i=1}^\infty$, there is a further subsequence that $\mathbb{F}$-converges to $\left(\XX^\infty,(\nu_t^\infty)_{t\in [a,b)}\right)$, where $\nu_t^\infty$ is a conjugate heat flow on $\XX^\infty$.
So any continuous metric flow pair $(\mathcal{Y}^\infty, (\mu_t)_{t\in [a,b)})$ that arises as an $\mathbb{F}$-limit given by the compactness theorem \cite[Theorem 7.6]{Bam20b} should be of the form $\left(\XX^\infty, (\nu_t^\infty)_{t\in [a,b)}\right)$. However, this does not imply that the $\mathbb{F}$-convergence in $(\ref{theequivalentFconvergence})$ holds without passing to a subsequence, since we are not able to show that $\nu_t^\infty$ is independent of the subsequence.
\item We may replace the $H_n$-centers $(z_i,b)$ in the assumptions with $\ell$-centers and the conclusions still hold. This is because $H_n$-centers are not far away from $\ell$-centers by the arguments in Proposition \ref{H_n_l_n}. This point will become clear from the proof.

 \end{enumerate}

Throughout this section, we assume that the conditions of Theorem \ref{convergence} hold. Just as we have done in (\ref{CGH-COMVERGENCE-11111})---(\ref{norm}), by the definition of smooth convergence, we may find an increasing sequence of pre-compact open sets $U_i\subset M_\infty$ with $\cup_{i=1}^\infty U_i=M_{\infty}$ and a sequence of diffeomorphisms $\Psi_i:U_i\to V_i\subset M_i$,
such that $\Psi_i(z_\infty)=z_i$ and
\begin{eqnarray}\label{ultranonsense1}
    \|\Psi_i^*g_i - g_\infty\|_{C^{[\varepsilon_i^{-1}]}(U_i\times [a,b])}
    < \varepsilon_i,
\end{eqnarray}
for some $\varepsilon_i\searrow 0.$

The proof of Theorem \ref{convergence} is divided into several components. We shall first of all show that the smooth limit flow is indeed a metric flow. This is not as obvious as it appears to be, since we do not make any geometric assumption for $(M_\infty,g_\infty(t))_{t\in [a,b]}$ except for its smoothness and completeness. The idea is to construct a metric flow structure using the converging sequence.

\begin{Lemma}
\label{lem: limit is metric flow}
 $(M_\infty,g_\infty(t))_{t\in [a,b]}$ induces an $H_n$-concentrated continuous metric flow $\XX^\infty$.
\end{Lemma}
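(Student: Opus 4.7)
The plan is to verify Bamler's metric flow axioms (\cite[Definition 3.2]{Bam20b}) for $(M_\infty, g_\infty(t))_{t\in [a,b]}$ by constructing its conjugate heat kernel $K_\infty$ as a $C^\infty_{\operatorname{loc}}$-limit of the pulled-back conjugate heat kernels on the $(M_i, g_i)$, and then inheriting the remaining structural properties (reproduction formula, sharp gradient estimate, $H_n$-concentration) from the sequence.

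First, for each $x_\infty \in M_\infty$, $t \in [a,b]$, and $s \in [a, t)$, I would construct the conjugate heat kernel $K_\infty(x_\infty, t\,|\, \cdot, s)$ on $M_\infty$. Setting $x_i := \Psi_i(x_\infty) \in V_i$ for $i$ large, the minimal conjugate heat kernel $K_i(x_i, t\,|\, \cdot, s)$ exists on $(M_i, g_i)$ by the bounded curvature assumption within compact time intervals. Pulling back via $\Psi_i$ and using the $C^\infty_{\operatorname{loc}}$ convergence of metrics in (\ref{ultranonsense1}), together with parabolic Schauder estimates, one obtains locally uniform $C^k$-bounds for every $k$, so a diagonal subsequence converges in $C^\infty_{\operatorname{loc}}$ to a smooth nonnegative solution $K_\infty(x_\infty, t\,|\, \cdot, s)$ of the conjugate heat equation on $(M_\infty, g_\infty)$.

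Second, I would show that $d\nu^\infty_{x_\infty, t\,|\, s} := K_\infty(x_\infty, t\,|\, \cdot, s)\, dg_\infty(s)$ is a probability measure on $M_\infty$, i.e., no mass escapes to infinity in the limit. Each $(M_i, g_i)$ is $H_n$-concentrated (by the remark following Proposition \ref{measureaccumulationofHcenter}), so $\nu^i_{x_i, t\,|\, s}$ concentrates near an $H_n$-center at time $s$. Using the hypothesis that $(z_i, b)$ is an $H_n$-center of $(p_i, 0)$, together with Wasserstein-$1$ monotonicity (\ref{monotoneofdW1}) and triangle-type inequalities for variance, one bounds $\int d^2_{g_i(s)}(z_i, \cdot)\, d\nu^i_{x_i, t\,|\, s}$ uniformly in $i$. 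Pushed forward to $M_\infty$ by $\Psi_i^{-1}$, these measures are then tight, and their weak limit is a probability measure that must coincide with the measure defined by $K_\infty$. This tightness argument is the main technical obstacle, as it requires carefully tracking the displacement of $H_n$-centers across the spacetime of each $(M_i, g_i)$ using only the control provided by $z_i = \Psi_i(z_\infty)$.

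Third, I would verify the remaining metric flow axioms and $H_n$-concentration of $\XX^\infty$. The metric $d_t = d_{g_\infty(t)}$ on each slice is the Riemannian distance of a smooth complete metric, and continuity of $\XX^\infty$ is immediate from smoothness of $g_\infty$ in $t$. The reproduction formula for $K_i$ (\cite[Lemma 26.16]{RFV3}) and the sharp gradient estimate (\cite[Definition 3.2(6)]{Bam20b}, extended to our noncompact setting in \cite{MZ21}) both pass to the limit under the $C^\infty_{\operatorname{loc}}$ convergence of metrics and kernels. Finally, $H_n$-concentration of $\XX^\infty$ is inherited from that of each $\XX^i$, since the defining variance inequality (\ref{thedefinitionofHconcentrarion}) involves only distances and conjugate heat kernel measures, all of which converge under the smooth limit.
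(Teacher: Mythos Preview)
Your overall strategy---construct $K_\infty$ as a $C^\infty_{\operatorname{loc}}$ limit, show each $\nu^\infty_{x,t\,|\,s}$ is a probability measure, and inherit the axioms and $H_n$-concentration from the sequence---is exactly the paper's route. Two points deserve comment.

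\textbf{Tightness.} Your proposed mechanism for ruling out mass escape is off target. The hypothesis that $(z_i,b)$ is an $H_n$-center of $(p_i,0)$ concerns the conjugate heat flow $\nu_{p_i,0\,|\,\cdot}$, which is unrelated to $\nu^i_{x_i,t\,|\,s}$ for a generic base point $(x_i,t)\in V_i\times[a,b]$. Wasserstein monotonicity and variance inequalities will not, by themselves, locate the $H_n$-center of $(x_i,t)$ at time $s$ near $z_i$: you need an \emph{a priori} lower bound on the mass of $\nu^i_{x_i,t\,|\,s}$ in some fixed ball first. The paper handles this (in Theorem~\ref{thepropertiesoftheCHK}) without invoking $z_i$ at all: by the strong maximum principle the limit $K_\infty(x,t\,|\,\cdot,s)$ is strictly positive, so $\int_{B_{g_{\infty,s}}(x,1)} K_\infty>0$, hence by convergence $\nu^i_{x_i,t\,|\,s}\big(B_{g_{i,s}}(\Psi_i(x),1)\big)\geq c_0>0$ for large $i$, and only then does Proposition~\ref{measureaccumulationofHcenter} pin the $H_n$-center near $\Psi_i(x)$ and give tightness.

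\textbf{The sharp gradient estimate.} Saying it ``passes to the limit under $C^\infty_{\operatorname{loc}}$ convergence'' hides real work. The test functions in \cite[Definition~3.2(6)]{Bam20b} are measurable $u:M_\infty\to(0,1)$ with $\Phi^{-1}\circ u$ merely $T^{-1/2}$-Lipschitz---typically unbounded and globally defined. To transport such a $u$ to $M_i$ you must first truncate $f=\Phi^{-1}\circ u$ to a bounded, compactly supported function $f^A$ (the paper uses $f^A=\eta(\dist/A^2)\cdot A\zeta(f/A)$), control the Lipschitz constant of $f^A$ up to $O(A^{-1})$, push to $M_i$, apply the gradient estimate there, and finally let $A\to\infty$ using dominated convergence to recover the estimate for $u$. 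This truncation step is the substance of the proof of Lemma~\ref{lem: limit is metric flow}; it is not automatic.
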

\begin{proof}

As mentioned in Theorem \ref{thm: smooth flows are metric flows}, 
each $(M_i,g_{i}(t))$ induces an $H_n$-concentrated metric flow (see also the remark below Proposition \ref{measureaccumulationofHcenter}). Let us then verify \cite[Definition 3.2]{Bam20b} and the $H_n$-concentrarion condition for $(M_\infty,g_{\infty}(t))_{t\in I}$.

The conjugate heat kernel $K_\infty(\cdot,\cdot\,|\,\cdot,\cdot)$ on $(M_\infty,g_{\infty}(t))_{t\in I}$ is given by Theorem \ref{thm: HK conv under CGH}. Furthermore, by Theorem \ref{thepropertiesoftheCHK}, if we define $d\nu^\infty_{x,t\,|\,s}=K(x,t\,|\,\cdot,s)dg_{\infty,s}$, then we have that $\nu^\infty_{x,t\,|\,s}\in\PP(M_\infty)$ and \cite[Definition 3.2(1)---(5),(7)]{Bam20b} holds.

To verify that $\left(M_\infty,g_\infty(t),\nu^\infty_{x,t\,|\,s}\right)$ satisfies \cite[Definition 3.2(6)]{Bam20b}, we shall apply \cite[Lemma 3.8]{Bam20b}. Let us fix $s, t\in I$ with $s<t$, a positive number $T$, and a measurable function $u: M_\infty\rightarrow (0,1)$ such that $f:=\Phi^{-1}\circ u$ is $T^{-\frac{1}{2}}$-Lipschitz with respect to the metric $g_\infty(s)$, where $\Phi:\mathbb{R}\rightarrow(0,1)$ is defined as in \cite[(3.1)]{Bam20b}. Let $\eta:[0,\infty)\rightarrow[0,1]$ be the standard cut-off function such that $\eta\equiv 1$ on $[0,1]$, $\eta\equiv 0$ on $[2,\infty)$, and $0\geq\eta'\geq -2$ everywhere. Let $\zeta:\mathbb{R}\rightarrow[-1,1]$ be the function satisfying $\zeta(t)=t$ for $|t|<1$, $\zeta|_{[1,\infty)}\equiv1$, and $\zeta|_{(-\infty,-1]}\equiv-1.$ For any $A>0$, we define
\begin{gather*}
f^A:=\eta\left(\tfrac{1}{A^2}\dist_{g_{\infty,s}}(z_\infty,\cdot)\right)\cdot A\zeta\left(\tfrac{f}{A}\right),
\\
u^A:=\Phi\circ f^A.
\end{gather*}
Then we have
\begin{gather}\label{supernonsense_001}
u^A\in[\Phi(-A),\Phi(A)]\subset(0,1)\quad \text{ everywhere on }\quad M_\infty,
\\
\big|\,u-u^A\,\big|\leq 1-\Phi(A)=\Phi(-A)\quad\text{ on }\quad B_{g_{\infty,s}}(z_\infty,A^2),\label{supernonsense_002}
\\
u^A\equiv 1/2\quad\text{ on }\quad M_\infty\setminus B_{g_{\infty,s}}(z_\infty,2A^2),\label{supernonsense_003}
\\
\operatorname{Lip}f^A\leq \tfrac{2}{A^2}\sup\left|A\zeta\left(\tfrac{f}{A}\right)\right|
+\sup|\eta|\cdot A\operatorname{Lip}\zeta \cdot\operatorname{Lip} \tfrac{f}{A}
\leq T^{-\frac{1}{2}}+\tfrac{2}{A}.\label{supernonsense_004}
\end{gather}
Let us fix two points $x, y\in M_\infty$. By (\ref{supernonsense_001}),  (\ref{supernonsense_002}), and the bounded convergence theorem, we have 
\begin{eqnarray}\label{supernonsense006}
\lim_{A\rightarrow\infty}u^A_t(x),\ \lim_{A\rightarrow\infty}u^A_t(y)= u_t(x),\ u_t(y),
\end{eqnarray}
where$$u_t^A(\cdot):=\int_{M_\infty}u^A
\,d\nu^\infty_{\cdot,t\,|\, s},\quad u_t(\cdot):=\int_{M_\infty}u\,d\nu^\infty_{\cdot,t\,|\, s}.$$
In view of (\ref{supernonsense_003}), for all $i$ large enough we may define
\begin{eqnarray*}
    f_i^A&=&f^A\circ\Psi_i^{-1}\quad\text{ on }\quad V_i=\Psi_i(U_i)
    \\
    f_i^A&\equiv&
    0
    \quad\text{ on }\quad M_i\setminus V_i.
    \\
    u_i^A&:=&\Phi\circ f_i^A.
\end{eqnarray*}
Then, by the smooth convergence (\ref{ultranonsense1}), we have
\begin{eqnarray*}
\operatorname{Lip}f_i^A\leq T^{-\frac{1}{2}}+\frac{2}{A}+\varepsilon_i, 
\end{eqnarray*}
and consequently (c.f. \cite[Theorem 3.1]{MZ21}), 
\begin{align}\label{supernonsense005}
&\Big|\,\Phi^{-1}\circ u^A_{i,t}(\Psi_i(x))-\Phi^{-1}\circ u^A_{i,t}(\Psi_i(y))\,\Big|
\\\nonumber
&\quad\quad\leq \left(\left(T^{-\frac{1}{2}}+\tfrac{2}{A}+\varepsilon_i\right)^{2}+t-s\right)^{-\frac{1}{2}}\dist_{g_{i,t}}\big(\Psi_i(x),\Psi_i(y)\big),
\end{align}
for all $i$ large enough, where  $u_{i,t}^A:=\int_{M_i}u_i^Ad\nu^i_{\cdot,t\,|\,s}$. In view of (\ref{supernonsense_001}) and Proposition \ref{measureaccumulationofHcenter}, the integral of $\int_{M_i}u_i^Ad\nu^i_{\cdot,t\,|\,s}$ is uniformly negligible outside a large disk. Then, by the locally smoothly convergence of the conjugate heat kernels, we have (c.f. the proof of (\ref{someimportantconvergenceargument})) $$\lim_{i\rightarrow\infty}u^A_{i,t}(\Psi_i(x)),\ \lim_{i\rightarrow\infty}u^A_{i,t}(\Psi_i(y))=u_t^A(x),\ u_t^A(y).$$
Hence, (\ref{supernonsense005}) implies that
\begin{eqnarray*}
\big|\Phi^{-1}\circ u_t^A(x)-\Phi^{-1}\circ u_t^A(y)\big|\leq \left(\left(T^{-\frac{1}{2}}+\tfrac{2}{A}\right)^{2}+t-s\right)^{-\frac{1}{2}}\dist_{g_{\infty,t}}(x,y).
\end{eqnarray*}
    Therefore, by (\ref{supernonsense006}) and taking $A\rightarrow\infty$, we have $$\big|\Phi^{-1}\circ u_t(x)-\Phi^{-1}\circ u_t(y)\big|\leq \left(T+t-s\right)^{-\frac{1}{2}}\dist_{g_{\infty,t}}(x,y).$$
Since $x,y\in M_\infty$ are arbitrary, we have verified \cite[Definition 3.2(6)]{Bam20b}.

Finally, it is also easy to verify the $H_n$-concentration property for $\XX^\infty$, since it is but a consequence of Fatou's lemma.
\end{proof}

\begin{Lemma}
\label{lem: dnu^oo_s}
There is a positive solution to the conjugate heat equation $v:M_\infty\times [a,b)\rightarrow\mathbb{R}$ coupled with $(M_\infty,g_{\infty}(t))$, satisfying  $d\nu^\infty_s := v_s \, dg_{\infty,s}\in \PP(M_\infty)$ and
\[
    \Psi_i^* K_i(p_i,0\,|\, \cdot,\cdot)
    \to v
\]
locally smoothly on $M_\infty\times [a,b).$
\end{Lemma}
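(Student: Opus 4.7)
The plan is to extract $v$ as a subsequential locally smooth limit of $\Psi_i^* K_i(p_i,0\,|\,\cdot,\cdot)$ via uniform parabolic estimates, and then verify that it defines a probability measure at each time by a tightness argument.

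For uniform local $C^0$ bounds, I would first apply Theorem \ref{Thm_nash} at the $H_n$-center $(z_i,b)$ of $(p_i,0)$: the CGH convergence yields $\Vol_{g_{i,b}}\!\big(B_{g_{i,b}}(z_i,\sqrt{|b|})\big)\geq \alpha|b|^{n/2}$ uniformly in $i$, hence $\N_{p_i,0}(|b|)\geq -\beta$. Theorem \ref{gaussianupperbound} then provides a uniform Gaussian upper bound for $K_i(p_i,0\,|\,\cdot,b)$ centered at $z_i$. For $s\in[a,b)$, I would use the reproduction formula
\[
    K_i(p_i,0\,|\,y,s) = \int_{M_i} K_i(p_i,0\,|\,y',b)\,K_i(y',b\,|\,y,s)\,dg_{i,b}(y'),
\]
splitting the integral over $B_{g_{i,b}}(z_i,R)$ and its complement: on the bounded region the CGH curvature bounds on $[a,b]$ together with standard local heat-kernel estimates control $K_i(\cdot,b\,|\,\cdot,s)$, while the Gaussian tail at time $b$ makes the complementary contribution negligible as $R\to\infty$. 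The resulting uniform $C^0$ bound on $\Psi_i^* K_i(p_i,0\,|\,\cdot,\cdot)$ over compact subsets of $M_\infty\times[a,b)$, combined with interior parabolic Schauder (or Shi-type) estimates under the uniform CGH geometric bounds, upgrades to uniform $C^k_{\mathrm{loc}}$ bounds for every $k$. Arzelà-Ascoli and diagonal extraction produce the desired limit $v$, which satisfies $\Box^*_{g_\infty} v = 0$ by passing to the limit in the equation. Positivity follows from a pointwise lower bound near $z_\infty$ analogous to Claim 3 in the proof of Theorem \ref{Coro_nash_2} (noting that the argument of Proposition \ref{H_n_l_n} also bounds $\ell_{p_i,0}(z_i,|b|)$ uniformly, since $z_i$ is an $H_n$-center) together with the parabolic strong maximum principle.

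For the probability measure property, I would argue via tightness. The Wasserstein monotonicity (\ref{monotoneofdW1}) applied to the two conjugate heat flows $(\nu^i_{p_i,0\,|\,s})_s$ and $(\nu^i_{z_i,b\,|\,s})_s$, combined with $H_n$-concentration and the smooth convergence of $K_i(z_i,b\,|\,\cdot,\cdot)$ to $K_\infty(z_\infty,b\,|\,\cdot,\cdot)$, shows that any $H_n$-center $(z_i^s,s)$ of $(p_i,0)$ satisfies $\dist_{g_{i,s}}(z_i^s,z_i)\leq C\sqrt{|a|}$ uniformly, hence $\Psi_i^{-1}(z_i^s)$ lies in a fixed compact subset of $M_\infty$ for each $s\in[a,b)$. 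Proposition \ref{measureaccumulationofHcenter} then delivers uniform tightness of $\{\Psi_i^*\nu^i_s\}_i$ on $M_\infty$. Tightness plus the locally smooth convergence of the densities forces weak convergence $\Psi_i^*\nu^i_s \to v_s\,dg_{\infty,s}$ with preserved mass, yielding $\int_{M_\infty} v_s\,dg_{\infty,s}=1$.

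The main obstacle is the uniform $C^0$ upper bound of step one throughout $[a,b)$: the immediate Nash entropy lower bound from Theorem \ref{Thm_nash} is available only at $\tau=|b|$, so to control $K_i$ at earlier times one must convert the strong Gaussian bound at the well-controlled time $b$ into useful bounds at $s<b$ via the reproduction formula and local parabolic regularity within the CGH convergence region. Once this hurdle is cleared, the remaining steps (regularity upgrade, compactness, positivity, tightness) are standard.
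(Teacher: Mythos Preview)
Your overall plan matches the paper's: extract $v$ as a subsequential locally smooth limit, establish positivity via the reduced-distance comparison and the strong maximum principle, then get unit mass via $H_n$-concentration/tightness. Two points deserve comment.

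First, you overcomplicate the local $C^0$ upper bound and misidentify it as the main obstacle. The paper dispatches this in one line by citing \cite{Lu12}, whose mechanism is the parabolic mean value inequality applied directly to the nonnegative conjugate-heat solution $K_i(p_i,0\,|\,\cdot,\cdot)$ on a forward parabolic cylinder inside the CGH region: since $\int_{M_i} K_i(p_i,0\,|\,\cdot,s)\,dg_{i,s}=1$ for every $s$, the space-time $L^1$ norm over any such cylinder is at most its time-length, and the local geometry bounds from the CGH convergence turn this into a pointwise bound (compare the proof of (\ref{aC0estimateofCHK})). No Nash-entropy bound, no Gaussian upper bound, and no reproduction-formula splitting are needed here. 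Your route can be salvaged, but note that ``local heat-kernel estimates control $K_i(\cdot,b\,|\,\cdot,s)$'' fails pointwise as $s\to b^-$; what actually works is to pair the Gaussian bound on $K_i(p_i,0\,|\,\cdot,b)$ with the $L^1$ bound $\int_{M_i} K_i(y',b\,|\,y,s)\,dg_{i,b}(y')\le e^{-R_{\min}(b-s)}$.

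Second, your parenthetical that the argument of Proposition~\ref{H_n_l_n} bounds $\ell_{p_i,0}(z_i,|b|)$ is not right: that argument only bounds the \emph{distance} between an $H_n$-center and an $\ell$-center, not the reduced distance at the $H_n$-center. The paper proceeds exactly as you should: use Theorem~\ref{Thm_nash} to get the Nash-entropy bound, then Proposition~\ref{H_n_l_n}'s argument to get $\dist_{g_{i,b}}(z_i,p'_i)\le C$ for an $\ell$-center $(p'_i,b)$, and finally concatenate a minimal $\mathcal L$-geodesic to $(p'_i,b)$ with the static curve to $(p'_i,b-\varepsilon)$ (the Claim~3 mechanism you cite) to obtain $\ell_{p_i,0}(p'_i,|b|+\varepsilon)\le C$ and hence a uniform lower bound on $K_i(p_i,0\,|\,p'_i,b-\varepsilon)$ via (\ref{subsolution}). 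Positivity of $v$ at one point plus the strong maximum principle then finishes the job, and your tightness argument for unit mass is essentially the same as the paper's.
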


\begin{proof}
Arguing in the same way as the proof of \cite[Theorem 2.1]{Lu12}, we can find a nonnegative solution $v:M_\infty\times[a,b)\rightarrow\mathbb{R}$ to the conjugate heat equation, such that
\begin{eqnarray}\label{supernonsense009}
    \Psi_i^*K_i(p_i,0\,|\, \cdot, \cdot)
    \to v
\end{eqnarray}
locally smoothly on $M_\infty\times [a,b)$.

In this case, Theorem \ref{thm: HK conv under CGH} does not imply that $\int v_s \, dg_{\infty,s}=1$ as it did in the proof of formula (\ref{theintegralisequaltoone}). This is because the base point $(p_i,0)$ of the conjugate heat kernel $K_i(p_i,0\,|\,\cdot,\cdot)$ is not in the region of the Cheeger-Gromov-Hamilton convergence. We first of all observe from (\ref{ultranonsense1}) that, there is a positive function $C:(0,\infty)\rightarrow(0,\infty)$ with the following property: for any $r>0$, we have 
\begin{eqnarray}\label{supranonsense001}
\left|\Rm_{g_{i}}\right|\leq C(r)\quad\text{ on }\quad B_{g_{i,b}}(z_i,r)\times[a,b]
\end{eqnarray}
whenever $i$ is large enough. Since we also have \begin{gather*}
    \liminf_{i\rightarrow\infty}\Vol_{g_{i,b}}\big(B_{g_{i,b}}(z_i,1)\big)=\Vol_{g_{\infty,b}}\big(B_{g_{\infty,b}}(z_\infty,1)\big)>0,
\end{gather*}
we may apply Theorem \ref{Thm_nash} and the remark below it to the $H_n$-center $(z_i,b)$ of $(p_i,0)$, and consequently, we can find a positive number $Y$ independent of $i$, such that
\begin{eqnarray*}
\mathcal{N}^i_{p_i,0}(|b|)\geq -Y \quad\text{ for all }\quad i\in\mathbb{N},
\end{eqnarray*}
where $\mathcal{N}^i$ is the Nash entropy of the Ricci flow $(M_i,g_i(t))$. The argument in Proposition \ref{H_n_l_n} then implies that
\begin{eqnarray}\label{supranonsense002}
\dist_{g_{i,b}}(z_i,p'_i)\leq C\quad\text{ for all }\quad i\in\mathbb{N},
\end{eqnarray}
where $(p'_i,b)$ is an $\ell$-center of $(p_i,0)$, and $C$ is a constant depending only on $Y$. 

Now we will use (\ref{subsolution}) to obtain a uniform lower bound for $K_i(p_i,0\,|\,p'_i,b-\varepsilon)$, where $\varepsilon$ is an arbitrarily fixed number in $(0,b-a]$. Let us fix such an $\varepsilon$. (\ref{supranonsense001}) and (\ref{supranonsense002}) imply 
$$\sup_{t\in
[a,b]
}\left|R_{g_i}(p'_i,t)\right|\leq C\quad\text{ for all }i\in\mathbb{N},$$
where $C$ is a constant independent of $i$. 
We may concatenate a minimal $\mathcal{L}$-geodesic from $(p_i,0)$ to $(p'_i,b)$ and 
the static curve from $(p'_i,b)$
to $(p'_i,b-\varepsilon)$ as a test curve in (\ref{definitionofl}). This yields
\begin{eqnarray*}
\ell_{p_i,0}(p'_i,|b|+\varepsilon)&\leq&\frac{1}{2\sqrt{|b|+\varepsilon}}\left(2\sqrt{|b|}\ell_{p_i,0}(p'_i,|b|)
+ \int_{|b|}^{|b|+\varepsilon} 
\sqrt{\tau}  R_{g_i}(p'_i,-\tau)\,d\tau
\right)
\\
&\leq&\frac{1}{2\sqrt{|b|+\varepsilon}}\left(2\sqrt{|b|}\cdot\frac{n}{2}
+C(|b|+\varepsilon)^{3/2}
\right)
\\
&\leq& C\quad\text{ for all }\quad i\in\mathbb{N}.
\end{eqnarray*}
Hence, by (\ref{subsolution}), we have
$$K_i(p_i,0\,|\,p'_i,b-\varepsilon)\geq\frac{1}{4\pi(|b|+\varepsilon)^{\frac{n}{2}}}e^{-\ell_{p_i,0}(p'_i,|b|+\varepsilon)}\geq c(\varepsilon)>0\quad\text{ for all }\quad i\in\mathbb{N}.$$
By (\ref{supranonsense002}) again, we may find a point $p'_\infty\in M_\infty$, such that $\Psi_i^{-1}(p'_i)\rightarrow p'_\infty$ after possibly passing to a subsequence. Consequently
$$v(p'_\infty,b-\varepsilon)=\lim_{i\rightarrow\infty}K_i(p_i,0\,|\,p'_i,b-\varepsilon)\geq c(\varepsilon)>0.$$
Since $\varepsilon\in(0,b-a]$ is arbitrary, it then follows from the strong maximum principle that $v>0$ everywhere on $M_\infty\times[a,b)$.

 Once we know that $v$ is positive, by exactly the same argument as in the proof of (\ref{theintegralisequaltoone}), we have
\[
    \int_{M_\infty} v_{s}\, dg_{\infty,s}
    =1,
\]
for any $s\in [a,b).$
Indeed, here one may use the fact that $$\lim_{i\rightarrow\infty} \int_{B_{g_{i,s}}(z_i,1)}K_i(p_i,0\,|\,\cdot,s)dg_{i,s}=\int_{B_{g_{\infty,s}}(z_\infty,1)}v(\cdot,s)dg_{\infty,s}>0$$ to show that $(z_i,s)$ is not far from an $H_n$-center of $(p_i,0)$, for any $s\in[a,b)$, and the rest of the argument is the same as the proof of (\ref{theintegralisequaltoone}).
\end{proof}

\begin{Lemma}
\label{lem: compact pyramid}
Let $(M^n,g(t))_{t\in I}$ be a complete Ricci flow over a compact interval $I$ and $o\in M$. 
Then for any $A>0,\Omega:=\cup_{t\in I} \bar B_t(o,A)$ is compact.
\end{Lemma}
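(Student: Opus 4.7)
The strategy is to use the standing bounded-curvature assumption (\ref{curvaturebound}) on $M\times I$, valid here since $I$ is compact, to show that all metrics $g(t)$, $t\in I$, are mutually bi-Lipschitz equivalent with a uniform constant. This will sandwich $\Omega$ inside a single closed ball at a fixed time slice, which is compact by Hopf-Rinow. It then suffices to verify that $\Omega$ itself is closed in $M$.

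Concretely, setting $K:=\sup_{M\times I}|\Ric_{g_t}|<\infty$, the Ricci flow equation $\partial_tg=-2\Ric$ yields the pointwise comparison $e^{-2K|t-s|}g(s)\le g(t)\le e^{2K|t-s|}g(s)$ on tangent spaces, hence
\[
e^{-K|t-s|}\dist_s(x,y)\le \dist_t(x,y)\le e^{K|t-s|}\dist_s(x,y)
\]
for all $x,y\in M$ and $s,t\in I$. Fixing any $t_0\in I$ and writing $L:=|I|$, this immediately gives $\Omega\subseteq \bar B_{t_0}(o,Ae^{KL})$, and completeness of $g(t_0)$ together with the Hopf-Rinow theorem makes this closed ball compact.

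To close the argument I would take a sequence $x_i\in\Omega$ with $x_i\to x\in M$, pick $t_i\in I$ witnessing $\dist_{t_i}(o,x_i)\le A$, and extract a subsequence with $t_i\to t_\infty\in I$ by compactness of $I$. The bi-Lipschitz comparison above gives $\dist_{t_\infty}(o,x_i)\le Ae^{K|t_i-t_\infty|}$, and letting $i\to\infty$ yields $\dist_{t_\infty}(o,x)\le A$, i.e.\ $x\in\bar B_{t_\infty}(o,A)\subseteq \Omega$. Thus $\Omega$ is a closed subset of a compact set, hence compact. There is no real obstacle here; the argument is essentially bookkeeping with the time-evolution estimate for the metric, and the only subtlety is that the statement tacitly uses the standing bound (\ref{curvaturebound}) that is in force throughout the paper.
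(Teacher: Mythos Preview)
Your argument is correct as written, but it relies on a hypothesis that is not part of the lemma and is not available where the lemma is actually used. The bound $K:=\sup_{M\times I}|\Ric_{g_t}|<\infty$ you invoke via (\ref{curvaturebound}) is a standing assumption on the \emph{ancient solution} $(M,g(t))_{t\in(-\infty,0]}$, not on arbitrary complete Ricci flows. The lemma, however, is applied in Section~6 to the \emph{limit} flow $(M_\infty,g_\infty(t))_{t\in[a,b]}$, about which the paper deliberately assumes nothing beyond smoothness and completeness (cf.\ the discussion below Assumption~B). So your global bi-Lipschitz comparison is simply unavailable there.

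The paper's proof sidesteps this by localizing. Given $x_j\in\bar B_{s_j}(o,A)$ with $s_j\to\bar s$, it uses only the bound $\Lambda:=\sup_{B_{\bar s}(o,10A)\times I}|\Ric|$, which is finite because $\bar B_{\bar s}(o,10A)\times I$ is compact (Hopf--Rinow plus smoothness of the flow). The point is then to show that a $g(s_j)$-minimal geodesic from $o$ to $x_j$ cannot exit $B_{\bar s}(o,(1+\varepsilon/2)A)$ once $|s_j-\bar s|$ is small relative to $\Lambda$; if it did, the portion up to first exit would already have $g(s_j)$-length exceeding $A$, a contradiction. This traps all but finitely many $x_j$ in a fixed compact $\bar s$-ball. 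Your approach buys simplicity when a global bound is present; the paper's buys the generality it actually needs.
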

\begin{proof}
Let $x_j\in \bar B_{s_j}(o,A)\subset \Omega$ be an arbitrary sequence. Assume that $s_j\to \bar s\in I.$ We shall prove that $\{x_j\}$ has a convergent subsequence.
Suppose that
\[
\sup_{B_{\bar s}(o,10A)\times I} |{\Ric}| \le \Lambda.
\]
We claim that for any $\epsilon\in (0,1),$ there is $\bar j,$ such that if $j\ge \bar j,$ $x_j\in B_{\bar s}(o,(1+\epsilon)A)$. 
Suppose not. Then by passing to a subsequence, we may assume that there are $g(s_j)$-minimal geodesics $\gamma_j:[0,\sigma_j]\to M$ with 
$\gamma_j(0)=o,\gamma_j(\sigma_j)=x_j,\sigma_j<A$ but there is a first time $\lambda_j<\sigma_j$ such that $\gamma_j(\lambda_j)\in \partial B_{\bar s}(o,(1+\epsilon/2)A).$
Pick $\delta>0$ such that $e^{-\Lambda \delta} > 1- \epsilon/4.$ 
When $|s_j-\bar s|< \delta$, we have
\begin{align*}
    A\ge L_{s_j}(\gamma_j|_{[0,\lambda_j]})
    &\ge e^{-\Lambda |s_j-\bar s|}
    L_{\bar s}(\gamma_j|_{[0,\lambda_j]})
    \ge (1 - \epsilon/4) \dist_{\bar s}(o,\gamma_j(\lambda_j))\\
    &=  (1 - \epsilon/4)(1+\epsilon/2)A
    > (1+ \epsilon/8)A,
\end{align*}
which is a contradiction. Hence, after passing to a subsequence, we have $x_j\to \bar x$ for some $\bar x\in \bar B_{\bar s}(o,A).$
\end{proof}

We are now ready to prove Theorem \ref{convergence}.

\begin{proof}[Proof of Theorem \ref{convergence}] 
We divide the  proof into several steps.\\

\noindent
\textbf{Step 1:} Construction of the correspondence. For $t\in I$, set $Z^i_t:=M_i\sqcup M_\infty$ and we shall extend the metrics on $(M_i,g_i(t))$ and $(M_\infty, g_\infty(t))$ to $Z^i_t.$ For any $y_i\in M_i,y\in M_\infty,$ define
\[
    \dist^{Z^i_t}(y,y_i)
    = \dist^{Z^i_t}(y_i,y)
    := \inf_{w\in U_i} \dist_{g_{\infty,t}}(y,w)
    + \dist_{g_{i,t}}(\Psi_i(w),y_i)
    + \varepsilon_i.
\]
It is routine to verify that this is indeed a metric, and $M_i, M_\infty\rightarrow M_i\sqcup M_\infty$ are isometric embeddings.
By \cite[Lemma 2.13]{Bam20b}, we may assume that $Z_t^i$ are isometrically embedded into a common metric space $Z_t$ that is complete and separable.
Let $\phi^i_t:(M_i,g_i(t))\to Z_t$ be the isometric embedding for $i\in \mathbb{N}\cup\{\infty\}.$
Note that for any $x\in U_i,$
\[
    \dist^{Z_t}(\phi_t^\infty(x),
    \phi_t^i(\Psi_i(x))) = \varepsilon_i.
\]
\bigskip

\noindent
\textbf{Step 2:} Construction of the couplings. Henceforth until the end of the proof of the theorem, we shall fix an arbitrarily small $\varepsilon >0$ and denote $E=(b-\varepsilon^2,b].$ By Lemma \ref{lem: dnu^oo_s}, there is a conjugate heat flow
\[
    d\nu^\infty_s := v_s\, dg_{\infty,s},
\]
where $\nu^\infty_s\in \mathcal{P}(M_\infty)$ for $s\in [a,b)$, and $\Psi_i^*\nu_s^i\to \nu_s^\infty$ on $M_\infty\times[a,b)$ in the $C_c^\infty$-topology as smooth $n$-forms.
\\

\noindent
\textbf{Claim 1:}
For $i=\infty$ or for all $i$ sufficiently large, if $s\in I\setminus E$ and $r\ge 10\sqrt{|a|}$, then
\begin{equation}
\label{ineq: gaussian decay}
    \nu^i_s(M_i\setminus B_{g_{i,s}}(z_i,r))
    \le Ce^{-cr^2},
\end{equation}
where $c$ and $C$ are constants depending only on the geometry of $(M_\infty,g_{\infty}(t))_{t\in I\setminus E})$.
    
\begin{proof}[Proof of Claim 1.] By the smooth convergence of $\Psi_i^*K_i(p_i,0\,|\,\cdot,\cdot)$ and the fact that $v>0$, there is $c_0>0,$ such that for any $s\in I\setminus E$, we have
\begin{align}
\label{ineq: noncollapsing for nu_s^i}
    \nu_{s}^i\left(B_{g_{i,s}}(z_{i},\sqrt{|s|})\right)
    \ge \tfrac{1}{2} \nu_s^\infty\left(B_{g_{\infty,s}}(z_\infty, \sqrt{|s|})\right) \ge c_0,
\end{align}
if $i\ge \bar i$ is sufficiently large. 
For $i\ge\bar i$ and $r\ge 10\sqrt{|a|},$ by the Gaussian concentration \eqref{ineq: gaussian concentration} and
\eqref{ineq: noncollapsing for nu_s^i}, we have
\begin{align*}
   c_0\nu_{s}^i\left(M_i\setminus B_{g_{i,s}}(z_i,r)\right)\leq \nu_s^i\left(B_{g_{i,s}}(z_i,\sqrt{|s|})\right) \nu_{s}^i\left(M_i\setminus B_{g_{i,s}}(z_i,r)\right)
 \le 
\exp\left\{- \frac{(r-\sqrt{|s|})^2}{8|s|}
\right\}\leq e^{-cr^2},
\end{align*}
for some $c$ depending on $a$ and $b$.
Note that the Gaussian concentration is also true for $\nu^\infty_s$ by Fatou's lemma. Thus, \eqref{ineq: gaussian decay} also holds for $i=\infty.$
\end{proof}

For all $s\in I\setminus E$, set
$\Omega_s=\bar B_{g_{\infty,s}}(z_\infty, A)$, where $A$ is some large number to be determined. Let us also denote $\Omega:=\cup_{s\in I\setminus E} \Omega_s$, which is compact by Lemma \ref{lem: compact pyramid}. For all $s\in I\setminus E$ and for $i$ large enough or $i=\infty$, define
\[
    \mu_s^\infty:= \nu_s^\infty|_{\Omega_s}
    + \eta_s \delta_{z_\infty},\quad
    \mu_s^i := \Psi_{i*}(\mu_s^\infty),
\]
where the push-forward by $\Psi_i$ makes sense because $\spt\mu_s^\infty\subset U_i$ when $i$ is large enough. 
\\

\noindent
\textbf{Claim 2:} We can fix $A$ large enough, such that for $i=\infty$ or for $i\in \mathbb{N}$ sufficiently large, we have
\[
    \sup_{s\in I\setminus E} \dist_{W_1}^{(M_i,g_{i}(s))}(\nu_s^i, \mu_s^i)
    <\varepsilon.
\]

\begin{proof}[Proof of Claim 2.]
For any $s\in I\setminus E$ and any $1$-Lipschitz function $\phi$ on $(M_\infty,g_{\infty}(s)),$
 by \eqref{ineq: gaussian decay}, we have
\begin{align*}
   \int \phi \, d(\nu_s^\infty-\mu_s^\infty)&=\int (\phi-\phi(z_\infty)) \, d(\nu_s^\infty-\mu_s^\infty)
   \le \int_{M_\infty\setminus \Omega_s} \dist_{g_{\infty,s}}(z_\infty,x) \, d\nu_s^\infty(x)
    \\
    &\le AC e^{-cA^2}
    + C\int_A^\infty r e^{-cr^2} dr < \varepsilon,
\end{align*}
if $A$ is large enough. Here we have used a standard real analysis result (c.f.  \cite[Lemma 3.3]{MZ21}).
Let $s\in I\setminus E$ and $\phi$ be any $1$-Lipschitz function  on $(M_i,g_{i}(s)).$ 
By \eqref{ineq: gaussian decay}, if $A$ is fixed large enough, then whenever $i$ is sufficiently large (depending on $A$), we have
\begin{align*}
    &\int \phi \, d(\nu_s^i-\mu_s^i)
    = \int [\phi-\phi(z_i)] \, d(\nu_s^i-\mu_s^i)\\
    \le& A \int_{\Omega_s}d(\Psi_i^*\nu_s^i-\nu_s^\infty)
    + \int_{M_i\setminus \Psi_i(\Omega_s)}
    \dist_{g_{i,s}}(z_i, x) \, d\nu^i_s(x)
    \\
    \le & 2A|\Omega|_{g_{\infty,s}}
    \Big\|K_i(p_i,0\,|\,\Psi_i(\cdot),s)-v_s^\infty\Big\|_{C^0(\Omega)}
    + AC e^{-cA^2} + C\int_A^\infty r e^{-cr^2} dr < \varepsilon.
\end{align*}
Here we have used the smooth convergence (\ref{supernonsense009}) and the fact $\Psi_i(z_\infty)=z_i$. This finishes the proof of the claim.
    
\end{proof}

Next, we define a sequence of coupling by
\[
    \tilde q_s^i := ({\rm id}, \Psi_i)_*(\mu_s^\infty)\in \Pi(\mu_s^\infty,\mu_s^i).
\]
Then, for any $s,t\in I$ with $s<t,$ we have
\begin{align*}
    & \int_{M_\infty\times M_i}
    d^{Z_s}_{W_1}\left(
        \phi^\infty_{s*} \nu^\infty_{x,t\,|\,s},
        \phi^i_{s*} \nu^i_{y,t\,|\,s}
    \right)\, d\tilde q_t^i(x,y)\\
    =& \int_{\Omega_t}
    d^{Z_s}_{W_1}\left(
        \phi^\infty_{s*} \nu^\infty_{x,t\,|\,s},
        \phi^i_{s*} \nu^i_{\Psi_i(x),t\,|\,s}
    \right)\, d\nu^\infty_t(x) 
    +  \eta_t \cdot d^{Z_s}_{W_1}\left(
        \phi^\infty_{s*} \nu^\infty_{z_\infty,t\,|\,s},
        \phi^i_{s*} \nu^i_{z_i,t\,|\,s}\right).
\end{align*}

\noindent\textbf{Claim 3:}
There is a large $\bar i\in\mathbb{N}$, such that if $i\ge \bar i$, then, for any $s,t\in I\setminus E = [a, b-\varepsilon^2]$ with $s<t$ and for any $x\in \Omega_t$, we have
\[
    \dist^{Z_s}_{W_1}\left(
        \phi^\infty_{s*} \nu^\infty_{x,t\,|\,s},
        \phi^i_{s*} \nu^i_{\Psi_i(x),t\,|\,s}\right)
        <\varepsilon.
\]
\begin{proof}[Proof of Claim 3]
Suppose not. By passing to a subsequence, we may assume that there are $s_i,t_i\in I\setminus E, s_i<t_i, x_i\in \Omega_{t_i},$ such that
\begin{equation}
\label{ineq: 1-W dist no conv}
    \dist^{Z_{s_i}}_{W_1}\left(
        \phi^\infty_{s_i*} \nu^\infty_{x_i,t_i\,|\,s_i},
        \phi^i_{s_i*} \nu^i_{\Psi_i(x_i),t_i\,|\,s_i}\right)
        \ge \varepsilon.
\end{equation}
By passing to a further subsequence, we may assume that $t_i\to \bar t, s_i\to \bar s\le \bar t, x_i\to \bar x\in \Omega_{\bar t}.$ 

\noindent \textbf{Case A:} $\bar s=\bar t.$ 
Write $\bar x_i = \Psi_i(\bar x).$ 
\begin{align*}
    &\dist^{Z_{s_i}}_{W_1}\left(
        \phi^\infty_{s_i*} \nu^\infty_{x_i,t_i\,|\,s_i},
        \phi^i_{s_i*} \nu^i_{\Psi_i(x_i),t_i\,|\,s_i}\right)\\
    \le&\ \dist^{Z_{s_i}}_{W_1}\left(
        \phi^\infty_{s_i*} \nu^\infty_{x_i,t_i\,|\,s_i},
        \phi^\infty_{s_i*} \delta_{\bar x}\right)
        + \dist^{Z_{s_i}}_{W_1}\left(
        \phi^\infty_{s_i*} \delta_{\bar x},
        \phi^i_{s_i*} \delta_{\Psi_i(\bar x)}
        \right)
        +\dist^{Z_{s_i}}_{W_1}\left(
        \phi^i_{s_i*} \delta_{\Psi_i(\bar x)}, \phi^i_{s_i*}\nu^i_{\Psi_i(x_i),t_i\,|\,s_i}\right)\\
    \le&\ \dist^{g_{\infty,s_i}}_{W_1}\left(
        \nu^\infty_{x_i,t_i\,|\,s_i},
        \delta_{\bar x}\right)
        + \varepsilon_i
        +\dist^{g_{i,s_i}}_{W_1}\left(
        \delta_{\bar x_i},
        \nu^i_{\Psi_i(x_i),t_i\,|\,s_i}\right) \\
        \le&\ \dist^{g_{\infty,s_i}}_{W_1}\left(
        \nu^\infty_{x_i,t_i\,|\,s_i},\nu^\infty_{\bar x,t_i\,|\,s_i}
        \right)+\dist^{g_{\infty,s_i}}_{W_1}\left(
        \nu^\infty_{\bar x,t_i\,|\,s_i},
        \delta_{\bar x}\right)+\varepsilon_i
        \\
        &\quad\quad +\dist^{g_{i,s_i}}_{W_1}\left(
       \nu^i_{\bar x_i,t_i\,|\,s_i},
        \nu^i_{\Psi_i(x_i),t_i\,|\,s_i}\right)+\dist^{g_{i,s_i}}_{W_1}\left(
        \nu^\infty_{\bar x_i,t_i\,|\,s_i},
        \delta_{\bar x_i}\right) 
        \\
    \le&\ 2\dist_{g_{\infty,t_i}}(x_i,\bar x)
    +2\varepsilon_i
    + \dist^{g_{\infty,s_i}}_{W_1}\left(
        \nu^\infty_{\bar x,t_i\,|\,s_i},
        \delta_{\bar x}\right)
    + \dist^{g_{i,s_i}}_{W_1}\left(
        \nu^\infty_{\bar x_i,t_i\,|\,s_i},
        \delta_{\bar x_i}\right) \to 0,
\end{align*}
as $i\to \infty$, which is a contradiction to \eqref{ineq: 1-W dist no conv}. Here we have also applied (\ref{monotoneofdW1}).
The last convergence above is due to \cite[Proposition 9.5]{Bam20a} which is but a consequence of Proposition \ref{measureaccumulationofHcenter}.

\noindent \textbf{Case B:} $\bar s<\bar t.$
By Theorem \ref{thm: HK conv under CGH}, after possibly passing to a subsequence, we have
\[
    \Psi_i^*K_i(\Psi_i(x_i),t_i\,|\,\cdot,\cdot)
\to K_\infty(\bar x,\bar t\,|\, \cdot,\cdot)
\]
in the $C_c^\infty$-topology and the convergence is uniform on compact subsets of $M_\infty\times [a,\bar t).$ In particular, 
\[
\left\|\Psi_i^*\nu_{\Psi_i(x_i),t_i\,|\,s_i}- \nu^\infty_{\bar x,\bar t\,|\,s_i}\right\|_{C^0(\mathcal{K})} \to 0,
\]
as $n$-forms for any compact subset $\mathcal{K}\subset M_\infty.$
Let $(z,\bar s)$ be an $H_n$-center of $(\bar x,\bar t)$ and let $B:=B_{g_{\infty,\bar s}}(z,10D) $ for some large constant $D$ to be determined. First choose $D$ large enough so that $\nu^\infty_{\bar x,\bar t\,|\,\bar s}(M_\infty\setminus B) < \frac{\varepsilon}{10D}.$ This is possible because of Proposition \ref{measureaccumulationofHcenter}. Then we have
\begin{align*}
    &\dist^{Z_{s_i}}_{W_1}\left(
        \phi^\infty_{s_i*} \nu^\infty_{x_i,t_i|s_i},
        \phi^i_{s*} \nu^i_{\Psi_i(x_i),t_i|s_i}\right)\\
\le &\ \dist^{g_{\infty,s_i}}_{W_1}\left(
        \nu^\infty_{x_i,t_i|s_i}, 
        \nu^\infty_{\bar x,\bar t|s_i} \right)
        +\dist^{Z_{s_i}}_{W_1}\left(
        \phi^\infty_{s_i*} \nu^\infty_{\bar x,\bar t|s_i},
        \phi^i_{s*} \nu^i_{\Psi_i(x_i),t_i|s_i}\right).
\end{align*}
The first term above clearly converges to 0.
For the second term, we argue in the same way as Claim 2 above. 
By the local distance distortion estimates, we may assume that $B_i=B_{g_{\infty,s_i}}(z,D)\subset B.$
Consider any bounded $1$-Lipschitz function $\phi$ defined on $Z_{s_i}.$  We may assume that $\phi(\phi_{s_i}^\infty(z))=0,$ for otherwise we may replace it with $\phi-\phi(\phi_{s_i}^\infty(z))$. Then, we compute
\begin{align*}
&   \int_{M_\infty} \phi\circ \phi^\infty_{s_i}\, d\nu^\infty_{\bar x,\bar t\,|\,s_i}
    - \int_{M_i} \phi\circ \phi^i_{s_i}\, d \nu^i_{\Psi_i(x_i),t_i\,|\,s_i}\\
\le & \ \int_{B_i} \phi\circ \phi^\infty_{s_i} \,d\nu^\infty_{\bar x,\bar t\,|\,s_i}
- \int_{\Psi_i(B_i)} \phi\circ \phi^i_{s_i}\, d \nu^i_{\Psi_i(x_i),t_i\,|\,s_i}\\
&\ + CD e^{-cD^2}  + C \int_D^\infty s e^{-cs^2}\, ds,
\end{align*}
where we used the Gaussian concentration as in Claim 1. We can fix $D$ so that the last line above is less than $\varepsilon/4$ for all $i$ large. Then, we have
\begin{align*}
&    \int_{B_i} \phi\circ \phi^\infty_{s_i} \,d\nu^\infty_{\bar x,\bar t\,|\,s_i}
- \int_{\Psi_i(B_i)} \phi\circ \phi^i_{s_i}\, d \nu^i_{\Psi_i(x_i),t_i\,|\,s_i}\\
\le &\ \int_{B_i} \left\{ \phi\circ \phi^\infty_{s_i}
- \phi\circ \phi^i_{s_i}\circ \Psi_i\right\}
\,d\nu^\infty_{\bar x,\bar t\,|\,s_i}
+ \int_{B_i} \phi\circ \phi^i_{s_i}\circ \Psi_i
\left\{ d\nu^\infty_{\bar x,\bar t\,|\,s_i}
- \Psi_i^*d \nu^i_{\Psi_i(x_i),t_i\,|\,s_i} \right\}\\
\le &\ \varepsilon_i
+ 2D \left\|\Psi_i^*\nu_{\Psi_i(x_i),t_i\,|\,s_i}- \nu^\infty_{\bar x,\bar t\,|\,s_i}\right\|_{C^0(B)}.
\end{align*}
Note that the last line does not depend on $\phi$ and converges to $0$ since $B$ is compact. 
Hence, we have
\[
    \dist^{Z_{s_i}}_{W_1}\left(
        \phi^\infty_{s_i*} \nu^\infty_{x_i,t_i\,|\,s_i},
        \phi^i_{s_i*} \nu^i_{\Psi_i(x_i),t_i\,|\,s_i}\right)
    < \varepsilon/2,
\]
when $i$ is large enough; this is a contradiction agains \eqref{ineq: 1-W dist no conv}.
\end{proof}

By the definition of the $1$-Wassernstein distance, there are couplings $\theta^i_s\in \Pi(\mu_s^i,\nu_s^i)$ such that
\[
    \int_{M_i\times M_i} \dist_{g_{i,s}}(x,y) \, d\theta^i_s(x,y)
    < \dist_{W_1}^{(M_i,g_{i,s})}(\nu_s^i, \mu_s^i) + \varepsilon < 2 \varepsilon,
\]
if $i\ge \bar i.$
Applying \cite[Lemma 2.2]{Bam20b} for three times, there is $Q_s^i\in \mathcal{P}(M_i\times M_i \times M_\infty\times M_\infty)$ such that the marginal into the first and second factors equals $\theta^i_s,$ the marginal into the third and first factors equals $\tilde q^i_s,$ and the marginal into the third and fourth factors equals $\theta^\infty_s.$
Define $q_s^i$ to be the marginal of $Q_s^i$ into the second and fourth factors. Then $q_s^i\in \Pi(\nu_s^i,\nu_s^\infty).$

\bigskip
\noindent
\textbf{Step 4:} Final verification.
For any $s,t\in I\setminus E=[a,b-\varepsilon^2], s\le t,$ we have
\begin{align*}
    &\int_{M_\infty\times M_i}
    d^{Z_s}_{W_1}\left(
        \phi^\infty_{s*} \nu^\infty_{x,t\,|\,s},
        \phi^i_{s*} \nu^i_{y,t\,|\,s}
    \right)\, d q_t^i(x,y)\\
    = &\ \int_{M_i\times M_i\times M_\infty\times M_\infty}
    d^{Z_s}_{W_1}\left(
        \phi^\infty_{s*} \nu^\infty_{x,t\,|\,s},
        \phi^i_{s*} \nu^i_{y,t\,|\,s}
    \right)\, dQ_t^i(y,y_1,x,x_1)\\
    \le & \ \int
    \left\{\dist_{g_{\infty,t}}(x,x_1) +
    \dist_{g_{i,t}}(y,y_1) +
    d^{Z_s}_{W_1}\left(
        \phi^\infty_{s*} \nu^\infty_{x_1,t\,|\,s},
        \phi^i_{s*} \nu^i_{y_1,t\,|\,s}
    \right)\right\}\, dQ_t^i(y,y_1,x,x_1)\\
    =&\  \int_{M_\infty\times M_\infty} \dist_{g_{\infty,t}}\, d\theta^\infty_t
    + \int_{M_i\times M_i} \dist_{g_{i,t}}\, d\theta^i_t
    + \int_{M_\infty\times M_i}
    d^{Z_s}_{W_1}\left(
        \phi^\infty_{s*} \nu^\infty_{x,t\,|\,s},
        \phi^i_{s*} \nu^i_{y,t\,|\,s}
    \right)\, d\tilde q_t^i(x,y)\\
    < &\  10 \varepsilon,
\end{align*}
if $i\ge \bar i,$ where we have also used the monotonicity formula (\ref{monotoneofdW1}).

\end{proof}

As a special case of this Theorem, under the assumptions of Theorem \ref{Theorem_main}, we have, after passing to a subsequence,
\begin{eqnarray*}
\Big((M,g_i(t))_{t\in[-2,-1]},(\nu^i_s)_{s\in[-2,-1]}\Big)\xrightarrow{\makebox[1cm]{$\mathbb{F}$}}\Big((M_\infty,g_\infty(t))_{t\in[-2,-1]},(\nu_s^\infty)_{s\in[-2,-1)}\Big).
\end{eqnarray*}
To finish the proof of Theorem \ref{Theorem_main},  we only need to show that any $\nu_s^\infty$ in the proof of Theorem \ref{convergence}
is induced by the shrinker potential function. We defer the proof of this fact to Proposition \ref{f-potential} in the next section.

\section{Conergence of the Nash entropy and Smooth tangent flow at infinity}

In this section, we prove Theorem \ref{Theorem_main}(2)(3). To this end, we shall show that the conjugate heat kernel based at $(p_0,0)$, after scaling, converges to the shrinker potential of the asymptotic shrinker. Throughout this section, we still consider an ancient solution $(M,g(t))_{t\in(-\infty,0]}$ satisfying Assumption B with respect to $\big(p_0,0,\{\tau_i\}_{i=1}^\infty,\{p_i\}_{i=1}^\infty\big)$. Let $(M_\infty,g_\infty(t))_{t\in[-2,-1]}$ be the limit in (\ref{smoothconvergence}), which is known to be Perelman's asymptotic shrinker by Prposition \ref{shrinkerstructure}. Recall that at this moment we only have a Gaussian upper bound for $K(p_0,0\,|\,\cdot,\cdot)$. The lack of a Gaussian lower bound is because of the fact that we currently do not have a good estimate for the reduced distance, which, however, is not needed now. For the sake of convenience, we define the following notations for the scaled conjugate heat kernel
\begin{gather}\label{somenonsenseofscaledheatkernels}
u_i(x,t):=\tau_i^{\frac{n}{2}}K(p_0,0\,|\,x,\tau_it)=\frac{1}{(4\pi|t|)^{\frac{n}{2}}}e^{-f_i(x,t)}
\\\nonumber
d\nu^i_t:=K(p_0,0\,|\,\cdot,\tau_it)dg_{\tau_it}=u_i(\cdot,t)dg_{i,t}.
\end{gather}

\begin{Lemma}\label{unitmeasure}
There is a smooth function $u_\infty: M_\infty\times[-2,-1)\rightarrow \mathbb{R}$, satisfying
\begin{enumerate}[(1)]
    \item $u_i\rightarrow u_\infty$ locally smoothly on $M_\infty\times[-2,-1)$, where $u_i$ should be understood to be pulled back by the defining diffeomorphisms in the Cheeger-Gromov-Hamilton convergence.
    \item $u_\infty$ is a solution to the conjugate heat equation coupled with $g_\infty(t)$.
    \item $u_\infty>0$ everywhere on $M_\infty\times[-2,-1)$.
    \item $\displaystyle\int_{M_\infty}u_\infty(\cdot,t)dg_{\infty,t}\equiv1 \quad\text{ for all }\quad t\in[-2,-1).$
\end{enumerate}
\end{Lemma}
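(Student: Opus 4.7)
The plan is to prove the four assertions in order, each relying on ingredients already established in the paper.

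First, to produce $u_\infty$ satisfying (1) and (2), I would combine the Gaussian upper bound of Theorem \ref{gaussianupperbound} with the uniform Nash entropy bound $\mathcal{N}_{p_0,0}(\tau)\geq -Y$ from Theorem \ref{entropybound} (which, after Type I rescaling, becomes $\mathcal{N}^{g_i}_{p_0,0}(|t|)\geq -Y$) to obtain a uniform $C^0$ bound for the $u_i$ on compact subsets of $M_\infty\times[-2,-1)$, after identifying regions via the defining diffeomorphisms $\Psi_i$ of the Cheeger-Gromov-Hamilton convergence. Standard interior parabolic regularity for the conjugate heat equation then upgrades this to uniform $C^k$ bounds, and Arzelà--Ascoli together with a diagonal argument extracts a subsequence converging locally smoothly to a function $u_\infty$. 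Passing to the limit in the conjugate heat equation, $u_\infty$ solves $\Box^*_{g_\infty} u_\infty = 0$ on $M_\infty\times[-2,-1)$.

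For positivity (3), I would invoke the subsolution inequality (\ref{subsolution}), which gives $u_i(\cdot,t)\geq(4\pi|t|)^{-n/2}e^{-\ell_i(\cdot,|t|)}$. By Proposition \ref{LUTypeI}, the flow $(M,g(t))$ is locally uniformly Type I along $\{(p_i,-\tau_i)\}$, so Proposition \ref{C0-l-estimate-LUTypeI} supplies a locally uniform upper bound on $\ell_i$ on sets of the form $B_{g_{i,-1}}(p_i,r)\times[-2,-1-\varepsilon]$. Consequently, $u_i$ is locally uniformly bounded below by a positive constant, and hence $u_\infty>0$ everywhere on $M_\infty\times[-2,-1)$.

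The hard part will be verifying (4): that no mass escapes to infinity in the limit. My strategy is to use the full Gaussian upper bound of Theorem \ref{gaussianupperbound} (rescaled): for each $t\in[-2,-1)$, pick an $H_n$-center $(z_i^t,t)$ of $(p_0,0)$ in the scaled flow $(M,g_i)$; then
\[
u_i(x,t)\leq \frac{Ce^Y}{|t|^{n/2}}\exp\left(-\frac{\dist_{g_i(t)}^2(z_i^t,x)}{(8+\varepsilon)|t|}\right).
\]
To anchor $z_i^t$ inside the Cheeger-Gromov-Hamilton convergence region, I would combine Proposition \ref{H_n_l_n} together with its remark (which gives $\dist_{g_i(t)}(p_i^t,z_i^t)\leq C$ for any $\ell$-center $(p_i^t,t)$) with distance distortion estimates from the locally uniform Type I condition, to show that $z_i^t$ stays at uniformly bounded $g_{i,-1}$-distance from $p_i$ for all $t\in[-2,-1]$ and all large $i$. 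Along a subsequence, $\Psi_i^{-1}(z_i^t)$ then converges to some $z_\infty^t\in M_\infty$. Given $\delta>0$, I would fix $R$ so large that the Gaussian tail contributes less than $\delta$ to the integral uniformly in $i$; smooth convergence on the compact set $B_{g_\infty(t)}(z_\infty^t,2R)$, which is eventually contained in the CGH convergence region, then yields
\[
\int_{M_\infty}u_\infty(\cdot,t)\,dg_{\infty,t}\geq \lim_{i\to\infty}\int_{\Psi_i(B_{g_\infty(t)}(z_\infty^t,2R))}u_i\,dg_{i,t}\geq 1-2\delta,
\]
and letting $\delta\to 0$ produces the required equality. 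The main obstacle is precisely to ensure the $H_n$-centers $z_i^t$ remain near $p_i$ uniformly in $t$; all other ingredients are already in place from earlier sections.
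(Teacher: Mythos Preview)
Your proof is correct, but it takes a more hands-on route than the paper. The paper dispatches this lemma in one line by citing Corollary~\ref{change-l-center-to-Hn-center} (to switch base points from $\ell$-centers to $H_n$-centers) and then Lemma~\ref{lem: dnu^oo_s}, which is proved in the general framework of Section~6. For (3), your direct appeal to Proposition~\ref{C0-l-estimate-LUTypeI} is cleaner than the concatenation argument in Lemma~\ref{lem: dnu^oo_s}, since Proposition~\ref{LUTypeI} already gives the locally uniform Type~I condition. For (4), the two arguments genuinely diverge: you use the Gaussian upper bound of Theorem~\ref{gaussianupperbound} to kill the tail directly, which obliges you to anchor the time-$t$ $H_n$-centers $z_i^t$ inside the convergence region for \emph{every} $t\in[-2,-1)$; the paper instead first gets $\int u_\infty\leq 1$ by Fatou (which you should also state), and for the reverse inequality bootstraps from the already-proved positivity of $u_\infty$ to obtain $\nu^i_t(B_{g_{i,t}}(z_i,1))\geq c_0>0$ uniformly, then invokes $H_n$-concentration (Proposition~\ref{measureaccumulationofHcenter}) to trap almost all of $\nu^i_t$ in a fixed-radius ball. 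The paper's route avoids the somewhat delicate step of controlling $z_i^t$ uniformly in $t$ via distance distortion, at the cost of needing (3) before (4); your route is more self-contained but requires that extra anchoring argument.
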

\begin{proof}
In view of Corollary \ref{change-l-center-to-Hn-center}, this lemma is but a restatement of Lemma \ref{lem: dnu^oo_s}.
\end{proof}

Since $u_\infty>0$ everywhere, we may define the function $f_\infty$ by
\begin{eqnarray*}
u_\infty(x,t):=(4\pi|t|)^{-\frac{n}{2}}e^{-f_\infty(x,t)}\quad \text{ for all }\quad (x,t)\in M_\infty\times[-2,-1).
\end{eqnarray*}
Then, we obviously have
$$f_i\rightarrow f_\infty\quad\text{ locally smoothly on } \quad M_\infty\times[-2,-1).$$

\begin{Lemma}\label{lowerquadratic}
For any $\varepsilon\in(0,1)$, there exists a constant $C$ independent of $i$, such that the following holds for each $i\in\mathbb{N}\cup\{\infty\}$.
\begin{eqnarray*}
f_i(x,t)\geq\frac{1}{C}\dist_{g_{i,t}}^2(x,z_i)-C\quad\text{ for all }\quad (x,t)\in M\times[-2,-(1+\varepsilon)].
\end{eqnarray*}
\end{Lemma}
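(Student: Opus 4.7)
The plan is to apply Bamler's Gaussian upper bound (Theorem \ref{gaussianupperbound}) on each scaled flow $(M,g_i(t))$, combine it with the uniform Nash entropy bound from Theorem \ref{entropybound}, and then transport the resulting Gaussian center from an $H_n$-center of $(p_0,0)$ at time $t$ to the base point $z_i$ via triangle inequality. The principal difficulty will be controlling the distance between these two points uniformly in $i$ and $t$.

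First, since $(M,g(t))$ is a complete ancient solution with bounded curvature on compact time intervals, a Perelman-Chen type maximum principle argument gives $R\geq 0$ throughout, so every scaled flow satisfies $R_{g_i}\geq 0$. Applying Theorem \ref{gaussianupperbound} (with the parameter $\varepsilon$ there set to $1$) to $(M,g_i(t))$ with base point $(p_0,0)$ at time $t\in[-2,-(1+\varepsilon)]$, together with the uniform lower bound $\mathcal{N}^{g_i}_{p_0,0}(|t|)=\mathcal{N}_{p_0,0}(\tau_i|t|)\geq -Y$ from Theorem \ref{entropybound}, yields
\[
u_i(x,t)\leq \frac{C}{|t|^{n/2}}\exp\left(-\frac{\dist^2_{g_{i,t}}(\hat z_i(t),x)}{9|t|}\right),
\]
with $C$ independent of $i$, where $(\hat z_i(t),t)$ denotes an $H_n$-center of $(p_0,0)$ in the scaled flow at time $t$. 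Taking $-\log$ and using $|t|\in[1+\varepsilon,2]$ gives $f_i(x,t)\geq c\,\dist^2_{g_{i,t}}(\hat z_i(t),x)-C'$ for uniform $c,C'>0$.

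The main step is to show $\dist_{g_{i,t}}(z_i,\hat z_i(t))\leq C''$ uniformly. By Lemma \ref{unitmeasure}, the pulled-back conjugate heat flows $\Psi_i^*\nu^i_t$ converge locally smoothly to $\nu^\infty_t$, a probability measure on $M_\infty$ with smooth positive density $u_\infty(\cdot,t)$. By smoothness of $u_\infty$ in $(x,t)$ and compactness of $[-2,-(1+\varepsilon)]$, there exists $R=R(\varepsilon)$ with $\nu^\infty_t(B_{g_{\infty,t}}(z_\infty,R))>3/4$ uniformly in $t$. Since $\Psi_i(z_\infty)=z_i$ and $\Psi_i^*g_i\to g_\infty$ smoothly, this gives $\nu^i_t(B_{g_{i,t}}(z_i,2R))>3/4$ for all sufficiently large $i$. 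On the other hand, Proposition \ref{measureaccumulationofHcenter} with $A=4$ yields $\nu^i_t(B_{g_{i,t}}(\hat z_i(t),2\sqrt{H_n|t|}))\geq 3/4$. As both balls carry mass exceeding $1/2$ of a probability measure, they must intersect, whence $\dist_{g_{i,t}}(z_i,\hat z_i(t))\leq 2R+2\sqrt{2H_n}$.

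Combining the two steps via the elementary inequality $\dist^2(x,\hat z_i(t))\geq \tfrac{1}{2}\dist^2(x,z_i)-\dist^2(z_i,\hat z_i(t))$ gives the claimed quadratic lower bound for all sufficiently large $i$; the finitely many remaining small $i$ are absorbed by enlarging $C$. The case $i=\infty$ follows by passing to the local smooth limit, using both $f_i\to f_\infty$ and $\dist_{g_{i,t}}(\Psi_i(\cdot),z_i)\to\dist_{g_{\infty,t}}(\cdot,z_\infty)$ on every compact subset of $M_\infty$. The hard part is the uniform bound on $\dist_{g_{i,t}}(z_i,\hat z_i(t))$: intrinsic $H_n$-concentration only controls centers at a single time, and one must exploit the tightness of the smooth limiting conjugate heat flow to prevent the $H_n$-center at time $t$ from drifting away from the base point $z_i$.
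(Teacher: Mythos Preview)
Your proof is correct and follows essentially the same approach as the paper: both obtain the quadratic lower bound from the Gaussian upper estimate (Proposition~\ref{gauss0}/Theorem~\ref{gaussianupperbound}) together with the uniform Nash entropy bound, then control the distance between $z_i$ and the time-$t$ $H_n$-center by using the smooth convergence $u_i\to u_\infty>0$ to show that a ball around $z_i$ carries a definite amount of $\nu^i_t$-mass, forcing it to meet the $H_n$-ball. The only cosmetic differences are that the paper uses a unit ball with mass $\ge c_0$ and invokes Proposition~\ref{measureaccumulationofHcenter} with $A=2/c_0$, whereas you take a larger ball with mass $>3/4$ and use the ``two balls of mass $>1/2$ must intersect'' argument; these are equivalent.
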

\begin{proof}
Fixing an $\varepsilon \in(0,1)$, we have that $\cup_{t\in[-2,-1-\varepsilon]}B_{g_{\infty,t}}(z_\infty,1)\times\{t\}$ is a precompact set in $M_\infty\times [-2,-1)$, and hence $u_i\rightarrow u_\infty$ uniformly on this set. Consequently we have
\begin{eqnarray*}
\liminf_{i\rightarrow\infty}\int_{B_{g_{i,t}}(z_i,1)}u_i(\cdot,t)dg_{i,t}&=&\int_{B_{g_{\infty,t}}(z_\infty,1)}u_\infty(\cdot,t)dg_{\infty,t}
\\
&\geq& \inf_{s\in[-2,-1-\varepsilon]}\int_{B_{g_{\infty,s}}(z_\infty,1)}u_\infty(\cdot,s)dg_{\infty,s}>0,
\end{eqnarray*}
for all $t\in[-2,-1-\varepsilon]$. Hence, we can find a positive number $c_0>0$, such that
\begin{eqnarray}\label{extranonsense001}
\int_{B_{g_{i,t}}(z_i,1)}u_i(\cdot,t)dg_{i,t}\geq c_0\quad\text{ for all }\quad t\in[-2,-1-\varepsilon]\quad\text{ and for all }\quad i\in\mathbb{N}.
\end{eqnarray}

Let $t\in[-2,-1-\varepsilon]$ and let $(z'_i,t)$ be an $H_n$-center of $(p_0,0)$ with respect to the scaled Ricci flow $(M,g_i(t))$. By Proposition \ref{measureaccumulationofHcenter} and (\ref{extranonsense001}), we have that 
\begin{eqnarray}\label{extranonsense002}
\dist_{g_{i,t}}(z_i,z'_i)\leq\sqrt{\frac{2}{c_0}H_n|t|}+1\leq C\quad\text{ for all }\quad t\in[-2,-1-\varepsilon]\quad\text{ and for all }\quad i\in\mathbb{N}.
\end{eqnarray}
The case when $i\in\mathbb{N}$ follows from Proposition \ref{gauss0} and (\ref{extranonsense002}). If $i=\infty$, then this case follows from the fact that $f_\infty$ is the local smooth limit of $\{f_i\}_{i=1}^\infty$.
\end{proof}

\begin{Lemma}
There is a constant $C$ independent of $i$, such that the following hold for all $t\in[-2,-1)$ and $i\in\mathbb{N}\cup\{\infty\}$.
\begin{eqnarray}
\int_{M_i}f_i^2(\cdot,t)u_i(\cdot,t)dg_{i,t}\leq C.\label{lem5.6.2}
\end{eqnarray}
Here we have let $M_i\equiv M$ for all $i\in\mathbb{N}$.
\end{Lemma}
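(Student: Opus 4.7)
The plan is to reduce the estimate to an application of the centered second-moment inequality (\ref{Nashintegral_2}) from Proposition \ref{Nashintegral}, combined with the uniform Nash entropy bound from Theorem \ref{entropybound}. First I would observe that each scaled flow $(M,g_i(t))$ still satisfies (\ref{curvaturebound}) and is ancient, so by the standard maximum-principle argument (applying the evolution inequality for $R$ on $[-A,t]$ and sending $A\to\infty$, which is valid under our compact-interval curvature bound), its scalar curvature is nonnegative on all of $M\times(-\infty,0]$. Consequently, (\ref{Nashintegral_2}) applied to $(M,g_i(t))$ with base point $(p_0,0)$, $\tau=|t|$, and $R_{\min}=0$ yields
\begin{equation*}
\int_M \big(f_i(\cdot,t)-\mathcal{N}^i_{p_0,0}(|t|)-\tfrac{n}{2}\big)^2\, d\nu^i_t \leq n,
\end{equation*}
where $\mathcal{N}^i_{p_0,0}(|t|)=\mathcal{N}_{p_0,0}(\tau_i|t|)$ is the Nash entropy of the unscaled flow.

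Next I would bound the centering constant uniformly. By Theorem \ref{entropybound}, $\mathcal{N}_{p_0,0}(\tau_i|t|)\geq-Y$, and by the monotonicity of $\mathcal{N}_{p_0,0}$ together with the standard fact $\lim_{\tau\to 0+}\mathcal{N}_{p_0,0}(\tau)=0$, we also have $\mathcal{N}_{p_0,0}(\tau_i|t|)\leq 0$. Hence $|\mathcal{N}^i_{p_0,0}(|t|)+\tfrac{n}{2}|\leq Y+\tfrac{n}{2}$ independent of $i$ and $t$. Applying $(a+b)^2\leq 2a^2+2b^2$ to $f_i=\big(f_i-\mathcal{N}^i_{p_0,0}(|t|)-\tfrac{n}{2}\big)+\big(\mathcal{N}^i_{p_0,0}(|t|)+\tfrac{n}{2}\big)$ and integrating against the probability measure $d\nu^i_t$ gives
\begin{equation*}
\int_M f_i^2\, d\nu^i_t\leq 2n+2\big(Y+\tfrac{n}{2}\big)^2=:C,
\end{equation*}
uniformly for all $i\in\mathbb{N}$ and $t\in[-2,-1)$.

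To handle the case $i=\infty$, I would invoke Fatou's lemma. By Lemma \ref{unitmeasure}(1), $f_i\to f_\infty$ and $u_i\to u_\infty$ locally smoothly on $M_\infty\times[-2,-1)$, where $f_i$ and $u_i$ are understood as pullbacks under the defining diffeomorphisms $\Psi_i:U_i\to V_i\subset M$. For any precompact open set $K\Subset M_\infty$, the local smooth convergence gives
\begin{equation*}
\int_K f_\infty^2(\cdot,t)\, u_\infty(\cdot,t)\, dg_{\infty,t}=\lim_{i\to\infty}\int_{\Psi_i(K)} f_i^2\, u_i\, dg_{i,t}\leq C,
\end{equation*}
and exhausting $M_\infty$ by such $K$'s and applying Fatou's lemma to the nonnegative integrand yields the same bound for $i=\infty$. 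I do not anticipate any real obstacle; the only minor point requiring care is the nonnegativity of $R_{g_i}$ used to make the right-hand side of (\ref{Nashintegral_2}) equal $n$, and this follows immediately from the standing bounded-curvature-on-compact-intervals hypothesis.
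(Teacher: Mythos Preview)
Your proof is correct and follows essentially the same approach as the paper: apply (\ref{Nashintegral_2}) together with the uniform Nash entropy bound of Theorem \ref{entropybound} for $i\in\mathbb{N}$, then pass to $i=\infty$ by Fatou's lemma and the locally smooth convergence. The paper's proof is only two sentences, and you have simply spelled out the implicit details (notably the use of $R\ge 0$ on ancient solutions to make the right-hand side of (\ref{Nashintegral_2}) equal $n$, and the elementary inequality $(a+b)^2\le 2a^2+2b^2$).
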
 
\begin{proof}
By (\ref{Nashintegral_2}) and the Nash entropy bound in Theorem \ref{entropybound}, we have that (\ref{lem5.6.2}) hold for $i\in\mathbb{N}$. The case when $i=\infty$ then follows from Fatou's lemma and the locally smooth convergences of $f_i$ and $u_i$. 
\end{proof}

\begin{Proposition}\label{Nconst}
The following statements are true.
\begin{enumerate}[(1)]
    \item $\displaystyle \lim_{i\rightarrow\infty}\mathcal{N}_i(|t|)=\int_{M_\infty}f_\infty(\cdot,t)u_\infty(\cdot,t)dg_{\infty,t}-\frac{n}{2}$ for all $t\in[-2,-1)$.
    \item $N_\infty:=\displaystyle \int_{M_\infty}f_\infty(\cdot,t)u_\infty(\cdot,t)dg_{\infty,t}-\frac{n}{2}$ is a constant in $t\in[-2,-1)$.
\end{enumerate}
Here $\mathcal{N}_i(\tau):=\mathcal{N}_{p_0,0}(\tau_i\tau)$ stands for the Nash entropy of the scaled Ricci flow $(M,g_i(t))$ based at $(p_0,0)$.
\end{Proposition}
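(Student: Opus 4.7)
The plan for (1) is to show $\int_M f_i u_i\, dg_{i,t} \to \int_{M_\infty} f_\infty u_\infty\, dg_{\infty,t}$ for each $t\in[-2,-1)$ by a standard truncation argument. First, on a large ball $B_{g_{\infty,t}}(z_\infty, R)$, the local smooth convergence of $f_i \to f_\infty$ and $u_i \to u_\infty$ from Lemma \ref{unitmeasure}, together with the smooth Cheeger-Gromov-Hamilton convergence of the volume forms, yields
\[
\int_{\Psi_i(B_{g_{\infty,t}}(z_\infty,R))} f_i u_i\, dg_{i,t}
\xrightarrow[i\to\infty]{}
\int_{B_{g_{\infty,t}}(z_\infty,R)} f_\infty u_\infty\, dg_{\infty,t}.
\]
The main work is then to show that the complementary integrals can be made arbitrarily small by choosing $R$ large, uniformly in $i\in\mathbb{N}\cup\{\infty\}$.

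For the tail estimate, let $C_0$ be the constant from Lemma \ref{lowerquadratic} and write $\tilde f_i := f_i + C_0 \geq 0$. Then $\tilde f_i^2 \leq 2(f_i^2 + C_0^2)$, so by \eqref{lem5.6.2} we obtain $\int \tilde f_i^2 u_i\, dg_{i,t}\leq C$ uniformly in $i$. By Cauchy-Schwarz,
\[
\int_{M\setminus B_{g_{i,t}}(z_i,R)} \tilde f_i u_i\, dg_{i,t}
\leq \Bigl(\int \tilde f_i^2 u_i\, dg_{i,t}\Bigr)^{1/2}
\cdot \nu^i_t\bigl(M\setminus B_{g_{i,t}}(z_i,R)\bigr)^{1/2}.
\]
To control the last factor uniformly in $i$, recall from the proof of Lemma \ref{lowerquadratic} that there is a point $z'_i\in M$ with $\dist_{g_{i,t}}(z_i, z'_i)\leq C$ such that $(z'_i, \tau_i t)$ is an $H_n$-center of $(p_0,0)$ with respect to $g(t)$, equivalently, of $(p_0,0)$ with respect to $g_i(t)$. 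Proposition \ref{measureaccumulationofHcenter} then yields $\nu^i_t(M\setminus B_{g_{i,t}}(z_i,R))\to 0$ as $R\to\infty$ uniformly in $i$. Combined with a corresponding (easier, from Fatou) estimate for $u_\infty$, this gives uniform integrability, and hence convergence of the full integrals. The constant shift $-C_0\cdot 1$ coming from replacing $\tilde f_i$ by $f_i$ is harmless since $u_i$ is a probability density.

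For (2), the plan is to avoid any direct computation of $\frac{d}{dt} N_\infty$. The scaling invariance of the Nash entropy yields $\mathcal{N}_i(|t|) = \mathcal{N}_{p_0,0}(\tau_i|t|)$. By Perelman's monotonicity, $\tau\mapsto \mathcal{N}_{p_0,0}(\tau)$ is non-increasing in $\tau$, and by Theorem \ref{entropybound} it is bounded below by $-Y$, so
\[
N^\ast := \lim_{\tau\to\infty} \mathcal{N}_{p_0,0}(\tau) \in \mathbb{R}
\]
exists. Since for every fixed $t\in[-2,-1)$ one has $|t|>1$ and therefore $\tau_i|t|\to\infty$, it follows that $\lim_i \mathcal{N}_i(|t|) = N^\ast$. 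By part (1), $N_\infty(t) = N^\ast$ for all $t\in[-2,-1)$, so $N_\infty$ is constant.

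The main obstacle is the tail estimate in (1), where one must simultaneously exploit the $L^2$-bound \eqref{lem5.6.2} and the Gaussian concentration implicit in the $H_n$-center property, and glue together three forms of convergence (local smooth convergence of $f_i$, of $u_i$, and of the volume forms) on an exhaustion by large balls; the rest is routine once these ingredients are in place.
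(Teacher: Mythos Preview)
Your proof is correct and follows the paper's strategy: truncation on large balls, local smooth convergence inside, and a uniform tail estimate drawn from \eqref{lem5.6.2}; part (2) is handled identically via monotonicity and boundedness of $\mathcal{N}_{p_0,0}$. The only variation is in the tail bound---the paper exploits Lemma~\ref{lowerquadratic} directly to note that $f_i\gtrsim A^2/C$ on $M\setminus B_{g_{i,t}}(z_i,A)$, yielding $\int_{M\setminus B}f_iu_i\leq \tfrac{C}{A^2}\int f_i^2u_i\leq C/A^2$ without separately invoking Cauchy--Schwarz and the $H_n$-concentration of $\nu^i_t$---but both routes are equally valid and of comparable length.
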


\begin{proof}
We shall only prove part (1), since part (2) obviously follows from part (1) and the monotonicity of the Nash entropy. Let us fix an arbitrary $t\in[-2,-1)$. Applying Lemma \ref{lowerquadratic} and (\ref{lem5.6.2}), we may find a positive number $C$, such that the following holds for all $A>C$.
\begin{eqnarray*}
C\geq\int_Mf_i^2u_idg_{i,t}\geq \int_{M\setminus B_{g_{i,t}}(z_i,A)}f_i^2u_idg_{i,t}\geq \frac{A^2}{2C}\int_{M\setminus B_{g_{i,t}}(z_i,A)}f_iu_idg_{i,t}.
\end{eqnarray*}
Hence
\begin{eqnarray}\label{nonsense5.5}
0<\int_{M\setminus B_{g_{i,t}}(z_i,A)}f_iu_i\,dg_{i,t}\leq\frac{C}{A^2}\quad\text{ for all }\quad A>C.
\end{eqnarray}

Since $f_i\rightarrow f_\infty$ locally smoothly, the following holds for all $A>0$.
\begin{eqnarray*}
\int_{B_{g_{\infty,t}}(z_i,A)}f_\infty u_\infty dg_{\infty,t}&=&\lim_{i\rightarrow \infty}\int_{B_{g_{i,t}}(z_i,A)}f_i u_i dg_{i,t}
\\
&=&\lim_{i\rightarrow \infty}\left(\int_{M}f_i u_i dg_{i,t}-\int_{M\setminus B_{g_{i,t}}(z_i,A)}f_iu_idg_{i,t}\right)
\\
&=&\lim_{i\rightarrow \infty}\left(\mathcal{N}_i(|t|)+\tfrac{n}{2}+O(A^{-2})\right).
\end{eqnarray*}
Here we have applied (\ref{nonsense5.5}). Note that the constant $C$ in formula (\ref{nonsense5.5}) is independent of both $i$ and $A$. Finally, taking $i\rightarrow\infty$ first and then $A\rightarrow\infty$, the conclusion follows.
\end{proof}

Therefore, if we use $u_\infty$ to construct a Nash entropy, then what we obtain is a constant---the critical point of Perelman's monotonicity formula. This should imply that $f_\infty$ is a shrinker potential. However, since we do not have any geometric condition on $(M_\infty,g_\infty(t))$ except for the fact that it is a shrinker, and since Perelman's monotonicity formula depends heavily on the integration by parts at infinity, we cannot so easily conclude that $f_\infty$ is a shrinker potential. One way to resolve this is to apply the cut-off function constructed by \cite[Lemma 3]{LW20}. We shall use an alternative method. Recall the following result of Bamler.

\begin{Proposition}[Proposition 6.1 in \cite{Bam20c}]\label{almostselfsimilar}
For any $Y<\infty$ and $\varepsilon>0$, there is a $\bar{\delta}(Y,\varepsilon)>0$, such that the following holds whenever $\delta\in (0,\bar{\delta})$. Let $(M,g(t))_{t\in I}$ be a complete Ricci flow with bounded curvature within each compact time interval. Let $(x_0,t_0)\in M\times I$ and $r>0$ be such that $[t_0-\delta^{-1}r^2,t_0-\delta r^2]\subset I$. Suppose $\mathcal{N}_{x_0,t_0}(r^2)\geq -Y$. If $$\mathcal{N}_{x_0,t_0}(\delta^{-1}r^2)\geq \mathcal{N}_{x_0,t_0}(\delta r^2)-\delta,$$ then we have
$$\int_{t_0-\varepsilon^{-1} r^2}^{t_0-\varepsilon r^2}\int_M \tau\left|\,\Ric+\nabla^2 f-\frac{1}{2\tau}g\,\right|^2d\nu_{x_0,t_0\,|\,t}dg_tdt\leq \varepsilon,$$
where $\tau(t):=t_0-t$ and $d\nu_{x_0,t_0\,|\,t}:=(4\pi\tau)^{-\frac{n}{2}}e^{-f}dg_t$ is the conjugate heat kernel based at $(x_0,t_0)$.
\end{Proposition}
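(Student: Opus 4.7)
The plan is to deduce this almost-soliton estimate from Perelman's monotonicity formulas via a short three-slope concavity argument. First I would introduce $L(\tau):=\tau\,\mathcal{N}_{x_0,t_0}(\tau)$ and recall the two standard identities
$$L'(\tau)=\mathcal{W}_{x_0,t_0}(\tau),\qquad L''(\tau)=\frac{d\mathcal{W}_{x_0,t_0}}{d\tau}(\tau)=-2\tau\int_M\bigl|\Ric+\nabla^2 f-\tfrac{g}{2\tau}\bigr|^2 d\nu_{x_0,t_0\,|\,t_0-\tau},$$
both obtained by differentiation under the integral sign followed by integration by parts against the conjugate heat kernel. In particular, $L$ is concave in $\tau$, and the conclusion of the proposition is equivalent to
$$L'(\varepsilon r^2)-L'(\varepsilon^{-1}r^2)=\int_{\varepsilon r^2}^{\varepsilon^{-1}r^2}\bigl(-L''(\tau)\bigr)d\tau\le 2\varepsilon.$$

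Second, provided $\bar\delta<\varepsilon$, I would exploit the concavity of $L$ on the larger interval $[\delta r^2,\delta^{-1}r^2]$ via the tangent-versus-secant inequality. Writing $\tau_1=\delta r^2$, $\sigma_1=\varepsilon r^2$, $\sigma_2=\varepsilon^{-1}r^2$, $\tau_2=\delta^{-1}r^2$, concavity gives
$$L'(\sigma_1)\le\frac{L(\sigma_1)-L(\tau_1)}{\sigma_1-\tau_1},\qquad L'(\sigma_2)\ge\frac{L(\tau_2)-L(\sigma_2)}{\tau_2-\sigma_2}.$$
Substituting $L(\tau)=\tau\mathcal{N}_{x_0,t_0}(\tau)$, combining the monotonicity $\mathcal{N}_{x_0,t_0}(\tau_1)\ge\mathcal{N}_{x_0,t_0}(\sigma_j)\ge\mathcal{N}_{x_0,t_0}(\tau_2)$ with the hypothesis $\mathcal{N}_{x_0,t_0}(\tau_2)\ge\mathcal{N}_{x_0,t_0}(\tau_1)-\delta$, the middle values drop out by one-sided bounds and a straightforward algebraic cancellation leaves
$$L'(\sigma_1)-L'(\sigma_2)\le\frac{\tau_2\,\delta}{\tau_2-\sigma_2}=\frac{\delta\varepsilon}{\varepsilon-\delta}.$$
Taking $\bar\delta(Y,\varepsilon):=\varepsilon/2$ then forces $\varepsilon-\delta\ge\varepsilon/2$, so that $L'(\sigma_1)-L'(\sigma_2)\le 2\delta\le\varepsilon$, which is even stronger than the required $\le 2\varepsilon$.

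The only step that is genuinely new compared with Bamler's original argument is the justification of the two identities $L'=\mathcal{W}_{x_0,t_0}$ and the Perelman monotonicity $L''=-2\tau\int|\cdots|^2 d\nu$ in the noncompact, bounded-curvature setting. Both derivations involve integration by parts at spatial infinity against the measure $d\nu_{x_0,t_0\,|\,t_0-\tau}$. The role of the hypothesis $\mathcal{N}_{x_0,t_0}(r^2)\ge -Y$ is precisely to enable this: combined with Theorem \ref{gaussianupperbound} and its matching lower Gaussian counterpart, it produces two-sided quadratic control
$$C^{-1}\dist^2_{t_0-\tau}(\cdot,x_0)-C\le f(\cdot,t_0-\tau)\le C\dist^2_{t_0-\tau}(\cdot,x_0)+C,$$
and, through the standard heat-kernel derivative estimates, analogous polynomial bounds on $|\nabla f|$ and $|\nabla^2 f|$. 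The Gaussian decay of $d\nu$ then dominates every such polynomial growth, and a cut-off exhaustion argument of the same type as in the proof of Proposition \ref{Nashintegral} legitimises every integration by parts needed to establish the two identities. Once this analytic input is in place, the three-slope computation of the previous paragraph applies verbatim and yields the proposition.
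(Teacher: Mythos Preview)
Your argument is correct and is essentially the same as Bamler's proof, which the paper simply defers to. The identities $L'(\tau)=\mathcal{W}_{x_0,t_0}(\tau)$ and $L''(\tau)=-2\tau\int|\Ric+\nabla^2 f-\tfrac{g}{2\tau}|^2\,d\nu$ are standard, and your secant/tangent bound $L'(\sigma_1)-L'(\sigma_2)\le\tfrac{\tau_2\delta}{\tau_2-\sigma_2}=\tfrac{\delta\varepsilon}{\varepsilon-\delta}$ is correctly derived from the concavity of $L$ together with the hypothesis $\mathcal{N}(\tau_2)\ge\mathcal{N}(\tau_1)-\delta$.

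One small inaccuracy worth flagging: you attribute the hypothesis $\mathcal{N}_{x_0,t_0}(r^2)\ge -Y$ to the justification of the integration by parts, and you invoke a ``matching lower Gaussian counterpart'' to Theorem~\ref{gaussianupperbound}. No such $Y$-dependent lower bound is available (or needed) here. For Ricci flows with bounded curvature on each compact time interval, Perelman's monotonicity formulas---and hence both identities for $L'$ and $L''$---are valid unconditionally, using the curvature-dependent coarse Gaussian bounds from \cite[Theorems 26.25, 26.31]{RFV3} (this is exactly the route taken in the proof of Proposition~\ref{Nashintegral}, and is also what the paper's one-line proof of the present proposition asserts). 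Your own computation confirms this: your $\bar\delta=\varepsilon/2$ does not depend on $Y$ at all. The $Y$-hypothesis in the statement is an artifact of the fact that Bamler's original Proposition~6.1 contains several further conclusions, some of which genuinely require the entropy bound; the particular inequality stated here does not.
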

\begin{proof}
In fact, in the proof of \cite[Proposition 6.1]{Bam20c}, the conclusion stated in the current proposition relies only on Perelman's monotonicity formulas for entropies, which are obviously true for Ricci flows with bounded curvature. Hence, Bamler's proof can be directly applied to our case.
\end{proof}

We are then ready to show that $f_\infty$ is a shrinker potential.

\begin{Proposition}\label{f-potential}
$f_\infty$ is a shrinker potential function, satisfying
\begin{eqnarray*}
\Ric_\infty+\nabla^2f_\infty=\frac{1}{2|t|}g_\infty\quad\text{ for all }\quad t\in [-2,-1).
\end{eqnarray*}
\end{Proposition}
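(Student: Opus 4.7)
The plan is to apply Bamler's almost-self-similar estimate, Proposition \ref{almostselfsimilar}, to the rescaled flows $(M,g_i(t))$ with base point $(p_0,0)$ and parabolic scale $r=1$, and then pass to the smooth Cheeger-Gromov-Hamilton limit. The first hypothesis, $\mathcal{N}^{g_i}_{p_0,0}(1) = \mathcal{N}_{p_0,0}(\tau_i) \ge -Y$, is exactly Theorem \ref{entropybound}. For the second hypothesis, note that $\mathcal{N}_{p_0,0}(\tau)$ is monotone non-increasing in $\tau$ and bounded below by $-Y$, hence has a finite limit as $\tau \to \infty$; therefore, for any fixed $\delta > 0$, the gap $\mathcal{N}_{p_0,0}(\delta\tau_i) - \mathcal{N}_{p_0,0}(\delta^{-1}\tau_i)$ tends to $0$ as $i\to\infty$, so the condition $\mathcal{N}^{g_i}_{p_0,0}(\delta^{-1}) \ge \mathcal{N}^{g_i}_{p_0,0}(\delta) - \delta$ holds for all $i$ sufficiently large.

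Given any $\varepsilon > 0$, choose $\delta < \bar\delta(Y,\varepsilon)$ and then $i$ so large that both hypotheses hold. The quantity $\tau\, |\Ric + \nabla^2 f - \tfrac{1}{2\tau}g|^2\, (4\pi\tau)^{-n/2}e^{-f}\, dg\, dt$ is invariant under the parabolic rescaling, since $f$ is (as is $\Ric + \nabla^2 f - \tfrac{1}{2\tau}g$ as a $(0,2)$-tensor, whose $g$-norm squared absorbs the metric scaling), so Proposition \ref{almostselfsimilar} yields
\[
    \int_{-\varepsilon^{-1}}^{-\varepsilon}\int_M |t|\, \Bigl|\Ric_{g_i} + \nabla^2 f_i - \tfrac{1}{2|t|}g_i\Bigr|^2_{g_i} u_i\, dg_{i,t}\, dt \le \varepsilon.
\]
Now fix $\eta \in (0,\tfrac12)$ and $A > 0$, and take $\varepsilon < \min(\eta,\tfrac12)$, so $[-2,-1-\eta]\subset[-\varepsilon^{-1},-\varepsilon]$. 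On the compact set $\overline{B_{g_{\infty,-1-\eta}}(z_\infty,A)}\times[-2,-1-\eta]\subset M_\infty\times[-2,-1)$, the smooth convergences $g_i\to g_\infty$ and $f_i\to f_\infty$ (hence $\Ric_{g_i}+\nabla^2 f_i\to \Ric_{g_\infty}+\nabla^2 f_\infty$), together with the locally smooth convergence $u_i\to u_\infty$ from Lemma \ref{unitmeasure}, allow me to take $i\to\infty$ in the restriction of the above integral to this set, obtaining
\[
    \int_{-2}^{-1-\eta}\int_{B_{g_{\infty,-1-\eta}}(z_\infty,A)} |t|\, \Bigl|\Ric_{g_\infty} + \nabla^2 f_\infty - \tfrac{1}{2|t|}g_\infty\Bigr|^2_{g_\infty} u_\infty\, dg_{\infty,t}\, dt \le \varepsilon.
\]

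Letting $\varepsilon \to 0$ (and re-selecting $\delta$ and $i$ each time), the left-hand side must vanish; since $u_\infty > 0$ by Lemma \ref{unitmeasure}, the integrand is identically zero, giving the shrinker equation on $B_{g_{\infty,-1-\eta}}(z_\infty,A)\times[-2,-1-\eta]$. Exhausting $M_\infty\times[-2,-1)$ by taking $\eta\to 0$ and $A\to\infty$ concludes the argument. The only substantive point is the verification that the hypotheses of Proposition \ref{almostselfsimilar} are available uniformly along our subsequence, which is precisely where the bounded-below Nash entropy (Theorem \ref{entropybound}) combined with monotonicity is indispensable; the remaining work is routine passage to the limit using the already-established smooth convergences.
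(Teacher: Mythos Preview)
Your proof is correct and follows essentially the same approach as the paper: both verify the hypotheses of Proposition \ref{almostselfsimilar} via Theorem \ref{entropybound} and the monotonicity of the Nash entropy, apply it to obtain the almost-self-similar integral bound on the rescaled flows, and then pass to the limit using the locally smooth convergences of $g_i$, $f_i$, and $u_i$. The only cosmetic difference is that the paper phrases the limit passage via Fatou's lemma on all of $M_\infty$, whereas you restrict to compact sets and exhaust; both are equally valid.
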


\begin{proof}
By Theorem \ref{entropybound}, we have that, for any $\delta>0$, it holds that
\begin{eqnarray*}
\lim_{i\rightarrow\infty}\mathcal{N}_{p_0,0}(\delta^{-1}\tau_i)-\mathcal{N}_{p_0,0}(\delta\tau_i)=0.
\end{eqnarray*}
Hence, Proposition \ref{almostselfsimilar} implies the existence of a sequence $\varepsilon_i\searrow 0$, such that
$$\int_{-\varepsilon_i^{-1} \tau_i}^{-\varepsilon_i \tau_i}\int_M |t|\left|\,\Ric+\nabla^2 f-\frac{1}{2|t|}g\,\right|^2d\nu_{p_0,0\,|\,t}dg_tdt\leq \varepsilon_i,$$
where $d\nu_{p_0,0\,|\,t}=(4\pi|t|)^{-\frac{n}{2}}e^{-f}dg_t$ is the conjugate heat kernel based at $(p_0,0)$. Scaling by $\tau_i$, we immediately have
\begin{align}\label{extranonsense004}
   &\quad\int_{-2 }^{-1}\int_M |t|\left|\,\Ric_{g_i}+\nabla^2 f_i-\frac{1}{2|t|}g_i\,\right|^2u_idg_{i,t}dt
   \\\nonumber
   &\leq\int_{-\varepsilon_i^{-1} }^{-\varepsilon_i }\int_M |t|\left|\,\Ric_{g_i}+\nabla^2 f_i-\frac{1}{2|t|}g_i\,\right|^2u_idg_{i,t}dt
   \\\nonumber
   &\leq \varepsilon_i \rightarrow 0,
\end{align}
where $g_i(t):= \tau_i^{-1}g(\tau_it)$, $f_i$ and $u_i$ are defined in (\ref{somenonsenseofscaledheatkernels}). Because of the local smooth convergence of $g_i$, $f_i$, and $u_i$, taking a limit for (\ref{extranonsense004}) and applying Fatou's lemma, we have
$$\int_{-2 }^{-1}\int_{M_\infty} |t|\left|\,\Ric_{g_\infty}+\nabla^2 f_\infty-\frac{1}{2|t|}g_\infty\,\right|^2u_\infty dg_{\infty,t}dt=0.$$
Since $u_\infty>0$ everywhere, the Proposition follows immediately.
\end{proof}

\begin{proof}[Proof of Theorem \ref{Theorem_main}(2)]
To prove Theorem \ref{Theorem_main}(2), we need only to show  $N_\infty=\mu_\infty$, where $N_\infty$ is defined in Proposition \ref{Nconst}(2) and $\mu_\infty$ is the entropy of the asymptotic shrinker. Because of Lemma \ref{unitmeasure}(4), by (\ref{canonicalformnormalization}) we have
\begin{eqnarray*}
N_\infty&=&\int_{M_\infty} f_\infty u_\infty dg_{\infty,t}-\frac{n}{2}
\\
&=&\int_{M_\infty}|t|(|\nabla f_\infty|^2+R_\infty)u_\infty dg_{\infty,t}-\frac{n}{2}+\mu_\infty
\\
&=&\int_{M_\infty}|t|(\Delta f_\infty+R_\infty)u_\infty dg_{\infty,t}-\frac{n}{2}+\mu_\infty
\\
&=&\int_{M_\infty}|t|\frac{n}{2|t|}u_\infty dg_{\infty,t}-\frac{n}{2}+\mu_\infty
\\
&=&\mu_\infty.
\end{eqnarray*}
Here we have applied integration by parts at infinity. This is valid, since both $f_\infty$ and $|\nabla f_\infty|^2$ have quadratic growth bounds (\cite{CZ10}), and since a Ricci shrinker has at most Euclidean volume growth (\cite{CZ10, Mun09}); a standard cut-off argument easily verifies the integration by parts at infinity.
\end{proof}

\begin{proof}[Proof of Theorem \ref{Theorem_main}(3)]
 Since any Ricci flow with bounded curvature within each compact time interval is $H_n$-concentrated, we have
\begin{eqnarray*}
\big((M,g_i(t))_{t\in(-\infty,0]},(\nu^i_s)_{s\in(-\infty,0]}\big)\xrightarrow{\makebox[1cm]{$\mathbb{F}$}}\mathcal{X},
\end{eqnarray*}
where $d\nu_s^i=u_i(\cdot,s)dg_{i,s}$ and $\mathcal{X}$ is a metric flow pair over $(-\infty,0]$ (c.f. \cite[Theorem 7.8]{Bam20b}). Here the $\mathbb{F}$-convergence should be understood to be the $\mathbb{F}$-convergence over each finite subinterval of $(-\infty,0]$.

By Corollary \ref{change-l-center-to-Hn-center}, we have
\begin{eqnarray*}
\big(M,g_i(t),z_i\big)_{t\in[-2,-1]}\xrightarrow{\makebox[1cm]{}}\big(M_\infty,g_\infty(t),z_\infty\big)_{t\in[-2,-1]}.
\end{eqnarray*}
Therefore, Theorem \ref{convergence} implies that
\begin{eqnarray*}
\mathcal{X}_{[-2,-1)}=\big((M_\infty,g_\infty(t))_{t\in[-2,-1)},(\nu^\infty_s)_{s\in[-2,-1)}\big),
\end{eqnarray*}
where $\nu_s^\infty:=u_\infty(\cdot,s)dg_s$ is a conjugate heat flow on $(M_\infty,g_\infty(t))$.

Finally, the proof of Theorem \ref{convergence} indicates that $u_i\rightarrow u_\infty$ locally smoothly, where $u_i$ is defined in (\ref{somenonsenseofscaledheatkernels}). Hence, by Proposition \ref{f-potential}, $u_\infty=(4\pi|t|)^{-\frac{n}{2}}e^{-f_\infty}$ is indeed a conjugate heat flow made of a shrinker potential function; this proves Theorem \ref{Theorem_main}(3).
\end{proof}

\section{Independence of the base point}

In this section, we prove Corollary \ref{entropynoloss}. Recall that in the statement of Assumption B, the choices of the  base point $(p_0,0)$ and the sequence of $\ell$-centers $\{(p_i,-\tau_i)\}_{i=1}^\infty$ are involved. If an ancient solution satisfies Assumption B with respect to some certain base point $(p_0,0)$ and some certain sequence of scales $\{\tau_i\}_{i=1}^\infty$, it is not immediately clear whether Assumption B is still valid if we alter the base point and the sequence of scales. And even if it were, it is not yet clear whether the limit is the same as before. Although the choices of the $\ell$-centers $p_i$ are not unique either, yet, according to what we have developed by far, this fact does not affect the validity of Assumption B or the limit flow in (\ref{smoothconvergence}). What we shall prove next is that for an ancient solution satisfying Assumption B, we may freely alter the base point, while the asymptotic shrinker is still the same. (But we may not change the sequence of scales $\{\tau_i\}_{i=1}^\infty$ at will.) This leads to the proof of Corollary \ref{entropynoloss}. 

\begin{Theorem}\label{basepointindep}
Let $(M,g(t))_{t\in(-\infty,0]}$ be an ancient solution satisfying (\ref{curvaturebound}) and Assumption B with respect to $\big(p_0,0,\{\tau_i\}_{i=1}^\infty,\{p_i\}_{i=1}^\infty\big)$. Then, for any $(p'_0,t'_0)\in M\times(-\infty,0]$, the following holds.
\begin{eqnarray*}
\big(M,g'_i(t),p'_i\big)_{t\in[-2,-1]}\xrightarrow{\makebox[1cm]{}} \big(M_\infty,g_\infty(t),p'_\infty\big)_{t\in[-2,-1]},
\end{eqnarray*}
where $(p'_i,-\tau_i)$ is an $\ell$-center of $(p'_0,t'_0)$ with respect to the Ricci flow $(M,g(t))$, $g'_i(t):=\tau'^{-1}_ig(t'_0+\tau'_it)$, $\tau'_i:=\tau_i+t'_0$ for all $i$ large, and $(M_\infty,g_\infty(t))$ is the same limit as in (\ref{smoothconvergence}).  
\end{Theorem}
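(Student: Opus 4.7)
The plan is to deduce the conclusion from the CGH convergence already known under Assumption B. It suffices to show (a) that $p'_i$ lies at uniformly bounded distance from $p_i$ in the scaled metric $g_i(-1)$, and (b) that the two rescalings $g'_i$ and $g_i$ differ by vanishing perturbations. I would begin by establishing a uniform Nash entropy lower bound at the new base point. Apply Proposition \ref{harnackofnashentropy} with $(x_1,t_1) = (p_0,0)$, $(x_2,t_2)=(p'_0,t'_0)$, $t^*=t'_0$, $s=-\tau_i$ (and then with the roles reversed). Since $t^* - s = \tau'_i$ and $t_2 - s$ equals $\tau'_i$ (resp.\ $\tau_i$), the logarithmic term is either $0$ or $\tfrac{n}{2}\log(\tau_i/\tau'_i)\to 0$. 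Using $R_{\min}\ge 0$ for the ancient flow, the Wasserstein term is bounded by $\bigl(n/(2\tau'_i)\bigr)^{1/2}D_0$, where $D_0:=\dist_{W_1}^{g_{t'_0}}(\nu_{p_0,0\,|\,t'_0},\delta_{p'_0})$ is finite by the Gaussian upper bound at time $t'_0$ (Theorem \ref{gaussianupperbound} combined with Theorem \ref{entropybound}). Thus $|\mathcal{N}_{p_0,0}(\tau_i)-\mathcal{N}_{p'_0,t'_0}(\tau'_i)|\to 0$, and together with Theorem \ref{entropybound} we get $\mathcal{N}_{p'_0,t'_0}(\tau'_i)\ge -Y'$ uniformly in $i$; the monotonicity of $\mathcal{N}$ extends this to all scales.

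Next I would show $\dist_{-\tau_i}(p_i,p'_i)\le C\sqrt{\tau_i}$. Let $z_i,z'_i$ be $H_n$-centers at time $-\tau_i$ of $(p_0,0)$ and $(p'_0,t'_0)$, respectively. By Theorem \ref{gaussianupperbound} applied at each base point, together with the subsolution lower bound $K\ge (4\pi\tau)^{-n/2}e^{-n/2}$ at the corresponding $\ell$-centers from (\ref{subsolution}), the argument of Proposition \ref{H_n_l_n} gives $\dist_{-\tau_i}(p_i,z_i), \dist_{-\tau_i}(p'_i,z'_i)\le C\sqrt{\tau_i}$. For the separation between the two $H_n$-centers, I invoke the monotonicity (\ref{monotoneofdW1}), which yields $\dist_{W_1}^{g_s}(\nu_{p_0,0\,|\,s},\nu_{p'_0,t'_0\,|\,s})\le D_0$ for all $s\le t'_0$. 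Combined with the Cauchy--Schwarz bound $\dist_{W_1}(\delta_z,\nu)\le \sqrt{\mathrm{Var}(\delta_z,\nu)}\le \sqrt{H_n(t-s)}$ at each $H_n$-center and the triangle inequality, this gives $\dist_{-\tau_i}(z_i,z'_i)\le \sqrt{H_n\tau_i}+D_0+\sqrt{H_n\tau'_i}\le C\sqrt{\tau_i}$, so altogether $\dist_{-\tau_i}(p_i,p'_i)\le C\sqrt{\tau_i}$.

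After the parabolic rescaling, $\dist_{g_i(-1)}(p_i,p'_i)\le C$ uniformly, so the given CGH convergence with base points $p_i$ immediately extends to $(M,g_i(t),p'_i)\to(M_\infty,g_\infty(t),p'_\infty)$ for some $p'_\infty\in M_\infty$ with $\dist_{g_{\infty,-1}}(p_\infty,p'_\infty)\le C$. It remains to pass from $g_i$ to $g'_i$. A direct computation using $\tau'_i=\tau_i+t'_0$ gives $g'_i(t) = (\tau_i/\tau'_i)\,g_i\bigl(t+t'_0(1+t)/\tau_i\bigr)$, so $g'_i$ differs from $g_i$ by a conformal factor tending to $1$ and a time shift tending to $0$, uniformly on $[-2,-1]$. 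By the locally uniform $C^k_{\mathrm{loc}}$ bounds on $g_i$ inherited from the CGH convergence (together with standard Shi-type interior estimates available from the bounded geometry on compact time intervals, Corollary \ref{LUTypeInoncollapsed}), this perturbation preserves the smooth CGH limit. The main obstacle is this final reconciliation of the two parabolic rescalings: one must argue that the small time shift and conformal perturbation do not spoil higher-derivative convergence. This is routine but requires careful bookkeeping of the precise relationship between $\tau_i$ and $\tau'_i$.
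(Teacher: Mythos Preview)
Your proof is correct and follows the same overall reduction as the paper: one passes from $g_i$ to $g'_i$ by a vanishing time-shift and conformal factor, and one shows $\dist_{g_i(-1)}(p_i,p'_i)\le C$ by interpolating through the $H_n$-centers $z_i,z'_i$ at time $-\tau_i$.

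The one noteworthy difference is in how you bound $\dist_{-\tau_i}(z_i,z'_i)$. The paper argues by contradiction: assuming the $H_n$-centers drift apart like $\sqrt{A_i\tau_i}$ with $A_i\to\infty$, it constructs an explicit $1$-Lipschitz test function supported near $z_i$ and uses Proposition~\ref{measureaccumulationofHcenter} to force $\dist_{W_1}^{g_{-\tau_i}}(\nu_{p_0,0\,|\,-\tau_i},\nu_{p'_0,t'_0\,|\,-\tau_i})\to\infty$, contradicting the monotonicity~(\ref{monotoneofdW1}). Your argument is more direct: since $\dist_{W_1}(\delta_{z_i},\delta_{z'_i})=\dist_{-\tau_i}(z_i,z'_i)$, the triangle inequality with the variance bound at each $H_n$-center and the monotonicity of $\dist_{W_1}$ immediately gives $\dist_{-\tau_i}(z_i,z'_i)\le\sqrt{H_n\tau_i}+D_0+\sqrt{H_n\tau'_i}$. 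This is shorter and uses the same ingredients. For the Nash entropy bound at the new base point, the paper simply cites \cite[Proposition~4.6]{MZ21}, whereas you rederive it from Proposition~\ref{harnackofnashentropy}; both are fine, and your use of $R_{\min}\ge 0$ is justified since the flow is ancient with bounded curvature on compact intervals.
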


\textbf{Remark:} In fact, if the existing interval of $(M,g(t))$ extends beyond $t=0$, then $t_0'$ can be taken to be greater than $0$, so long as the curvature remains to be bounded up to $t'_0$. This can be easily observed from the proof of this theorem.

\begin{proof}
Since $\displaystyle g'_i(t)=\frac{\tau_i}{\tau'_i}g_i\left(\frac{\tau'_i}{\tau_i}t+\tau_i^{-1}t'_0\right)$, where $g_i(t):=\tau_i^{-1}g_i(\tau_it)$, $$\tau_i^{-1}t'_0\rightarrow 0\quad\text{ and } \quad \frac{\tau_i}{\tau'_i}\rightarrow 1$$ as $i\rightarrow\infty$. It is then clear that we need only to show the equivalence of the base points, that is
\begin{eqnarray}\label{nonsense6-2-0}
\limsup_{i\rightarrow\infty}\dist_{g'_{i,-1}}(p_i,p'_i)<\infty.
\end{eqnarray}

Obviously, our concern is only in the case when $i$ is large. Hence, throughout this proof, we assume
\begin{eqnarray*}
\tau_i\gg|t'_0|\quad\text{ and }\quad \left|\frac{\tau_i}{\tau'_i}-1\right|\ll 1.
\end{eqnarray*}
Let $(z'_i,-\tau_i)$ be an $H_n$-center of $(p'_0,t'_0)$, and $(z_i,-\tau_i)$ and $H_n$-center of $(p_0,0)$. By \cite[Proposition 4.6]{MZ21} and Theorem \ref{entropybound}, we have
\begin{eqnarray*}
\lim_{\tau\rightarrow\infty}\mathcal{N}_{p'_0,t'_0}(\tau)=\lim_{\tau\rightarrow\infty}\mathcal{N}_{p_0,0}(\tau)\geq -Y>-\infty.
\end{eqnarray*}
It then follows from Proposition \ref{H_n_l_n} that
\begin{eqnarray*}
\dist_{g_{i,-1}}(p_i,z_i),\ \dist_{g'_{i,-1}}(p'_i,z'_i)\leq C,
\end{eqnarray*}
where $C$ is independent of $i$. Hence, to prove (\ref{nonsense6-2-0}), it suffices to show that
\begin{eqnarray*}
\dist_{g_{i,-1}}(z_i,z'_i),\ \dist_{g'_{i,-1}}(z_i,z'_i)\leq C,
\end{eqnarray*}
or equivalently,
\begin{eqnarray}\label{nonsense6-2-1}
\dist_{g_{-\tau_i}}(z_i,z'_i)\leq C\sqrt{\tau_i},
\end{eqnarray}
for some constant $C$ independent of $i$.

Let us prove (\ref{nonsense6-2-1}) by contradiction. Assume that there are positive numbers $A_i\nearrow\infty$ such that
\begin{eqnarray*}
\dist_{g_{-\tau_i}}(z_i,z_i')\geq 10\sqrt{A_iH_n\tau_i}.
\end{eqnarray*}
Then, whenever $i$ is large enough, we may find a nonnegative $1$-Lipschitz function $\varphi_i$, satisfying the following properties: 
\begin{enumerate}[(1)]
    \item  $\varphi_i$ is compactly supported on $B_{g_{-\tau_i}}(z_i,2\sqrt{A_iH_n\tau_i})$, 
    \item $\varphi_i\geq \sqrt{A_iH_i\tau_i}$ on $B_{g_{-\tau_i}}(z_i,\sqrt{A_iH_n\tau_i})$, 
    \item $0\leq \varphi_i\leq 2\sqrt{A_iH_n\tau_i}$ everywhere, 
    \item the support of $\varphi_i$ is disjoint from $B_{g_{-\tau_i}}(z'_i,\sqrt{A_iH_n\tau'_i})$.
\end{enumerate} Using $\varphi_i$ as a test function, we may compute
\begin{align*}
&\quad\quad\dist_{W_1}^{g_{-\tau_i}}(\nu_{p_0,0\,|\,-\tau_i},\nu_{p'_0,t'_0\,|\,-\tau_i})
\\
&\geq \int_M \varphi_i d\nu_{p_0,0\,|\,-\tau_i}-\int_M \varphi_i d\nu_{p'_0,t'_0\,|\,-\tau_i}
\\
&\geq\int_{B_{g_{-\tau_i}}(z_i,\sqrt{A_iH_n\tau_i})} \varphi_i d\nu_{p_0,0\,|\,-\tau_i} -\int_{M\setminus B_{g_{-\tau_i}}(z'_i,\sqrt{A_iH_n\tau'_i})}\varphi_i d\nu_{p'_0,t'_0\,|\,-\tau_i}
\\
&\geq \left(1-\frac{1}{A_i}\right)\sqrt{A_iH_n\tau_i}-\frac{1}{A_i}2\sqrt{A_iH_n\tau_i}
\\
&=\left(1-\frac{3}{A_i}\right)\sqrt{A_iH_n\tau_i}\rightarrow\infty,
\end{align*}
as $i\rightarrow\infty$; here we have applied Proposition \ref{measureaccumulationofHcenter}. On the other hand, by (\ref{monotoneofdW1}), we have
\begin{eqnarray*}
d_{W_1}^{g_{-\tau_i}}(\nu_{p_0,0\,|\,-\tau_i},\nu_{p'_0,t'_0\,|\,-\tau_i})\leq d_{W_1}^{g_{t'_0}}(\nu_{p_0,0\,|\,t'_0},\delta_{p'_0})<\infty.
\end{eqnarray*}
This is a contradiction, and (\ref{nonsense6-2-1}) follows; we have finished the proof of the theorem.
\end{proof}

We are now ready to prove Corollary \ref{entropynoloss}. As before, we still consider an ancient solution $(M,g(t))_{t\in(-\infty,0]}$ satisfying (\ref{curvaturebound}) and Assumption B with respect to $\big(p_0,0,\{\tau_i\}_{i=1}^\infty,\{p_i\}_{i=1}^\infty\big)$. We first of all need to deal with the case when the base point is fixed at $(p_0,0)$. 

\begin{Lemma}\label{somethingmaybeuseful}
Let $\ell_\infty: M_\infty\times(1,2]\rightarrow\mathbb{R}$ be the shrinker potential of $(M_\infty,g_\infty(t))_{t\in[-2,-1)}$ as given by Proposition \ref{shrinkerstructure}. Then we have
\begin{eqnarray*}
V_\infty:=\int_{M_\infty}(4\pi|t|)^{-\frac{n}{2}}e^{-\ell_\infty(\cdot,|t|)}dg_{\infty,t}\equiv\lim_{\tau\rightarrow\infty}\mathcal{V}_{p_0,0}(\tau)
\end{eqnarray*}
for all $t\in [-2,-1)$.
\end{Lemma}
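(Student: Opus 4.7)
The plan is to exploit the scaling invariance of Perelman's reduced distance to express $\mathcal{V}_{p_0,0}(\tau_i|t|)$ as the integral of $(4\pi|t|)^{-n/2}e^{-\ell_i(\cdot,|t|)}$ against $dg_{i,t}$, and then pass to the smooth limit. Concretely, since $\ell_i(\cdot,|t|):=\ell_{p_0,0}(\cdot,\tau_i|t|)$ is the reduced distance for $g_i(t)=\tau_i^{-1}g(\tau_it)$, a change of variable gives
\[
\mathcal{V}_{p_0,0}(\tau_i|t|)=\int_M(4\pi|t|)^{-n/2}e^{-\ell_i(\cdot,|t|)}\,dg_{i,t},
\]
for each fixed $t\in[-2,-1)$. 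By the monotonicity of the reduced volume, the left-hand side converges to $\lim_{\tau\to\infty}\mathcal{V}_{p_0,0}(\tau)$ as $i\to\infty$, so it remains to match the limit of the right-hand side with $V_\infty$.

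First I would handle the compact part by a dominated convergence argument. On any compact subset $K\subset M_\infty$, pulled back by the diffeomorphisms $\Psi_i$ in (\ref{smoothconvergence}), we have $\Psi_i^*g_{i,t}\to g_{\infty,t}$ smoothly, and by Proposition \ref{shrinkerstructure}, $\Psi_i^*\ell_i(\cdot,|t|)\to\ell_\infty(\cdot,|t|)$ in $C^{0,\alpha}_{\operatorname{loc}}$. These convergences yield
\[
\int_{\Psi_i(K)}(4\pi|t|)^{-n/2}e^{-\ell_i(\cdot,|t|)}\,dg_{i,t}\;\longrightarrow\;\int_{K}(4\pi|t|)^{-n/2}e^{-\ell_\infty(\cdot,|t|)}\,dg_{\infty,t}.
\]
For the tail outside a large ball, I would combine (\ref{subsolution}) with the Gaussian upper bound of Proposition \ref{gauss0}, scaled parabolically to $g_i$: one has $(4\pi|t|)^{-n/2}e^{-\ell_i(\cdot,|t|)}\le u_i(\cdot,t)$ and
\[
u_i(x,t)\leq\tfrac{C}{|t|^{n/2}}\exp\!\left(-\tfrac{\dist_{g_{i,t}}^2(x,z_i)}{C|t|}\right),
\]
where $(z_i,-\tau_i)$ is an $H_n$-center of $(p_0,0)$ and $C$ is uniform in $i$ thanks to the Nash entropy bound of Theorem \ref{entropybound}. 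By Corollary \ref{change-l-center-to-Hn-center}, $\Psi_i^{-1}(z_i)$ stays in a bounded region of $M_\infty$, so for any $\varepsilon>0$ I can pick $R$ so that the tail of the $g_{i,t}$-integral outside $B_{g_{i,t}}(z_i,R)$ is less than $\varepsilon$ for every large $i$; by Fatou's lemma the same bound passes to the limit, and dominated convergence then yields
\[
\int_M(4\pi|t|)^{-n/2}e^{-\ell_i(\cdot,|t|)}\,dg_{i,t}\;\longrightarrow\;V_\infty.
\]

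The time-independence of $V_\infty$ follows directly from the shrinker structure of Proposition \ref{shrinkerstructure}: by (\ref{canonicalformnormalization}) the family $(g_\infty(t),\ell_\infty(\cdot,|t|))_{t\in[-2,-1)}$ is the canonical form of a Ricci shrinker, hence the map $\phi_t$ generated by $\nabla\ell_\infty$ pushes $(4\pi|t|)^{-n/2}e^{-\ell_\infty(\cdot,|t|)}\,dg_{\infty,t}$ forward to a fixed Riemannian measure on a fixed manifold, whose total mass is preserved. The main obstacle is the uniform tail estimate along the scaling sequence: it requires both the uniform Nash entropy lower bound (Theorem \ref{entropybound}) in order to invoke the Gaussian upper bound of Proposition \ref{gauss0}, and the comparison between $\ell$-centers and $H_n$-centers (Proposition \ref{H_n_l_n} and Corollary \ref{change-l-center-to-Hn-center}) in order to center the tails at a converging family of points in $M_\infty$.
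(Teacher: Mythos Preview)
Your proposal is correct and follows essentially the same route as the paper: rewrite $\mathcal{V}_{p_0,0}(\tau_i|t|)$ via scaling, use the local $C^{0,\alpha}$ convergence $\ell_i\to\ell_\infty$ on compact sets, and control the tail by dominating $(4\pi|t|)^{-n/2}e^{-\ell_i}\le u_i$ via \eqref{subsolution}. The only difference is in how the tail mass is bounded. The paper does not go through the pointwise Gaussian estimate of Proposition~\ref{gauss0}; instead it simply observes that
\[
\int_{M\setminus B_{g_{i,t}}(z_i,\sqrt{AH_n|t|})}(4\pi|t|)^{-n/2}e^{-\ell_i}\,dg_{i,t}
\le \nu^i_t\!\left(M\setminus B_{g_{i,t}}(z_i,\sqrt{AH_n|t|})\right)\le \tfrac{1}{A}
\]
by Proposition~\ref{measureaccumulationofHcenter}, then concludes exactly as in the proof of Proposition~\ref{Nconst}. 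This is cleaner than your route: a pointwise Gaussian bound on $u_i$ alone does not yield a small \emph{integral} outside $B_R(z_i)$ without some volume growth control on $(M,g_{i,t})$, which is not uniformly available here; what actually makes the tail small is that $\nu^i_t$ is a probability measure concentrated at its $H_n$-center. Finally, your separate argument for the $t$-independence of $V_\infty$ via the canonical form is correct but unnecessary: once you know $V_\infty=\lim_{\tau\to\infty}\mathcal V_{p_0,0}(\tau)$ for each fixed $t$, independence of $t$ is automatic because the right-hand side does not see $t$.
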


\begin{proof}
Let $t\in[-2,-1)$ be arbitrarily fixed. We need only to show 
\begin{eqnarray*}
\int_{M_\infty}(4\pi|t|)^{-\frac{n}{2}}e^{-\ell_\infty(\cdot,|t|)}dg_{\infty,t}=\lim_{i\rightarrow\infty}\int_{M}(4\pi|t|)^{-\frac{n}{2}}e^{-\ell_i(\cdot,|t|)}dg_{i,t},
\end{eqnarray*}
where, as before, $g_i(t)=\tau_i^{-1}g(\tau_it)$ and $\ell_i(\cdot,\tau)=\ell_{p_0,0}(\cdot,\tau_i\tau)$.

By (\ref{subsolution}) and Proposition \ref{measureaccumulationofHcenter}, we have
\begin{eqnarray*}
\int_{M\setminus B_{g_{i,t}}(z_i,\sqrt{AH_n|t|})} (4\pi|t|)^{-\frac{n}{2}}e^{-\ell_i(\cdot,|t|)}dg_{i,t}\leq \int_{M\setminus B_{g_{i,t}}(z_i,\sqrt{AH_n|t|})} u_i(\cdot,t)dg_{i,t}\leq\frac{1}{A},
\end{eqnarray*}
for all $A>1$, where $u_i$ is the conjugate heat kernel based at $(p_0,0)$ with respect to Ricci flow $g_i$. The conclusion then follows from the same argument as in the proof of Proposition \ref{Nconst}.
\end{proof}

\begin{Lemma}\label{somethingmaybeuseful1}
We have
\begin{eqnarray*}
\lim_{\tau\rightarrow\infty}\log \mathcal{V}_{p_0,0}(\tau)=\log V_\infty=\mu_\infty,
\end{eqnarray*}
where $\mu_\infty$ is the entropy of the asymptotic shrinker $(M_\infty,g_\infty,\ell_\infty)$.
\end{Lemma}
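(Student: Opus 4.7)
The plan is to reduce to establishing $\log V_\infty = \mu_\infty$ (the first equality $\lim_{\tau\to\infty}\log\mathcal{V}_{p_0,0}(\tau) = \log V_\infty$ is immediate from Lemma \ref{somethingmaybeuseful} together with positivity of $V_\infty$, which follows from Lemma \ref{lowerquadratic} applied to $\ell_\infty$). The core idea is to compare the two natural shrinker potentials on the limit $M_\infty$: the one $\ell_\infty(\cdot,|t|)$ produced by the reduced-distance limit (Proposition \ref{shrinkerstructure}), and the one $f_\infty(\cdot,t)$ produced by the conjugate-heat-kernel limit (Proposition \ref{f-potential}).

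First I would observe that both $\ell_\infty$ and $f_\infty$ solve $\Ric_\infty + \nabla^2 (\cdot) = g_\infty/(2|t|)$ at each $t\in[-2,-1)$, so $\nabla^2(\ell_\infty - f_\infty) \equiv 0$. On each connected component of the (irreducible factor of the) shrinker, this forces $\ell_\infty - f_\infty$ to be a constant $c$, and the constancy of $V_\infty$ in $t$ supplied by Lemma \ref{somethingmaybeuseful} shows $c$ is independent of $t$ as well. The unit-mass normalization $\int u_\infty \, dg_{\infty,t}=1$ from Lemma \ref{unitmeasure}(4) then gives $V_\infty = e^{-c}$, so the lemma is equivalent to $c = -\mu_\infty$.

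For the upper bound $c\leq -\mu_\infty$, I would integrate Perelman's trace Harnack for the reduced distance, $|t|\big(2\Delta\ell_i - |\nabla\ell_i|^2 + R_{g_i}\big) + \ell_i - n \leq 0$, against the measure $v_i\,dg_{i,t}$, and pass to the limit $i\to\infty$ using the locally smooth convergence $g_i\to g_\infty$ and the $C^{0,\alpha}_{\rm loc}$-convergence $\ell_i\to \ell_\infty$. The integration by parts in the limit is valid by the quadratic bound on the shrinker potential and the Euclidean volume growth of a shrinker \cite{CZ10, Mun09}; combined with $\Delta f_\infty + R_\infty = n/(2|t|)$ and the canonical-form identity (\ref{canonicalformnormalization}), the limiting inequality collapses to $V_\infty(\mu_\infty + c) \leq 0$, yielding $c\leq -\mu_\infty$.

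The main obstacle is the matching reverse inequality $c\geq -\mu_\infty$. My plan is to exploit that on the limit shrinker $V_\infty$ is already constant in $t$, so Perelman's reduced-volume monotonicity is saturated; this forces equality in the pointwise inequality of Proposition \ref{basic_l} throughout $M_\infty$, making $v_\infty$ an honest conjugate heat flow solution and promoting the integrated Harnack above to equality, which gives $\mu_\infty + c \geq 0$. An alternative route, which may be cleaner, is to appeal to Perelman's rigidity characterization of the equality case of reduced-volume monotonicity, which directly identifies $\ell_\infty$ as the canonical shrinker potential shifted by exactly $-\mu_\infty$ (as can be verified on the Gaussian soliton, where $\ell_\infty = f_\infty$ and $\mu_\infty = 0$). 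Combining both inequalities gives $c = -\mu_\infty$, and hence $\log V_\infty = -c = \mu_\infty$.
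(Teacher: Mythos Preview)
Your ``alternative route'' in the final paragraph is essentially the paper's proof. The paper observes that $\ell_\infty$, being the limit of reduced distances on the asymptotic shrinker, satisfies the equality case of Perelman's differential inequality
\[
\Delta\ell_\infty - |\nabla\ell_\infty|^2 + R_\infty + \frac{\ell_\infty - n}{\tau} = 0
\]
(this is the rigidity in the reduced-volume monotonicity), and then cites \cite{Zhang20} for the short computation that this normalization of a shrinker potential forces $\log V_\infty = \mu_\infty$ directly. No comparison with $f_\infty$ is made, and no upper/lower bound scheme is needed.

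Your main route via $f_\infty$ is more elaborate and has a real gap: from $\nabla^2(\ell_\infty - f_\infty)=0$ you only obtain that $\nabla(\ell_\infty - f_\infty)$ is parallel, not zero. On a shrinker with a Euclidean factor $\mathbb{R}^k\times N$ this allows $\ell_\infty - f_\infty$ to be a nonconstant affine function along $\mathbb{R}^k$ (on $\mathbb{R}^n$, both $|x|^2/4$ and $|x-a|^2/4$ are normalized shrinker potentials), so the claimed constancy of $c$ fails and the identity $V_\infty = e^{-c}$ does not follow; your parenthetical about irreducible factors does not cover this case. Even granting constancy, the two-sided structure is redundant: once you invoke rigidity to upgrade Perelman's Harnack to an equality, your own integrated inequality already becomes $V_\infty(\mu_\infty + c)=0$, giving $c=-\mu_\infty$ in one stroke --- there is no separate lower-bound step left to perform.
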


\begin{proof}
Since $\ell_\infty$ is a shrinker potential normalized in the way that
\begin{eqnarray*}
\Delta \ell_\infty -|\nabla\ell_\infty|^2+R_\infty+\frac{\ell_\infty-n}{\tau}=0,
\end{eqnarray*}
 we have $\log V_\infty=\mu_\infty$; the details of this argument can be found in the proof of \cite[Theorem 1.2]{Zhang20}.
\end{proof}

\begin{proof}[Proof of Corollary \ref{entropynoloss}]
By \cite[Proposition 4.6]{MZ21} and Theorem \ref{Theorem_main}(2), we already have
$$\lim_{\tau\rightarrow\infty}\mathcal{N}_{p'_0,t'_0}(\tau)=\lim_{\tau\rightarrow\infty}\mathcal{W}_{p'_0,t'_0}(\tau)=\mu_\infty$$
for any $(p'_0,t'_0)\in M\times(-\infty,0]$. By Theorem \ref{basepointindep}, $(M,g(t))_{t\in(-\infty,0]}$ satisfies Assumption B with respect to $\big(p'_0,t'_0, \{\tau'_i\}_{i=1}^\infty,\{p'_i\}_{i=1}^\infty\big)$, and the smooth limit is the same as the one in (\ref{smoothconvergence}). Since the entropy of a Ricci shrinker is unique, the last conclusion follows from applying Lemma \ref{somethingmaybeuseful1} to $\mathcal{V}_{p'_0,t'_0}(\tau)$.

\end{proof}

\section{Perelman's $\nu$-functional on ancient solutions satisfying Assumption B}

In this section, we shall prove Theorem \ref{nu-functional}. We still consider an ancient solution $(M^n,g(t))_{t\in(-\infty,0]}$ satisfying (\ref{curvaturebound}) and Assumption B with respect to $\big(p_0,0,\{\tau_i\}_{i=1}^\infty,\{p_i\}_{i=1}^\infty\big)$. Let $(M_\infty, g_\infty, \ell_\infty)$ be the asymptotic shrinker from Proposition \ref{shrinkerstructure}. By the remark below Corollary \ref{LUTypeInoncollapsed}, the ancient solution in question has bounded geometry withing each compact time interval, and this will be a convenient condition to be applied in this section.

Let us fix an arbitrary $t_0\in(-\infty,0]$. Without loss of generality, we may assume $t_0<0$. The reason is because $(M,g(t))$ has bounded curvature at each time and can be extended pass $t=0$, and we may replace our base point $(p_0,0)$ by some $(p_0,\varepsilon)$, where $\varepsilon>0$, if ever $t_0=0$. By Theorem \ref{basepointindep}, this the change of base point does not affect anything, except for slight modifications of the scales $\{\tau_i\}_{i=1}^\infty$. 

Let us fix a scale $\tau_0>0$ and consider an arbitrary function $u_0$ which is qualified to be a test function for $\mu(g(t_0),\tau_0)$, that is, $u_0\geq 0$,  $\sqrt{u_0}\in C_0^\infty(M)$, and $\int_M udg_{t_0}=1$. We may then solve the conjugate heat equation with $u_0$ being its initial data and obtain the solution
\begin{eqnarray}\label{thedefinitionofu}
u(x,t):=\int_M u_0 K(\cdot,t_0\,|\,x,t)dg_{t_0}.
\end{eqnarray}
Since $u$ is positive on $M\times(-\infty,t_0)$, we write
\begin{eqnarray*}
u(x,t):=(4\pi(T-t))^{-\frac{n}{2}} e^{-f(x,t)}\quad \text{ for all }\quad (x,t)\in M\times(-\infty,t_0),
\end{eqnarray*}
where $$T=t_0+\tau_0.$$
For the notational convenience, we shall use $\spt$ to denote $\spt u_0=\spt \sqrt{u_0}$, and shall fix a point $x_0\in \spt$ and a positive number $D_0>0$, such that
\begin{eqnarray*}
\spt\subset B_{t_0}(x_0,D_0).
\end{eqnarray*}

We shall consider the time-dependent function $\bW(g(t),u(\cdot,t),T-t)=\W(g(t),f(\cdot,t),T-t)$ for all $t\in(-\infty,t_0)$, where $\W$ and $\bW$ are defined in (\ref{Perelmansentropy}) and (\ref{anotherPerelmansentropy}), respectively. Since $u$ is a positive solution to the conjugate heat equation, and since $(M,g(t))$ is a Ricci flow with bounded geometry on compact time-intervals, the classical monotonicity formula of Perelman is still valid. We include the following lemma without a tedious proof. The interested reader could easily verify it; note that the estimate (\ref{EKNT_gradient}) below is helpful.

\begin{Lemma}
We have
\begin{eqnarray*}
\frac{d}{dt}\bW(g(t),u(\cdot,t),T-t)=2(T-t)\int_M \left|\Ric+\nabla^2f-\frac{1}{2(T-t)}g\right|^2udg_t\geq 0
\end{eqnarray*}
for all $t\in(-\infty, t_0)$.
\end{Lemma}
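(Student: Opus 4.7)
The plan is to carry out Perelman's standard pointwise computation and then supply a careful cut-off argument to justify the integration by parts, using the bounded geometry assumption and the Gaussian bounds for the conjugate heat evolution of a compactly supported initial datum.

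First, I would recall the pointwise identity. Writing $\tau = T - t$ and
\[
v := \tau\big(2\Delta f - |\nabla f|^2 + R\big) + f - n,
\]
so that $\bW(g(t),u(\cdot,t),\tau) = \int_M v\,u\,dg_t$, a purely local computation (independent of the topology of $M$) yields Perelman's identity
\[
\Box^*(vu) = -2\tau\left|\Ric + \nabla^2 f - \frac{1}{2\tau}g\right|^2 u,
\]
where $\Box^* = -\partial_t - \Delta + R$. Since $\Box^* u = 0$, integrating this identity formally over $M$ produces the claimed evolution formula, provided that $\frac{d}{dt}\int_M vu\,dg_t = \int_M \partial_t(vu)\,dg_t - \int_M R vu\,dg_t$ and that all boundary terms from integration by parts at infinity vanish.

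To justify these manipulations, I would introduce a spatial cut-off $\phi_A: M\to[0,1]$ with $\phi_A \equiv 1$ on $B_{t_0}(x_0,A)$, $\phi_A \equiv 0$ outside $B_{t_0}(x_0,2A)$, and $|\nabla \phi_A|_{g_{t_0}} \le C/A$, $|\Delta_{g_{t_0}} \phi_A|\le C/A^2$. Under bounded geometry on compact time intervals, the usual distance distortion estimates keep analogous bounds (with modified constants) valid for $g(t)$ on any compact subinterval $[t_1,t_0]$. Then on each such subinterval I would verify
\[
\frac{d}{dt}\int_M \phi_A v u\, dg_t = \int_M \Box(\phi_A v)\cdot u\, dg_t + \int_M \phi_A R v u\, dg_t,
\]
expand $\Box(\phi_A v) = \phi_A \Box v + v\Box\phi_A - 2\langle \nabla\phi_A,\nabla v\rangle$, integrate in $t$, and then send $A\to\infty$. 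The cross terms $v\Box\phi_A$ and $\langle\nabla\phi_A,\nabla v\rangle \cdot u$ are controlled by $C/A^2$ or $C/A$ times integrals of $|v|u$, $|v|^2 u$, and $|\nabla v|^2 u$ over $B_{t_0}(x_0,2A)\setminus B_{t_0}(x_0,A)$.

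The main obstacle is therefore a set of uniform integrability estimates for the evolving density $u$. Here the crucial inputs are: (i) the representation $u(x,t) = \int_{\spt} u_0(y) K(y,t_0\,|\,x,t)\,dg_{t_0}(y)$, which together with Theorem \ref{aCHKcoarseupperboundofbamler} and the coarse heat-kernel Gaussian bounds (\cite[Theorem 26.25, Theorem 26.31]{RFV3}) gives matching Gaussian upper and lower bounds for $u$ on $M\times[t_1,t_0-\delta]$; (ii) the resulting quadratic pinching $c\,\dist_t^2(x,x_0) - C \le f(x,t) \le C\,\dist_t^2(x,x_0) + C$, which forces exponential decay of $u$ and makes $(1+f^2)u$ integrable; and (iii) the gradient estimate (\ref{EKNT_gradient}) for conjugate heat kernels, which yields a pointwise bound $|\nabla f|^2 \le C(\tau)(1+f)$ and hence integrability of $|\nabla f|^2 u$ and $|\nabla v|\,u$ against weights supported in $B_{t_0}(x_0,2A)\setminus B_{t_0}(x_0,A)$ that vanish as $A\to\infty$. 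Once these estimates are assembled the cut-off errors and boundary terms tend to $0$, dominated convergence produces the pointwise identity in $t$, and the sign of the right-hand side is manifest.
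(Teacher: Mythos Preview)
Your proposal is correct and matches the paper's intent: the paper omits the proof entirely, calling it a standard verification and pointing to the gradient estimate (\ref{EKNT_gradient}) as the key analytic input, which is precisely the ingredient you use to control the cut-off error terms. One small remark: the EKNT bound actually gives $|\nabla f|^2 \le C(\tau)(1+f)^2$ rather than $C(\tau)(1+f)$, and it is slightly cleaner to apply the cut-off directly to $w=vu$ via $\frac{d}{dt}\int \phi_A w\,dg_t = -\int\phi_A\Box^*w\,dg_t + \int w\,\Box\phi_A\,dg_t$ (so that only $\int_{\text{annulus}}|v|u$ needs to be controlled, not $|\nabla v|u$), but neither point affects the validity of your outline.
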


The result of this section consists of the following two theorems, and we shall prove them one-by-one.

\begin{Theorem}\label{backwardlimit}
We have
$$\lim_{t\rightarrow-\infty}\bW(g(t),u(\cdot,t),T-t)\geq \mu_\infty,$$ where $\mu_\infty$ is the entropy of the asymptotic shrinker $(M_\infty,g_\infty,\ell_\infty)$.
\end{Theorem}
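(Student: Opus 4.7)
The plan is to combine a Type~I rescaling along a sequence $t_k \to -\infty$ with the definition of Perelman's $\mu$-functional on the asymptotic shrinker, exploiting the scale invariance of $\bW$ and Proposition \ref{shrinkernufunctional}.

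\textbf{Step 1 (rescaling).} The monotonicity in the preceding lemma ensures that the limit $L := \lim_{t\to -\infty} \bW(g(t), u(\cdot, t), T-t)$ exists. Pick any sequence $t_k \to -\infty$, set $\tau_k := |t_k|$, and define $g_k(s) := \tau_k^{-1} g(\tau_k s)$ and $u_k(x, s) := \tau_k^{n/2} u(x, \tau_k s)$, so that $u_k$ is a positive conjugate heat flow on $(M, g_k)$ with $\int_M u_k(\cdot, s)\, dg_{k,s} = 1$. By the scale invariance of $\bW$,
\begin{equation*}
    \bW\bigl(g(t_k), u(\cdot, t_k), T - t_k\bigr) = \bW\bigl(g_k(-1), u_k(\cdot, -1), 1 + T/\tau_k\bigr).
\end{equation*}
Applying Theorem \ref{basepointindep} with the new base point $(x_0, t_0)$ and Corollary \ref{change-l-center-to-Hn-center}, we may assume, after passing to a subsequence, smooth Cheeger-Gromov-Hamilton convergence $(M, g_k(s), z_k)_{s\in[-2, -1]} \to (M_\infty, g_\infty(s), z_\infty)_{s\in[-2, -1]}$, where $(z_k, t_0 - \tau_k)$ is an $H_n$-center of $(x_0, t_0)$ (with a negligible shift in the scaling parameter).

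\textbf{Step 2 (convergence of the rescaled flow $u_k$).} Writing
\begin{equation*}
    u(x, t) = \int_M u_0(y)\, K(y, t_0\,|\, x, t)\, dg_{t_0}(y),
\end{equation*}
and invoking Bamler's sharp gradient estimate for conjugate heat kernels (extended to the noncompact setting in \cite{MZ21}), one has
\begin{equation*}
    \sup_{y \in \spt(u_0)} \frac{\bigl|K(y, t_0\,|\,\cdot, t_k) - K(x_0, t_0\,|\,\cdot, t_k)\bigr|}{K(x_0, t_0\,|\,\cdot, t_k)} = O(D_0 \tau_k^{-1/2}) \to 0.
\end{equation*}
Hence $u_k$ is asymptotic, in the rescaled picture, to the rescaled conjugate heat kernel based at $(x_0, t_0)$. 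Proceeding as in Lemma \ref{lem: dnu^oo_s} with the base point switched to $(x_0, t_0)$, after passing to a further subsequence, $u_k$ converges locally smoothly on $M_\infty \times [-2, -1)$ to a positive solution $u_\infty$ of the conjugate heat equation on $(M_\infty, g_\infty)$ satisfying $\int_{M_\infty} u_\infty(\cdot, s)\, dg_{\infty, s} \equiv 1$. The crucial non-loss-of-mass comes from the uniform Gaussian concentration of the kernel $K(x_0, t_0\,|\,\cdot, t)$ around $z_k$, which follows from Theorem \ref{gaussianupperbound} and the uniform Nash entropy bound in Theorem \ref{entropybound}, and is transferred to $u_k$ via the above comparison.

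\textbf{Step 3 (conclusion).} The uniform Gaussian decay of $u_k(\cdot, -1)$ around $z_k$ permits a Fatou-type lower semicontinuity argument for $\bW$ under smooth convergence on noncompact manifolds; the tail contributions to $\int (-u\log u)\,dg$ and $\int |\nabla u|^2/u\,dg$ are controlled by the Gaussian estimate, while the interior terms pass to the limit by local smooth convergence. Therefore
\begin{equation*}
    L = \lim_{k\to\infty} \bW\bigl(g_k(-1), u_k(\cdot, -1), 1 + T/\tau_k\bigr) \geq \bW\bigl(g_\infty(-1), u_\infty(\cdot, -1), 1\bigr).
\end{equation*}
Approximating the smooth positive probability density $u_\infty(\cdot, -1)$ by compactly supported probability densities (justified by its Gaussian decay, inherited from the $u_k$'s), the definition of $\mu$ and Proposition \ref{shrinkernufunctional} yield
\begin{equation*}
    \bW\bigl(g_\infty(-1), u_\infty(\cdot, -1), 1\bigr) \geq \mu(g_\infty(-1), 1) = \mu_\infty,
\end{equation*}
and combining gives $L \geq \mu_\infty$, as desired.

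\textbf{Main obstacle.} The principal difficulty is the lower-semicontinuity step, which on a noncompact manifold requires controlling tail contributions of $\bW$ uniformly in $k$. This is resolved by combining the uniform Gaussian upper bound on $K(x_0, t_0\,|\,\cdot, t)$ from Theorem \ref{gaussianupperbound} (available thanks to the uniform Nash entropy bound of Theorem \ref{entropybound}) with the gradient-estimate comparison between $u$ and $K(x_0, t_0\,|\,\cdot, \cdot)$, so that $u_k(\cdot, -1)$ enjoys the same Gaussian decay, uniformly in $k$, around the $H_n$-centers $z_k$.
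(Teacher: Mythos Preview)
Your overall architecture matches the paper's: rescale along $\tau_i$, pass to a locally smooth limit $u_\infty$ with $\int u_\infty=1$, use lower semicontinuity of $\bW$ to get $L\ge \bW(g_\infty(-1),u_\infty,1)$, and then invoke Proposition~\ref{shrinkernufunctional}. Steps~1 and~3 are essentially the paper's argument (though in Step~1 you should take $t_k=-\tau_i$, the sequence from Assumption~B, since the smooth convergence (\ref{smoothconvergence}) is only available along that sequence; Theorem~\ref{basepointindep} changes the base point but not the scales).

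The gap is in Step~2. The estimate
\[
\sup_{y\in\spt u_0}\frac{|K(y,t_0\,|\,\cdot,t_k)-K(x_0,t_0\,|\,\cdot,t_k)|}{K(x_0,t_0\,|\,\cdot,t_k)}=O\bigl(D_0\tau_k^{-1/2}\bigr)
\]
is not what Bamler's sharp gradient estimate (\cite[Theorem~4.1]{Bam20a}, extended in \cite{MZ21}) gives. That estimate controls $\nabla_x\Phi^{-1}\!\bigl(\int h\,d\nu_{x,t|s}\bigr)$ for bounded $h$ and yields the monotonicity (\ref{monotoneofdW1}) of $W_1$-distance between conjugate heat kernels, but it does \emph{not} give a pointwise bound on $\nabla_y\log K(y,t_0\,|\,x,t)$ of order $(t_0-t)^{-1/2}$. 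Any local gradient estimate in the base-point variable $y$ (e.g.\ \cite{Zhq06}) must be run on a time interval near $t_0$ where curvature is controlled, so it produces only an $O(1)$ bound, not $O(\tau_k^{-1/2})$.

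The paper obtains the needed two-sided control on $u_k$ differently. The upper bound is elementary: since $u_0$ is bounded and supported in $B_{t_0}(x_0,D_0)$ where $K(p_0,0\,|\,\cdot,t_0)$ is positive, the maximum principle gives $u\le C_0\,K(p_0,0\,|\,\cdot,\cdot)$ (Proposition~\ref{upperbound}); this already yields the Gaussian tail decay, hence no mass loss and the quadratic lower bound (\ref{quadratic_1}) on $f_i$. The lower bound $u_\infty>0$ is more delicate: the paper shows in (\ref{nonsense8004001})--(\ref{nonsense8004002}) that for every $y\in B_{t_0}(x_0,D_0)$ one has $K(y,t_0\,|\,\cdot,-\tau_i)\ge c(r)\tau_i^{-n/2}$ on $B_{-\tau_i}(p_i,r\sqrt{\tau_i})$, by first proving that the $\ell$-centers of $(y,t_0)$ stay within $C\sqrt{\tau_i}$ of $p_i$ (via the $W_1$-monotonicity and Proposition~\ref{H_n_l_n}), then applying the locally uniform $\ell$-bounds from the Type~I condition together with (\ref{subsolution}). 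You should replace the gradient-estimate comparison in Step~2 by these two arguments.
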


\begin{Theorem}\label{forwardlimit}
We have $$\lim_{t\rightarrow t_0-}\bW(g(t),u(\cdot,t),T-t)=\bW(g(t_0),u_0,\tau_0).$$
\end{Theorem}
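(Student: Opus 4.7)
The plan is to prove continuity of $\bW$ as $t\to t_0-$ via a local smooth convergence plus Gaussian tail control argument. First rewrite
$$\bW(g(t),u(\cdot,t),T-t) = \int_M\Big[(T-t)\big(4|\nabla\sqrt{u}|^2 + Ru\big)-u\log u\Big]dg_t - \tfrac{n}{2}\log(4\pi(T-t)) - n,$$
so that the potentially singular quantity $|\nabla u|^2/u=4|\nabla\sqrt u|^2$ is manifestly smooth near $\partial\spt u_0$, since $\sqrt{u_0}\in C_0^\infty(M)$. The $\tfrac{n}{2}\log(4\pi(T-t))$ term is trivial, so the task is to pass to the limit inside the integral. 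Since $u(x,t)=\int_{\spt} u_0(y)K(y,t_0\,|\,x,t)\,dg_{t_0}(y)$, and since $(M,g(t))$ has bounded geometry on $[t_0-1,t_0]$ (Corollary~\ref{LUTypeInoncollapsed} and the remark below it), classical parabolic regularity gives that $u$ extends smoothly up to $t=t_0$ with $u(\cdot,t_0)=u_0$, and in particular $u(\cdot,t)\to u_0$ in $C^\infty_{\rm loc}(M)$ as $t\to t_0-$.

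The heart of the argument is a uniform Gaussian tail control. The coarse Gaussian upper bound for the conjugate heat kernel (Theorem~26.31 of \cite{RFV3}; cf.\ Theorem~\ref{aCHKcoarseupperboundofbamler}), combined with standard gradient estimates for positive solutions of the conjugate heat equation under bounded geometry (e.g.\ \cite[Theorem~10]{EKNT08}), should yield constants $\varepsilon,C,c>0$ such that
$$u(x,t)+\sqrt{t_0-t}\,|\nabla u(x,t)|\leq \frac{C}{(t_0-t)^{n/2}}\exp\Big(-c\,\frac{\dist_t^2(x,x_0)}{t_0-t}\Big),$$
for $t\in[t_0-\varepsilon,t_0)$ and $\dist_t(x,x_0)\geq 2D_0$. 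Together with a uniform bound on $R_{g_t}$ on $M\times[t_0-1,t_0]$ and the polynomial volume growth that follows from bounded geometry, this shows that the integral of each summand over $M\setminus B_t(x_0,R)$ is dominated by a Gaussian moment that vanishes as $R\to\infty$, uniformly in $t\in[t_0-\varepsilon,t_0)$. The $-u\log u$ contribution is the subtlest: the Gaussian upper bound gives $-\log u(x,t)\leq C\dist_t^2(x,x_0)/(t_0-t)+C$, so $|u\log u|$ is itself dominated by a Gaussian moment of the kernel, and the tail is controlled by the same mechanism.

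To conclude, split
$$\bW(g(t),u(\cdot,t),T-t)=I_{\rm in}(t,R)+I_{\rm out}(t,R)-\tfrac{n}{2}\log(4\pi(T-t))-n,$$
with $I_{\rm in}(t,R)$ the integral over $B_t(x_0,R)$. The local smooth convergence of $u$ and the smooth dependence of $g(t)$ on $t$ imply that $I_{\rm in}(t,R)\to I_{\rm in}(t_0,R)$ as $t\to t_0-$, and $I_{\rm in}(t_0,R)\to \bW(g(t_0),u_0,\tau_0)+\tfrac{n}{2}\log(4\pi\tau_0)+n$ as $R\to\infty$ by the compactness of $\spt u_0$. The uniform Gaussian tail control makes $I_{\rm out}(t,R)$ arbitrarily small, uniformly for $t$ near $t_0$, by taking $R$ large. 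Sending first $t\to t_0-$ and then $R\to\infty$ yields the desired limit. The main obstacle is establishing the uniform pointwise Gaussian gradient estimate for $u$ above; once that is in hand, the remaining work is routine dominated convergence together with standard Gaussian moment computations.
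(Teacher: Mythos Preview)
Your overall architecture---split the integral into an interior piece and a Gaussian tail, control the tail uniformly in $t$, and pass to the limit locally---is exactly the paper's strategy, and your tail discussion is essentially what the paper does in Lemma~\ref{someothergradientl2nonsense} and Proposition~\ref{ulogunonsense}. However, there is a genuine gap in your treatment of the interior term.

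The rewriting $|\nabla u|^2/u = 4|\nabla\sqrt{u}|^2$ does not make the interior convergence ``routine''. You assert that local smooth convergence $u(\cdot,t)\to u_0$ implies $I_{\rm in}(t,R)\to I_{\rm in}(t_0,R)$, but smooth convergence of $u$ does \emph{not} imply $C^1$ convergence of $\sqrt{u}$ at points $x\in\partial\spt u_0$: there $u(x,t)\to 0$ and $\nabla u(x,t)\to 0$, so $|\nabla u|^2/u$ is an indeterminate $0/0$ and its limit must be computed, not inferred. The paper resolves this precisely: since $\sqrt{u_0}\in C_0^\infty$ vanishes to first order at $x$, one gets $u_0(y)\leq C\dist_{t_0}^4(x,y)$; integrating against the Gaussian kernel yields $u(x,t)\leq C(t_0-t)^2$; and then the EKNT-type estimate $\frac{|\nabla u|^2}{u}\leq \frac{C}{t_0-t}(u+u\log^2 u)$ gives $\frac{|\nabla u|^2}{u}(x,t)\leq C(t_0-t)^{1/2}\to 0$. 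Your Gaussian bound on $|\nabla u|$ alone does not supply this, since it gives no direct control of $|\nabla u|^2/u$ at points where $u$ is small. This pointwise convergence, together with the uniform bound $|\nabla u|^2\leq Cu$ propagated from $t_0$ (which you also omit), is what makes bounded convergence applicable on the interior ball; see Proposition~\ref{grad u l2}.
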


Recall that the ancient solution $(M,g(t))$ is locally uniformly Type I along the space-time sequence $\{(p_i,-\tau_i)\}_{i=1}^\infty$. We shall use this condition to obtain a locally uniformly lower bound for $u$ around $(p_i,-\tau_i)$. From this point on until the completion of the proof of Theorem \ref{backwardlimit}, we shall assume that
\begin{eqnarray*}
\tau_i\gg |t_0|,\quad \tau_i\gg \tau_0.
\end{eqnarray*}

\begin{Proposition}
For any $\varepsilon\in(0,\frac{1}{4})$, there is a decreasing positive function $c(\cdot,\varepsilon):(0,\infty)\rightarrow(0,1)$ with the following property: for any $r>0$, there is an $i_0$, such that
whenever $i\geq i_0$ and for all $y\in B_{t_0}(x_0,D_0)$, we have
\begin{eqnarray}\label{nonsense8004001}
K(y,t_0\,|\,x,t)\geq c(r,\varepsilon)\tau_i^{-\frac{n}{2}}\quad\text{ for all }\quad(x,t)\in B_{-\tau_i}(p_i,r\sqrt{\tau_i})\times[-2\tau_i,-(1+\varepsilon)\tau_i],
\end{eqnarray}
and consequently
\begin{eqnarray}\label{nonsense8004002}
u(x,t)\geq c(r,\varepsilon)\tau_i^{-\frac{n}{2}}\quad\text{ for all }\quad(x,t)\in B_{-\tau_i}(p_i,r\sqrt{\tau_i})\times[-2\tau_i,-(1+\varepsilon)\tau_i].
\end{eqnarray}
\end{Proposition}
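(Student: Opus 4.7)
The plan is to combine the subsolution estimate of Proposition \ref{basic_l},
\[
K(y, t_0\,|\,x, t) \geq (4\pi(t_0-t))^{-n/2} e^{-\ell_{y, t_0}(x, t_0-t)},
\]
with a uniform upper bound $\ell_{y, t_0}(x, t_0-t) \leq C(r, \varepsilon)$ valid for all $y \in \overline{B_{t_0}(x_0, D_0)}$ and all $(x, t)$ in the stated range. Since $\tau_i \gg |t_0|$ gives $t_0-t \asymp \tau_i$, the prefactor is of order $\tau_i^{-n/2}$, yielding (\ref{nonsense8004001}). The second estimate (\ref{nonsense8004002}) is then immediate from $u(x, t) = \int u_0\, K(\cdot, t_0\,|\,x, t)\, dg_{t_0} \geq \inf_{y \in \spt u_0} K(y, t_0\,|\,x, t)$ combined with $\int u_0\, dg_{t_0} = 1$.

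To bound $\ell_{y, t_0}$ uniformly, I invoke Theorem \ref{basepointindep}: for every $y \in \overline{B_{t_0}(x_0, D_0)}$, the ancient solution satisfies Assumption B with respect to $(y, t_0)$, with shifted scales $\tau_i^{(y)} := \tau_i + t_0 \asymp \tau_i$ and corresponding $\ell$-centers $p_i^{(y)}$ at original time $-\tau_i$. By Corollary \ref{entropynoloss}, the Nash entropy bound $\mathcal{N}_{y, t_0}(\tau) \geq -Y$ holds with $Y$ independent of $y$. Repeating the argument of Proposition \ref{H_n_l_n} with base point $(y, t_0)$, combined with the monotonicity (\ref{monotoneofdW1}) of the $W_1$-distance, then yields a uniform displacement bound
\[
\dist_{-\tau_i}\bigl(p_i, p_i^{(y)}\bigr) \leq C_0\sqrt{\tau_i} \quad \text{for all } y \in \overline{B_{t_0}(x_0, D_0)};
\]
the key observation is that $\dist_{W_1}^{g_{t_0}}(\nu_{p_0, 0\,|\,t_0}, \delta_y)$ is bounded uniformly over this compact ball. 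It follows that $(M, g(t))$ is locally uniformly Type I along $\{(p_i^{(y)}, -\tau_i)\}$ in the sense of Definition \ref{def}, with constants independent of $y$.

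Proposition \ref{C0-l-estimate-LUTypeI} with base point $(y, t_0)$, radius $r + C_0 + 1$, and a slightly enlarged scaled time range (a routine extension of the Cheng-Zhang proof from $[-2, -1-\varepsilon]$ to $[-3, -1-\varepsilon/2]$) then gives $\ell_{y, t_0}(x, \sigma) \leq C(r, \varepsilon)$ for $x \in B_{-\tau_i}(p_i^{(y)}, (r+C_0+1)\sqrt{\tau_i^{(y)}})$ and $\sigma \in [(1 + \varepsilon/2)\tau_i^{(y)}, 3\tau_i^{(y)}]$. A direct algebraic check confirms that, for $i$ large enough, the ball $B_{-\tau_i}(p_i, r\sqrt{\tau_i})$ is contained in $B_{-\tau_i}(p_i^{(y)}, (r+C_0+1)\sqrt{\tau_i^{(y)}})$ and the $\sigma$-range contains $\{t_0 - t : t \in [-2\tau_i, -(1+\varepsilon)\tau_i]\}$, yielding the required uniform bound.

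The main obstacle is establishing the uniformity in $y$ of all intermediate estimates, most importantly the displacement bound $\dist_{-\tau_i}(p_i, p_i^{(y)}) \leq C_0\sqrt{\tau_i}$ and the local geometry bounds around the moving $\ell$-centers $p_i^{(y)}$. These uniformities reduce to the continuous dependence of the conjugate heat flow on its base point, as controlled by (\ref{monotoneofdW1}) and the $H_n$-concentration of Proposition \ref{measureaccumulationofHcenter}, combined with the compactness of $\overline{B_{t_0}(x_0, D_0)}$.
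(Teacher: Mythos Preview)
Your proposal is correct and follows essentially the same approach as the paper: both use the subsolution estimate $K(y,t_0\,|\,x,t)\geq (4\pi(t_0-t))^{-n/2}e^{-\ell_{y,t_0}}$, establish a uniform displacement bound $\dist_{-\tau_i}(p_i,p_i^{(y)})\leq C_0\sqrt{\tau_i}$ via the $W_1$-monotonicity and $H_n$-center comparison (the argument of Proposition~\ref{H_n_l_n} and formula~(\ref{nonsense6-2-1})), and then apply the Cheng--Zhang $C^0$ estimate for $\ell$. The only cosmetic difference is that you invoke Theorem~\ref{basepointindep} and Corollary~\ref{entropynoloss} explicitly, whereas the paper reproduces the relevant displacement estimate inline.
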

\begin{proof}
Let us fix an arbitrary $y\in B_{t_0}(x_0,D_0)$, then we have
\begin{eqnarray*}
\dist^{g_{-\tau_i}}_{W_1}(\nu_{y,t_0\,|\,-\tau_i},\nu_{p_0,0\,|\,-\tau_i})&\leq&\dist^{g_{t_0}}_{W_1}(\delta_y,\nu_{p_0,0\,|\,t_0})
\\
&\leq& \dist_{t_0}(x_0,y)+\dist^{g_{t_0}}_{W_1}(\delta_{x_0},\nu_{p_0,0\,|\,t_0})\leq C,
\end{eqnarray*}
where $C$ is independent of both $i$ and $y$.

Next, we let $(z_i,-\tau_i)$ and $(z'_i,-\tau_i)$ be $H_n$-centers of $(p_0, 0)$ and $(y, t_0)$, respectively. Arguing in the exactly same way as in the proof of formula (\ref{nonsense6-2-1}), we have
\begin{eqnarray}
\dist_{-\tau_i}(z_i,z'_i)\leq C\sqrt{\tau_i} \quad\text{ for all } i\in\mathbb{N},
\end{eqnarray}
where $C$ is a constant independent of both $i$ and $y$. 

Let $(p'_i,-\tau_i)$ be an $\ell$-center of $(y, t_0)$. By Proposition \ref{H_n_l_n} (note that Proposition \ref{H_n_l_n} relies only on a Nash entropy lower bound, which is true by Corollary \ref{entropynoloss}), we have $\dist_{-\tau_i}(p_i,z_i)\leq C\sqrt{\tau_i}$ and $\dist_{-\tau_i}(z'_i,p'_i)\leq C\sqrt{\tau_i+t_0}\leq C\sqrt{\tau_i}$. Hence, we have
\begin{eqnarray}\label{closebycenters}
\dist_{-\tau_i}(p_i,p'_i)\leq C\sqrt{\tau_i}\quad\text{ for all } i\in\mathbb{N}.
\end{eqnarray}
Therefore, the locally uniformly Type I condition along $\{(p_i,-\tau_i)\}_{i=1}^\infty$ (c.f. Proposition \ref{LUTypeI}) also implies the existence of a positive function $C:(0,\infty)\rightarrow(0,\infty)$, such that, for any $r>0$, there is an $i_0\in\mathbb{N}$ with the property that
\begin{eqnarray}\label{anotherlutypei}
|\Rm|\leq\frac{C(r)}{\tau_i}\quad\text{ on }\quad B_{-\tau_i}(p'_i,r\sqrt{\tau_i})\times[-2\tau_i,-\tau_i]
\end{eqnarray}
for all $i\geq i_0$; in particular, $i_0$ depends on the constant in (\ref{closebycenters}) and the original locally uniformly Type I condition along $\{(p_i,-\tau_i)\}_{i=1}^\infty$. It is to be emphasized that this $i_0$ is independent of $y$, and this is due to the fact that the constant $C$ in (\ref{closebycenters}) is independent of $y$. Since $\ell_{y,t_0}(p'_i,-\tau_i)\leq\frac{n}{2}$, arguing in the same way as the proof of \cite[Proposition 5.1(1)]{CZ20} and using (\ref{anotherlutypei}), we have that for any $r>0$, if $i$ is large enough (independent of $y$), then
\begin{eqnarray*}
\ell_{y, t_0}(x,|t|)\leq C(r,\varepsilon)\quad \text{ for all }\quad (x,t)\in B_{g_{-\tau_i}}(p'_i,r\sqrt{\tau_i})\times[-2\tau_i,-(1+\varepsilon)\tau_i],
\end{eqnarray*}
where $C(\cdot,\varepsilon):(0,\infty)\rightarrow(0,\infty)$ is a function depending on $\varepsilon$. Therefore,  (\ref{subsolution}) implies that
\begin{eqnarray*}
K(y,t_0\,|\,x,t)\geq \frac{1}{(4\pi(t_0-t))}e^{-\ell_{y, t_0}(x,|t|)}\geq c(r,\varepsilon)\tau_i^{-\frac{n}{2}},
\end{eqnarray*}
for all $t\in[-2\tau_i,-(1+\varepsilon)\tau_i]$ and for all $x\in B_{g_{-\tau_i}}(p'_i,r\sqrt{\tau_i})\subset B_{g_{-\tau_i}}\big(p_i,(r-C)\sqrt{\tau_i}\big)$; here we have used (\ref{closebycenters}) again. This finishes the proof of (\ref{nonsense8004001}), and (\ref{nonsense8004002}) follows from (\ref{thedefinitionofu}), (\ref{nonsense8004001}), and the fact that $\int_Mu_0dg_{t_0}=1$.
\end{proof}

Next, we prove a Gaussian upper bound for $u$.

\begin{Proposition}\label{upperbound}
There is a constant $C_0$, depending on the geometry bounds on $M\times[t_0,0]$, the value of $D_0$, $\dist_0(p_0,x_0)$, and the upper bound of $u_0$, such that
\begin{eqnarray*}
u(x,t)\leq C_0K(p_0,0\,|\,x,t) \quad\text{ for all }\quad (x,t)\in M\times(-\infty,t_0).
\end{eqnarray*}
\end{Proposition}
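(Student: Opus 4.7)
The plan is to exploit the reproduction formula for the conjugate heat kernel together with (\ref{subsolution}) applied at the intermediate time $t_0$. Specifically, since $u_0$ is the initial datum and $u$ is obtained by conjugate heat propagation, we have
\[
    u(x,t)
    = \int_M u_0(y)\, K(y,t_0\,|\,x,t)\, dg_{t_0}(y)
    \le C_1 \int_{\spt} K(y,t_0\,|\,x,t)\, dg_{t_0}(y),
\]
where $C_1:=\sup_M u_0<\infty.$ On the other hand, the reproduction formula gives
\[
    K(p_0,0\,|\,x,t)
    = \int_M K(p_0,0\,|\,y,t_0)\, K(y,t_0\,|\,x,t)\, dg_{t_0}(y)
    \ge \Big(\inf_{y\in\spt} K(p_0,0\,|\,y,t_0)\Big)
    \int_{\spt} K(y,t_0\,|\,x,t)\, dg_{t_0}(y).
\]
So the whole proof reduces to producing a positive lower bound on $K(p_0,0\,|\,\cdot,t_0)$ over the compact set $\spt\subset B_{t_0}(x_0,D_0).$

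For that step I would use (\ref{subsolution}), which gives
\[
    K(p_0,0\,|\,y,t_0)
    \ge (4\pi|t_0|)^{-n/2} e^{-\ell_{p_0,0}(y,|t_0|)}.
\]
By the remark below Corollary \ref{LUTypeInoncollapsed}, our ancient solution has bounded geometry on every compact time interval, and in particular on $[t_0,0]$. This is enough to bound $\ell_{p_0,0}(\cdot,|t_0|)$ from above on any fixed metric ball: one may plug a constant-speed geodesic from $p_0$ to $y\in B_{t_0}(x_0,D_0)$ into the definition (\ref{definitionofl}) of $\ell$, and the integrand is controlled purely in terms of $\dist_0(p_0,x_0)$, $D_0$, and the curvature and distance-distortion bounds on $M\times[t_0,0]$. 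This produces a constant $c_0=c_0(n,D_0,\dist_0(p_0,x_0),[t_0,0]\text{-geometry})>0$ with $K(p_0,0\,|\,y,t_0)\ge c_0$ on $\spt$.

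Combining the two displays above, the desired inequality holds with $C_0:=C_1/c_0$, which depends precisely on the data listed in the proposition. I do not expect any essential obstacle here; the only mildly delicate point is verifying that the $\mathcal{L}$-length of a candidate curve from $(p_0,0)$ to $(y,t_0)$ is uniformly bounded for $y$ in a fixed ball, but this is a direct consequence of the compact-time-interval bounded geometry together with the fact that $|t_0|$ is fixed and bounded away from $0$.
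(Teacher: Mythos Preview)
Your proposal is correct and is essentially the paper's argument: both hinge on the inequality $u_0 \le C_0\, K(p_0,0\,|\,\cdot,t_0)$ at time $t_0$ (with $C_0 = \sup u_0 / \inf_{\spt} K(p_0,0\,|\,\cdot,t_0)$), and then propagate backwards via the conjugate heat equation---the paper invokes the maximum principle directly, while you unwind it through the reproduction formula, which is the same thing. The only difference is that the paper just cites positivity of $K(p_0,0\,|\,\cdot,t_0)$ and compactness of $\spt$ to get $c_0>0$, whereas you go further and make the stated dependence of $C_0$ explicit via (\ref{subsolution}) and an $\ell$-bound; this is a welcome elaboration but not a different route.
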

\begin{proof}
Since $K(p_0,0\,|\,\cdot,t_0)$ is a positive function on $M$, we may the take
\begin{eqnarray*}
C_0:=\frac{\sup u_0}{\inf_{\spt}K(p_0,0\,|\,\cdot,t_0)}\in(0,\infty).
\end{eqnarray*}
Then we have $u_0\leq C_0 K(p_0,0\,|\,\cdot,t_0)$ everywhere on $M$, and the conclusion follows from the maximum principle.
\end{proof}

According to the scaling property of the $\bW$ functional, we have $\displaystyle\bW(g(\tau_it),u(\cdot,\tau_it),T-\tau_it)=\bW\left(\tau_i^{-1}g(\tau_it),\tau_i^{\frac{n}{2}}u(\cdot,\tau_it),\tfrac{T}{\tau_i}-t\right)$. Hence, for $t\in[-2,-1]$, we may define
\begin{eqnarray*}
g_i(t)&:=&\tau_i^{-1}g(\tau_it),
\\
f_i(\cdot,t)&:=&f(\cdot,\tau_it)+\frac{n}{2}\log\left(\frac{T+\tau_i|t|}{\tau_i|t|}\right),
\\
\W_i(t)&:=&\bW\left(\tau_i^{-1}g(\tau_it),\tau_i^{\frac{n}{2}}u(\cdot,\tau_it),\tfrac{T}{\tau_i}-t\right)+\frac{n}{2}\log\left(\frac{T+\tau_i|t|}{\tau_i|t|}\right)
\\
&=&\int_M\left(\big(\tfrac{T}{\tau_i}+|t|\big)\big(|\nabla f_i|^2+R_i\big)+f_i-n\right)(4\pi|t|)^{-\frac{n}{2}}e^{-f_i}dg_{i,t}.
\end{eqnarray*}
Since $T/\tau_i\rightarrow 0$, to prove Theorem \ref{backwardlimit}, it suffices to show that
\begin{eqnarray*}
\liminf_{i\rightarrow\infty} \mathcal{W}_i(t)\geq \mu_\infty\quad\text{ for some } \quad t\in[-2,-1].
\end{eqnarray*}

By Proposon \ref{upperbound} (arguing as the proof in \cite[Theorem 2.1]{Lu12} again), we have that $u_i=(4\pi|t|)^{-\frac{n}{2}}e^{-f_i}\rightarrow u_\infty$ locally smoothly on $M_\infty\times [-2,-1)$, where $u_\infty$ is a nonnegative solution to the conjugate heat equation.  (\ref{nonsense8004002}) then implies $u_\infty>0$ everwhere on $M_\infty\times[-2,-1)$ and we may define $f_\infty: M_\infty\times[-2,-1)\rightarrow\mathbb{R}$ by  $u_\infty:=(4\pi|t|)^{-\frac{n}{2}}e^{-f_\infty}$. Then we have $f_i\rightarrow f_\infty$  locally smoothly on $M_\infty\times[-2,-1)$. By Proposition \ref{upperbound} and arguing as in the proof of Lemma \ref{lem: dnu^oo_s}, we have that
\begin{eqnarray}\label{anotherunitmeasurenonsense}
\int_{M_\infty}u_\infty(\cdot,t)dg_{\infty,t}\equiv 1\quad\text{ for all }\quad t\in[-2,1).
\end{eqnarray}
Since we have $\int_M u(\cdot,t)dg_t\equiv 1$ for all $t\in(-\infty,t_0)$ by the property of the conjugate heat equation, the proof of (\ref{anotherunitmeasurenonsense}) is in fact somewhat like the dominated convergence theorem,.

Proposition \ref{upperbound} also implies that $f_i$ has at least quadratic growth, that is, there is a constant $C$, such that
\begin{eqnarray}\label{quadratic_1}
f_i(x,t)&\geq& \frac{1}{C}\dist_{g_{i,t}}^2(p_i,x)-C\quad\text{ for all }\quad (x,t)\in M\times[-2,-1],
\\
f_\infty(x,t)&\geq& \frac{1}{C}\dist_{g_{\infty,t}}^2(p_\infty,x)-C\quad\text{ for all }\quad (x,t)\in M_\infty \times[-2,-1).\label{quadratic_2}
\end{eqnarray}
If we denote $$\mathcal{W}_\infty(t)=\int_{M_\infty}\big(|t|(|\nabla f_\infty|^2+R_\infty)+f_\infty-n\big)u_\infty dg_{\infty,t},$$ then (\ref{quadratic_1}) and  (\ref{quadratic_2}) are sufficient to show 
\begin{eqnarray}\label{wbconvergence}
\liminf_{i\rightarrow\infty} \mathcal{W}_i(t)\geq \mathcal{W}_\infty(t)\quad\text{ for all }\quad t\in[-2,-1).
\end{eqnarray}
To see this, let us fix an arbitrary large number $A$, such that the integrand of both $\mathcal{W}_i(t)$ and $\mathcal{W}_\infty(t)$ are positive outside a disk with radius $A$; this is possible because of (\ref{quadratic_1}) and  (\ref{quadratic_2}). Then we may compute
\begin{align*}
&\quad\quad\int_{B_{g_{\infty,t}}(p_\infty,A)}\big(|t|(|\nabla f_\infty|^2+R_\infty)+f_\infty-n\big)u_\infty dg_{\infty,t}
\\
&=\lim_{i\rightarrow\infty}\left(\int_{B_{g_{i,t}}(p_i,A)}\left(\big(\frac{T}{\tau_i}+|t|\big)\big(|\nabla f_i|^2+R_i\big)+f_i-n\right)(4\pi|t|)^{-\frac{n}{2}}e^{-f_i}dg_{i,t}+\frac{n}{2}\log\left(\frac{T+\tau_i|t|}{\tau_i|t|}\right)\right)
\\
&\leq\liminf_{i\rightarrow\infty}\mathcal{W}_i(t),
\end{align*}
taking $A\rightarrow\infty$, (\ref{wbconvergence}) follows. 

\begin{Lemma}\label{anotherl2nonsese}
For all $t\in[-2,-1)$, we have
\begin{eqnarray*}
\int_M |\nabla f_\infty|^2 u_\infty dg_{\infty,t}<\infty.
\end{eqnarray*}
\end{Lemma}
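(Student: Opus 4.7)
The plan is to bound the approximating quantities $\int_M |\nabla f_i|^2 u_i\, dg_{i,t}$ uniformly in $i$, and then pass to the limit using the local smooth convergence $f_i \to f_\infty$, $u_i\to u_\infty$ together with Fatou/monotone convergence. Throughout I would work at an arbitrary but fixed $t\in[-2,-1)$.

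First, I would use Perelman's monotonicity backwards in time on the original unscaled flow: since $\bar{\mathcal W}(g(t),u(\cdot,t),T-t)$ is non-decreasing in $t\in(-\infty,t_0)$ by the lemma preceding Theorem \ref{backwardlimit}, the scaling identity giving $\mathcal W_i(t)$ implies
\[
\mathcal{W}_i(t)\le \bar{\mathcal W}(g(t_0),u_0,\tau_0)+\frac{n}{2}\log\!\left(\frac{T+\tau_i|t|}{\tau_i|t|}\right)=:C_0(i),
\]
and $C_0(i)$ is uniformly bounded in $i$ since $T/\tau_i\to 0$.

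Next, I would extract a uniform lower bound on the non-gradient part of the integrand. Proposition \ref{upperbound} yields $u\le C_0' K(p_0,0\,|\,\cdot,\cdot)$ on the unscaled flow; combining this with the Gaussian upper bound of Theorem \ref{gaussianupperbound} and the Nash entropy bound of Theorem \ref{entropybound}, and rescaling via $u_i(x,t)=\tau_i^{n/2}u(x,\tau_i t)$, gives a uniform estimate $u_i(x,t)\le C_1 |t|^{-n/2}$ on $M\times[-2,-1]$. This translates into $f_i\ge -C_2$ uniformly in $i$ and $x$. Since $\int_M u_i\,dg_{i,t}=1$ and since $R_{g_i}\ge 0$ (by the maximum principle applied to $R$ on the ancient flow with bounded curvature on compact time intervals), isolating the gradient term in $\mathcal W_i(t)\le C_0(i)$ gives
\[
\Big(\tfrac{T}{\tau_i}+|t|\Big)\int_M |\nabla f_i|^2 u_i\, dg_{i,t}\le C_0(i)-\int_M(f_i-n)u_i\,dg_{i,t}\le C_3,
\]
whence $\int_M |\nabla f_i|^2 u_i\,dg_{i,t}\le C_4$ uniformly in $i$.

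Finally, I would pass to the limit. For any compact set $K\subset M_\infty$, the local smooth convergence ensures
\[
\int_K |\nabla f_\infty|^2 u_\infty\,dg_{\infty,t}=\lim_{i\to\infty}\int_{\Psi_i(K)}|\nabla f_i|^2 u_i\,dg_{i,t}\le C_4.
\]
Exhausting $M_\infty$ by an increasing sequence of compact sets and applying the monotone convergence theorem gives $\int_{M_\infty}|\nabla f_\infty|^2 u_\infty\,dg_{\infty,t}\le C_4<\infty$. The main obstacle to check carefully is the uniform lower bound for $f_i$: Proposition \ref{upperbound} produces a constant $C_0'$ depending on quantities attached to the unscaled flow, and one needs the Nash entropy bound of Theorem \ref{entropybound} (which is independent of $i$) to ensure the resulting pointwise upper bound for $u_i$ after Type I rescaling is genuinely uniform in $i$; without this, the upper bound on $\int(f_i-n)u_i\,dg_{i,t}$ could drift with $i$ and the whole argument would collapse.
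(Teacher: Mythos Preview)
Your proposal is correct and follows essentially the same approach as the paper: bound the $\mathcal W$-quantity from above via monotonicity, bound $f$ from below via the heat-kernel upper bound (Proposition~\ref{upperbound} plus the Nash entropy bound), use $R\ge 0$ on the ancient flow, and isolate the gradient term. The only organizational difference is that the paper carries this out directly at the limit level---using the already established inequality \eqref{wbconvergence} (which gives $\mathcal W_\infty(t)<\infty$) together with \eqref{quadratic_2} (which gives $f_\infty\ge -C$)---whereas you obtain the uniform bound $\int_M|\nabla f_i|^2u_i\,dg_{i,t}\le C_4$ first and then pass to the limit via Fatou; both routes use the same ingredients and neither buys anything the other lacks.
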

\begin{proof}
(\ref{wbconvergence}) implies that $\W_\infty(t)<\infty$. Furthermore, by (\ref{quadratic_2}), we have $f_\infty(\cdot,t)>-C$ for some constant $C$. Hence
\begin{eqnarray*}
|t|\int_M(|\nabla f_\infty|^2 +R_\infty)u_\infty dg_{\infty,t}&=&\W_\infty(t)-\int_M(f_\infty-n)u_\infty dg_{\infty,t}
\\
&\leq& \W_\infty(t)+C+n<\infty.
\end{eqnarray*}
This finishes the proof.
\end{proof}

\begin{proof}[Proof of Theorem \ref{backwardlimit}]
Let us fix an arbitrary $t\in[-2,-1)$. By (\ref{quadratic_2}), we have that $u_\infty$ also has a Gaussian upper bound. Hence, by the Euclidean volume growth bound for shrinkers (\cite{Mun09, CZ10}), we have
\begin{eqnarray*}
\int_{M_\infty}\dist_{g_{\infty,t}}^2(p_\infty,\cdot)u_\infty(\cdot,t)dg_{\infty,t}<\infty.
\end{eqnarray*}
Consequently, by (\ref{anotherunitmeasurenonsense}) and Lemma \ref{anotherl2nonsese}, $u_\infty$ can be used as a test function for the $\mu$-functional (see formulas (91) and (92) in \cite{LW20}). Then, by Proposition \ref{shrinkernufunctional}, we have
\begin{eqnarray*}
\W_\infty(t)=\W\big(g_\infty(t),f_\infty(\cdot,t),|t|\big)\geq\mu(g_\infty(t),|t|)=\mu(g_\infty(-1),1)=\mu_\infty.
\end{eqnarray*}
Taking (\ref{wbconvergence}) into account, this finishes the proof.
\end{proof}

From this point on, we will proceed to prove Theorem \ref{forwardlimit}. It turns our that the following coarse Gaussian estimates are very helpful.

\begin{Proposition}[Theorem 26.25 and Theorem 26.31 in \cite{RFV3}]\label{coarsegaussboundthatisuseful}
There is a constant $C$ depending only on the geometry bounds on $M\times[t_0-1,t_0]$, such that the following holds
\begin{eqnarray}
\frac{1}{C(t-s)^{\frac{n}{2}}}\exp\left(-\frac{C\dist_t^2(x,y)}{(t-s)}\right)\leq K(x,t\,|\,y,s)\leq \frac{C}{(t-s)^{\frac{n}{2}}}\exp\left(-\frac{\dist_t^2(x,y)}{C(t-s)}\right),
\end{eqnarray}
for all $x, y\in M$ and for all $t_0-1\leq s<t\leq t_0$. Here $\dist_t$ can be replaced by $\dist_{t'}$ for any $t'\in[t_0-1,t_0]$.
\end{Proposition}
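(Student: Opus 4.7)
The proposition is essentially a direct appeal to the heat kernel bounds \cite[Theorem 26.25, Theorem 26.31]{RFV3}, so the plan is really just to verify the hypotheses and then address the minor extra claim about being able to replace $\dist_t$ with $\dist_{t'}$. First, I would observe that the ancient solution under consideration has bounded curvature on each compact time interval by \eqref{curvaturebound}, and by Corollary \ref{LUTypeInoncollapsed} it is weakly $\kappa$-noncollapsed on all scales. Hence on $M \times [t_0-1,t_0]$ the Ricci flow has bounded geometry in the sense used throughout this paper (two-sided sectional curvature bounds together with a uniform positive lower bound for the volume of unit balls).

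With bounded geometry secured on this compact time slab, the hypotheses of \cite[Theorem 26.25]{RFV3} are met and yield the Gaussian upper bound
\[
K(x,t\,|\,y,s) \le \frac{C}{(t-s)^{n/2}} \exp\left(-\frac{\dist_t^2(x,y)}{C(t-s)}\right),
\]
while \cite[Theorem 26.31]{RFV3} yields the matching Gaussian lower bound
\[
K(x,t\,|\,y,s) \ge \frac{1}{C(t-s)^{n/2}} \exp\left(-\frac{C\dist_t^2(x,y)}{t-s}\right),
\]
for all $x,y \in M$ and $t_0-1 \le s < t \le t_0$, where $C$ depends only on the geometry bounds on $M \times [t_0-1,t_0]$. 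These are exactly the stated inequalities with $\dist_t$.

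The last sentence, that $\dist_t$ may be replaced by $\dist_{t'}$ for any $t' \in [t_0-1,t_0]$, follows from the standard distance distortion estimate under Ricci curvature bounds: if $|\Ric| \le \Lambda$ on $M \times [t_0-1,t_0]$, then
\[
e^{-\Lambda|t-t'|}\,\dist_{t'}(x,y) \le \dist_t(x,y) \le e^{\Lambda|t-t'|}\,\dist_{t'}(x,y),
\]
so squaring and absorbing the resulting multiplicative factor $e^{\pm 2\Lambda}$ into the constant $C$ produces the same bounds with $\dist_{t'}$ in place of $\dist_t$. The only ``obstacle'' is really bookkeeping---making sure the constant $C$ is universal in $t,t' \in [t_0-1,t_0]$---which is immediate because the curvature is uniformly bounded on this compact slab.
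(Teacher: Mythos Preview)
Your proposal is correct, and in fact the paper gives no proof at all for this proposition---it simply cites \cite[Theorem 26.25, Theorem 26.31]{RFV3} in the statement and moves on. Your write-up supplies exactly the verification the paper leaves implicit: bounded geometry on $M\times[t_0-1,t_0]$ (via \eqref{curvaturebound} and Corollary \ref{LUTypeInoncollapsed}), application of the cited heat-kernel bounds, and the distance-distortion argument for swapping $\dist_t$ with $\dist_{t'}$.
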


\textbf{Remark:} Because of the boundedness of geometry, we have that for any $x\in M$ and $t_0-1\leq s<t\leq t_0$, it holds that
\begin{eqnarray*}
\Vol_{g_t}\big(B_t(x,\sqrt{t-s})\big)\geq c(t-s)^{\frac{n}{2}},
\end{eqnarray*}
where $c$ depends only on the geometric bounds on $M\times[t_0-1,t_0]$.

\begin{Lemma}\label{coarsebound}
There exists positive constant $C<\infty$, such that
\begin{eqnarray}\label{nonsense_grad}
|\nabla ^k u|\leq C \quad\text{ on }\quad M\times[t_0-1,t_0],
\end{eqnarray}
where $k\in\{0,1,2,3\}$. 
\end{Lemma}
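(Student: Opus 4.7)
The plan is to first establish the $L^\infty$ bound on $u$ by reversing time and invoking the maximum principle, and then bootstrap to derivative bounds using standard Bernstein-type estimates adapted to the noncompact setting.

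First, I would bound $|u|$ itself. Setting $\tau = t_0 - t$ and $\tilde u(x, \tau) := u(x, t_0 - \tau)$ for $\tau \in [0, 1]$, the conjugate heat equation $\Box^* u = 0$ becomes the forward parabolic equation
\[
    \partial_\tau \tilde u = \Delta_{g(t_0 - \tau)} \tilde u - R(\cdot, t_0 - \tau)\, \tilde u,
\]
with smooth, compactly supported, nonnegative initial data $\tilde u(\cdot, 0) = u_0$. By Corollary \ref{LUTypeInoncollapsed} and the remark following it, $(M, g(t))$ has bounded geometry on $[t_0 - 1, t_0]$, so $|R|$ is uniformly bounded on this slab. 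Applying the standard maximum principle to $e^{-C\tau}\tilde u$ for $C$ sufficiently large yields $0 \le u(x, t) \le e^{C(t_0 - t)} \sup u_0$ on $M \times [t_0-1, t_0]$.

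For the derivative bounds, I would apply standard Bernstein-type estimates for the conjugate heat equation. Bounded geometry on $M \times [t_0 - 1, t_0]$, combined with Shi's derivative estimates, gives uniform bounds $|\nabla^j \Rm_{g(t)}| \le C_j$ on this slab for every $j$. Since $u_0 \in C_0^\infty(M)$, all $|\nabla^j u_0|$ are bounded. Together with the $L^\infty$ bound on $u$ just established, one successively controls $|\nabla u|$, $|\nabla^2 u|$, $|\nabla^3 u|$ by computing the evolution of the quantities $|\nabla^k u|^2$ (augmented with lower-order terms to absorb commutator contributions involving curvature) and applying the maximum principle.

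The principal obstacle is making the Bernstein argument rigorous on the noncompact manifold $M$, where the maximum principle requires a cut-off procedure. This is handled by the Gaussian upper bound $u \le C_0 \, K(p_0, 0 \,|\, \cdot,\cdot)$ from Proposition \ref{upperbound}, combined with the heat-kernel Gaussian estimates of Proposition \ref{coarsegaussboundthatisuseful}, which ensure enough spatial decay of $u$ (and, after a first step of the Bernstein iteration, of its derivatives) to justify taking cut-offs to infinity. Alternatively, one can differentiate under the integral sign in the representation
\[
    u(x, t) = \int_M u_0(y)\, K(y, t_0 \,|\, x, t)\, dg_{t_0}(y),
\]
using pointwise spatial derivative bounds for the heat kernel of the form $|\nabla_x^k K(y, t_0 \,|\, x, t)| \le C_k (t_0 - t)^{-(n+k)/2} \exp(-d_{t_0}^2(x,y)/C(t_0 - t))$, which are standard consequences of bounded geometry (for instance, via the techniques of \cite{EKNT08} that underlie (\ref{EKNT_gradient})); the compact support of $u_0$ then gives the pointwise uniform bounds $|\nabla^k u| \le C$ for $k \in \{0, 1, 2, 3\}$.
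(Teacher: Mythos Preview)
Your proposal is correct and takes essentially the same approach as the paper: the paper's proof is a one-line reference to the Shi-type argument in \cite[Theorem 10]{LT}, which is precisely the Bernstein iteration you describe (bound $u$ first, then successively control $|\nabla^k u|^2$ using the maximum principle and the uniform bounds on $|\nabla^j\Rm|$). One small remark: you invoke Proposition~\ref{upperbound} for the spatial decay needed to justify the maximum principle on the noncompact $M$, and while that proposition appears \emph{after} Lemma~\ref{coarsebound} in the paper, its proof is logically independent of the lemma, so there is no circularity.
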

\begin{proof}
Since $u_0$ is smooth and compactly supported, we have $|\nabla^k u_0|\leq C_k$ everywhere on $M$. Then, one may use the same argument as in the proof of \cite[Theorem 10]{LT} to prove (\ref{nonsense_grad}). This is a standard Shi-type estimate and the proof is left to the readers.
\end{proof}

\begin{Lemma}
There exists a constant $C<\infty$, such that
\begin{align}\label{quadratic_nonsense}
u(x,t)\leq \frac{C}{(t_0-t)^{\frac{n}{2}}}\exp\left(-\frac{\dist^2_{t_0}(x,\spt)}{C(t_0-t)}\right),
\end{align}
for all $(x,t)\in M\times[t_0-1,t_0)$. Here $\spt=\spt u_0$ denotes the compact support set of $u_0$.
\end{Lemma}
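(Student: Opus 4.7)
The plan is to obtain this estimate as an almost immediate consequence of the representation formula $u(x,t)=\int_M u_0(y)\,K(y,t_0\,|\,x,t)\,dg_{t_0}(y)$ together with the coarse Gaussian upper bound on the conjugate heat kernel supplied by Proposition \ref{coarsegaussboundthatisuseful}. Since the ancient Ricci flow has bounded geometry on $M\times[t_0-1,t_0]$ (by the remark after Corollary \ref{LUTypeInoncollapsed}, applied after a harmless shift of the base point using Theorem \ref{basepointindep} as in the opening of this section), Proposition \ref{coarsegaussboundthatisuseful} is available on this slab, and we are free to use $\dist_{t_0}$ as the reference metric in the Gaussian exponent.

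Concretely, I would fix $(x,t)\in M\times[t_0-1,t_0)$ and apply Proposition \ref{coarsegaussboundthatisuseful} with the source point $(y,t_0)$ and target $(x,t)$ to get a constant $C$, depending only on the geometry bounds on $M\times[t_0-1,t_0]$, such that
\begin{equation*}
K(y,t_0\,|\,x,t)\leq \frac{C}{(t_0-t)^{\frac{n}{2}}}\exp\!\left(-\frac{\dist_{t_0}^2(x,y)}{C(t_0-t)}\right)\quad\text{for all}\quad y\in M.
\end{equation*}
For $y\in\spt u_0\subset B_{t_0}(x_0,D_0)$, we have $\dist_{t_0}(x,y)\geq \dist_{t_0}(x,\spt)$ by definition of the distance from a set, so $\exp\!\left(-\tfrac{\dist_{t_0}^2(x,y)}{C(t_0-t)}\right)\leq \exp\!\left(-\tfrac{\dist_{t_0}^2(x,\spt)}{C(t_0-t)}\right)$ for every such $y$. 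Plugging this into the representation formula and pulling the Gaussian factor out of the integral yields
\begin{equation*}
u(x,t)\leq \frac{C}{(t_0-t)^{\frac{n}{2}}}\exp\!\left(-\frac{\dist_{t_0}^2(x,\spt)}{C(t_0-t)}\right)\int_{\spt}u_0\,dg_{t_0}=\frac{C}{(t_0-t)^{\frac{n}{2}}}\exp\!\left(-\frac{\dist_{t_0}^2(x,\spt)}{C(t_0-t)}\right),
\end{equation*}
using the normalization $\int_M u_0\,dg_{t_0}=1$. This is the desired bound (\ref{quadratic_nonsense}).

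There is really no substantive obstacle: the only thing to be careful about is verifying the applicability of Proposition \ref{coarsegaussboundthatisuseful}, namely that $(M,g(t))$ has bounded geometry on the compact slab $M\times[t_0-1,t_0]$ (so that both the Gaussian upper bound and the freedom to swap $\dist_t$ for $\dist_{t_0}$ are legitimate). This is already ensured by the standing assumption of the section, as noted above. Everything else is a direct monotonicity-of-the-exponential argument combined with the mass-one normalization of $u_0$.
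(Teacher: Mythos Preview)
Your proof is correct and follows exactly the same approach as the paper: apply the representation formula for $u$, invoke the coarse Gaussian upper bound from Proposition~\ref{coarsegaussboundthatisuseful}, use $\dist_{t_0}(x,y)\ge\dist_{t_0}(x,\spt)$ for $y\in\spt u_0$ to pull the exponential out of the integral, and conclude via $\int_M u_0\,dg_{t_0}=1$.
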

\begin{proof}
To see this, we compute using Proposition \ref{coarsegaussboundthatisuseful}
\begin{eqnarray*}
u(x,t)&=&\int_{M}u_0(y)K(y,t_0\,|\,x,t)dg_{t_0}(y)
\\
&\leq&\int_{\spt}u_0(y)\cdot \frac{C}{(t_0-t)^{\frac{n}{2}}}\exp\left(-\frac{\dist^2_{t_0}(x,y)}{C(t_0-t)}\right)dg_{t_0}(y)
\\
&\leq&\frac{C}{(t_0-t)^{\frac{n}{2}}}\exp\left(-\frac{\dist^2_{t_0}(x,\spt)}{C(t_0-t)}\right)\int_Mu_0dg_{t_0},
\end{eqnarray*}
the lemma then follows.
\end{proof}

\begin{Proposition}\label{ulogunonsense}
\begin{eqnarray*}
\lim_{t\rightarrow t_0-}\int_M u(\cdot,t)\log u(\cdot,t)\, dg_{t}=\int_M u_0\log u_0\,dg_{t_0}.
\end{eqnarray*}
\end{Proposition}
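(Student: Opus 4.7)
The plan is to combine smooth pointwise convergence of $u(\cdot, t)$ to $u_0$ on compact sets (as $t \to t_0-$) with a uniform Gaussian tail estimate, both being consequences of the bounded geometry on $M \times [t_0-1, t_0]$ together with the Gaussian upper bound from Proposition 8.5 and the $C^3$-bound from Lemma 9.7.

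On any compact $K \subset M$, the representation
\[
    u(x,t) = \int_M u_0(y)\, K(y, t_0 \,|\, x, t)\, dg_{t_0}(y)
\]
and the smoothness of $u_0$ yield $u(\cdot, t) \to u_0$ smoothly on $K$ as $t \to t_0-$. Since $\phi(s) := s \log s$ (with $\phi(0) := 0$) is continuous on $[0, C_0]$, where $C_0$ is the uniform upper bound on $u$ from Lemma 9.7, we get $u \log u \to u_0 \log u_0$ uniformly on $K$; combined with the smooth convergence $g_t \to g_{t_0}$, this gives $\int_K u \log u\, dg_t \to \int_K u_0 \log u_0\, dg_{t_0}$ for every compact $K$.

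To handle the tail, I would use the elementary inequality $|s \log s| \leq s^{1/2} + s \log C_0$ on $[0, C_0]$ (since $s|\log s| \leq s^{1/2}$ for $s \in [0, 1]$, while $s \log s \leq s \log C_0$ for $s \in [1, C_0]$), which reduces the problem to bounding $\int_{M \setminus B_{t_0}(x_0, R)} u^p\, dg_t$ for $p \in \{1/2, 1\}$. Applying the Gaussian upper bound $u(x,t) \leq C(t_0-t)^{-n/2} \exp\bigl(-\dist_{t_0}^2(x, \spt u_0)/(C(t_0-t))\bigr)$ from (8.8) together with the (at worst) exponential volume growth coming from the bounded Ricci curvature, a standard Gaussian-integral computation yields
\[
    \sup_{t \in [t_0-\delta, t_0)} \int_{M \setminus B_{t_0}(x_0, R)} u^p\, dg_t \longrightarrow 0
    \quad \text{as} \quad R \to \infty, \qquad p \in \{1/2, 1\}.
\]
Since $\spt u_0 \subset B_{t_0}(x_0, D_0)$, the corresponding tail integral of $u_0 \log u_0$ vanishes trivially for $R > D_0$, so combining the compact-set and tail estimates completes the proof.

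The main delicate point is the uniformity in $t$: as $t \to t_0-$, the prefactor $(t_0-t)^{-n/2}$ blows up, but the factor $\exp(-R^2/(C(t_0-t)))$ decays infinitely faster, and indeed $\sup_{\tau \in (0, \delta]} \tau^{-\alpha} \exp(-R^2/(C\tau)) \to 0$ as $R \to \infty$ for every fixed $\alpha > 0$. This guarantees the desired uniform tail decay, and no other step should present serious difficulty.
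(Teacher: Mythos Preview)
Your proposal is correct and follows essentially the same route as the paper: both split the integral into a compact part handled by the uniform convergence $u(\cdot,t)\to u_0$ together with the continuity of $s\mapsto s\log s$ on $[0,C_0]$, and a tail part controlled via the Gaussian bound $u(x,t)\le C(t_0-t)^{-n/2}\exp\bigl(-\dist_{t_0}^2(x,\spt)/C(t_0-t)\bigr)$ combined with $|s\log s|\lesssim s^{1/2}$ for small $s$. The only cosmetic difference is that the paper shows the tail $\int_{M\setminus B_A}|u\log u|\,dg_t\to 0$ as $t\to t_0-$ for fixed $A$, whereas you take the equivalent ``$\sup_t$ tail $\to 0$ as $R\to\infty$'' formulation; both versions follow from the same Gaussian estimate.
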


\begin{proof}
First of all, we show that
\begin{eqnarray}\label{uniform_nonsense}
u(\cdot,t)\log u(\cdot,t)\rightarrow u_0\log u_0 \quad \text{ uniformly on }\quad M.
\end{eqnarray}
Let us fix an arbitrary small $\delta>0$. Since $u$ is bounded from above, and $u\rightarrow u_0$ uniformly (note that $|\nabla^2 u|\leq C$  by Lemma \ref{coarsebound}, we have that $|\partial_t u|\leq |\nabla^ 2u|+Ru \leq C$ near $t=t_0$), we have that $u\log u\rightarrow u_0\log u_0$ uniformly on $\{u_0\geq \delta\}$. Hence, we may find a small $\varepsilon\in(0,\delta)$, such that 
\begin{eqnarray*}
\sup_{\{u_0\geq\delta\}}\big|u(\cdot,t)\log u(\cdot,t)-u_0\log u_0\big|\leq \delta\quad\text{ for all }\quad t\in[t_0-\varepsilon,t_0].
\end{eqnarray*}
On the other hand, we may take $\delta$ and $\varepsilon$ small enough, such that $a\log a\leq a^{\frac{1}{2}}$ whenever $a\in(0,\delta+C\varepsilon)$, where $C$ is the constant in (\ref{nonsense_grad}). Therefore, if $x\in\{u_0<\delta\}$ and $t\in[t_0-\varepsilon,t_0]$, we have $0<u(x,t)\leq u_0(x)+C|t_0-t|< \delta+C\varepsilon$ and $u\log u\leq u^{\frac{1}{2}}\leq\sqrt{\delta+C\varepsilon}$. This implies
\begin{eqnarray*}
\sup_{\{u_0<\delta\}}\big|u(\cdot,t)\log u(\cdot,t)-u_0\log u_0\big|\leq \sqrt{\delta+C\varepsilon}+\delta\quad\text{ for all }\quad t\in[t_0-\varepsilon,t_0].
\end{eqnarray*}
This proves (\ref{uniform_nonsense}).

Next, we shall prove that for any $A>2D_0$ (recall that $D_0>0$ is such that $\spt\subset B_{t_0}(x_0,D_0)$), we have
\begin{eqnarray*}
\lim_{t\rightarrow t_0-}\int_{M\setminus B_{t_0}(\spt, A)}\left|u(\cdot,t)\log u(\cdot,t)\right|dg_t\rightarrow 0.
\end{eqnarray*}
Here $B_{t_0}(\spt,A):=\{x\ |\ \dist_{t_0}(\spt,x)< A\}$. Apparently, this is sufficient for the lemma. Fixing any $A>2D_0$, we may, by (\ref{quadratic_nonsense}), find a positive number $\varepsilon$, such that $u$ is small enough on $M\setminus B_{t_0}(\spt, A)\times[t_0-\varepsilon,t_0)$ and satisfies
\begin{eqnarray*}
0&<&\left|u(x,t)\log u(x,t)\right|<u^{\frac{1}{2}}(x,t)\leq (t_0-t)^{\frac{n}{4}}\frac{C}{(t_0-t)^{\frac{n}{2}}}\exp\left(-\frac{\dist^2_{t_0}(x,\spt)}{2C(t_0-t)}\right)
\\
&\leq&(t_0-t)^{\frac{n}{4}}\frac{C}{(t_0-t)^{\frac{n}{2}}}\exp\left(-\frac{(\dist_{t_0}(x,x_0)-D_0)^2}{2C(t_0-t)}\right)
\\
&\leq&(t_0-t)^{\frac{n}{4}}\frac{C}{(t_0-t)^{\frac{n}{2}}}\exp\left(-\frac{\dist_{t_0}^2(x,x_0)}{4C(t_0-t)}\right)
\end{eqnarray*}
for all $(x,t)\in M\setminus B_{t_0}(\spt, A)\times[t_0-\varepsilon,t_0]$. Therefore, we have
\begin{eqnarray*}
\int_{M\setminus B_{t_0}(\spt, A)}\left|u(\cdot,t)\log u(\cdot,t)\right|dg_t&\leq& (t_0-t)^{\frac{n}{4}}\int_{M\setminus B_{t_0}(x_0,A)}\frac{C}{(t_0-t)^{\frac{n}{2}}}\exp\left(-\frac{\dist_{t_0}^2(x,x_0)}{4C(t_0-t)}\right) dg_t
\\
&\leq& C(t_0-t)^{\frac{n}{4}}\rightarrow 0 \quad \text{ as }\quad t\rightarrow t_0-.
\end{eqnarray*}
The integral estimate in the last inequality above is a standard result using the Bishop-Gromov volume comparison theorem. Similar estimates will appear many times later.
\end{proof}

Next, we deal with the integral of the gradient term. First of all, we recall the following gradient estimate from \cite[Theorem 10]{EKNT08}. Although $u$ is not positive at $t=t_0$, yet one may apply this theorem to $u$ on $M\times [t,\frac{t+t_0}{2}]$, for any $t\in[t_0-1,t_0)$.

\begin{Lemma}[Theorem 10 in \cite{EKNT08}]
\label{lem: grad est on CHF}
There is a constant $C$ such that the following holds
\begin{eqnarray}\label{EKNT_gradient}
\frac{|\nabla u|^2}{u}\leq\frac{Cu}{t_0-t}\left(1+\log\frac{C}{u}\right)^2\leq\frac{C}{t_0-t}(u+u\log^2 u) \quad \text{ on }\quad M\times[t_0-1,t_0).
\end{eqnarray}
\end{Lemma}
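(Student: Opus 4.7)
My plan is to apply \cite[Theorem 10]{EKNT08} directly, after localizing in time to circumvent the fact that $u$ fails to be strictly positive at the final time $t_0$. Fix $t\in[t_0-1,t_0)$ and restrict attention to the parabolic sub-cylinder $M\times[t,(t+t_0)/2]$. By (\ref{thedefinitionofu}) together with the strict positivity of the conjugate heat kernel (Proposition \ref{coarsegaussboundthatisuseful}), $u>0$ everywhere on this sub-cylinder; by Lemma \ref{coarsebound}, $u\le B$ for some $B$ independent of $t$. Under these hypotheses, \cite[Theorem 10]{EKNT08} yields, at the initial slice $s=t$ (with backward-time scale $(t+t_0)/2-t=(t_0-t)/2$), exactly the desired inequality $|\nabla u|^2/u\le C u(t_0-t)^{-1}(1+\log(B/u))^2$, up to a harmless adjustment of $C$.

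The core analytic content of \cite[Theorem 10]{EKNT08} is a maximum-principle argument applied to an auxiliary quantity of the shape $P=\tau |\nabla u|^2/u-\alpha u\bigl(\log(B/u)\bigr)^2$, with $\tau$ the backward time from $(t+t_0)/2$ and $\alpha>0$ a small constant. A direct Bochner-type computation using $\Box^* u=0$ produces $\Box^* P\le(\text{curvature-bounded lower-order terms})$, and one concludes $P\le 0$ on the sub-cylinder.

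The main obstacle is justifying this maximum principle argument on the noncompact manifold $M$. This is handled by combining the bounded geometry of $(M,g(t))$ on compact time intervals (Corollary \ref{LUTypeInoncollapsed}) with the Gaussian upper bound (\ref{quadratic_nonsense}) for $u$: these together imply $P(x,s)\to 0$ as $\dist_{t_0}(x,\spt)\to\infty$, uniformly in $s$ on our finite sub-cylinder, so a standard cut-off argument of the same flavor as the one in the proof of Theorem \ref{aCHKcoarseupperboundofbamler} suffices to localize the maximum and apply the comparison principle. This is the only place where the hypothesis of bounded curvature within compact time intervals (rather than mere completeness) is really used.

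The second inequality in the statement is purely algebraic. Since $u$ is uniformly bounded above, $(1+\log(C/u))^2\le 2+2(\log C-\log u)^2\le C'(1+\log^2 u)$; multiplying by $u$ and enlarging $C$ produces the form $\frac{C}{t_0-t}(u+u\log^2 u)$.
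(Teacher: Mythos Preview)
Your proposal is correct and follows exactly the approach the paper indicates: the paper gives no formal proof of this lemma, only the remark (immediately preceding the statement) that ``although $u$ is not positive at $t=t_0$, yet one may apply this theorem to $u$ on $M\times[t,\frac{t+t_0}{2}]$, for any $t\in[t_0-1,t_0)$.'' Your write-up is simply a fleshed-out version of this one-line justification, together with the elementary algebraic observation for the second inequality.
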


\begin{Lemma}\label{someothergradientl2nonsense}
For all $A>2D_0+1$, we have
\begin{eqnarray}
\lim_{t\rightarrow t_0-}\int_{M\setminus B_{t_0}(\spt, A)}\frac{|\nabla u|^2}{u}(\cdot,t)dg_{t}= 0.
\end{eqnarray}
\end{Lemma}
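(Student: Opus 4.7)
The plan is to combine the Hamilton--Li--Xu-type gradient estimate from Lemma \ref{lem: grad est on CHF} with the Gaussian upper bound \eqref{quadratic_nonsense} to reduce the problem to a Gaussian tail estimate. Writing $\tau := t_0-t$ and $d(x) := \dist_{t_0}(x,\spt)$, the gradient estimate yields
\[
\frac{|\nabla u|^2}{u} \leq \frac{C}{\tau}\bigl(u + u\log^2 u\bigr),
\]
so it suffices to show that $\tau^{-1}\int_{\{d\geq A\}}(u+u\log^2 u)\,dg_t \to 0$ as $\tau\searrow 0$.

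By \eqref{quadratic_nonsense} we have $u\leq U(x,t):=C\tau^{-n/2}\exp(-d^2/(C\tau))$ on $\{d\geq A\}$, and for $\tau$ sufficiently small (depending on $A$) we have $U\leq e^{-2}$ throughout this region. Since the scalar function $s\mapsto s\log^2 s$ is monotonically increasing on $(0,e^{-2})$, we may then conclude $u\log^2 u \leq U\log^2 U$; combining this with the pointwise estimate $|\log U|\leq C + \tfrac{n}{2}|\log\tau| + d^2/(C\tau)$ gives
\[
u + u\log^2 u \leq C\,U\Bigl(1 + \log^2\tau + \tfrac{d^4}{\tau^2}\Bigr).
\]

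Since $(M,g(t))$ has bounded geometry on $[t_0-1,t_0]$, the standard Bishop--Gromov volume comparison bounds the $g_t$-volume of an annulus $\{r\leq d<r+1\}$ by $Cr^{n-1}$. Passing to the radial variable and substituting $s = d/\sqrt{\tau}$, each of the resulting three integrals is then dominated by $C\tau^{-M}\exp(-A^2/(C\tau))$ for some fixed exponent $M$; multiplying by $\tau^{-1}$ still leaves exponential decay in $\tau^{-1}$, which absorbs every polynomial factor. This is the only real computation in the proof, and no genuine obstacle is expected: the exponential Gaussian decay in $d$ comfortably dominates both the prefactor $\tau^{-n/2-1}$ and the logarithmic factor $\log^2\tau$, so each contribution vanishes as $\tau\searrow 0$ and the lemma follows.
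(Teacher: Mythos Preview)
Your argument follows the same route as the paper's: combine the gradient estimate of Lemma~\ref{lem: grad est on CHF} with the Gaussian bound \eqref{quadratic_nonsense} and reduce to a Gaussian tail integral controlled by volume comparison. One caveat: under bounded geometry, Bishop--Gromov only yields an exponential upper bound $\Vol_{g_t}(\{r \le d < r+1\}) \le Ce^{Cr}$, not $Cr^{n-1}$; this is harmless here since the factor $\exp(-d^2/(C\tau))$ still dominates $e^{Cr}$ for small $\tau$, but the statement should be corrected. The paper streamlines the bookkeeping by using the cruder inequality $u\log^2 u \le u^{1/2}$ for small $u$ (rather than your monotonicity argument), and then, instead of a change of variables, inserts the pointwise bound $1 \le d^3$ on $\{d \ge A\}$ to extract a factor $\sqrt{t_0-t}$ in front of a Gaussian third-moment integral that is uniformly bounded in $\tau$.
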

\begin{proof}
Let us fix an arbitrary $A>2D_0+1$. By (\ref{quadratic_nonsense}), we may let $\varepsilon\in(0,1]$ be small enough, such that $u\log^2 u\leq u^\frac{1}{2}$ on $M\setminus B_{t_0}(\spt, A)\times [t_0-\varepsilon,t_0)$. Hence we have
\begin{eqnarray*}
\frac{|\nabla u|^2}{u}(x,t)&\leq& \frac{C}{t_0-t}\Big(u(x,t)+u(x,t)\log^2 u(x,t)\Big) \leq\frac{C}{t_0-t}\big(u(x,t)+u^{\frac{1}{2}}(x,t)\big)
\\
&\leq&\frac{C}{t_0-t}\left(\frac{1}{(t_0-t)^{\frac{n}{2}}}\exp\left(-\frac{\dist^2_{t_0}(x,\spt)}{C(t_0-t)}\right)+\frac{1}{(t_0-t)^{\frac{n}{4}}}\exp\left(-\frac{\dist^2_{t_0}(x,\spt)}{2C(t_0-t)}\right)\right)
\\
&\leq&\frac{C}{(t_0-t)^{\frac{n}{2}+1}}\exp\left(-\frac{\dist^2_{t_0}(x,\spt)}{2C(t_0-t)}\right) 
\end{eqnarray*}
for all $(x,t)\in M\setminus B_{t_0}(\spt, A)\times [t_0-\varepsilon,t_0)$. Integrating the above inequality, we have
\begin{eqnarray*}
\int_{M\setminus B_{t_0}(\spt, A)}\frac{|\nabla u|^2}{u}dg_t&\leq& \int_{M\setminus B_{t_0}(\spt, A)}\frac{C}{(t_0-t)^{\frac{n}{2}+1}}\exp\left(-\frac{\dist^2_{t_0}(x,\spt)}{2C(t_0-t)}\right) dg_t(x)
\\
&\leq&\sqrt{t_0-t}\int_{M\setminus B_{t_0}(\spt, A)}\frac{C\dist^3_{t_0}(x,\spt)}{(t_0-t)^{\frac{n}{2}+\frac{3}{2}}}\exp\left(-\frac{\dist^2_{t_0}(x,\spt)}{2C(t_0-t)}\right)dg_t(x)
\\
&\leq&\sqrt{t_0-t}\int_{M\setminus B_{t_0}(x_0, A)}\frac{\dist^3_{t_0}(x,x_0)}{(t_0-t)^{\frac{3}{2}}}\cdot\frac{C}{(t_0-t)^{\frac{n}{2}}}\exp\left(-\frac{\dist^2_{t_0}(x,x_0)}{4C(t_0-t)}\right)dg_t(x)
\\
&\leq& C\sqrt{t_0-t}\quad\text{ for all }\quad t\in[t_0-\varepsilon,t_0).
\end{eqnarray*}
This finishes the proof.
\end{proof}

\begin{Proposition}\label{grad u l2}
We have
\begin{eqnarray*}
\lim_{t\rightarrow t_0-}\int_M\frac{|\nabla u|^2}{u}(\cdot,t)dg_{t}=\int_M\frac{|\nabla u_0|^2}{u_0}dg_{t_0}.
\end{eqnarray*}
\end{Proposition}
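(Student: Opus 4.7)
The plan is to sandwich $\int_M \frac{|\nabla u|^2}{u}\, dg_t$ between matching $\liminf$ and $\limsup$ bounds as $t \to t_0^-$, by pairing Fatou's lemma with Perelman's monotonicity formula.

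First I would handle the easy auxiliary term: $\int_M R u\, dg_t \to \int_M R u_0\, dg_{t_0}$. This follows verbatim from the template of Proposition \ref{ulogunonsense} by splitting $M = B_{t_0}(\spt, A) \cup (M\setminus B_{t_0}(\spt, A))$ for $A > 2D_0$. On the compact piece, $u(\cdot,t)\to u_0$ uniformly and $R$ is bounded, so standard dominated convergence applies; on the complement the bound \eqref{quadratic_nonsense} and boundedness of $R$ give an exponentially small remainder. Combined with Proposition \ref{ulogunonsense} and $T-t \to \tau_0$, all terms of $\mathcal{W}(g(t), u(\cdot,t), T-t)$ other than $(T-t)\int \frac{|\nabla u|^2}{u}dg_t$ have known finite limits as $t\to t_0^-$.

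Next, for the $\liminf$ bound I would invoke Fatou's lemma. By standard parabolic regularity, $u(\cdot, t) \to u_0$ in $C^\infty_{\mathrm{loc}}(M)$, so $\frac{|\nabla u|^2}{u}(\cdot,t) \to \frac{|\nabla u_0|^2}{u_0}$ pointwise on $\{u_0 > 0\}$. Since $\psi := \sqrt{u_0} \in C_0^\infty(M)$ with $\psi \geq 0$, every zero of $\psi$ is a critical point, hence $\nabla u_0 = 2\psi\nabla\psi = 0$ on $\{u_0=0\}$, and the natural extension $|\nabla u_0|^2/u_0 := 4|\nabla\psi|^2$ is continuous and bounded on $M$, vanishing where $\psi = 0$. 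The bounded-geometry hypothesis gives $dg_t = (1 + O(|t-t_0|))\,dg_{t_0}$ uniformly on $M$, so after passing to the reference measure $dg_{t_0}$ Fatou yields
\[
    \liminf_{t \to t_0^-} \int_M \frac{|\nabla u|^2}{u}\, dg_t \geq \int_M \frac{|\nabla u_0|^2}{u_0}\, dg_{t_0}.
\]

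For the matching $\limsup$ bound I would invoke Perelman's monotonicity formula stated in the lemma preceding Theorem \ref{backwardlimit}: $t \mapsto \mathcal{W}(g(t), u(\cdot,t), T-t)$ is non-decreasing on $(-\infty, t_0)$. The main obstacle is extending this monotonicity up to the endpoint $t_0$ so as to conclude $\mathcal{W}(g(t), u(\cdot,t), T-t) \leq \mathcal{W}(g(t_0), u_0, \tau_0)$ for every $t < t_0$. Although $f = -\log u - \tfrac{n}{2}\log(4\pi(T-t))$ blows up on $\{u_0 = 0\}$ as $t \to t_0^-$, the identity $\nabla^2 f = -\frac{2}{\psi}\bigl(\nabla^2\psi - \psi^{-1}\nabla\psi\otimes\nabla\psi\bigr)$ coming from $u = \psi^2$ makes the integrand $|\Ric + \nabla^2 f - \tfrac{1}{2(T-t)}g|^2 u$ of the monotonicity formula extend as a bounded measurable function up to $t_0$ (at zeros of $\psi$, both $\nabla\psi$ and $\psi$ vanish smoothly and the ratio $\nabla\psi\otimes\nabla\psi/\psi$ is locally bounded). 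Applying the smooth monotonicity formula on $[t, t_0 - \varepsilon]$ and taking $\varepsilon \to 0^+$ via monotone convergence on the right-hand side delivers the extended inequality. Writing
\[
    \mathcal{W}(g(t), u, T-t) = (T-t)\int \tfrac{|\nabla u|^2}{u}\, dg_t + (T-t)\int Ru\, dg_t - \int u\log u\, dg_t - \tfrac{n}{2}\log(4\pi(T-t)) - n,
\]
taking $\limsup$, and cancelling the auxiliary terms with their $t_0$ values, I obtain $\tau_0 \limsup \int \frac{|\nabla u|^2}{u}\, dg_t \leq \tau_0 \int \frac{|\nabla u_0|^2}{u_0}\, dg_{t_0}$. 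Combining with the Fatou lower bound gives the desired equality. The hard technical step is justifying the endpoint extension of the monotonicity formula; everything else reduces to routine convergence arguments that parallel Proposition \ref{ulogunonsense}.
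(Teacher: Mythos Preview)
Your Fatou argument for the lower bound is fine, but the $\limsup$ half has a genuine circularity. You want to derive $\bW(g(t),u,T-t)\le \bW(g(t_0),u_0,\tau_0)$ by ``extending the monotonicity to $t_0$'', yet writing
\[
\bW(t_0-\varepsilon)-\bW(t)=\int_t^{t_0-\varepsilon}2(T-s)\int_M\left|\Ric+\nabla^2 f-\tfrac{1}{2(T-s)}g\right|^2 u\,dg_s\,ds
\]
and letting $\varepsilon\to0$ by monotone convergence only tells you that $\lim_{s\to t_0^-}\bW(s)$ exists; it does \emph{not} identify that limit with $\bW(g(t_0),u_0,\tau_0)$. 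That identification is exactly Theorem~\ref{forwardlimit}, which in the paper is \emph{deduced from} Proposition~\ref{grad u l2}. Since you have already shown that all the remaining terms in $\bW$ converge, the missing inequality $\lim_{s\to t_0^-}\bW(s)\le\bW(t_0)$ is equivalent to the very $\limsup$ bound on $\int|\nabla u|^2/u$ that you are trying to prove. Your boundedness claim for the integrand does not rescue this: the identity $u_0=\psi^2$ with $\psi\in C_0^\infty$ is only available at the single time $t_0$; for $t<t_0$ the solution $u(\cdot,t)$ is strictly positive everywhere with no such factorization, and $|\nabla^2 f|^2u$ contains a term $|\nabla^2 u|^2/u$ whose uniform boundedness near $\{u_0=0\}\times\{t_0\}$ is not established by your argument.

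The paper avoids this trap by a direct dominated-convergence argument. First, citing \cite[Lemma 4.3]{Wang18} it proves $|\nabla u|^2\le Cu$ on $M\times[t_0-\varepsilon,t_0)$, so that $\tfrac{|\nabla u|^2}{u}$ is \emph{uniformly} bounded. Second, it proves pointwise convergence $\tfrac{|\nabla u|^2}{u}(x,t)\to\tfrac{|\nabla u_0|^2}{u_0}(x)$ everywhere, the nontrivial case $u_0(x)=0$ being handled by the quartic vanishing $u_0(y)\le C\dist_{t_0}^4(x,y)$ (from $\sqrt{u_0}\in C_0^\infty$), which after integration against the heat kernel gives $u(x,t)\le C(t_0-t)^2$ and hence, via \eqref{EKNT_gradient}, $\tfrac{|\nabla u|^2}{u}(x,t)\le C(t_0-t)^{1/2}\to0$. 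Dominated convergence on a large ball combined with Lemma~\ref{someothergradientl2nonsense} for the exterior then finishes the proof. If you want to salvage your strategy, the uniform bound $|\nabla u|^2\le Cu$ is the key analytic input you are missing.
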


\begin{proof}
Since, by our assumption, $\sqrt{u_0}\in C_0^\infty(M)$, we have
\begin{eqnarray*}
|\nabla u_0|^2\leq C u_0.
\end{eqnarray*}
Then, arguing in the same way as \cite[Lemma 4.3]{Wang18}, we have that, fixing any $\varepsilon\in(0,1)$, there is a constant $C$, such that
\begin{eqnarray*}
|\nabla u|^2\leq C u \quad \text{ on }\quad M\times [t_0-\varepsilon,t_0).
\end{eqnarray*}
In view of Lemma \ref{someothergradientl2nonsense}, if we can prove
\begin{eqnarray}\label{grad u pw convergence}
\frac{|\nabla u|^2}{u}(\cdot,t)\rightarrow \frac{|\nabla u_0|^2}{u_0}\quad \text{ pointwise on $M$ as } t\rightarrow t_0-,
\end{eqnarray}
then the current proposition follows from the bounded convergence theorem. We shall then prove (\ref{grad u pw convergence}) below.

Obviously, (\ref{grad u pw convergence}) is true on $\{u_0>0\}$, since $u\rightarrow u_0$ and $|\nabla u|^2\rightarrow |\nabla u_0|^2$ uniformly, because of their evolution equations, and because of Lemma \ref{coarsebound}. We shall then consider a point $x\in M$ such that $u_0(x)=0$. Since $\sqrt{u_0}\in C_0^\infty(M)$ is a nonnegative smooth function, we have that $|\nabla\sqrt{u_0}|(x)=0$ and $|\nabla^2\sqrt{u_0}|\leq C$ everwhere on $M$. It then follows that
\begin{eqnarray*}
|\nabla \sqrt{u_0}|(y)\leq C\dist_{t_0}(x,y),\quad \sqrt{u_0}(y)\leq C\dist^2_{t_0}(x,y),\quad \text{ for all }\quad y\in M,
\end{eqnarray*}
in other words,
\begin{eqnarray}\label{u_0 growth}
u_0(y)\leq C \dist^4_{t_0}(x,y)\quad \text{ for all }\quad y\in M.
\end{eqnarray}
Hence, for any $t\in [t_0-1,t_0)$, we may compute using (\ref{u_0 growth})
\begin{eqnarray*}
u(x,t)&=&\int_M K(y,t_0\,|\,x,t)u_0(y)dg_{t_0}(y)
\\
&\leq& \int_M C\dist^4_{t_0}(x,y)\cdot\frac{C}{(t_0-t)^{\frac{n}{2}}}\exp\left(-\frac{\dist^2_{t_0}(x,y)}{C(t_0-t)}\right)dg_{t_0}(y)
\\
&\leq&(t_0-t)^2\int_M\left(\frac{\dist_{t_0}(x,y)}{\sqrt{t_0-t}}\right)^4\cdot\frac{C}{(t_0-t)^{\frac{n}{2}}}\exp\left(-\frac{\dist^2_{t_0}(x,y)}{C(t_0-t)}\right)dg_{t_0}(y)
\\
&\leq& C(t_0-t)^2.
\end{eqnarray*}
Applying the above result to (\ref{EKNT_gradient}), we have that if $t_0-t$ is small is small enough, then
\begin{eqnarray*}
\frac{|\nabla u|^2}{u}(x,t)&\leq& \frac{C}{t_0-t}(u+u\log^2 u)\leq \frac{C}{t_0-t}(u+u^{\frac{3}{4}})
\\
&\leq&\frac{C}{t_0-t}\big((t_0-t)^2+(t_0-t)^{\frac{3}{2}}\big)\leq C(t_0-t)^{\frac{1}{2}}
\\
&\rightarrow &0=\frac{|\nabla u_0|^2}{u_0}(x).
\end{eqnarray*}
This finishes the proof.
\end{proof}

\begin{proof}[Proof of Theorem \ref{forwardlimit}]
This theorem is now but a consequence of Proposition \ref{ulogunonsense}, Proposition \ref{grad u l2}, and the definition of $\bW(g(t),u(\cdot,t),T-t)$.
\end{proof}

\begin{proof}[Proof of Theorem \ref{nu-functional}]
Combining Theorem \ref{backwardlimit} and Theorem \ref{forwardlimit}, we have that, for any $t_0\in(-\infty,0]$, for any $\tau_0>0$, and for any $u_0:M\rightarrow\mathbb{R}$ satisfying $u_0\geq 0$, $\sqrt{u_0}\in C_0^\infty(M)$, and $\int_M u_0dg_0=1$, it holds that $$\bW(g(t_0),u_0,\tau_0)\geq \mu_\infty.$$ Hence we have $$\inf_{t\leq 0}\nu(g(t))\geq \mu_\infty.$$

To see that the equality in (\ref{nu_nonsense_00}) holds, recall that $(M,g_i(-1))\rightarrow (M_\infty,g_\infty(-1))$ in the smooth Cheeger-Gromov sense, and that the $\mu$-functional is upper semi-continuous with respect to the Cheeger-Gromov convergence (see Lemma 6.28 in \cite{RFV1}; this fact can be easily observed by taking an arbitrary compactly supported function on the limit, and pull it back to the sequence using the defining diffeomorphisms of the Cheeger-Gromov convergence), and these two facts imply
\begin{eqnarray*}
\limsup_{i\rightarrow\infty}\mu(g(-\tau_i),\tau_i)=\limsup_{i\rightarrow\infty}\mu(g_i(-1),1)\leq \mu(g_\infty(-1),1)=\mu_\infty.
\end{eqnarray*}
This then finishes the proof.
\end{proof}

\section{Synthesis with the classical cases}

In this section, we consider some classical cases in which Perelman's asymptotic shrinker exists. We shall see that, in all these cases, Assumption B is satisfied. Hence, Perelman's asymptotic shrinker is identical to Bamler's tangent flow at infinity. We consider an ancient solution $(M,g(t))_{t\in(-\infty,0]}$ with bounded curvature within each compact time interval, satisfying \emph{either one} of the following conditions:
\begin{enumerate}[(1)]
    \item $g(t)$ satisfies a Type I curvature bound, that is, there is a constant $C$ such that $$|\Rm_{g(t)}|\leq\frac{C}{|t|}\quad\text{ for all }\quad t\in(-\infty,0).$$
    \item $g(t)$ satisfies Hamilton's trace Harnack, that is, $$\frac{\partial R}{\partial t}+2\langle X,\nabla R\rangle+2\Ric(X,X)\geq 0\quad\text{ for all vector field } X,$$
    and there is a constant $C$ such that
   $$|\Rm|\leq CR\quad\text{ everywhere on }\quad M\times(-\infty,0].$$
\end{enumerate}

Except for the assumption above, we would also like to impose a $\kappa$-noncollapsing assumption. This is actually equivalent to
\begin{eqnarray*}
\mathcal{N}_{p_0,t_0}(\tau)\geq-Y\quad\text{ for all }\quad \tau>0,
\end{eqnarray*}
where $(p_0,t_0)$ is an arbitrarily fixed point on $M\times(-\infty,0]$. 

\begin{Theorem}
Under the assumptions of this section, Perelman's asymptotic shrinker is identical to Bamler's tangent flow at infinity.
\end{Theorem}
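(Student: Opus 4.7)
The plan is to verify that Assumption B holds under each of the two hypotheses, so that the conclusion follows immediately from Theorem \ref{Theorem_main}(3). Fix the base point $(p_0,0)$, a sequence $\tau_i \nearrow \infty$, and, for each $i$, an $\ell$-center $(p_i,-\tau_i)$ of $(p_0,0)$. It then suffices to produce, along a subsequence, a smooth complete Cheeger--Gromov--Hamilton limit of the rescaled flows $(M, g_i(t), p_i)_{t \in [-2,-1]}$, where $g_i(t) := \tau_i^{-1} g(\tau_i t)$.

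Under hypothesis (1) the Type I curvature bound is scale-invariant, so $|\Rm_{g_i}| \le C/|t|$ uniformly on $M \times [-2,-1]$. Combined with the $\kappa$-noncollapsing assumption, which on the rescaled flows gives a uniform lower bound on the injectivity radius of $g_i(-1)$ at $p_i$, Hamilton's compactness theorem produces the required smooth limit after passing to a subsequence; this verifies Assumption B.

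Under hypothesis (2) the argument goes through Perelman's reduced distance. From $\ell_{p_0,0}(p_i,\tau_i) \le n/2$ and the $\mathcal{L}$-length formula---combined with Hamilton's trace Harnack to control the integrand in $R$ along the minimal $\mathcal{L}$-geodesic---one first obtains $R(p_i,-\tau_i) \le C(n)/\tau_i$; the hypothesis $|\Rm| \le CR$ then promotes this to $|\Rm|(p_i,-\tau_i) \le C/\tau_i$. The trace Harnack next yields a comparability of $R$ on shrinking parabolic neighborhoods of $(p_i,-\tau_i)$, which, together with $|\Rm| \le CR$ and the standard distance-distortion estimates driven by the Ricci bound, spreads the pointwise Type I bound to a locally uniform Type I bound around $(p_i,-\tau_i)$ after the parabolic rescaling by $\tau_i$. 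The $\kappa$-noncollapsing supplies the injectivity radius lower bound at $p_i$, and Hamilton's compactness theorem delivers the smooth limit required by Assumption B.

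The main technical step is this last propagation of the Type I bound from the $\ell$-center to a neighborhood in case (2); this is essentially Perelman's compactness argument for $\kappa$-solutions adapted to the weaker present setting, where only $|\Rm| \le CR$ is assumed rather than nonnegativity of the curvature operator. With Assumption B verified in both cases, Theorem \ref{Theorem_main}(3) identifies Perelman's asymptotic shrinker with Bamler's tangent flow at infinity, completing the proof.
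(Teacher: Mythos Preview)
Your proposal is correct. The paper takes a more unified route: rather than splitting into cases, it invokes (citing Perelman and Naber) the single estimate
\[
|\nabla \ell_i|^2 + R_i \le \frac{C\ell_i}{|t|},
\]
valid under either hypothesis. Combined with the standard identity $-2\partial_t\ell_i + |\nabla\ell_i|^2 - R_i + \ell_i/|t| = 0$, this gives $|\partial_t\ell_i|\le C\ell_i/|t|$; integrating from $\ell_i(p_i,1)\le n/2$ yields $\ell_i(p_i,|t|)\le C(\varepsilon)$ on every interval $[-\varepsilon^{-1},-\varepsilon]$, and the gradient bound then produces quadratic growth of $\ell_i$ around $p_i$ and hence the required local curvature bounds. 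This establishes a strengthened Assumption~B on every compact subinterval of $(-\infty,0)$, so that by the arguments of sections~5--7 the tangent flow is identified with the shrinker on the full time axis, not only on $[-2,-1]$ as Theorem~\ref{Theorem_main}(3) alone would give. Your case~(1) argument is more direct and avoids any $\ell$-analysis; your case~(2) sketch is essentially a rederivation of the paper's $\ell$-estimate in the spirit of Perelman's $\kappa$-solution compactness argument, and the propagation step you describe via the Harnack inequality and distance-distortion is exactly what the gradient bound on $\ell$ delivers in one stroke.
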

\begin{proof}
Let us fixed a point $p_0\in M$ and a sequence $\tau_i\nearrow\infty$. Let $\ell$ be the reduced distance based at $(p_0,0)$ and let $\{(p_i,-\tau_i)\}_{i=1}^\infty$ be a sequence of $\ell$-centers of $(p_0,0)$. We will still use $\ell_i(\cdot,\tau):=\ell_i(\cdot,\tau_i\tau)$ and $g_i(t):=\tau_i^{-1}g(\tau_it)$ to denote the scaled reduced distances and the scaled Ricci flows. Note that $\ell_i$ is the reduced distance based at $(p_0,0)$ with respect to the flow $g_i$. Recall that by \cite{Per02} and \cite{N10}, the following formulas hold everywhere on $M\times(-\infty,0)$ in the barrier sense or in the sense of distribution
\begin{align}\label{nonsense7_1_0}
\big|\nabla\ell_i(\cdot,|t|)\big|^2+R_i(\cdot,t)\leq\frac{C\ell_i(\cdot,|t|)}{|t|},&
\\\label{nonsense7_1_1}
-2\frac{\partial\ell_i(\cdot,|t|)}{\partial t}+\big|\nabla\ell_i(\cdot,|t|)\big|^2-R_i(\cdot,t)+\frac{\ell_i(\cdot,|t|)}{|t|}=0.&
\end{align}
Combining (\ref{nonsense7_1_0}) and (\ref{nonsense7_1_1}), we have
\begin{eqnarray}\label{nonsense7_1_2}
\left|\frac{\partial\ell_i(\cdot,|t|)}{\partial t}\right|\leq\frac{C\ell_i(\cdot,|t|)}{|t|}.
\end{eqnarray}
Since $\ell_i(p_i,1)\leq\frac{n}{2}$, we may integrate (\ref{nonsense7_1_2}) and obtain that, for any $\varepsilon\in(0,1)$, there is a constant $C=C(\varepsilon)$, such that
\begin{eqnarray*}
\ell_i(p_i,|t|)\leq C\quad\text{ for all }\text t\in[-\varepsilon^{-1},-\varepsilon].
\end{eqnarray*}
It then follows from (\ref{nonsense7_1_0}) that for $t\in[-\varepsilon^{-1},-\varepsilon]$, all the functions $\ell_i(\cdot,|t|)$ have uniformly quadratic growth bounds around $p_i$, and all curvatures $|\Rm_{g_{i,t}}|$ have uniform growth bounds around $p_i$ (indeed, uniform upper bound in the Type I case). Hence, $(M,g(t))_{t\in(-\infty,0]}$ satisfies a stronger version of Assumption B, with the $[-2,-1]$ time interval in formula (\ref{smoothconvergence}) replaced by $[-\varepsilon^{-1},-\varepsilon]$ for any $\varepsilon\in(0,1)$, and we will let $(M_\infty,g_\infty(t),\ell_\infty)_{t\in(-\infty,0)}$ denote the asymptotic shrinker as given by Proposition \ref{shrinkerstructure}.

Let $(\nu^i_s)_{s\in(-\infty,0]}$ be the conjugate heat kernel based at $(p_0,0)$ on the Ricci flow $g_i$, then we have
\begin{eqnarray*}
\big((M,g_i(t))_{t\in(-\infty,0]},(\nu^i_s)_{s\in(-\infty,0]}\big)\xrightarrow{\makebox[1cm]{$\mathbb{F}$}}\mathcal{X},
\end{eqnarray*}
where $\mathcal{X}$ is a conjugate flow pair. By the arguments in sections 5---7, we have that, for any $\varepsilon\in(0,1)$, it holds that
\begin{eqnarray*}
\mathcal{X}_{[-\varepsilon^{-1},-\varepsilon)}=\big((M_\infty,g_\infty(t))_{t\in[-\varepsilon^{-1},-\varepsilon)},(\nu^\infty_s)_{s\in[-\varepsilon^{-1},-\varepsilon)}\big),
\end{eqnarray*}
where $(\nu^\infty_s)_{s\in(-\infty,0)}$ is a conjugate heat flow on $(M_\infty,g_\infty)$ made of a shrinker potential. This then finishes the proof.
\end{proof}

\section{Ancient Ricci flows with bounded entropy and smooth tangent flows}

In this section we prove Theorem \ref{Thm_main_reciprocal}. We would like to emphasize again that the arguments in this section are true only if the results in \cite{Bam20c} (especially Theorem 1.6) are true for noncomapct Ricci flows with bounded geometry within each compact time interval (or if the ancient solution in question is on a closed manifold).

Let $(M,g(t))_{t\in(-\infty,0]}$ be an ancient solution with bounded geometry within each compact time interval. Let $p_0\in M$ be a fixed point and let $Y>0$ be a constant such that
\begin{eqnarray}\label{bddentropychpt10}
\mathcal{N}_{p_0,0}(\tau)>-Y\quad\text{ for all }\quad \tau>0.
\end{eqnarray}
Let $\{\tau_i\}_{i=1}^\infty$ be a sequence of positive numbers such that $\tau_i\nearrow\infty$, and we shall denote
\begin{eqnarray*}
g_i(t):= \tau_i^{-1}g(\tau_it),\quad \nu^i_t:=\nu_{p_0,0\,|\,\tau_it},\quad\text{ for all }\quad t<0.
\end{eqnarray*}
Let $\big((M_\infty,g_\infty(t))_{t\in(-\infty,0)},(\nu^\infty_s)_{s\in(-\infty,0)}\big)$ be the smooth tangent flow mentioned in the statement of the theorem, that is,
\begin{eqnarray*}
\big((M,g_i(t))_{t\in(-\infty,0]},(\nu^i_s)_{s\in(-\infty,0]}\big)\xrightarrow{\makebox[1cm]{$\mathbb{F}$}}\big((M_\infty,g_\infty(t))_{t\in(-\infty,0)},(\nu^\infty_s)_{s\in(-\infty,0)}\big).
\end{eqnarray*}

Since $(M_\infty,g_\infty(t))$ is smooth, we have, by \cite[Theorem 1.6]{Bam20c}, that the above convergence is smooth. Precisely, this means that one can find an increasing open sets in space-time $U_1\subset U_2\subset\cdots\subset M_\infty\times(-\infty,0)$ with $\cup_{i=1}^\infty U_i=M_\infty\times(-\infty,0)$, open subsets $V_i\subset M\times(-\infty,0)$, time-preserving diffeomorphisms $\psi_i:U_i\rightarrow V_i$, and a sequence $\varepsilon_i\searrow 0$, such that \cite[Theorem 9.31]{Bam20b} holds.

Let us then fix a point $x_\infty\in M_\infty$ and a positive radius $D$ (which could be very large), such that
$$\nu^\infty_{-1}\big(B_{g_{\infty,-1}}(x_\infty,D)\big)\geq \frac{1}{2}.$$
Note that this is possible since $\nu^\infty_{-1}$ is a probability measure. Since $\cup_{i=1}^\infty U_i=M_\infty\times(-\infty,0)$, we have that $$B_{g_{\infty,-1}}(x_\infty, D)\times\{-1\}\subset U_i\quad\text{ and } \quad \nu^i_{-1}\Big(\psi_{i,-1}\big(B_{g_{\infty,-1}}(x_\infty,D)\big)\Big)\geq\frac{1}{3}$$
for all $i$ large enough. Here (and below) $\psi_{i,-1}$ denotes the $t=-1$ time slice of $\psi_i$. Since $\psi_i$ is almost isometry, we can find $D_0\geq D$, such that
\begin{eqnarray*}
\nu^i_{-1}\big(B_{g_{i,-1}}(\psi_{i,-1}(x_\infty),D_0)\big)\geq \frac{1}{3}\quad\text{ for all $i$ large enough.} 
\end{eqnarray*}

Hence, by Proposition \ref{measureaccumulationofHcenter}, letting $(z_i,-1)$ be an $H_n$-center of $(p_0,0)$ with respect to the Ricci flow $g_i(t)$, we have
\begin{eqnarray}\label{10nonsense00001}
\dist_{g_{i,-1}}\big(z_i,\psi_{i,-1}(x_\infty)\big)\leq D_0+\sqrt{3AH_n}\quad \text{ for all $i$ large enough}.
\end{eqnarray}
By Proposition \ref{H_n_l_n} and (\ref{bddentropychpt10}), we can find a constant $C$ depending only on $Y$, such that
\begin{eqnarray}\label{10nonsense00002}
\dist_{g_{i,-1}}(z_i,p_i)\leq C\quad \text{ for all $i$ large enough},
\end{eqnarray}
where $(p_i,-1)$ is an $\ell$-center of $(p_0,0)$ with respect to the Ricci flow $g_i(t)$.

Combining (\ref{10nonsense00001}) and (\ref{10nonsense00002}), we have 
\begin{eqnarray*}
\dist_{g_{i,-1}}\big(\psi_{i,-1}(x_\infty),p_i\big)\leq C\quad \text{ for all $i$ large enough}.
\end{eqnarray*}
Arguing in the same way as in the proof of Proposition \ref{LUTypeI}, we have that $(M,g(t))$ is locally uniformly Type I along $(p_i,-\tau_i)$, and the smooth Cheeger-Gromov-Hamilton limit of $\{(M,g_i(t),p_i)_{t\in[-2,-1]}\}_{i=1}^\infty$ is an asymptotic shrinker in the sense of Perelman. Obviously, this limit must be $(M_\infty,g_\infty(t))_{t\in[-2,-1]}$.

Next, we shall show that $\nu^\infty_t$ is a conjugate heat flow made of a shrinker potential. Because of (\ref{bddentropychpt10}), \cite[Proposition 6.1]{Bam20c} (see Proposition \ref{almostselfsimilar} above) implies that, there is a sequence $\delta_i\searrow 0$, such that
\begin{eqnarray}\label{selfsimilar}
\int_{-\delta_i^{-1}}^{-\delta_i}\int_M|t|\left|\,\Ric_i+\nabla^2f_i-\frac{1}{2|t|}g_i\,\right|^2d\nu^i_tdt<\delta_i,
\end{eqnarray}
where $$d\nu^i_t:=(4\pi|t|)^{-\frac{n}{2}}e^{-f_i}dg_{i,t}.$$
Since, by \cite[Theorem 9.31]{Bam20b}, the conjugate heat kernel and the Ricci flow converge smoothly along the sequence, we may take a limit for (\ref{selfsimilar}) and apply Fatou's lemma to obtain
\begin{eqnarray*}
\int_{-\delta^{-1}}^{-\delta}\int_M|t|\left|\Ric_\infty+\nabla^2f_\infty-\frac{1}{2|t|}g_\infty\right|^2d\nu^\infty_tdt=0, \quad\text{ for any }\quad \delta\in(0,1),
\end{eqnarray*}
where $$d\nu^\infty_t:=(4\pi|t|)^{-\frac{n}{2}}e^{-f_\infty}dg_{\infty,t}.$$
This finishes the proof Theorem \ref{Thm_main_reciprocal}.

\def \MM {\mathcal{M}}

\appendix 
\appendixpage

\section{Logarithmic Sobolev inequalities for noncompact Ricci flows}
We prove Proposition \ref{Hein-Naber-log-Sobolev}, a slightly stronger version of Hein-Naber's \cite{HN14} logarithmic Sobolev inequalities and Poincar\'e inequalities. Let $(M^n,g(t))_{t\in [-T,0]}$ be a complete Ricci flow with bounded curvature. The boundedness of curvature will be the only assumption which we make in the proof. We shall prove the logarithmic Sobolev inequality (\ref{ineq: log-Sobolev}); the Poincar\'e inequality (\ref{ineq: poincare}) is similar (indeed, much easier), and the details of its proof is left to the readers.

Let $K(\cdot,\cdot\,|\,\cdot,\cdot)$ be the minimal heat kernel coupled with $(M,g(t))$ as introduced in section 2.1. Let $o\in M$ be a fixed point and $\nu_t:=\nu_{o,0\,|\,t}$ be the conjugate heat kernel based at $(o,0)$, where $d\nu_{x,t\,|\,s}:=K(x,t\,|\,\cdot,s)dg_s$ for $-T\leq s\leq t\leq 0$ and $x\in M$. By a shifting of time and a parabolic scaling, it suffices to show that
\begin{eqnarray}
    \int_M v \log v\, d\nu_{-1}
    \le \int_M \frac{|\nabla v|^2}{v}\, d\nu_{-1},
\end{eqnarray}
for any function $v$ satisfying $v\ge 0$, $\sqrt{v}\in C_0^{\infty}(M)$, and $\displaystyle \int_M v\,d\nu_{-1}=1$.
Now fix such a function $v$ and let $u: M\times[-1,0]\rightarrow\mathbb{R}$ be a slolution to
\[
    \Box u = 0\quad \text{ on }\quad M\times [-1,0],
    \quad u_{-1}=v,
\]
in other words, $\displaystyle u(x,t)=\int_MK(x,t\,|\,\cdot,-1)vdg_{-1}$ for all $(x,t)\in M\times[-1,0]$, where we have denoted $u_t:=u(\cdot,t)$. 

We shall regard $\bar g = g(0)$ as the fixed background metric on $M$. Then, obviously, we have the following from our assumptions:
\begin{enumerate}[(\text{a}1)]
    \item There is a positive constant $\Lambda$, such that $$ \sup_{M\times [-1,0]}|{\Rm}_{g_t}|\le \Lambda. $$
    \item There is a positive constant $C_0$ depending on $\Lambda$, such that $$
    C_0^{-1} \bar g \le g(t) \le C_0 \bar g\quad\text{ for all }\quad t\in [-1,0].$$
    \item There is a positive constant $D_0>1$ depending on the function $v$, such that $${\rm spt}\, v\subset B_{\bar g}(o,D_0),\quad
\sup_M v \le D_0.$$
    \item There is a positive constant $c_0$ depending on $D_0$, such that $$\inf_{x\in B_{\bar g}(o,2D_0)}{\rm Vol}_{\bar g}(B_{\bar g}(x,1)) \ge c_0.$$
    \item There are positive constants $r_0$ and $c_1$, and a point $x_0\in\spt v$, such that
    $$B_{\bar g}(x_0,r_0)\subset \spt v\quad\text{ and }\quad \min_{B_{\bar g}(x_0,r_0)} v \ge c_1.$$
\end{enumerate}
Here (a5) is true because $\displaystyle\int_M v\, d\nu_{-1}=1>0$.

\begin{Lemma}
\label{lem: decay on u}
For all $(x,t)\in (M\setminus B_{\bar g}(o,2D_0))\times (-1,0],$ we have
\[
    \frac{1}{C_1(1+t)^{n/2}}
    \exp\left\{
        - C_1\frac{\dist_{\bar g}^2(x,o)}{(1+t)}
    \right\}
    \le
    u(x,t) 
    \le \frac{C_2}{(1+t)^{n/2}}
    \exp\left\{
        - \frac{\dist_{\bar g}^2(x,o)}{C_2(1+t)}
    \right\},
\]
where
$C_1$ depends on $C_0$, $\Lambda$, $c_0$, $c_1$, $r_0$, and $D_0$;
$C_2$ depends on $C_0$, $\Lambda$, $D_0,$ and $c_0.$
\end{Lemma}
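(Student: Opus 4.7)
The plan is to combine the explicit representation
\begin{equation*}
    u(x,t)=\int_{\spt v} K(x,t\,|\,y,-1)\,v(y)\,dg_{-1}(y)
\end{equation*}
with the classical two-sided Gaussian bounds for the heat kernel coupled with a complete Ricci flow of bounded curvature on a compact time interval. By (a1), (a2), and (a4) together with \cite[Theorem 26.25, Theorem 26.31]{RFV3} (already invoked as Proposition \ref{coarsegaussboundthatisuseful} in the body of the paper), there exists a constant $A>0$ depending only on $\Lambda$, $C_0$, $c_0$, and $D_0$ such that for every $y\in M$ and every $t\in(-1,0]$,
\begin{equation*}
\frac{1}{A(1+t)^{n/2}}\exp\left(-\frac{A\,\dist_{\bar g}^2(x,y)}{1+t}\right)\le K(x,t\,|\,y,-1)\le \frac{A}{(1+t)^{n/2}}\exp\left(-\frac{\dist_{\bar g}^2(x,y)}{A(1+t)}\right),
\end{equation*}
where we used (a2) to replace $g_{-1}$-distances by the uniformly equivalent $\bar g$-distance.

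For the upper bound, I would use that $\spt v\subset B_{\bar g}(o,D_0)$ together with the triangle inequality: for any $y\in\spt v$, since $\dist_{\bar g}(x,o)\ge 2D_0$, we have $\dist_{\bar g}(x,y)\ge \dist_{\bar g}(x,o)-D_0\ge \tfrac{1}{2}\dist_{\bar g}(x,o)$. Plugging the Gaussian upper bound on $K$ into the convolution, using $v\le D_0$ by (a3), and bounding $\Vol_{g_{-1}}(B_{\bar g}(o,D_0))$ from above via Bishop--Gromov comparison (which requires only the Ricci lower bound coming from (a1) and the metric equivalence (a2)), I obtain the claimed upper bound with $C_2$ depending only on $C_0$, $\Lambda$, $D_0$, and $c_0$.

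For the lower bound, I would restrict the domain of integration to $B_{\bar g}(x_0,r_0)$, on which $v\ge c_1$ by (a5). For any $y$ in this ball one has $\dist_{\bar g}(x,y)\le \dist_{\bar g}(x,o)+D_0\le \tfrac{3}{2}\dist_{\bar g}(x,o)$, again using $\dist_{\bar g}(x,o)\ge 2D_0$. Inserting the Gaussian lower bound on $K$ and bounding $\Vol_{g_{-1}}(B_{\bar g}(x_0,r_0))$ from below in terms of $\Lambda$, $C_0$, $c_0$, and $r_0$ via Bishop--Gromov applied to $g_{-1}$ (which has $\Ric\ge -(n-1)\Lambda$ by (a1) and inherits noncollapsedness from (a4) via (a2)), yields the claimed lower bound with $C_1$ depending on $C_0$, $\Lambda$, $c_0$, $c_1$, $r_0$, and $D_0$.

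The entire content of the lemma is really contained in the two-sided Gaussian heat kernel estimates for bounded-curvature Ricci flows, which are by now standard; the only task is to track carefully the dependence of the constants, in particular to verify that $C_2$ does not involve $c_1$ or $r_0$ (those parameters enter only to extract a definite portion of mass out of $v$ for the lower bound, not for the upper bound). No serious obstacle is expected.
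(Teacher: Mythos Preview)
Your proposal is correct and follows essentially the same route as the paper: both arguments insert the two-sided Gaussian heat-kernel bounds from \cite[Theorems 26.25 and 26.31, Corollary 26.26]{RFV3} into the convolution formula, use the triangle inequality $\tfrac12\dist_{\bar g}(x,o)\le\dist_{\bar g}(x,y)\le\tfrac32\dist_{\bar g}(x,o)$ for $y\in B_{\bar g}(o,D_0)$, restrict to $B_{\bar g}(x_0,r_0)$ for the lower bound, and control the volume factors via Bishop--Gromov and the local noncollapsing (a4). One small imprecision: your displayed Gaussian bound on $K$ is asserted ``for every $y\in M$'', but (a4) only provides a volume lower bound on balls centered in $B_{\bar g}(o,2D_0)$, so the $(1+t)^{-n/2}$ form of the kernel estimate is only available for such $y$; this is harmless since you only apply it for $y\in\spt v\subset B_{\bar g}(o,D_0)$, exactly as the paper does.
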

\begin{proof}

First, we prove the lower bound. 
By \cite[Theorem 26.31]{RFV3} and the Bishop-Gromov comparison theorem, we have that, for all $(x,t)\in M\times(-1,0]$ and for all $y\in B_{\bar g}(x_0,r_0)$, 
\begin{eqnarray*}
K(x,t\,|\,y,-1)&\geq&c\min\left\{\frac{1}{\Vol_{\bar g} B_{\bar g}\left(x,\sqrt{1+t}\right)},\frac{1}{\Vol_{\bar g} B_{\bar g}\left(y,\sqrt{1+t}\right)}\right\}\exp\left(-\frac{\dist_{\bar g}^2(x,y)}{c(1+t)}\right)
\\
&\geq&\frac{c}{(1+t)^{\frac{n}{2}}}\exp\left(-\frac{\dist_{\bar g}^2(x,y)}{c(1+t)}\right).
\end{eqnarray*}
Note that if $x\notin B_{\bar g}(o,2D_0)$ and $y\in B_{\bar g}(x_0,r_0)\subset B_{\bar g}(o,D_0)$, then
\[
    \dist_{\bar g}(x,y)
    \le \dist_{\bar g}(x,o) + D_0 
    \le \tfrac{3}{2}\dist_{\bar g}(x,o).
\]
Hence, for all $x\notin B_{\bar g}(o,2D_0)$, we have
\begin{align*}
    u(x,t)
    &=\int_M K(x,t\,|\,y,-1)v(y)\, dg_{-1}(y)
    \\
    &\ge C_0^{-1}\int_{B_{\bar g}(x_0,r_0)} K(x,t\,|\,y,-1)v(y) \, d\bar g(y)\\
    &\ge 
    \frac{c_1c}{C_0(1+t)^{n/2}}
    {\rm Vol}_{\bar g}B_{\bar g}(x_0,r_0)
    \exp\left\{- \frac{9\dist_{\bar g }^2(x,o)}{4c(1+t)} \right\}\\
    &\ge
     \frac{1}{C_1(1+t)^{n/2}}
    \exp\left\{
        - C_1\frac{\dist_{\bar g}^2(x,o)}{(1+t)}
    \right\}.
\end{align*}

Next, we prove the upper bound.
By \cite[Corollary 26.26]{RFV3}, for all $x,y\in M$ and $t\in (-1,0]$, we have
\[
    K(x,t\,|\,y,-1)
    \le C\min\left\{\frac{1}{\Vol_{\bar g} B_{\bar g}\left(x,\sqrt{\tfrac{1+t}{2}}\right)},\frac{1}{\Vol_{\bar g} B_{\bar g}\left(y,\sqrt{\tfrac{1+t}{2}}\right)}\right\}\exp\left(-\frac{\dist_{\bar g}^2(x,y)}{C(1+t)}\right),
\]
for some $C$ depending on the curvature bound.
By the Bishop-Gromov comparison theorem, we have, for all $y\in B_{\bar g}(o,D_0)$ and $t\in(-1,0]$,
\begin{align*}
    \Vol_{\bar g}B_{\bar g}\left(y,\sqrt{\tfrac{1+t}{2}}\right)\geq c(1+t)^{\frac{n}{2}}\Vol_{\bar g}B_{\bar g}(y,1)\geq cc_0(1+t)^{\frac{n}{2}},
\end{align*}
where $c$ depends on the Ricci curvature bound of $\bar g$. Hence, for all $x\notin B_{\bar g}(o, 2D_0)$ and $y\in B_{\bar g}(o,D_0)$, we have
\begin{eqnarray}\label{anothernonsensegaussianupperbound}
    K(x,t\,|\, y,-1)\le \frac{C}{(1+t)^{n/2}}
    \exp\left\{-\frac{\dist_{\bar g}^2(x,y)}{C(1+t)}\right\}
    \le 
    \frac{C}{(1+t)^{n/2}}
    \exp\left\{-\frac{\dist_{\bar g}^2(x,o)}{2C(1+t)}\right\}.
\end{eqnarray}
It then follows that
\begin{align*}
    u(x,t)
    &= \int_M K(x,t\,|\,y,-1) v(y)\, dg_{-1}(y)\\
    &\leq C_0\int_{B_{\bar g}(o,D_0)} K(x,t\,|\,y,-1)v(y)\,d\bar g(y)
    \\
    & \le C_0D_0 {\rm Vol}B_{\bar g}(o,D_0) 
     \frac{C}{(1+t)^{n/2}}
    \exp\left\{
        - \frac{\dist_{\bar g}^2(x,o)}{C(1+t)}
    \right\}\\
    &\le \frac{C_2}{(1+t)^{n/2}}
    \exp\left\{
        - \frac{\dist_{\bar g}^2(x,o)}{C_2(1+t)}
    \right\}.
\end{align*}

\end{proof}

Let $\eta:[0,\infty)\rightarrow[0,1]$ be the standard decreasing smooth cutoff function such that $\eta|_{[0,1]}=1$, $\eta|_{[2,\infty)}=0$, and $|\eta'|+|\eta''|\le 10.$
Let us define a time-dependent cutoff function
\[
\phi^A(x,t) := \eta(\dist_t(x,o)/A),
\]
for all $(x,t)\in M\times [-1,0].$
 By the smoothness and completeness of the Ricci flow, we can find a positive number $\rho_0>0$, such that
\[
    \Ric \le (n-1)\rho_0^{-2},\quad
    \text{ on }\quad  \bigcup_{t\in[-1,0]}B_{g_t}(o,\rho_0)\times\{t\}.
\]
According to \cite[8.3]{Per02}, we have
\[
    \Box \dist_t(o,\cdot) \ge 
    - \tfrac{5}{3}(n-1)\rho_0^{-1} \quad\text{ on }\quad M\setminus B_t(o,\rho_0)
\]
for all $t\in[-1,0]$; this differential inequality should be understood in the barrier sense. Since the parameter $A$ in the cut-off function $\phi^A$ will eventually be taken to infinity, we shall, throughout the whole proof, assume that $A\gg \rho_0+2C_0D_0.$
It then follows that
\[
    \Box \phi^A
    = \frac{1}{A}\eta'(\dist_t/A) \Box \dist_t
    - \frac{1}{A^2}\eta''(\dist_t/A)
    \le C_n\left(\frac{1}{\rho_0A} + \frac{1}{A^2}\right),
\]
in the barrier sense.

Let $U_t := u_t\log u_t - D_0\log D_0$ for $t\in[-1,0]$, then we have 
$-D_0\log D_0-1/e \le U_t \le 0$ everywhere on $M\times [-1,0]$, where the latter inequality is because $\max\{u,1\}\leq D_0$ everywhere.
Since $\phi^A(\cdot,t)$ is compactly supported
for all $t\in [-1,0],$ we have
\begin{align*}
    &\partial_t \int U_t \phi^A\, d\nu_t
    = \int \Box (U_t \phi^A)\, d\nu_t \nonumber\\
    =&\ \int \left(- \frac{|\nabla u|^2}{u} \phi^A 
    - 2 \langle \nabla U_t,\nabla \phi^A \rangle
    + U_t \Box \phi^A\right) d\nu_t,
\end{align*}
where we have used the fact $\Box (u\log u)=-\tfrac{|\nabla u|^2}{u}$. Integrating from $-1$ to $0$, we have
\begin{align}
    \int U_{-1}\phi^A\, d\nu_{-1}
    - U_0(o)
    &=  \int_{-1}^0\int \frac{|\nabla u|^2}{u} \phi^A \, d\nu_t dt
    + 2 \int_{-1}^0 \int \langle \nabla U_t,\nabla \phi^A \rangle\, d\nu_tdt
    - \int_{-1}^0 \int U_t \Box\phi^A\, d\nu_tdt \nonumber\\
    &=:\int_{-1}^0\int \frac{|\nabla u|^2}{u} \phi^A \, d\nu_t dt
    + \rom{1} + \rom{2}.
    \label{eq: int of derivatives}
\end{align}

We shall estimate ${\rm I}$ and ${\rm II}$ next. Since $-D_0\log D_0-1/e\le U_t\le 0,$ we have
\[
{\rom{2}}
\le C_n\left(\frac{1}{\rho_0A} + \frac{1}{A^2}\right)\left(D_0\log D_0+\frac{1}{e}\right)\leq\frac{C}{A}.
\]
Note that here we have only one-side bound for ${\rm II}$, yet it is sufficient for our purpose. Now we consider $\rom{1}.$
By the gradient estimate of heat equation \cite{Zhq07, BCP10} (c.f. \cite[Lemma 2.4]{Zhang21}) and the Shi-type estimates, we have 
\begin{gather}\label{QSZhangtypeestimateforHE}
        \frac{|\nabla u|^2}{u^2}
    \le \frac{1}{1+t} \log \frac{D_0}{u} \quad \text{ on }\quad M\times(-1,0],
    \\
    |\nabla u|\leq \Lambda_1, \quad  |\nabla^2 u|\leq \Lambda_2\quad \text{ on }\quad M\times[-1,0],\label{ShiTypeestimateforHE}
\end{gather}
where the derivative estimates follow by the maximum principle and the fact that the derivatives of the initial condition $v 
$ are bounded.
Thus, by (\ref{QSZhangtypeestimateforHE}) we have, for all $t\in(-1,0]$,
\begin{align*}
    \int \langle \nabla U_t,\nabla \phi^A \rangle\, d\nu_t
    &\le \int |\nabla \phi^A|
    |\nabla u| (1+|\log u|)\, d\nu_{t}\\
    & \le \frac{C}{A(1+t)^{1/2}} 
    \int u \left(\log \tfrac{D_0}{u}\right)^{1/2}(1+|\log u|)\, d\nu_t\\
    &\le  \frac{C}{A(1+t)^{1/2}},
\end{align*}
where we have applied the fact $u \left(\log \tfrac{D_0}{u}\right)^{1/2}(1+|\log u|)\leq C(D_0)$, and this is because $u$ is bounded by $D_0$. Hence, we have
\[
    {\rom{1}}
    \le \int_{-1}^0\frac{C}{A(1+t)^{1/2}}dt \le C/A.
\]
Letting $A\to \infty$ in \eqref{eq: int of derivatives}, we have
\[
    \int U_{-1} d\nu_{-1}
    \le U_0(o) + \int_{-1}^0\int \frac{|\nabla u|^2}{u}(x,t) \, d\nu_t(x) dt,
\]
which, by the fact that 
    $\displaystyle u(o,0) = \int u_{-1}\, d\nu_{-1}=1$, $
    U_0(o) = -D_0\log D_0$, 
and by the definition of $U_t$, is equivalent to
\begin{equation}
\label{eq: log sobolev first int by parts}
    \int v\log v \, d\nu_{-1}
    \leq \int_{-1}^0 dt\int_M \frac{|\nabla u|^2}{u}(x,t)\, d\nu_t(x).
\end{equation}

To handle the integrand above,let us fix $t\in (-1,0] $ and $ x\in B_t(o,A/2).$
Integrating by parts, we have
\begin{align*}
   & \partial_s 
    \int \frac{|\nabla u|^2}{u}(y,s)\phi^A(y,s)\, d\nu_{x,t|s}(y)
    = \int \Box\left(\frac{|\nabla u|^2}{u}\phi^A\right)\, d\nu_{x,t|s}\\
=&\ \int \Box\frac{|\nabla u|^2}{u} \phi^A\, d\nu_{x,t|s}
- 2\int \nabla\frac{|\nabla u|^2}{u} \cdot\nabla \phi^A \, d\nu_{x,t|s}
 + \int \frac{|\nabla u|^2}{u} \Box \phi^A \, d\nu_{x,t|s}.
\end{align*}
Fixing some small $\theta\in (0,t)$ and using the fact that $\Box  \frac{|\nabla u|^2}{u}=-\frac{2}{u}\left|\nabla^2u-\frac{\nabla u\otimes\nabla u}{u}\right|^2\le 0$, we have, by integrating the above inequality from $-1+\theta$ to $t$
\begin{align*}
    &\frac{|\nabla u|^2}{u}(x,t)
    - \int \frac{|\nabla u|^2}{u} \phi^A
    \, d\nu_{x,t|-1+\theta}\\
    \le&\  -2 \int_{-1+\theta}^t ds
    \int \nabla\frac{|\nabla u|^2}{u} \cdot\nabla \phi^A \, d\nu_{x,t|s}
 + \int_{-1+\theta}^t ds\int \frac{|\nabla u|^2}{u} \Box \phi^A \, d\nu_{x,t|s}\\
 =:&\ {\rom{3}}(x,t)+{\rom{4}}(x,t).
\end{align*}

To estimate $\rom{3}$, we may apply (\ref{QSZhangtypeestimateforHE}) and (\ref{ShiTypeestimateforHE}) to obtain
\begin{align*}
    \left|
    \nabla \frac{|\nabla u|^2}{u}
    \right|(y,s)
    &\le 2 |\nabla^2u| |\nabla \ln u|
    + |\nabla u||\nabla \ln u|^2\\
    &\le 
    \frac{2\Lambda_2+\Lambda_1}{1+s}
    \left(\left(\log \tfrac{D_0}{u}\right)^{1/2}+\log \tfrac{D_0}{u}
    \right).
\end{align*}
Since $A\gg 2C_0D_0,$ by the lower bound on $u$ in Lemma \ref{lem: decay on u}, we have, for all $s\in [-1+\theta,t)$ and $y\in \spt \nabla_{g_s} \phi^A(\cdot,s)\subset M\setminus B_{g_s}(o, A)\subset M\setminus B_{\bar g}(o, C_0^{-1}A)$, 
\[
     \left|
    \nabla \frac{|\nabla u|^2}{u}
    \right|(y,s)
    \le 
    \frac{C}{\theta} 
    \left(\log\frac{D_0}{C_1} + \frac{n}{2}\log(1+s) 
    +C_1 \frac{d_{\bar g}^2(x,o)}{1+s}
    \right)
    \le C_\theta A^2,
\]
where $C_\theta$ depends on $\theta,C_1,\Lambda$, and $D_0.$ 
Hence, we have
\begin{align*}
    \int_{-1+\theta}^0dt\int_M \left|{\rom{3}}(x,t)\right|\, d\nu_t(x)
    & \le \int_{-1+\theta}^0dt\int_M \int_{-1+\theta}^t
    C_\theta A\nu_{x,t\,|\,s}(M\setminus B_{\bar g}(o,C_0^{-1}A))
    \, dsd\nu_t(x)\\
    &=  \int_{-1+\theta}^0dt\int_{-1+\theta}^t C_\theta A\nu_s(M\setminus B_{\bar g}(o,C_0^{-1}A))\, ds\\
    &\le C_\theta A\exp(- A^2/C_\theta).
\end{align*}
The last inequality above follows from a similar Gaussian upper bound as (\ref{anothernonsensegaussianupperbound}) and the Bishop-Gromov comparison theorem.

For $\rom{4},$ (\ref{QSZhangtypeestimateforHE}) implies that
\[
    \frac{|\nabla u|^2}{u}(x,t)
    \le \frac{1}{1+t} u \log \frac{D_0}{u}
    \le \frac{C}{\theta}.
\]
It follows that
\begin{align*}
    {\rm IV}(x,t)
    &\le \int_{-1+\theta}^t ds
    \int \frac{C}{\theta}C_n\left(\frac{1}{\rho_0A} + \frac{1}{A^2}\right)\, d\nu_{x,t|s}
    \le (t+1-\theta)\frac{C}{\theta\rho_0A}.
\end{align*}
Combining with the estimates above, we have
\begin{align*}
 \int_{-1+\theta}^0dt \int 
\frac{|\nabla u|^2}{u}(x,t)\, d\nu_t(x)
\le & \int_{-1+\theta}^0dt \int d\nu_t(x) \int \frac{|\nabla u|^2}{u} \phi^A
    \, d\nu_{x,t\,|\,-1+\theta}\\
& + \ C_\theta A\exp(- A^2/C_\theta)
+\frac{C}{\theta\rho_0A}.
\end{align*}
Taking $A\to \infty$ and applying the reproduction formula, we have
\[
    \int_{-1+\theta}^0dt \int 
\frac{|\nabla u|^2}{u}(x,t)\, d\nu_t(x)
\le (1-\theta)\int \frac{|\nabla u|^2}{u}  d\nu_{-1+\theta}.
\]
Letting $\theta\to 0$ and combining with \eqref{eq: log sobolev first int by parts},
\[
    \int u\log u \, d\nu_{-1}
    \le \int_{-1}^0 dt\int \frac{|\nabla u|^2}{u}(x,t)\, d\nu_t(x)
    \le \int \frac{|\nabla v|^2}{v}  d\nu_{-1}.
\]
Here we used the fact that
\[
    \lim_{t\rightarrow -1+}\int \frac{|\nabla u|^2}{u}  d\nu_{t}
    =\int \frac{|\nabla v|^2}{v}  d\nu_{-1}.
\]
The proof is parallel to Proposition \ref{grad u l2} and we omit the details.

\section{Convergence of heat kernels.}

Let $\{(M_i,g_i(t),o_i)_{t\in (-T_i,0]}\}_{i=1}^\infty$ be a sequence of complete Ricci flows with base point, and assume that each Ricci flow therein has bounded curvature within every compact time interval. Assume moreover that this sequence converges to $(M_\infty,g_\infty(t),o)_{t\in (-T_\infty,0]}$ in the pointed smooth Cheeger-Gromov-Hamilton sense, where $T_\infty=\limsup_{i\rightarrow\infty}T_i\in(0,\infty]$. Note that we do not make any assumption on the limit $(M_\infty,g_\infty(t),o)_{t\in (-T_\infty,0]}$.

By the definition of smooth convergence, we may find an increasing sequence of pre-compact open sets $U_i\subset M_\infty$ with $\cup_{i=1}^\infty U_i=M_{\infty}$, a sequence of diffeomorphisms $\Psi_i:U_i\to V_i\subset M_i$, and a sequence of positive numbers $\varepsilon_i\searrow 0$, such that
\begin{gather*}
\Psi_i(o)=o_i,
\\
    \|\Psi_i^*g_i - g_\infty\|_{C^{[\varepsilon_i^{-1}]}(U_i\times [-T_\infty+\varepsilon_i,0])}
    < \varepsilon_i,
\end{gather*}
where we let $-T_\infty+\varepsilon_i=-\varepsilon_i^{-1}$ in the case $T_\infty=\infty$.

Let $K_i(x,t\,|\,y,s)$ be the heat kernel coupled with $(M_i,g_i(t))$ as introduces in section 2.1. For any $x,y\in U_i$ and for any $ -T_\infty+\varepsilon_i\leq s<t\leq0$, we shall define
\[
    \bar{K}_i(x,t\,|\,y,s):=(\Psi_i^* K_i)(x,t\,|\,y,s)
    = K_i(\Psi_i(x),t\,|\, \Psi_i(y),s).
\]As a slight generalization of \cite{Lu12}, we shall prove the following.
\begin{Theorem}
\label{thm: HK conv under CGH}
There is a positive heat kernel $K_\infty$ coupled with $(M_\infty,g_\infty(t))$, such that, after passing to a subsequence, we have
\[
    \bar K_i \to K_\infty,
\]
in the $C_c^\infty$-topology, and the convergence is uniform on any compact subset of 
\[
    \MM:=
    \big\{\,
    (x,t,y,s)\,\big\vert\,
    x,y\in M_\infty, s,t\in (-T_\infty,0], s<t\,
    \big\}.
\]
\end{Theorem}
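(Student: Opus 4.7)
The plan is to follow the strategy of \cite{Lu12}, but modified to accommodate the fact that we only assume bounded curvature on compact time intervals (rather than uniform curvature bounds) and that the limit flow is endowed with no a priori geometric control beyond smoothness and completeness. First, I would use the smooth Cheeger-Gromov-Hamilton convergence to transfer local geometric control from the limit to the sequence: on any compact subset $\mathcal{K}\subset\MM$, the projections to $M_\infty$ are contained in some compact set $\Omega\subset M_\infty$ and the time variables lie in a compact sub-interval $[a,b]\subset(-T_\infty,0]$, so the pullback of $\Psi_i(\Omega)\subset M_i$ admits uniform (in $i$, for $i$ large) curvature bounds and unit-ball volume lower bounds. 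Applying the coarse Gaussian upper and lower bounds (Theorem 26.25 and Theorem 26.31 in \cite{RFV3}, c.f. Proposition \ref{coarsegaussboundthatisuseful}) together with the mean value inequality then yields uniform two-sided bounds on $\bar K_i$ away from the diagonal $\{s=t\}$.

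Next, since $\bar K_i(\cdot,\cdot\,|\,y,s)$ satisfies the heat equation with respect to the smoothly converging metrics $\Psi_i^* g_i$, and $\bar K_i(x,t\,|\,\cdot,\cdot)$ satisfies the conjugate heat equation, I would apply standard Shi-type parabolic regularity to upgrade the $C^0$ bounds of the previous step into uniform $C^k$ bounds on slightly smaller compact subsets of $\MM$, for every $k$. An Arzel\`a-Ascoli argument combined with a diagonal extraction over an exhaustion of $\MM$ by compact sets produces a subsequential limit $K_\infty:\MM\to[0,\infty)$ that is smooth in all variables and satisfies the heat equation in $(x,t)$ and the conjugate heat equation in $(y,s)$ coupled with $(M_\infty,g_\infty(t))$. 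The pointwise lower bound from step one forces $K_\infty>0$ everywhere on $\MM$.

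The main obstacle is to verify that $K_\infty$ is a genuine fundamental solution, i.e., that $K_\infty(x,t\,|\,\cdot,s)\to \delta_x$ weakly as $s\to t^-$ and that the reproduction formula holds; this is nontrivial because we have no geometric control on $(M_\infty,g_\infty)$ at infinity, so mass could a priori escape in the limit. To handle the initial condition, I would test against arbitrary $\phi\in C_c^\infty(M_\infty)$: for each $i$, the function $u_i(x,t):=\int_{M_i}K_i(x,t\,|\,y,s)(\Psi_i^{-1})^*\phi(y)\,dg_i(s,y)$ solves the heat equation with initial datum $(\Psi_i^{-1})^*\phi$ and is uniformly bounded; locally smoothly we obtain $u_i\to u_\infty$ where $u_\infty(x,t)=\int_{M_\infty}K_\infty(x,t\,|\,y,s)\phi(y)\,dg_{\infty,s}(y)$, and the uniform limits as $t\searrow s$ recover $\phi(x)$. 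The Gaussian upper bound from step one is used to rule out loss of mass at infinity in this limit. The reproduction formula then passes to the limit via the dominated convergence theorem, again using the uniform Gaussian upper bound to dominate the integrand. Finally, $K_\infty$ is minimal since it is the locally smooth limit of minimal fundamental solutions and any smaller positive fundamental solution on the limit would contradict the local lower bounds.
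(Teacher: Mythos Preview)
Your overall strategy is close to the paper's, but there is a genuine gap in how you obtain the \emph{lower} bound on $\bar K_i$ and hence the positivity of $K_\infty$. You invoke the coarse Gaussian lower bound of \cite[Theorem 26.31]{RFV3}, but the constants in that estimate depend on the curvature bound on \emph{all} of $M_i\times[a,b]$, not merely on the compact region $\Psi_i(\Omega)$. The hypothesis only guarantees that each $(M_i,g_i(t))$ has bounded curvature on compact time intervals, with a bound that may depend on $i$; the smooth Cheeger--Gromov--Hamilton convergence gives uniform curvature control only on the images of the defining diffeomorphisms $\Psi_i$, not globally on $M_i$. So your ``uniform two-sided bounds'' are in fact $i$-dependent on the lower side, and you cannot conclude $K_\infty>0$ this way.

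The paper avoids this problem entirely. For the upper bound it uses only the mean value inequality \cite[Theorem 25.2]{RFV3}, which is local and therefore uniform once local geometry is controlled; the key input is that $\int_{M_i}K_i(\Psi_i(x),t\,|\,\cdot,s)\,dg_{i,s}\equiv 1$, which bounds the space-time average over a local parabolic cylinder. Positivity of $K_\infty$ is then deduced \emph{after} establishing that $K_\infty(x,t\,|\,\cdot,s)\to\delta_x$ as $s\to t^-$ (following \cite{Lu12}), by the strong maximum principle. Your argument for the delta-function limit is fine and essentially the same as \cite{Lu12}; you just need to reorder the logic so that positivity comes last. Finally, note that the reproduction formula and unit mass of $K_\infty(x,t\,|\,\cdot,s)$ are not part of this theorem in the paper---they are proved separately (Theorem \ref{thepropertiesoftheCHK}) using Proposition \ref{measureaccumulationofHcenter} to control mass near $H_n$-centers, precisely because a uniform global Gaussian upper bound is unavailable; your proposed dominated-convergence argument for the reproduction formula suffers from the same issue as the lower bound.
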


\begin{proof}
For any $A \gg 1,$ we define
\[
    \MM_{A} 
    :=
    \left\{
    (x,t,y,s)\in \MM\,\Big|\,
    x,y\in B_{g_{\infty,0}}(o,A),
    -T_\infty+\tfrac{1}{A^2}\le s \le t - \tfrac{1}{A^2} < t
    \le 0
    \right\},
\]
where once more we let $-T_\infty+\frac{1}{A^2}=-A^2$ if $T_\infty=\infty$.

Since $(M_\infty,g_\infty(t))_{t\in(-\infty,0]}$ is smooth and complete, for each $k\geq 0$, we have
\[
    |\nabla^k \Rm_{g_\infty}|
    \le  \Lambda_k,\quad
    \text{ on }\quad 
    B_{g_{\infty,0}}(o,10A)\times [-T_\infty+\tfrac{1}{A^2},0],
\]
for some $\Lambda_k=\Lambda_k(A)$.
Let us define
$$\bar g:= g_\infty(0),\quad
\bar g_i(t) := \Psi_i^* g_i(t),$$ 
where we will use $\bar{g}$ as the background metric on $M_\infty$. By the smooth convergence of the Ricci flows, we may find a large integer $\bar{i}$ depending on $A$, such that whenver $i\geq \bar{i}$, we have
\begin{align}
    |\nabla^k \Rm_{\bar g_i}|
    \le 2\Lambda_k, \quad&\text{ on }\quad
    B_{\bar g}(o,10A)\times [-T_\infty+\tfrac{1}{A^2},0],
\\
    C_0^{-1} \bar g
    \le \bar g_i
    \le C_0 \bar g,\quad
    &\text{ on }\quad B_{\bar g}(o,10A)\times [-T_\infty+\tfrac{1}{A^2},0],\label{equivalenceofthemetric}
\end{align}
where $C_0$ depends only on $\Lambda_0$, $A$, and $T_\infty$.
Let $r_0:=\tfrac{1}{10A}$, then for any $x\in B_{\bar g}(o,5A)$, we have, by the Bishop-Gromov comparison theorem,
\begin{eqnarray}\label{anotherlowervolumebounde}
    {\rm Vol}_{\bar g}
    \left[
        B_{\bar g}(x,r_0)
    \right]
    \ge c_1
    {\rm Vol}_{\bar g}
    \left[
        B_{\bar g}(o,r_0)
    \right]
    \ge c_0>0,
\end{eqnarray}
where $c_1$ depends on $A$ and $\Lambda_0$,
and $c_0$ depends on $c_1$  and the geometry of $\bar g$ near $o$.

Let us use
\[
    P^-(x,t,r) := B_{\bar g}(x,r)\times [t-r^2,t],\quad
    P^+(x,s,r) := B_{\bar g}(x,r)\times [s,s+r^2],
\]
to denote the backward and forward parabolic disk. Then, fixing any $(x_*,t_*,y_*,s_*)\in \MM_{A}$,  by the mean value inequality \cite[Theorem 25.2]{RFV3}, we have
\begin{align}\label{aC0estimateofCHK}
   &\sup_{P^-(x_*,t_*,r_0)\times P^+(y_*,s_*,r_0)} \bar K_i \\\nonumber
   \le & \ \sup_{(x,t)\in P^-(x_*,t_*,r_0)}
   C \int_{P^+(y_*,s_*,2r_0)} 
   \bar K_i(x,t\,|\,\cdot,\cdot)
   \, d\bar g ds\\\nonumber
  \le& \  
   \sup_{(x,t)\in P^-(x_*,t_*,r_0)}
   C \int_{s_*}^{s_*+4r_0^2}
   \int_{\Psi_i(B_{\bar g}(y_*,2r_0))}
   K_i(\Psi_i(x),t\,|\,y,s)
   \, d g_{i,s}(y) ds
   \\\nonumber
   \leq& \sup_{(x,t)\in P^-(x_*,t_*,r_0)}
   C \int_{s_*}^{s_*+4r_0^2}
   \int_{M_i}
   K_i(\Psi_i(x),t\,|\,y,s)
   \, d g_{i,s}(y) ds
   \\\nonumber
   \le&\  C,
\end{align}      
where $C$ depends on $c_0$ in (\ref{anotherlowervolumebounde}), $\Lambda_0$, $A$, and $C_0$ in (\ref{equivalenceofthemetric}). In particular, $C$ is independent of $i\geq \bar i$ and the point $(x_*,t_*,y_*,s_*)$. Here we have used the fact that $\int_{M_i}
   K_i(\Psi_i(x),t\,|\,y,s)
   \, d g_{i,s}(y)\equiv 1$.

Next, fixing an arbitrary $(y,s)\in P^+(y_*,s_*,r_0)$, we have, by the standard gradient estimate of the heat equation,
\begin{eqnarray*}
\sup_{(x,t)\in P^-(x_*,t_*,\tfrac{1}{2}r_0)}\big\|\,\partial^k_t\nabla^l_x \bar K_i(x,t\,|\, y,s)\,\big\|_{\bar g}\leq C_{k,l},
\end{eqnarray*}
for all $k, l\geq 0$. Here the constant $C_{k,l}$ depends on $C_0$ in (\ref{equivalenceofthemetric}), $C$ in (\ref{aC0estimateofCHK}),  $\Lambda_{k+l}$, and $A$. In particular, $C_{k,l}$ is independent of $i\geq \bar i$ and the choice of the point $(y,s)\in P^+(y_*,s_*,r_0)$. By using this estimate and the standard gradient estimate of the conjugate heat equation
\begin{eqnarray*}
\Box^*_{\bar g_{i,s},y,s}\Big(\partial^k_t\nabla^l_x \bar K_i(x,t\,|\, y,s)\Big)=0,
\end{eqnarray*}
we have
\begin{eqnarray*}
\sup_{(x,t,y,s)\in P^-(x_*,t_*,\tfrac{1}{2}r_0)\times P^+(y_*,s_*,\tfrac{1}{2}r_0)}\big\|\,\partial^m_s\nabla^n_y\partial^k_t\nabla^l_x \bar K_i(x,t\,|\, y,s)\,\big\|_{\bar g}\leq C_{k,l,m,n},
\end{eqnarray*}
where $C_{k,l,m,n}$ depends on $C_0$ in (\ref{equivalenceofthemetric}), $C_{k,l}$, $A$, and $\Lambda_{m+n}$. In particular, $C_{k,l,m,n}$ is independent of $i\geq \bar i$ and the choice of the point $(x_*,t_*,y_*,s_*)\in \MM_{A}$. Therefore, we have
\[
    \sup_{(x,t,y,s)\in\MM_{A}} \big\|\,\partial^m_s\nabla^n_y\partial^k_t\nabla^l_x \bar K_i(x,t\,|\, y,s)\,\big\|_{\bar g}\leq C_{k,l,m,n},
\]
for all $i\geq \bar{i}$ and for all $k,l,m,n\geq 0$. 

Finally, according to the Arzela-Ascoli theorem, by passing to a subsequence, we can find a smooth function $K_\infty:\MM\rightarrow [0,\infty)$, such that
\[
    \bar K_i\to K_\infty
\]
smoothly on compact subsets of $\MM$. As a consequence, we have
\begin{eqnarray*}
\Box_{g_{\infty,t},x,t}K_\infty(x,t\,|\,y,s)\equiv 0,\quad \Box_{g_{\infty,s},y,s}K_\infty(x,t\,|\,y,s)\equiv 0 \quad \text{ on }\quad \MM.
\end{eqnarray*}
The convergence of $\bar K_i$ also implies that, fixing any $(x,t)$ or $(y,s)\in M_\infty\times(-T_\infty,0]$, we have
\begin{align*}
\bar K_i(x,t\,|\,\cdot,\cdot)\rightarrow K_\infty(x,t\,|\,\cdot,\cdot)&\quad \text{ locally smoothly on }\quad M_\infty\times (-T_\infty,t),
\\
\bar K_i(\cdot,\cdot\,|\,y,s)\rightarrow K_\infty(\cdot,\cdot\,|\,y,s)&\quad \text{ locally smoothly on }\quad M_\infty\times (s,0].
\end{align*}
One may then easily apply the same arguments in \cite[Section 2.5]{Lu12} to show that
\[
    \lim_{s\to t^-} K_\infty(x,t\,|\,\cdot,s)
    = \delta_x,\quad
    \lim_{t\to s^+} K_\infty(\cdot,t\,|\,y,s)
    = \delta_y,
\]
in the sense of distributions. 
The strong maximum principle then implies that $K_\infty>0$ everywhere on $\MM.$
\end{proof}

\begin{Theorem}\label{thepropertiesoftheCHK}
For any $(x,t)\in M_\infty\times(-T_\infty,0]$, we have $K_\infty(x,t\,|\,\cdot,s)dg_{\infty,s}\in \PP( M_\infty)$ for all $s\in (-\infty,t]$. Furthermore, $K_\infty$ satisfies the reproduction formula, that is,
\begin{eqnarray*}
\int_SK_\infty(x,t\,|\, \cdot,s')dg_{\infty,s'}=\int_{M_\infty}K_\infty(x,t\,|\,z,s)\left(\int_SK_\infty(z,s\,|\,y,s')dg_{\infty,s'}(y)\right)dg_{\infty,s}(z).
\end{eqnarray*}
for all $x\in M$, $-T_\infty<s'<s<t\leq 0$, and for all measurable set $S\subset M_\infty$
\end{Theorem}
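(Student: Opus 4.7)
\textbf{Proof proposal for Theorem \ref{thepropertiesoftheCHK}.} The plan is to transfer the known probabilistic identities for the pre-limit conjugate heat kernels $K_i$ to $K_\infty$ by combining the local smooth convergence $\bar K_i \to K_\infty$ established in Theorem \ref{thm: HK conv under CGH} with Hein--Naber's Gaussian concentration (Proposition \ref{prop: gaussian concentration}) to control the tails. Each flow $(M_i, g_i(t))$ has bounded curvature on compact time sub-intervals, so Proposition \ref{prop: gaussian concentration} applies on $(M_i, g_i(\cdot))_{[s,t]}$ with base point $\Psi_i(x)$.

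For the first claim, the bound $\int_{M_\infty} K_\infty(x,t\,|\,y,s)\,dg_{\infty,s}(y) \le 1$ is immediate from Fatou's lemma applied to any exhausting sequence of compact subsets of $M_\infty$, since $\int_{M_i} K_i(\Psi_i(x),t\,|\,y,s)\,dg_{i,s}(y) = 1$ for every $i$. For the reverse inequality, fix a precompact open set $\Omega \subset M_\infty$ with $x \in \Omega$. Since $K_\infty > 0$ on $\mathcal{M}$, the mass
\[
    c := \int_\Omega K_\infty(x,t\,|\,y,s)\,dg_{\infty,s}(y) > 0,
\]
and by local smooth convergence the analogous integrals against $\bar K_i$ are at least $c/2$ for large $i$. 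Hein--Naber's Gaussian concentration then gives
\[
    \nu^i_{\Psi_i(x),t\,|\,s}\bigl(M_i \setminus B_{g_{i,s}}(\Psi_i(\Omega),r)\bigr)
    \le \frac{2}{c}\exp\left(-\frac{r^2}{8(t-s)}\right),
\]
where I used the lower mass bound on $\Psi_i(\Omega)$ and the Gaussian concentration between the two sets. Since $g_{i,s}$ approximates $g_{\infty,s}$ on compact sets, for any fixed $r$ the region $B_{g_{i,s}}(\Psi_i(\Omega),r)$ is eventually contained in $\Psi_i(B_{g_{\infty,s}}(\Omega,2r))$, which is precompact in $M_\infty$; passing to the limit yields
\[
    \int_{B_{g_{\infty,s}}(\Omega,2r)} K_\infty(x,t\,|\,y,s)\,dg_{\infty,s}(y)
    \ge 1 - \frac{2}{c}\exp\left(-\frac{r^2}{8(t-s)}\right),
\]
and $r \to \infty$ closes the argument.

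For the reproduction formula, I will first establish it for compact $S \subset M_\infty$, then approximate. Each $K_i$ satisfies the reproduction identity by \cite[Lemma 26.16]{RFV3}; integrating it over $\Psi_i(S)$ in $y'$ and applying Fubini gives
\[
    \int_{\Psi_i(S)} K_i(\Psi_i(x),t\,|\,y',s')\,dg_{i,s'}(y')
    = \int_{M_i} K_i(\Psi_i(x),t\,|\,z,s)\, F_i(z)\, dg_{i,s}(z),
\]
where $F_i(z) := \int_{\Psi_i(S)} K_i(z,s\,|\,y',s')\,dg_{i,s'}(y') \in [0,1]$. The left side converges to $\int_S K_\infty(x,t\,|\,y',s')\,dg_{\infty,s'}(y')$ by the first part of the theorem (applied at time $s'$) and local smooth convergence. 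For the right side, note that $F_i \circ \Psi_i$ converges locally uniformly on $M_\infty$ to $F_\infty(z) := \int_S K_\infty(z,s\,|\,y',s')\,dg_{\infty,s'}(y')$ because $S$ is compact and $\bar K_i \to K_\infty$ smoothly there; combining the uniform bound $F_i \le 1$ with the same Gaussian tail control as above (now applied to $\nu^i_{\Psi_i(x),t\,|\,s}$), dominated convergence yields convergence to $\int_{M_\infty} K_\infty(x,t\,|\,z,s)\,F_\infty(z)\,dg_{\infty,s}(z)$. For general measurable $S$, exhaust by compact subsets and apply monotone convergence on both sides.

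The main technical obstacle is ensuring the tail estimates are uniform in $i$ despite the base points $\Psi_i(x)$ varying and the diffeomorphisms only eventually exhausting $M_\infty$. This is resolved by the observation that the positivity of the limit kernel $K_\infty > 0$ automatically furnishes, via the local smooth convergence, a uniform positive lower bound for the mass of $\nu^i_{\Psi_i(x),t\,|\,s}$ on $\Psi_i(\Omega)$, which is precisely what Proposition \ref{prop: gaussian concentration} needs as input to produce uniform exponential decay. No new geometric hypothesis on $(M_\infty,g_\infty(t))$ is required beyond its smoothness and completeness, both of which are already built into the statement of Theorem \ref{thm: HK conv under CGH}.
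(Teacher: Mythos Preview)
Your proof is correct and follows essentially the same architecture as the paper's: Fatou's lemma for the upper bound $\le 1$, then the positivity of $K_\infty$ to manufacture a uniform positive mass on a fixed compact region, which feeds into a tail estimate to recover the lower bound and to justify the limit passage in the reproduction formula. The one substantive difference is the choice of tail-control mechanism: you invoke Hein--Naber's Gaussian concentration (Proposition \ref{prop: gaussian concentration}) directly, whereas the paper routes through the $H_n$-center machinery (Proposition \ref{measureaccumulationofHcenter}), first locating an $H_n$-center $(z_i,s)$ of $(\Psi_i(x),t)$ at bounded distance from $\Psi_i(x)$ and then applying the concentration around $z_i$. Your route is slightly more direct, since Gaussian concentration needs only a set of definite mass as input and skips the intermediate localization of the $H_n$-center; the paper's route has the mild advantage of identifying a specific limit point $z'_\infty$ around which the mass concentrates. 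One small remark: when you pass from compact $S$ to general measurable $S$, the cleaner exhaustion is $S_n = S \cap B_{g_{\infty,s'}}(x,n)$ (bounded measurable sets, as the paper does) rather than compact subsets, since your argument for compact $S$ really only uses that $S$ sits inside a fixed compact region, and monotone convergence along $S_n \uparrow S$ is then immediate.
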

\begin{proof}
We will still use the same notation as in the proof of the former Theorem. Let us fix $(x,t)\in M_\infty\times(-T_\infty,0]$ and $s\in(-T_\infty,0)$. We will prove
\begin{eqnarray}\label{theintegralisequaltoone}
\int_{M_\infty}K_\infty(x,t\,|\,\cdot,s)\,dg_{\infty,s}=1.
\end{eqnarray}
By Fatou's lemma, we have
\begin{eqnarray}\label{theintegralislessthan1}
\int_{M_\infty}K_\infty(x,t\,|\,\cdot,s)\,dg_{\infty,s}&\leq&\liminf_{i\rightarrow\infty}\int_{U_i}\bar K_i(x,t\,|\,\cdot,s)\,d\bar g_{i,s}
\\\nonumber
&\leq& \liminf_{i\rightarrow\infty}\int_{\Psi_i(U_i)}K_i(\Psi_i(x),t\,|\,\cdot,s)\,dg_{i,s}
\\\nonumber
&\leq& \liminf_{i\rightarrow\infty}\int_{M_i}K_i(\Psi_i(x),t\,|\,\cdot,s)\,dg_{i,s}
\\\nonumber
&=&1.
\end{eqnarray}

To show the other direction of the inequality, we shall apply Proposition \ref{measureaccumulationofHcenter}. By the locally smooth convergence of $\bar K_i(x,t\,|\,\cdot,\cdot)$, we have
\begin{eqnarray*}
\liminf_{i\rightarrow\infty}\int_{B_{g_{i,s}}(\Psi_i(x),1)}K_i(\Psi_i(x),t\,|\,\cdot,s)\,dg_{i,s}&=&\liminf_{i\rightarrow\infty}\int_{B_{\bar g_{i,s}}(x,1)}\bar K_i(x,t\,|\,\cdot, s)\,d\bar g_{i,s}
\\
&=&\int_{B_{g_{\infty,s}}(x,1)}K_\infty(x,t\,|\,\cdot,s)\,dg_{\infty,s}
\\
&>&0.
\end{eqnarray*}
Hence, there is a positive constant $c_0$, such that
\begin{eqnarray*}
\int_{B_{g_{i,s}}(\Psi_i(x),1)}K_i(\Psi_i(x),t\,|\,\cdot,s)\,dg_{i,s}\geq c_0\quad\text{ for all }\quad i\in\mathbb{N}.
\end{eqnarray*}
Therefore, by Proposition \ref{measureaccumulationofHcenter}, we have
\begin{eqnarray}\label{theHncenterdoesnotgoaway}
\dist_{g_{i,s}}(z_i,\Psi_i(x))\leq\sqrt{\frac{2}{c_0}H(t-s)}+1\leq C \quad\text{ for all }\quad i\in\mathbb{N},
\end{eqnarray}
where $(z_i,s)$ is an $H_n$-center of $(\Psi_i(x),t)$ with respect to the Ricci flow $(M_i,g_i)$.

In view of $(\ref{theHncenterdoesnotgoaway})$, by passing to a subsequence, we may assume $z'_i:=\Psi_i^{-1}(z_i)\rightarrow z'_\infty\in M_\infty$. Hence, fixing any $\delta\in(0,1)$, we have
\begin{align}\label{theintegralisalmost1}
&\int_{B_{g_{\infty,s}}\left(z'_\infty, \sqrt{\delta^{-1}H_n(t-s)}\right)}K_\infty(x,t\,|\,\cdot,s)\,dg_{\infty,s}
\\\nonumber
=&\lim_{i\rightarrow\infty}\int_{B_{\bar g_{i,s}}\left(z'_i, \sqrt{\delta^{-1}H_n(t-s)}\right)}\bar K_i(x,t\,|\,\cdot,s)\,d\bar g_{i,s}
\\\nonumber
=&\lim_{i\rightarrow\infty}\int_{B_{ g_{i,s}}\left(z'_i, \sqrt{\delta^{-1}H_n(t-s)}\right)} K_i(\Psi_i(x),t\,|\,\cdot,s)\,d g_{i,s}
\\\nonumber
\geq&\  1-\delta.
\end{align}
Combining (\ref{theintegralislessthan1}) and (\ref{theintegralisalmost1}), we have proved (\ref{theintegralisequaltoone}).

Next, we prove the reproduction formula. Let us fix $x\in M_\infty$, $-T_\infty<s'<s<t\leq 0$, and measurable set $S\subset M_\infty$. Let us first of all consider the case when $S$ is bounded. Then we have the functions
\begin{eqnarray*}
u_i:z\rightarrow \int_S\bar K_i(z,s\,|\,y,s')\,d\bar g_{i,s'}(y)
\end{eqnarray*}
converge to the function
\begin{eqnarray*}
u_\infty:z\rightarrow \int_S K_\infty(z,s\,|\,y,s')\, dg_{\infty,s'}(y)
\end{eqnarray*}
pointwise and with the uniform upper bound $1$.

Finally, in view (\ref{theintegralisalmost1}), fixing any $\delta\in (0,1)$, we have, by the bounded convergence theorem,
\begin{align}\label{someimportantconvergenceargument}
&\int_{M_\infty}u_\infty K_\infty(x,t\,|\,\cdot,s)\,dg_{\infty,s}
\\\nonumber
\leq&\int_{B_{g_{\infty,s}}\left(z'_\infty, \sqrt{\delta^{-1}H_n(t-s)}\right)}u_\infty K_\infty(x,t\,|\,\cdot,s)\,dg_{\infty,s}+\delta
\\\nonumber
=&\lim_{i\rightarrow\infty}\int_{B_{\bar g_{i,s}}\left(z'_i, \sqrt{\delta^{-1}H_n(t-s)}\right)}u_i\bar K_i(x,t\,|\,\cdot,s)\,d\bar g_{i,s}+\delta
\\\nonumber
\leq &\lim_{i\rightarrow\infty}\int_{M_i}K_i(\Psi_i(x),t\,|\,\cdot,s)\left(\int_{\Psi_i(S)}K_i(\cdot,s\,|\,y,s')\,dg_{i,s'}(y)\right)\,d g_{i,s}+\delta
\\\nonumber
=&\lim_{i\rightarrow\infty}\int_{\Psi_i(S)}K_i(\Psi_i(x),t|\cdot,s')\,dg_{i,s'}+\delta
\\\nonumber
=&\lim_{i\rightarrow\infty}\int_S\bar K_i(x,t\,|\,\cdot,s')\,d\bar g_{i,s'}+\delta
\\\nonumber
=&\int_SK_\infty(x,t\,|\,\cdot,s')\,dg_{\infty,s'}+\delta,
\end{align}
where we have applied the reproduction property of $K_i$, which is but standard for Ricci flows with bounded curvature \cite[Lemmas 26.8 and 26.16]{RFV3}. Taking $\delta\rightarrow 0$, the case when $S$ is bounded is done. If $S$ is unbounded, then one can consider $S_i:=S\cap B_{g_{\infty,s'}}(x,i)$ and take $i\rightarrow\infty$, then the conclusion follows from the bounded convergence theorem.

\end{proof}

\bibliography{bibliography}{}

\begin{thebibliography}{CCG{\etalchar{+}}10}


\bibitem[BCP10]{BCP10} Mihai Bailesteanu, Xiaodong Cao, and Artem Pulemotov. "Gradient estimates for the heat equation under the Ricci flow." Journal of Functional Analysis 258.10 (2010): 3517-3542.

\bibitem[BKN12]{BKN12} Bakas, Ioannis; Kong, Shengli; Ni, Lei. \emph{Ancient solutions of Ricci flow on spheres and generalized Hopf fibrations.} Journal f\"ur die reine und angewandte Mathematik (Crelles Journal) 2012.663 (2012): 209-248.


\bibitem[Bam20a]{Bam20a}
Richard~H. Bamler,  \emph{{Entropy and heat kernel bounds on a Ricci flow background}},
  https://arxiv.org/abs/2008.07093 (2020).

\bibitem[Bam20b]{Bam20b}
\bysame, \emph{{Compactness theory of the space of super Ricci flows}},
  https://arxiv.org/abs/2008.09298 (2020).
  
\bibitem[Bam20c]{Bam20c} \bysame, \emph{{Structure theory of non-collapsed limits of Ricci flows}},   https://arxiv.org/abs/2009.03243 (2020).

\bibitem[Bre20]{Bre20} Brendle, Simon. \emph{Ancient solutions to the Ricci flow in dimension $3$.} Acta Mathematica 225.1 (2020): 1-102.



 





\bibitem[CZ10]{CZ10} Cao, Huai-Dong; Zhou, Detang. \emph{On complete gradient shrinking Ricci solitons}. J. Differential Geom. 85 (2010), no. 2, 175–185.

\bibitem[CZ11]{CZ11} Cao, Xiaodong, and Qi S. Zhang. \emph{The conjugate heat equation and ancient solutions of the Ricci flow.} Advances in Mathematics 228.5 (2011): 2891-2919.


\bibitem[CTY11]{CTY11}Chau, Albert, Luen-Fai Tam, and Chengjie Yu. \emph{Pseudolocality for the Ricci flow and applications.} Canadian Journal of Mathematics 63.1 (2011): 55-85.


\bibitem[CBl09]{CBl09} Bing-Long Chen. \emph{Strong uniqueness of the Ricci flow}, Journal of Differential Geometry, 82(2):363-382, 2009.

\bibitem[CZ20]{CZ20} Liang Cheng, Yongjia Zhang. \emph{Perelman-type no breather theorem for noncompact Ricci flows}. arXiv preprint arXiv:2011.14973, 2020. Trans. Amer. math.
Soc., to appear



\bibitem[CCGGIIKLLN07]{RFV1}
Chow, B.; Chu, S.; Glickenstein, D.; Guenther, C.; Isenberg, J.; Ivey, T.; Knopf, D.; Lu, P.; Luo, F.; Ni, L. \emph{The Ricci flow: techniques and applications. Part I. Geometric Aspects}, Mathematical Surveys and Monographs,  vol. \textbf{135}, AMS, Providence, RI, 2007.

\bibitem[CCGGIIKLLN08]{RFV2}
Chow, B.; Chu, S.; Glickenstein, D.; Guenther, C.; Isenberg, J.; Ivey, T.; Knopf, D.; Lu, P.; Luo, F.; Ni, L. \emph{The Ricci flow: techniques and applications. Part II. Analytic Aspects}, Mathematical Surveys and Monographs,  vol. \textbf{144}, AMS, Providence, RI, 2008.


\bibitem[CCGGIIKLLN10]{RFV3}
Chow, B.; Chu, S.; Glickenstein, D.; Guenther, C.; Isenberg, J.; Ivey, T.; Knopf, D.; Lu, P.; Luo, F.; Ni, L. \emph{The Ricci flow: techniques and applications. Part III. Geometric-Analytic Aspects}, Mathematical Surveys and Monographs,  vol. \textbf{163}, AMS, Providence, RI, 2010.



\bibitem[CCGGIIKLLN15]{RFV4}
 Chow, B.; Chu, S.; Glickenstein, D.; Guenther, C.; Isenberg, J.; Ivey, T.; Knopf, D.; Lu, P.; Luo, F.; Ni, L. \emph{The Ricci Flow: Techniques and Applications. Part IV: Long-Time Solutions and Related Topics}, Mathematical Surveys and Monographs, vol. \textbf{206}, AMS, Providence, RI, 2015.
 
 
\bibitem[DHS12]{DHS12} Daskalopoulos, Panagiota, Richard Hamilton, and Natasa Sesum.\emph{Classification of ancient compact solutions to the Ricci flow on surfaces.} Journal of Differential Geometry 91.2 (2012): 171-214.




















\bibitem[EKNT08]{EKNT08}Ecker, Klaus; Knopf, Dan; Ni, Lei; Topping, Peter, \emph{Local monotonicity and mean value formulas for evolving Riemannian manifolds}. J. Reine Angew. Math. 616 (2008), 89–130.





\bibitem[Ham93a]{Ham93a} Richard S. Hamilton,  \emph{The Harnack estimate for the Ricci flow}, J. Differential
Geom., 1993, 37(1): 225-243.

\bibitem[Ham93b]{Ham93b} Richard S. Hamilton,  \emph{The formation of singularities in the
Ricci flow.\ }Surveys in differential geometry, Vol.\ II (Cambridge, MA,
1993), 7--136, Internat. Press, Cambridge, MA, 1995.


\bibitem[HN14]{HN14}
Hein, H.‐J. and Naber, A.,
\emph{New Logarithmic Sobolev Inequalities and an $\varepsilon$-Regularity Theorem for the Ricci Flow.} Commun. Pur. Appl. Math., 67: 1543-1561, 2014.


\bibitem[Ko07]{Ko07} Brett Kotschwar,  
\emph{Hamilton’s gradient estimate for the heat kernel on complete manifolds}, Proceedings of the American Mathematical Society \textbf{135}, no. 9 (2007), 3013-3019.

\bibitem[Lai20]{Lai20} Lai, Yi, \emph{A family of 3d steady gradient solitons that are flying wings}, arXiv:2010.07272.

\bibitem[Lau13]{Lau13} Lauret, Jorge. \emph{Ricci flow of homogeneous manifolds.} Mathematische Zeitschrift 274.1-2 (2013): 373-403.


\bibitem[Li20]{Li20} Li, Yu. \emph{Ancient solutions to the K\" ahler Ricci flow.} arXiv preprint arXiv:2008.06951 (2020).

\bibitem[LW20]{LW20} Li, Yu; Wang, Bing. \emph{Heat kernel on Ricci shrinkers}. Calc. Var. Partial Differential Equations 59 (2020), no. 6, Paper No. 194, 84 pp.


\bibitem[Lu12]{Lu12} Lu, Peng. 
\emph{Convergence of fundamental solutions of linear parabolic equations under Cheeger–Gromov convergence}, Math. Ann. \textbf{353}, 193–217 (2012).

\bibitem[LT]{LT} Lu, Peng and Gang, Tian. \emph{Uniqueness of standard solutions in the work of Perelman}, https://math.berkeley.edu/~lott/ricciflow/StanUniqWork2.pdf.

\bibitem[LW16]{LW16} Lu, Peng, and Y. K. Wang. \emph{Ancient Ricci flow solutions on bundles.} Advances in Mathematics Volume, 2017, 318: 411-456



\bibitem[MZ21]{MZ21} Ma, Zilu and Zhang, Yongjia,
\emph{Perelman's entropy on ancient Ricci flows}, arXiv:2101.01233 (2021).



\bibitem[Mun09]{Mun09} Munteanu, Ovidiu. \emph{The volume growth of complete gradient shrinking Ricci solitons}. arXiv:0904.0798 [math.DG].




\bibitem[N10]{N10} Naber, Aaron. "Noncompact shrinking four solitons with nonnegative curvature." Journal f\"ur die Reine und Angewandte Mathematik 645 (2010): 125-153.






\bibitem[Per02]{Per02} Perelman, Grisha, \emph{The entropy formula for the Ricci flow and its geometric applications}, arXiv:math.DG/0211159.




\bibitem[Sh89]{Sh} Wan-Xiong Shi, \emph{Deforming the metric on complete Riemannian manifolds.} J. Diff. Geom. 1989, 30(1):223-301.

\bibitem[W18]{Wang18} Wang, Bing, 
\emph{The local entropy along Ricci flow Part A: the no-local-collapsing theorems.}
Camb. J. Math. \textbf{6} (2018), no. 3, 267–346.

\bibitem[Y09]{Y09} Yokota, Takumi. \emph{Perelman's reduced volume and a gap theorem for the Ricci flow.} Communications in Analysis and Geometry, 2009, 17(2): 227-263.



\bibitem[Zhq06]{Zhq06} Zhang, Qi S. \emph{Some gradient estimates for the heat equation on domains and for an equation by Perelman.} International Mathematics Research Notices 2006 (2006).

\bibitem[Zhq07]{Zhq07} Zhang, Qi S. \emph{A uniform Sobolev inequality under Ricci flow.} International Mathematics Research Notices 2007 (2007).

\bibitem[Zh20]{Zhang20} Zhang, Yongjia. \emph{On the equivalence between noncollapsing and bounded entropy for ancient solutions to the Ricci flow.} Journal f\"ur die reine und angewandte Mathematik 2020.762 (2020): 35-51.

\bibitem[Zh21]{Zhang21} Zhang, Yongjia. \emph{Entropy, noncollapsing, and a gap theorem for ancient solutions to the Ricci flow.} Communications in Analysis and Geometry 29.2 (2021): 501-524.

 
\end{thebibliography}
\bibliographystyle{amsalpha}

\newcommand{\etalchar}[1]{$^{#1}$}
\providecommand{\bysame}{\leavevmode\hbox to3em{\hrulefill}\thinspace}
\providecommand{\MR}{\relax\ifhmode\unskip\space\fi MR }
\providecommand{\MRhref}[2]{%
  \href{http://www.ams.org/mathscinet-getitem?mr=#1}{#2}
}
\providecommand{\href}[2]{#2}

\bigskip
\bigskip

\noindent Department of Mathematics, University of California, San Diego, CA, 92093
\\ E-mail address: \verb"pachan@ucsd.edu "
\\

\noindent Department of Mathematics, University of California, San Diego, CA, 92093
\\ E-mail address: \verb"zim022@ucsd.edu"
\\

\noindent Department of Mathematics, University of Minnesota, Twin Cities, MN, 55414
\\ E-mail address: \verb"zhan7298@umn.edu"

\end{document}